\newcommand\C{\mathbb{C}}
\newcommand\Z{\mathbb{Z}}
\newcommand\rat{\mathrm{rat}}
\newcommand\rss{\mathrm{ss}}
\newcommand\ab{\mathrm{ab}}
\newcommand\eval{\mathrm{eval}}
\newcommand\noneval{\mathrm{noneval}}
\newcommand\SL{\mathrm{SL}}
\newcommand\Sym{\mathrm{Sym}}
\newcommand\T{\mathrm{T}}
\newcommand\ex{\mathrm{ex}}
\newcommand\g{\ensuremath{\mathfrak{g}}}
\newcommand\frM{\mathfrak{M}}
\newcommand\frK{\mathfrak{K}}
\newcommand\fm{\mathfrak{m}}
\newcommand\frB{\mathfrak{B}}
\newcommand\frD{\mathfrak{D}}
\newcommand\bx{\mathbf{x}}
\newcommand\cF{\ensuremath{\mathcal{F}}}
\newcommand\cB{\ensuremath{\mathcal{B}}}
\newcommand\gq{/\!/} 
\newcommand\ot{\otimes}
\newcommand\ts{\textstyle}
\newcommand\ideal{\trianglelefteq}
\newcommand\pa{\partial}
\newcommand\NN{\mathbb{N}}
\newcommand\scF{\ensuremath{\mathcal{F}}}
\newcommand\scS{\ensuremath{\mathcal{S}}}
\newcommand\cC{\ensuremath{\mathcal{C}}}
\newcommand\frb{\mathfrak{b}}
\newcommand\frg{\g}
\newcommand\frh{\mathfrak{h}}
\newcommand\lsl{\ensuremath{\mathfrak{sl}}}
\newcommand\gl{\ensuremath{\mathfrak{gl}}}
\newcommand\frl{\ensuremath{\mathfrak{l}}}
\newcommand\frn{\mathfrak{n}}
\newcommand\frN{\mathfrak{N}}
\newcommand\frQ{\mathfrak{Q}}
\newcommand\frs{\mathfrak{s}}
\newcommand\frZ{\mathfrak{Z}}
\newcommand\al{\alpha}
\newcommand\be{\beta}
\newcommand\Ga{\Gamma}
\newcommand\de{\delta}
\newcommand\eps{\varepsilon}
\newcommand\la{\lambda}
\newcommand\vphi{\varphi}
\newcommand\rh{\rho}
\newcommand\si{\sigma}
\newcommand\ta{\tau}
\newcommand\om{\omega}
\DeclareMathOperator{\Hom}{Hom}
\DeclareMathOperator{\Ext}{Ext}
\DeclareMathOperator{\End}{End}
\DeclareMathOperator{\Aut}{Aut}
\DeclareMathOperator{\Span}{Span}
\DeclareMathOperator{\Spec}{Spec}
\DeclareMathOperator{\Der}{Der}
\DeclareMathOperator{\IDer}{IDer}
\DeclareMathOperator{\Id}{Id}
\DeclareMathOperator{\Supp}{Supp} 
\DeclareMathOperator{\Int}{Int}
\DeclareMathOperator{\Out}{Out}
\DeclareMathOperator{\maxSpec}{maxSpec}
\DeclareMathOperator{\wt}{wt}
\DeclareMathOperator{\ev}{ev}
\DeclareMathOperator{\rmH}{H}
\DeclareMathOperator{\Ker}{Ker}
\DeclareMathOperator{\res}{res}
\theoremstyle{plain}
\newtheorem{theo}{Theorem}[section]
\newtheorem{prop}[theo]{Proposition}
\newtheorem{lem}[theo]{Lemma}
\newtheorem{cor}[theo]{Corollary}
\theoremstyle{definition}
\newtheorem{defin}[theo]{Definition}
\newtheorem*{rem*}{Remark}
\newtheorem{example}[theo]{Example}
\theoremstyle{remark}
\newtheorem{rem}[theo]{Remark}
\newtheorem{rems}[theo]{Remarks}
\numberwithin{equation}{section}
\newcommand{\lv}[1]{}
\begin{document}
%

\title[Extensions and block decompositions for EMAS]{Extensions and block decompositions for finite-dimensional representations of\\ equivariant map algebras}

\authors{Erhard Neher\thanks{Supported by an NSERC Discovery Grant}
\address Department of Mathematics \& Statistics\\
University of Ottawa\\
Ottawa, ON, Canada
\email neher@uottawa.ca
\and
Alistair Savage\thanks{Supported by an NSERC Discovery Grant}
\address Department of Mathematics \& Statistics\\
University of Ottawa\\
Ottawa, ON, Canada
\email alistair.savage@uottawa.ca
}


\maketitle

\begin{abstract}
  Suppose a finite group acts on a scheme $X$ and a finite-dimensional Lie algebra $\g$.  The associated \emph{equivariant map algebra} is the Lie algebra of equivariant regular maps from $X$ to $\g$.  The irreducible finite-dimensional representations of these algebras were classified in \cite{NSS}, where it was shown that they are all tensor products of evaluation representations and one-dimensional representations.

  In the current paper, we describe the extensions between irreducible finite-dimensional representations of an equivariant map algebra in the case that $X$ is an affine scheme of finite type and $\g$ is reductive.  This allows us to also describe explicitly the blocks of the category of finite-dimensional representations in terms of \emph{spectral characters}, whose definition we extend to this general setting.  Applying our results to the case of generalized current algebras (the case where the group acting is trivial), we recover known results but with very different proofs.  For (twisted) loop algebras, we recover known results on block decompositions (again with very different proofs) and new explicit formulas for extensions.  Finally, specializing our results to the case of (twisted) multiloop algebras and generalized Onsager algebras yields previously unknown results on both extensions and block decompositions.
\end{abstract}

\tableofcontents

%
\section*{Introduction}
%

Equivariant map algebras are a large class of Lie algebras that include (twisted) loop or multiloop algebras, generalized current algebras, and generalized Onsager algebras, among others.  Suppose $X$ is a scheme and $\g$ is a finite-dimensional Lie algebra, both defined over an algebraically closed field of characteristic zero, and that $\Gamma$ is a finite group acting on both $X$ and $\g$ by automorphisms.  Then the \emph{equivariant map algebra} $\frM = M(X,\g)^\Gamma$ is the Lie algebra of equivariant algebraic maps from $X$ to $\g$.  One easily sees that $\frM \cong M(V, \g)^\Ga$ where $V=\Spec A$, and $A$ is the ring of global functions on $X$.  We will therefore assume throughout the paper that $X$ is affine.

In \cite{NSS}, the authors, together with P.~Senesi, gave a complete classification of the irreducible finite-dimensional representations of an arbitrary equivariant map algebra.  The main result there was that such representations are all tensor products of an irreducible evaluation representation and a one-dimensional representation.  Here by \emph{evaluation representation} we mean a representation of the form
\[ \textstyle
  \frM \xrightarrow{\ev_\bx} \bigoplus_{x \in \bx} \g^x \xrightarrow{\bigotimes_{x \in \bx} \rho_x} \End_k \left( \bigotimes_{x \in \bx} V_x \right),
\]
where $\bx$ is a finite subset of $X_\rat$, the set of rational points of $X$, $\ev_\bx$ is the map given by evaluation at the points of $\bx$, $\g^x$ is the subalgebra of $\g$ fixed by the isotropy group of $x$, and $\rho_x$, $x \in \bx$, are finite-dimensional representations $\g^x \to \End_k V_x$.   When all $\rho_x$, $x \in \bx$, are irreducible and no two points of $\bx$ lie in the same $\Gamma$-orbit, the corresponding evaluation representation is irreducible.  In many cases, including the generalized current algebras (for semisimple $\g$), multiloop algebras and generalized Onsager algebras, all irreducible finite-dimensional representations are in fact evaluation representations.  For generalized current algebras this was shown in \cite{CFK}, and for multiloop algebras in \cite{lau:mult} (different proofs were given in \cite{NSS}).  The isomorphism classes of irreducible evaluation representations are naturally parameterized by finitely-supported equivariant maps on $X_\rat$ taking values in the set of isomorphism classes of irreducible $\g^x$-modules at a point $x \in X_\rat$.

Except in rather trivial cases, the category of finite-dimensional representations of an equivariant map algebra is not semisimple.  It is therefore important to know the extensions between irreducibles.  These have been described for current algebras in \cite{CG05} and for generalized current algebras in \cite{kodera}.  In the current paper, we address the question of computing extensions in the general setting of equivariant map algebras.  Precisely, we determine the extensions between irreducible finite-dimensional representations of equivariant map algebras where the scheme $X$ is of finite type and the Lie algebra $\g$ is reductive.

One of our main results is that the problem of computing extensions between evaluation representations can be reduced to the case of single point evaluation representations (i.e.\ the case where $\bx$ above is a singleton) at the same point (see Theorem~\ref{theo:two-eval}).  We then show that the space of extensions between these single point evaluation representations is equal to certain spaces of homomorphisms of $\g^x$-modules (see Theorem~\ref{theo:ext-eval-point}).  These results generalize formulas previously obtained in \cite{CG05,kodera,S}.

Equipped with formulas for the extensions between irreducible objects in the category of finite-dimensional representations, we are able to determine the block decomposition of this category.  In \cite{CM}, these blocks were described for loop algebras in terms of \emph{spectral characters}.  These results were then extended to the cases of twisted loop algebras in \cite{S} and generalized current algebras in \cite{kodera}.  In the current paper, we generalize the notion of spectral characters to the setting of arbitrary equivariant map algebras ($X$ affine of finite type, $\g$ reductive).  In keeping with the classification of irreducibles in terms of finitely-supported equivariant functions on $X$, in many cases the spectral characters are finitely-supported equivariant functions on $X$ taking values in certain quotients of the weight lattice of $\g^x$ at a point $x \in X_\rat$ (see Sections~\ref{sec:block-decomps} and~\ref{sec:applications}).

Our results recover all the known results on extensions and block decomposition for Lie algebras that can be viewed as equivariant map algebras.  However, in such cases, our method is quite different.  Existing proofs in the literature use the concept of a Weyl module -- something which is not currently available for arbitrary equivariant map algebras.  In contrast, our approach uses results on the cohomology of Lie algebras, most importantly the Hochschild-Serre spectral sequence -- a technique that goes back to the paper \cite{FM} which studies extensions between irreducible evaluation modules of the current algebra $M(k,\g)$, $\g$ simple.
Hence our results give new proofs in the cases where the extensions and block decompositions were known.  In addition, we can describe the extensions between irreducible finite-dimensional representations and block decompositions for classes of equivariant map algebras for which these were not previously known.  This is the case, for example, for multiloop algebras and generalized Onsager algebras.

The organization of this paper is as follows.  In Section~\ref{sec:EMAs} we recall the definition of equivariant map algebras and the classification of their irreducible finite-dimensional representations in terms of evaluation representations.  In Section~\ref{sec:extgeneral} we collect some facts about extensions between representations of Lie algebras in general, and the relation between spaces of extensions and Lie algebra cohomology.  We specialize our discussion to equivariant map algebras in Section~\ref{sec:EMA-ext}, where we prove some of our main results on extensions between irreducible finite-dimensional representations. In Section~\ref{sec:abelian} we consider the special case where the group $\Gamma$ is abelian, in which case we are able to make our descriptions of extensions more explicit.  We use our results on extensions to describe the blocks of the category of finite-dimensional representations of an equivariant map algebra in Section~\ref{sec:block-decomps}.  Finally, in Section~\ref{sec:applications}, we specialize our general results to certain equivariant map algebras of particular interest.  In an appendix, we prove some results relating extensions to the weight lattice of a semisimple Lie algebra.  This allows us, in some cases, to describe the block decomposition in terms of explicit quotients of the weight lattice.

\subsection*{Notation} Throughout, $k$ is an algebraically closed field of characteristic $0$ and all algebras and tensor products are over $k$.  We denote by $X = \Spec A$ the prime spectrum of a unital associative commutative finitely generated $k$-algebra $A$.   Equivalently, $X$ is an affine scheme of finite type.  A point $x \in X$ is called a \emph{rational point} if $A/\fm_x \cong k$, where $\fm_x$ is the ideal of $A$ corresponding to $x$, and we abbreviate the subset of rational points of $X$ by $X_\rat$.  Since $A$ is finitely generated, the rational points correspond exactly to the maximal ideals of $A$.  Hence $X_\rat = \maxSpec A$.

The direct product of two algebras $A$ and $B$ is denoted $A \boxplus B$ to distinguish it from the direct sum of vector spaces.  For a Lie algebra $L$, we denote by $L'=[L,L]$ the derived
subalgebra and let $L_\ab = L/L'$ be the \emph{abelianization} of $L$, cf.\ \cite{wei}.  Throughout, $\g$ will denote a finite-dimensional reductive Lie algebra with semisimple part $\g_\rss = \g'$.  We identify $\g_\ab$ with the center of $\g$, so that $\g = \g_\rss \boxplus \g_\ab$.  We will denote the root and weight lattices of $\g_\rss$ by $Q$ and $P$, respectively.  The set of dominant weights with respect to some set of positive roots will be denoted by $P^+$, and $V(\la)$ is the irreducible finite-dimensional $\g_\rss$-module with highest weight $\la\in P^+$.  By $L_\ab^*$, we mean $(L_\ab)^*$ (and similarly for expressions such as $\g_\ab^*$ and $\g_{0,\ab}^*$).  By the usual abuse of notation, we use the terms module and representation interchangeably.

For a finite group $\Gamma$ and a $\Gamma$-module $M$, we let $M^\Gamma = \{m \in M : \gamma \cdot m = m \ \forall\ \gamma \in \Gamma\}$ denote the set of elements of $M$ fixed by $\Gamma$.  Similarly, if $M$ is an $L$-module, we let $ M^L = \{m\in M : l\cdot m = 0 \hbox{ for all $l\in L$}\}$.  In case $M=\Hom_k(M_1, M_2)$ for two $L$-modules $M_1$, $M_2$, the $L$-module $M^L$ coincides with the $L$-module homomorphisms $M_1 \to M_2$, and we therefore sometimes also employ the notation $(\Hom_k(M_1, M_2))^L = \Hom_L(M_1, M_2)$.

\subsection*{Acknowledgements} The authors thank V.~Chari, G.~Fourier, S.~Kumar, and G.~Smith for useful discussions.  In particular, they thank V.~Chari for pointing out the reference \cite{FM} and S.~Kumar for formulating and proving Proposition~\ref{prop:kumar} and providing a more direct proof of \eqref{eq:dirprodH1} than their original argument.  The second author would also like to thank the Hausdorff Research Institute for Mathematics, the Institut de Math\'ematiques de Jussieu, and the D\'epartement de Math\'ematiques d'Orsay for their hospitality during his stays there, when some of the writing of the current paper took place.

%
\section{Equivariant map algebras and their irreducible representations} \label{sec:EMAs}
%

In this section, we review the definition of equivariant map algebras and the classification of their irreducible finite-dimensional representations given in \cite{NSS}.  We recall the standing assumptions of this paper: $X$ is an affine $k$-scheme with finitely generated coordinate algebra $k[X]=A$, $\g$ is a reductive Lie $k$-algebra, and $\Ga$ is a finite group acting on $X$ (equivalently, on $A$) and on $\g$ by automorphisms. Let $M(X,\g)$ be the Lie $k$-algebra of regular maps from $X$ to $\g$, which we will often identify with $\g \ot A$.  This is a Lie algebra under pointwise multiplication.  The \emph{equivariant map algebra} $\frM=M(X,\g)^\Ga$ is the subalgebra of $\Ga$-equivariant maps.  In other words, $\frM$ consists of the $\Ga$-fixed points of the canonical (diagonal) action of $\Ga$ on $M(X,\g)=\g \ot A$.

For $x\in X$, we let
\[
  \Ga_x = \{ \gamma \in \Ga : \gamma \cdot x=x\}
\]
be its isotropy group and put
\[
  \g^x= \{u\in \g: \gamma \cdot u = u \hbox{ for all } \gamma \in \Ga_x\}.
\]
Since $\g$ is reductive, it is known that all isotropy subalgebras $\g^x$ are reductive (\cite[VII, \S1.5, Prop.~14]{Bou75}).  We denote by $X_*$ the set of finite subsets
$\bx\subseteq X_\rat$ for which $\Ga \cdot x \cap \Ga \cdot x' =
\varnothing$ for distinct $x,x'\in \bx$. For $\bx\in X_*$ we define
$\g^\bx = \boxplus_{x\in \bx}\, \g^x$.
The evaluation map
\[
  \ev_\bx : \frM \to \g^\bx,\quad \ev_\bx (\al) =  (\al(x))_{x\in \bx},
\]
is a Lie algebra epimorphism \cite[Cor.~4.6]{NSS} and we set
\[
  \frK_\bx = \Ker \ev_\bx.
\]
To $\bx\in X_*$ and a set $\{\rho_x: x \in \bx\}$ of (nonzero) representations $\rh_x : \g^x \to \End_k V_x$, we associate the \emph{evaluation representation} $\ev_\bx(\rho_x)_{x\in \bx}$ of $\frM$, defined as the composition
\[ \textstyle
  \frM \xrightarrow{\ev_\bx} \g^\bx \xrightarrow{\bigotimes_{x \in \bx} \rho_x} \End_k \left( \bigotimes_{x \in \bx} V_x \right).
\]
If all $\rh_x, x\in \bx$, are irreducible finite-dimensional representations, then this is also an irreducible finite-dimensional representation of $\frM$, \cite[Prop.~4.9]{NSS}. In this paper, we will always implicitly assume that evaluation representations are finite-dimensional (i.e.~the $\rho_x$ are all finite-dimensional).  The \emph{support}
of an evaluation representation $V=\ev_\bx(\rho_x)_{x\in \bx}$, abbreviated $\Supp V$, is the union of all $\Gamma \cdot x$, $x \in \bx$, for which $\rh_x$ is not the one-dimensional trivial representation of $\g^x$.  In a slight abuse of terminology, we will sometimes refer to $V$ as both a representation of $\frM$ and of $\g^\bx$.

For $x\in X_\rat$, let $\mathcal{R}_x$ denote the set of isomorphism classes of irreducible finite-dimensional representations of $\g^x$, and put $\mathcal{R}_X=\bigsqcup_{x \in X_\rat} \mathcal{R}_x$. Then $\Gamma$ acts on $\mathcal{R}_X$ by
\[
  \Gamma \times \mathcal{R}_X \to \mathcal{R}_X,\quad (\gamma,[\rho]) \mapsto \gamma  \cdot [\rho] := [\rho \circ \gamma^{-1}] \in \mathcal{R}_{\gamma \cdot x},
\]
where $[\rho] \in \mathcal{R}_x$ denotes the isomorphism class of a
representation $\rho$ of $\g^x$. Let $\mathcal{E}$ denote the set of
finitely supported $\Gamma$-equivariant functions $\psi : X_\rat \to
\mathcal{R}_X$ such that $\psi(x) \in \mathcal{R}_x$. Here the
support $\Supp \psi$ of $\psi \in \mathcal{E}$ is the set of all
$x \in X_\rat$ for which $\psi(x) \ne 0$, where $0$ denotes the
isomorphism class of the trivial one-dimensional representation.

For isomorphic representations $\rho$ and $\rho'$ of $\g^x$, the
evaluation representations $\ev_x \rho$ and $\ev_x \rho'$ are
isomorphic. Therefore, for $[\rho] \in \mathcal{R}_x$, we can define
$\ev_x [\rho]$ to be the isomorphism class of $\ev_x \rho$, and this
is independent of the representative $\rho$. Similarly, for a finite
subset $\bx \subseteq X_\rat$ and representations $\rho_x$ of $\g^x$
for $x \in \bx$, we define $\ev_\bx ([\rho_x])_{x \in \bx}$ to be
the isomorphism class of $\ev_\bx (\rho_x)_{x \in \bx}$.

For $\psi \in \mathcal{E}$, we define $\ev_\psi = \ev_\bx
(\psi(x))_{x \in \bx}$ where $\bx \in X_*$ contains one element of
each $\Gamma$-orbit in $\Supp \psi$.  By
\cite[Lem.~4.12]{NSS}, $\ev_\psi$ is independent
of the choice of $\bx$.  If $\psi$ is the map that is identically 0
on $X$, we define $\ev_\psi$ to be the isomorphism class of the
trivial representation of $\frM$.  Thus $\psi \mapsto \ev_\psi$
defines a map $\mathcal{E} \to \scS$, where $\scS$ denotes the set
of isomorphism classes of irreducible finite-dimensional
representations of $\frM$.  This map is injective by \cite[Prop.~4.14]{NSS}.  In other words, $\mathcal{E}$ naturally enumerates the isomorphism classes of irreducible evaluation representations of $\frM$.  We say that an evaluation representation is a \emph{single orbit evaluation representation} if its isomorphism class is $\ev_\psi$ for some $\psi \in \mathcal{E}$ whose support is contained in a single $\Gamma$-orbit.

We recall that the one-dimensional representations of a Lie algebra $L$ can be identified with the elements of $L_\ab^* \cong \{ \la \in L^*: \la(L') = 0 \}$, where to such a $\la$ we associate the one-dimensional representation on $k\equiv k_\la$ defined by $l \cdot a = \la(l) a$ for $l\in L$ and $a\in k$.

\begin{prop}[{\cite[Th.~5.5]{NSS}}] \label{prop:irred-classification}
The map
\[
  \frM_\ab^* \times \mathcal{E} \to \scS,\quad
  (\lambda, \psi) \mapsto k_\lambda \otimes \ev_\psi,\quad
  \lambda \in \frM_\ab^*,\quad \psi \in \mathcal{E},
\]
is surjective.  In particular, all irreducible finite-dimensional
representations of $\frM$ are tensor products of an irreducible evaluation
representation and a one-dimensional representation.
\end{prop}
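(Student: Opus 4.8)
The plan is to peel off a one-dimensional twist by means of the general structure theory of finite-dimensional Lie algebras, thereby reducing to a representation whose image is a semisimple Lie algebra, and then to identify such a representation with an evaluation representation by recognizing the relevant quotient of $\frM$ as some $\g^\bx$. (That the displayed map is well defined, i.e.\ that $k_\lambda \ot \ev_\psi$ is irreducible, is routine: a one-dimensional twist of an irreducible module is irreducible, and $\ev_\psi$ is irreducible by \cite[Prop.~4.9]{NSS} recalled above. So only surjectivity needs proof.)

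\emph{Splitting off the factor $k_\lambda$.} Let $(V,\rho)$ be an irreducible finite-dimensional $\frM$-module, put $L = \rho(\frM)$, and let $\mathfrak r$ be the radical of $L$. By the standard refinement of Lie's theorem, $\mathfrak r$ acts on $V$ through a linear form $\lambda_V \in \mathfrak r^*$ vanishing on $[L,\mathfrak r]$. A Levi decomposition $L = \frs \oplus \mathfrak r$ gives $[L,L] = \frs \oplus [L,\mathfrak r]$, so $\mathfrak r$ surjects onto $L_\ab$ with kernel $[L,\mathfrak r]$; hence $\lambda_V$ descends to $L_\ab^*$ and, pulled back along $\frM_\ab \twoheadrightarrow L_\ab$, yields an element $\lambda \in \frM_\ab^*$. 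For $m \in \frM$ with image $\bar m \in \mathfrak r$ one computes that $m$ acts on $W := V \ot k_{-\lambda}$ by $m \cdot (v \ot 1) = \lambda_V(\bar m)\, v \ot 1 - \lambda(m)\, v \ot 1 = 0$; thus $\rho^{-1}(\mathfrak r)$ annihilates $W$. Since $W$ is again irreducible, it factors through the semisimple quotient $\frM \twoheadrightarrow L/\mathfrak r$, so the image of $\frM$ in $\End_k W$ is semisimple.

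\emph{Recognizing $W$ as an evaluation representation.} It suffices to show $W \cong \ev_\psi$ for some $\psi \in \mathcal{E}$, since then $V \cong k_\lambda \ot \ev_\psi$. The crucial point is that every finite-dimensional semisimple quotient $q \colon \frM \twoheadrightarrow \bar\frM$ factors through $\ev_\bx$ for some $\bx \in X_*$, equivalently $\Ker q \supseteq \frK_\bx$. When $\Ga$ is trivial this is elementary: a semisimple $\bar\frM$ kills the central ideal $\g_\ab \ot A$ of $\frM = \g \ot A$ and hence factors through $\g_\rss \ot A = \boxplus_i \g_i \ot A$; the ideals of this algebra are exactly the subspaces $\boxplus_i \g_i \ot J_i$ with $J_i \ideal A$, and $\boxplus_i \g_i \ot (A/J_i)$ is semisimple precisely when each $A/J_i$ is a reduced finite-dimensional $k$-algebra, i.e.\ $J_i = \bigcap_{x \in \bx_i} \fm_x$ for a finite $\bx_i \subseteq X_\rat$; with $\bx := \bigcup_i \bx_i$ one gets $\frK_\bx = \g \ot (\bigcap_{x \in \bx} \fm_x) \subseteq \Ker q$. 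For nontrivial $\Ga$ one carries out the analogous argument inside $(\g \ot A)^\Ga$: first one notes that the nilpotent ideal $(\g \ot \N)^\Ga$, with $\N$ the nilradical of $A$, is killed by $q$, so one may assume $A$ reduced; then one invokes the structure theory of the finite-codimensional ideals of $\frM$ (and replaces $\bx$ by a set meeting each $\Ga$-orbit at most once, so that $\bx \in X_*$). Granting this, $W$ is an irreducible finite-dimensional module over $\g^\bx = \boxplus_{x \in \bx} \g^x$; since $U(\g^\bx) = \bigotimes_{x \in \bx} U(\g^x)$, such a module is a tensor product $\bigotimes_{x \in \bx} W_x$ of irreducible finite-dimensional $\g^x$-modules $W_x$. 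Hence $W \cong \ev_\bx(\rho_x)_{x \in \bx} = \ev_\psi$, where $\psi \in \mathcal{E}$ is the equivariant function with $\psi(x) = [\rho_x]$ for $x \in \bx$ and $\psi = 0$ elsewhere, and therefore $V \cong k_\lambda \ot \ev_\psi$.

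\emph{The main obstacle.} The sole non-formal ingredient is the claim above for nontrivial $\Ga$: that a finite-dimensional semisimple quotient of $(\g \ot A)^\Ga$ factors through an evaluation map. This requires a good enough understanding of the lattice of finite-codimensional ideals of the invariant algebra to see that such a quotient is ``supported'' at finitely many rational points. Everything else is either the general structure theory of finite-dimensional Lie algebras (used both in the first step and in the tensor-product decomposition of modules over a direct sum of Lie algebras) or elementary commutative algebra.
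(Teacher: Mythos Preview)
The paper does not give its own proof of this proposition; it is quoted verbatim from \cite[Th.~5.5]{NSS} as background. So there is no in-paper argument to compare against, only the original source.

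That said, your outline is the natural strategy and is, in broad strokes, how the result is obtained in \cite{NSS}: twist by a one-dimensional character so that the image of $\frM$ becomes semisimple, then show that finite-dimensional semisimple quotients of $\frM$ factor through some $\ev_\bx$. Your first paragraph (peeling off $k_\lambda$) is correct: the invariance lemma gives that the radical of $\rho(\frM)$ acts by a single weight $\lambda_V$ vanishing on $[L,\mathfrak r]$, the Levi decomposition identifies $L_\ab$ with $\mathfrak r/[L,\mathfrak r]$, and the pullback to $\frM_\ab^*$ is well defined; the computation that $\rho^{-1}(\mathfrak r)$ kills $V\otimes k_{-\lambda}$ is clean.

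You have correctly located the only genuine gap. For nontrivial $\Gamma$, the assertion that every finite-dimensional semisimple quotient of $(\g\otimes A)^\Gamma$ factors through an evaluation map is precisely the substantive content of \cite{NSS}, and your sketch (``invoke the structure theory of the finite-codimensional ideals of $\frM$'') does not supply it. What is needed, after reducing to $A$ reduced via the nilpotent ideal $(\g\otimes\mathcal N)^\Gamma$, is to control the $A^\Gamma$-annihilator of $\frM/I$: one shows it is a radical ideal of finite codimension in $A^\Gamma$, hence an intersection of finitely many maximal ideals, and then lifts each of these to a $\Gamma$-orbit of rational points in $X$. This is where the $\Gamma$-equivariance and the finiteness of $\Gamma$ are actually used. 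Your trivial-$\Gamma$ sketch is essentially correct, though the claim that every ideal of $\g_i\otimes A$ has the form $\g_i\otimes J$ (for $\g_i$ simple) is itself a lemma worth citing rather than asserting.
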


\begin{rems} \label{rem:irred-classification}
\begin{enumerate}
  \item In \cite[Th.~5.5]{NSS}, a condition on when pairs $(\lambda,\psi)$ and $(\lambda',\psi')$ correspond to the same representation is given, thus obtaining an analogue of Proposition~\ref{prop:irred-classification} where the map is bijective.  However, we will not need the stronger result in the current paper.

  \item \label{rem-item:unique-decomp} By \cite[Cor.~5.4]{NSS}, every irreducible finite-dimensional representation of $\frM$ can be written as $V_\rss \otimes k_\lambda$ for $V_\rss$ an evaluation representation (unique up to isomorphism) factoring through some $\g^\bx_\rss$ and unique $\lambda \in \frM_\ab^*$.

  \item The results of \cite{NSS} apply for an arbitrary finite-dimensional Lie algebra $\g$.  However, in the current paper, we restrict our attention to the case where $\g$ is reductive.
\end{enumerate}
\end{rems}

\begin{example}[Untwisted map algebras]
When the group $\Gamma$ is trivial, $M(X,\g)$ is called an \emph{untwisted map algebra}, or \emph{generalized current algebra}.  These algebras arise also for a nontrivial group $\Ga$ acting trivially on $\g$ or on $X$. In the first case we have $M(X,\g)^\Ga \cong M(\Spec(A^\Ga), \g)$, and in the second $M(X,\g)^\Ga = M(X,\g^\Ga)$.
\end{example}

\begin{example}[Multiloop algebras] \label{eg:multiloop}
Fix positive integers $n, m_1, \dots, m_n$.  Let
\[
  \Gamma = \langle \gamma_1,\dots, \gamma_n : \gamma_i^{m_i}=1,\ \gamma_i \gamma_j = \gamma_j \gamma_i,\ \forall\ 1 \le i,j \le n \rangle \cong (\Z/m_1\Z) \times \dots \times (\Z/m_n\Z)
\]
and suppose that $\Gamma$ acts on a semisimple $\g$. Note that this is
equivalent to specifying commuting automorphisms $\sigma_i$,
$i=1,\dots,n$, of $\g$ such that $\sigma_i^{m_i}=\Id$. For $i =
1,\dots, n$, let $\xi_i$ be a primitive $m_i$-th root of unity.
Let $X=(k^\times)^n$ and define an action of $\Gamma$ on $X$ by
\[
  \gamma_i \cdot (z_1, \dots, z_n) = (z_1, \dots, z_{i-1}, \xi_i z_i, z_{i+1}, \dots, z_n).
\]
Then
\begin{equation} \label{eq:twisted-multiloop-def}
    M(\g,\sigma_1,\dots,\sigma_n,m_1,\dots,m_n) := M(X,\g)^\Gamma
\end{equation}
is the \emph{multiloop algebra} of $\g$ relative to $(\sigma_1,
\dots, \sigma_n)$ and $(m_1, \ldots, m_n)$.  In this case, all irreducible finite-dimensional representations are evaluation representations (see \cite[Cor.~6.1]{NSS} or \cite{lau:mult}).
\end{example}

\begin{example}[$\Ga$ of order $2$] \label{eg:order-two}
Let $\frM= M(X,\g)^\Ga$ be an equivariant map algebra with $\g$ simple and $\Ga=\{1,\si\}$ of order $2$, acting nontrivially on $\g$.  Thus we have $\Z/2\Z$-gradings on $\g$ and $A$, denoted $\g=\g_0 \oplus \g_1$ and $A=A_0 \oplus A_1$ with $\g_0 = \g^\Ga$ and $A_0 = A^\Ga$. Hence
\[
 \frM = (\g_0 \ot A_0) \oplus (\g_1 \ot A_1).
\]
We will use the following facts regarding the structure of $\g$, for which the reader is referred to \cite[Ch.~X, \S5]{Hel01} and \cite[Exercise~8.9]{kac}.
\begin{enumerate}
  \item \label{eg-item:order-two:a} We have $\g_0 = [\g_1, \g_1]$, $\g_1 = [\g_0, \g_1]$ and $\g_0$ acts faithfully on $\g_1$ (all of these claims are immediate from simplicity of $\g$). \lv{Details: $\frb = \{ x\in \g_0 : [x,\g_1]=0\}$ is an ideal of $\g_0$ and satisfies $[\frb, \g_1]=0$, so it is an ideal of $\g$. Since $\g_1 \ne 0$, $\frb=0$ follows.}

  \item \label{eg-item:order-two:b} The Lie algebra $\g_0$ is reductive, so $\g_0 = \g_{0,\rss} \oplus \g_{0, \ab}$, with $\dim \g_{0,\ab} \le 1$.

  \item \label{eg-item:order-two:c} Suppose $\dim \g_{0,\ab} = 1$. Then $\g_1 = V_1 \oplus V_{-1}$ is a direct sum of two irreducible dual $\g_0$-modules $V_1$ and $V_{-1}$ with $\g_{0,\ab}$ acting on $V_{\pm 1}$ by $\pm \rho$ for some $0 \ne \rho \in \g_{0,\ab}^*$.  In particular, $[\g_{0,\ab},\g_1]=\g_1$.  Moreover, also $\g_{0,\rss}$ acts irreducibly on $V_{\pm 1}$, \lv{If $\g=\lsl_2$ then $\g_{0,\rss} = 0$, but the $V_{\pm 1}$ are $1$-dimensional, so the $0$-algebra acts irreducibly on them. So $\g_{0,\rss}$ always acts irreducibly on $V_{\pm 1}$.} and we have: $\g_1^{\g_{0,\rss}} = 0 \iff \g \ne \lsl_2(k)$. \lv{As stated in \cite{kac}, the case with $\g_{0,\ab}\ne 0$ occurs precisely when $\g=V_1 \oplus \g_0 \oplus V_{-1}$ is a short $\Z$-grading. Thus $V=(V_1, V_{-1})$ is a simple Jordan pair. In case $\lsl_2(k)$ we therefore have $\dim \g_0 = 1$, so $\g_0=\g_{0,\ab}$ and therefore $\g_1^{\g_{0,\rss}} = \g_1^{0} = \g_1 \ne 0$. In case $\g\ne \lsl_2(k)$ one knows that $\g_{0,\rss} \ne 0$, $\dim V_{-1}= \dim V_1 > 1$. Since $\g_{0,\ab}$ acts by a scalar on $V_{\pm 1}$, the action of $\g_{0,\rss}$ on $V_{\pm 1}$ is irreducible, otherwise $\g_0$ would not act irreducibly. It is not irreducible in case $\g_=\lsl_2$ since then $V_{\pm 1}$ are one-dimensional.}

  \item \label{eg-item:order-two:sl2} If $\g=\lsl_2(k)$, then $\si$ acts by a Chevalley involution and $\g_0=\g_{0,\ab}\ne 0$. So $\g_1 = V_1 \oplus V_{-1}$ as in \eqref{eg-item:order-two:c}.

  \item \label{eg-item:order-two:d} If $\g_0$ is semisimple, the $\g_0$-module $\g_1$ is irreducible.
\end{enumerate}
In particular, \eqref{eg-item:order-two:a} and \eqref{eg-item:order-two:b} imply
\[
  \frM' = (\g_{0,\rss} \ot A_0) \oplus (\g_{0,\ab} \ot A_1^2) \oplus (\g_1 \ot A_1),\quad \frM_\ab \cong \g_{0,\ab} \ot (A_0/A_1^2).
\]
It is easy to see that the fixed point set $X_\rat^\Ga = \{ x\in X_\rat : \si \cdot x= x\}$ has the following description,
\begin{equation} \label{eq:order-two:1}
  X_\rat^\Ga = \{ x\in X_\rat: A_1 \subseteq \fm_x\} = \{ x\in X_\rat: A_1^n \subseteq \fm_x\} \quad \text{for any } n \in   \NN_+,
\end{equation}
where $\fm_x$ is the maximal ideal of $A$ corresponding to $x$. \lv{ If $x\in X_\rat^\Ga$ then for $a_1 \in A_1$ we have $a_1(x) = a_1 (\si \cdot x) = (\si \cdot a_1)(x) = - a_1(x)$, so $A_1 \subseteq \fm_x$, and
then also $A_1^n \subseteq \fm_x$. Conversely, if $A_1^n \subseteq \fm_x$, then for $a_1 \in A_1$ we have $0 = (a_1)^n(x)$ so $a_1(x) = 0$ and $A_1 \subseteq \fm_x$ follows, whence as above $a(x) = a(\si \cdot x)$ for all $a\in A$. Thus $\si(x) = x$, since $A$ separates points.} Hence, if $\g_{0,\ab} \ne 0$, then $\frM$ has nontrivial one-dimensional representations if and only if $A_1^2 \subsetneq A_0$, which in turn is equivalent to $\Ga$ acting on $X$ with fixed points.  These nontrivial one-dimensional representations are in general not evaluation representations, see \cite[Ex.~5.21]{NSS}.  However, for the generalized Onsager algebras, which are special cases of the example here and which we review next, it turns out that all one-dimensional representations are in fact evaluation representations, see \cite[Prop.~6.2]{NSS}.
\end{example}

\begin{example}[Generalized Onsager algebras] \label{eg:Onsager}
Let $X=k^\times = \Spec k[t^{\pm 1}]$, $\g$ be a simple Lie algebra,
and $\Gamma=\{1,\sigma\}$ be a group of order $2$. We suppose that
$\sigma$ acts on $\g$ by an automorphism of order $2$ and on $
k[t^{\pm 1}]$ by $\si \cdot t = t^{-1}$, inducing an action of
$\Gamma$ on $X$. We define the \emph{generalized Onsager algebra} to be the equivariant map algebra $M(k^\times,\g)^\Gamma$ associated to these
data.  The term ``generalized Onsager algebra'' was used in
\cite[Ex.~3.9]{NSS} in a more restrictive way ($\sigma$ was supposed
to be a Chevalley involution), while the algebra above was
considered in \cite[Ex.~3.10]{NSS} without a name. We have chosen
the new definition since all the results proven in \cite{NSS} and
here are true for the more general notion.

For $k=\C$ and $\si$ acting by a Chevalley involution, it was shown
in \cite{Roan91} that $M(X,\lsl_2)^\Gamma$ is isomorphic to the usual
\emph{Onsager algebra}.
\end{example}

We will return to the above examples in Section~\ref{sec:applications}, where we apply our general results on extensions and block decompositions.

%
\section{Extensions and Lie algebra cohomology}
\label{sec:extgeneral}
%

Our aim in the current paper is to determine extensions between irreducible finite-dimen\-sio\-nal representations of equivariant map algebras.  One of our main tools for computing such extensions will be Lie algebra cohomology.  In this section, we recall some basic facts about extensions between modules for Lie algebras and collect some results on Lie algebra cohomology that will be used in the sequel.  Throughout this section, $L$ is an arbitrary Lie algebra over $k$, not necessarily of finite dimension.

We will use the following easy and well-known lemmas.  The second is a straightforward consequence of Schur's Lemma.

\begin{lem}\label{lem:geniso}
Let $M, N$ and $P$ be finite-dimensional
$L$-modules. The following canonical vector space isomorphisms are
in fact $L$-module isomorphisms:
\begin{gather*}
  M \ot N \cong N \ot M, \quad (M \ot N)^* \cong M^* \ot N^*,\\
  N \cong N^{**},\quad \Hom_k(M,N) \cong M^* \ot N, \\
  \begin{split}
    \Hom_k(M\ot N,P) \cong \Hom_k(M, &N^* \ot P) \\
    &\cong \Hom_k(M \ot P^*, N^*) \cong M^* \ot \Hom_k(N,P).
  \end{split}
\end{gather*}
\end{lem}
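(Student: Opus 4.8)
The plan is to establish each of the claimed isomorphisms by exhibiting the standard vector-space map and checking that it intertwines the two $L$-actions; since all modules are finite-dimensional, these maps are all isomorphisms of vector spaces for dimension reasons or by elementary linear algebra, so the only real content is $L$-equivariance. First I would recall the $L$-module structures: on a tensor product $M \ot N$ one has $l \cdot (m \ot n) = (l \cdot m) \ot n + m \ot (l \cdot n)$; on the dual $M^*$ one has $(l \cdot f)(m) = -f(l \cdot m)$; and on $\Hom_k(M,N)$ one has $(l \cdot \varphi)(m) = l \cdot \varphi(m) - \varphi(l \cdot m)$. These are the derivation-type actions induced from the universal enveloping algebra via its Hopf structure, so in principle all the isomorphisms follow from general nonsense about modules over a cocommutative Hopf algebra; but I would instead give the short direct verifications.

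The key steps, in order: (1) The flip map $m \ot n \mapsto n \ot m$ is $L$-equivariant, immediate from the symmetry of the Leibniz formula. (2) The canonical pairing $M^* \ot N^* \to (M \ot N)^*$, $f \ot g \mapsto (m \ot n \mapsto f(m) g(n))$, is a bijection in finite dimensions; equivariance is a one-line computation using the sign in the dual action, where the two derivation terms reproduce the Leibniz rule on $M \ot N$ after the minus sign is distributed. (3) The evaluation map $N \to N^{**}$ is the standard natural isomorphism; equivariance amounts to the double negative $-(-f(l\cdot n))$ matching $l \cdot n$ paired against $f$. (4) The map $M^* \ot N \to \Hom_k(M,N)$, $f \ot n \mapsto (m \mapsto f(m) n)$, is a bijection in finite dimensions, and equivariance is the identity $(l\cdot f)(m)\, n + f(m)\, (l \cdot n) = l\cdot(f(m) n) - f(l \cdot m)\, n$, which is exactly the $\Hom$-action formula. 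Once (4) is in hand, the three displayed isomorphisms for $\Hom_k(M \ot N, P)$ follow formally by combining (1)--(4): write $\Hom_k(M \ot N, P) \cong (M \ot N)^* \ot P \cong M^* \ot N^* \ot P$, and then reassociate and reapply (4) and (2) to land in $\Hom_k(M, N^* \ot P)$, in $\Hom_k(M \ot P^*, N^*)$ (using $P \cong P^{**}$ from (3) together with (2)), and in $M^* \ot \Hom_k(N,P)$. Associativity and the coherence of these canonical maps let one do this without choosing bases.

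I do not expect a genuine obstacle here: every map involved is one of the textbook canonical isomorphisms, finite-dimensionality guarantees bijectivity, and $L$-equivariance is in each case a short unwinding of the definitions of the induced actions. The only point requiring a little care — and the closest thing to a "hard part" — is bookkeeping the signs in the dual-module action so that the composite isomorphisms in the second line are checked to be equivariant rather than merely equivariant up to sign; organizing the argument as "prove (1)--(4), then derive the rest formally" makes even this routine, since each elementary isomorphism has its sign checked exactly once. I would therefore present the proof as: verify (1)--(4) by the displayed computations, then note that the remaining isomorphisms are obtained by composing these together with the obvious associativity isomorphism of triple tensor products, which is manifestly $L$-equivariant.
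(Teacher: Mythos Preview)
Your proposal is correct. The paper actually states this lemma without proof, labeling it ``easy and well-known,'' so your explicit verification of $L$-equivariance for the basic isomorphisms (1)--(4) and the formal derivation of the remaining ones from these is more than what the paper provides.
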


\begin{lem}\label{lem:schur-tensor}
Let $M$ and $N$ be irreducible finite-dimensional $L$-modules where $L$ is an arbitrary Lie algebra. Then $\dim (M \ot N^*)^L \le 1$, and $\dim_k (M \ot N^*)^L = 1 \iff M \cong N$.
\end{lem}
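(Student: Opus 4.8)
The statement is Lemma~\ref{lem:schur-tensor}, asserting that for irreducible finite-dimensional $L$-modules $M$ and $N$ we have $\dim (M \ot N^*)^L \le 1$, with equality exactly when $M \cong N$. The plan is to translate the invariant space into a Hom-space and invoke Schur's Lemma. First I would use Lemma~\ref{lem:geniso}, specifically the isomorphisms $\Hom_k(N,M) \cong N^* \ot M \cong M \ot N^*$, which are $L$-module isomorphisms. Taking $L$-invariants on both sides and recalling the convention recorded in the Notation section that $(\Hom_k(N,M))^L = \Hom_L(N,M)$, we obtain a vector space isomorphism $(M \ot N^*)^L \cong \Hom_L(N,M)$.

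Now I would apply Schur's Lemma. Since $N$ and $M$ are irreducible, any nonzero $L$-module homomorphism $N \to M$ is an isomorphism, so $\Hom_L(N,M) = 0$ unless $N \cong M$. In the case $N \cong M$, fixing an isomorphism identifies $\Hom_L(N,M)$ with $\End_L(M)$, and since $k$ is algebraically closed (and $M$ is finite-dimensional) the usual form of Schur's Lemma gives $\End_L(M) = k \cdot \Id_M$, which is one-dimensional. Combining the two cases: $\dim \Hom_L(N,M) = 1$ if $M \cong N$ and $0$ otherwise. Transporting this back through the isomorphism $(M \ot N^*)^L \cong \Hom_L(N,M)$ yields both claims at once, namely $\dim (M \ot N^*)^L \le 1$ and $\dim (M \ot N^*)^L = 1 \iff M \cong N$.

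There is essentially no obstacle here: the only point requiring a word of care is that the classical Schur's Lemma (endomorphism ring reduces to scalars) needs $k$ algebraically closed and $M$ finite-dimensional, both of which hold by hypothesis and by the standing conventions of the paper. Everything else is a formal manipulation of the canonical isomorphisms already established in Lemma~\ref{lem:geniso}, so the proof is short.
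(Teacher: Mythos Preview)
Your proof is correct and matches the paper's approach: the paper does not write out a proof but simply remarks that the lemma is a straightforward consequence of Schur's Lemma, which is exactly the argument you give via the identification $(M \ot N^*)^L \cong \Hom_L(N,M)$ from Lemma~\ref{lem:geniso}.
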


Extensions of a Lie algebra $L$ can be described in terms of the first cohomology group $\rmH^1(L, V)$, for an $L$-module $V$, as we now describe.  We first recall the well-known fact, see for example \cite[Th.~7.4.7]{wei}, that
\begin{equation} \label{eq:H-Der}
  \rmH^1(L,V) \cong \Der(L,V) / \IDer(L,V),
\end{equation}
where
\[
  \Der(L, V) = \{ \partial \in \Hom_k(L,V) : \partial([l_1,l_2]) = l_1 \cdot \partial(l_2) - l_2 \cdot
  \partial(l_1) \ \forall\ l_1,l_2 \in L\}
\]
denotes the space of all derivations from $L$ to $V$ and
\[
  \IDer(L,V) = \{ \partial_v : v \in V\}, \text{ where } \partial_v(l) = l \cdot v\ \forall\ l \in L,
\]
is the subspace of inner derivations.\lv{
Recall that any $v\in V$ gives rise to an inner derivation $\pa_v$
defined by $\pa_v(l)= l \cdot v$ for $l\in L$.} The obvious maps
give rise to an exact sequence of $L$-modules:
\begin{equation} \label{eq:trivex}
  0 \to V^L \to V \to \IDer(L,V) \to 0.
\end{equation}
For example, if $V$ is a trivial $L$-module, then $\IDer(L,V) =
\{0\}$, $\Der(L,V) = \{ \pa\in \Hom_k(L, V): \pa(L')=0\}$ and
hence (\cite[Cor.~7.4.8]{wei})
\begin{equation} \label{eq:triv}
  \rmH^1(L, V) \cong \Hom_k(L_\ab, V) \qquad (\text{$V$ a trivial $L$-module}).
\end{equation}

The set $\Ext^1_L(V_1, V_2)$ of equivalence classes of extensions of $V_1$ by $V_2$ is in bijection with the first cohomology group of $L$ with coefficients in the $L$-module $\Hom_k(V_1, V_2)$ (see \cite[Expos\'{e}~4]{solie}, \cite[Ch.~1, \S4.5]{fuks}, or \cite[Exercise~7.4.5]{wei}):
\begin{equation} \label{eq:ext-homology}
  \Ext^1_L(V_1,V_2) \cong \rmH^1\big(L, \Hom_k(V_1,V_2)\big) \cong \rmH^1(L, V_1^* \otimes V_2),
\end{equation}
where in the second isomorphism we assume that $V_1$ and $V_2$ are finite-dimensional.
The first isomorphism is induced by assigning to the derivation $\pa : L \to \Hom_k(V_1, V_2)$ the extension $V_2 \hookrightarrow U \twoheadrightarrow V_1$, where $U=V_1 \oplus V_2$ with $L$-module structure given by $l \cdot (v_1 \oplus v_2) = (l \cdot v_1) \oplus (\pa(l)(v_1) + l \cdot v_2)$ for $l\in L$, $v_i \in V_i$, and where $V_2 \hookrightarrow U$ and $U \twoheadrightarrow V_1$ are the obvious maps.

Combining \eqref{eq:ext-homology} with Lemma~\ref{lem:geniso} yields the following.
\begin{cor} \label{cor:ext-moving-arguments}
For finite-dimensional $L$-modules $M,N,P$ we have
\begin{align*}
  \Ext_L^1(M\ot N, P) \cong \Ext^1_L(M, N^* \ot P )\cong \Ext^1_L(M \ot P^*, N^*).
\end{align*}\end{cor}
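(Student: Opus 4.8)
The plan is to deduce Corollary~\ref{cor:ext-moving-arguments} directly from the isomorphism \eqref{eq:ext-homology} together with the adjunction/duality identities packaged in Lemma~\ref{lem:geniso}. The point is that all three $\Ext$-groups in the statement are, by \eqref{eq:ext-homology}, identified with $\rmH^1(L, -)$ applied to a $\Hom_k$-space, and these $\Hom_k$-spaces are canonically isomorphic as $L$-modules. Since $\rmH^1(L,-)$ is a functor on $L$-modules, an $L$-module isomorphism of coefficient modules induces an isomorphism on $\rmH^1$, and hence on the $\Ext$-groups.

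\textbf{Key steps.} First I would apply \eqref{eq:ext-homology} to rewrite
\[
  \Ext_L^1(M \ot N, P) \cong \rmH^1\big(L, \Hom_k(M \ot N, P)\big),\quad
  \Ext_L^1(M, N^* \ot P) \cong \rmH^1\big(L, \Hom_k(M, N^* \ot P)\big),
\]
and similarly $\Ext_L^1(M \ot P^*, N^*) \cong \rmH^1\big(L, \Hom_k(M \ot P^*, N^*)\big)$; here $M \ot N$, $M \ot P^*$, etc.\ are finite-dimensional since $M, N, P$ are, so the second isomorphism in \eqref{eq:ext-homology} applies. Next I would invoke the chain of $L$-module isomorphisms from Lemma~\ref{lem:geniso},
\[
  \Hom_k(M \ot N, P) \cong \Hom_k(M, N^* \ot P) \cong \Hom_k(M \ot P^*, N^*),
\]
which is exactly the displayed line in that lemma. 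Finally, applying $\rmH^1(L, -)$ to this chain of isomorphisms — using that cohomology in a fixed degree sends $L$-module isomorphisms to vector space isomorphisms — yields the desired identifications, and composing with the identifications from the first step gives the statement.

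\textbf{Main obstacle.} Honestly there is no serious obstacle: the content of the corollary is entirely contained in the two cited results, and the only thing to be careful about is bookkeeping — namely that one is allowed to feed $\Hom_k(M \ot P^*, N^*)$ (rather than $M^* \ot \Hom_k(N,P)$ or some other avatar) into \eqref{eq:ext-homology}, which is fine because \eqref{eq:ext-homology} in the form $\Ext_L^1(V_1,V_2) \cong \rmH^1(L,\Hom_k(V_1,V_2))$ does not require $V_1, V_2$ to be irreducible or otherwise special, only finite-dimensional. So the "hard part" is merely to note that the functoriality of $\rmH^1(L,-)$ is what converts the $L$-module isomorphisms of Lemma~\ref{lem:geniso} into isomorphisms of $\Ext$-groups; everything else is a one-line substitution.
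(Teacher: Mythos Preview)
Your proposal is correct and follows exactly the paper's approach: the paper's proof is the single sentence ``Combining \eqref{eq:ext-homology} with Lemma~\ref{lem:geniso} yields the following,'' which is precisely the argument you spell out.
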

\lv{\begin{proof} This is immediate from Lemma~\ref{lem:geniso} and \eqref{eq:ext-homology}. \qed
\end{proof} }

\begin{lem} \label{lem:corone}
Suppose $L$ has a one-dimensional representation given by $\la \in L_\ab^*$.  Then
\begin{equation} \label{eq:cohom-one-dim-rep}
  \rmH^1(L, k_\la) \cong (\frK_\la / \frD_\la)^*,
\end{equation}
where
\[
  \frK_\la = \Ker \la \quad   \hbox{and} \quad \frD_\la  = \Span\, \{ \la(l)u - [l,u] : l\in L, u\in \frK_\la \} \subseteq \frK_\la.
\]
Furthermore, we have the following.
\begin{enumerate}
  \item \label{lem-item:coronea} If $\la=0$, then $\frD_\la = L' \ideal L=\frK_\la$ and so $\rmH^1(L, k_0) \cong L_\ab^*$.

  \item \label{lem-item:coroneb} If $\la \ne 0$, let $z\in L$ satisfy $\la(z) = 1$. Then
  \begin{equation} \label{eq:corone1}
    \frD_\la = \frK_\la' + \{ u - [z,u] : u\in \frK_\la\} \ideal L.
  \end{equation}
\end{enumerate}
\end{lem}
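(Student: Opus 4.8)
The plan is to start from the general description of $\rmH^1(L,k_\la)$ and identify the module $\IDer(L,k_\la)$ explicitly. Since $k_\la$ is one-dimensional, $\Hom_k(L,k_\la)$ may be identified with $L^*$, and $\Der(L,k_\la)$ with the cocycle condition $\pa([l_1,l_2]) = \la(l_1)\pa(l_2) - \la(l_2)\pa(l_1)$ for $\pa \in L^*$. Dually, one checks that $\IDer(L,k_\la)^\perp$ inside $L$ (the annihilator of all inner derivations) is exactly $\frK_\la$, and that $\Der(L,k_\la)$ annihilates $\frD_\la$; then \eqref{eq:H-Der} together with a dimension/duality argument on the finite-quotient level gives $\rmH^1(L,k_\la) \cong (\frK_\la/\frD_\la)^*$. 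A cleaner route, and the one I would actually write down, is to invoke \eqref{eq:ext-homology}: $\rmH^1(L,k_\la) \cong \Ext^1_L(k_{-\la}, k_0)$ up to the standard twist, reducing everything to extensions of the trivial module by a one-dimensional module; but the direct derivation computation is elementary enough that I would just present it. In either case, the point is that a derivation $\pa\colon L\to k_\la$ is determined by its restriction to $\frK_\la$ (after subtracting an inner derivation to normalize $\pa(z)$ when $\la\ne 0$), and that restriction must kill $\frD_\la$; conversely any functional on $\frK_\la/\frD_\la$ extends to such a $\pa$.

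For part \eqref{lem-item:coronea}, when $\la=0$ we have $\frK_\la = L$ and $\frD_\la = \Span\{-[l,u] : l,u\in L\} = L'$, so $\rmH^1(L,k_0)\cong (L/L')^* = L_\ab^*$, which is also immediate from \eqref{eq:triv}.

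For part \eqref{lem-item:coroneb}, fix $z$ with $\la(z)=1$. First I would show $\frK_\la' + \{u-[z,u] : u\in\frK_\la\}$ is contained in $\frD_\la$: elements $\la(u')u'' - [u',u'']$ with $u',u''\in\frK_\la$ have $\la(u')=0$, giving $-[u',u'']\in\frD_\la$, so $\frK_\la'\subseteq\frD_\la$; and taking $l=z$, $\la(z)u-[z,u] = u-[z,u]\in\frD_\la$. For the reverse inclusion, take a general spanning element $\la(l)u-[l,u]$ with $l\in L$, $u\in\frK_\la$, and write $l = \la(l)z + (l-\la(l)z)$ with $l-\la(l)z\in\frK_\la$; then $\la(l)u-[l,u] = \la(l)(u-[z,u]) - [\,l-\la(l)z,\,u\,]$, and the second term lies in $\frK_\la' = [\frK_\la,\frK_\la]$ since both entries are in $\frK_\la$. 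Hence the two sets coincide. Finally, to see this space is an ideal of $L$: it suffices to check $[z,\,\cdot\,]$ preserves it (since $\frK_\la$ together with $z$ spans $L$, and $[\frK_\la, \frD_\la] \subseteq \frK_\la' \subseteq \frD_\la$). For $u\in\frK_\la$ one has $[z,u-[z,u]] = [z,u] - [z,[z,u]]$; writing $[z,u] = ([z,u] - [z,[z,u]]) + \big(\text{correction}\big)$ and noting $[z,u]\in\frK_\la$ (as $\la([z,u]) = -\la([u,z]) = 0$ because $\la$ vanishes on $L'$), we get $[z,u-[z,u]] = ([z,u] - [z,[z,u]])$, which is of the form $w - [z,w]$ with $w=[z,u]\in\frK_\la$, hence in $\frD_\la$; and $[z,\frK_\la']\subseteq\frK_\la'$ by the Jacobi identity applied to the fact that $\frK_\la$ is an ideal of $L$ (it is the kernel of a Lie algebra homomorphism $\la$). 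This completes \eqref{eq:corone1}.

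The main obstacle I anticipate is bookkeeping in the ideal-closure argument for \eqref{lem-item:coroneb}: one must use repeatedly that $\frK_\la\ideal L$ and that $\la$ kills $L'$ (so $[z,u]\in\frK_\la$ whenever $u\in L$), and keep careful track of which corrections land in $\frK_\la'$ versus in the $\{u-[z,u]\}$ part. The cohomology-identification step itself is routine once one commits to the derivation picture.
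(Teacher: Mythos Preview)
Your proposal is correct and follows essentially the same approach as the paper: both use the description \eqref{eq:H-Der}, identify $\IDer(L,k_\la)=k\la$, normalize a derivation by subtracting an inner one so that it vanishes on $z$, and then observe that the remaining restriction to $\frK_\la$ is exactly a functional killing $\frD_\la$; the decomposition $l=\la(l)z+(l-\la(l)z)$ for part~\eqref{lem-item:coroneb} is identical to the paper's. The only cosmetic difference is that you verify the ideal property using the description $\frK_\la'+\{u-[z,u]\}$ and checking $[z,\cdot]$ and $[\frK_\la,\cdot]$ separately, whereas the paper computes $[l',\la(l)u-[l,u]]$ directly on the original spanning set---both arguments are equally short.
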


\begin{proof}
For $\la = 0$ we have $\frD_\la = L'$, so part \eqref{lem-item:coronea} follows from \eqref{eq:triv}.
We therefore assume $\la \ne 0$ and prove the result using \eqref{eq:H-Der}. First, one easily verifies that $\IDer(L,k_\lambda) = k \la$ and that a linear map $\delta : L \to k_\lambda$ is a derivation if and only if $\delta(\frD_\lambda)=0$.
\lv{
First suppose $\delta : L \to k_\lambda$ is a derivation.  Then for $l \in L$, $u \in \frK_\lambda$, we have
\[
  \delta(\lambda(l)u - [l,u]) = \lambda(l) \delta(u) - \lambda(l)\delta(u) + \lambda(u) \delta(l) = 0.
\]
Hence $\delta(\frD_\lambda)=0$.

Now suppose $\delta(\frD_\lambda)=0$.  Then, as above, we have
\[
  \delta([l,u]) = \lambda(l)\delta(u) = \lambda(l) \delta(u) - \lambda(u) \delta(l) \text{ for } l \in L,\ u \in \frK_\lambda.
\]
By antisymmetry and linearity, it remains to verify that
\[
  \delta([l,l]) = \lambda(l)\delta(l) - \lambda(l) \delta(l)
\]
for some $l \in L$ with $\lambda(l)=1$.  But this is clearly satisfied.  Hence $\delta$ is a derivation, completing the proof of our claim.
}

Now fix $z\in L$ with $\la(z)=1$.  Then any $\de \in \Der(L,k_\lambda)$ can be written in the form $\de= t \la + \de_0$ with $t\in k$ and $\de_0(z) = 0$, and we can identify $\de_0$ with the restriction of $\delta$ to $\frK_\la$.  Equation~\eqref{eq:cohom-one-dim-rep} now follows from \eqref{eq:H-Der}.

For the proof of \eqref{lem-item:coroneb}, note that any $l \in L$ can be written in the form $l=tz + y$ with $t\in k$ and $y\in \frK_\la$.  Then, for $u \in \frK_\lambda$, we have $\la(l)u -[l,u] = (tu-[z,tu]) - [y,u]$, and so $\frD_\la $ has the form claimed in \eqref{eq:corone1}.  It is an ideal because, for $l,l'\in L$ and $u \in \frK_\lambda$, we have
\[
  [l',\la(l) u -[l,u]] = (\la(l)[l',u] -[l,[l',u]]) -[[l',l],u]\in \frD_\la,
\]
since $\frK_\la \ideal L$, and therefore $[l',u]\in \frK_\la$, and since $[[l',l],u] \in \frK_\la' \subseteq \frD_\la$. 
\lv{Proof using Proposition ~\ref{lem:codimone} below:\\
 Let $K = \Ker \lambda$. Then $K$ is an ideal of
codimension one and $(k_\lambda)^L = 0$. Hence, by
Proposition~\ref{lem:codimone}, it suffices to show $\rmH^1(K,
k_\la)^{L/K} \cong (K/L')^*$. As a $K$-module, $k_\la$ is
trivial. By \eqref{eq:triv}, we therefore have $\rmH^1(K,
k_\la)^{L/K} \cong \Hom_k(K/K', k_\la)^{L/K}$. The action of
$\bar l\in L/K$ on $\vphi \in \Hom_k(K/K', k_\la)^{L/K}$ is given
by $(\bar l \cdot \vphi)(\bar u) = \la(l)\bar u -
\vphi(\overline{[l,u]})$ for $l \in L$, $u \in K$. Hence $\bar l
\cdot \vphi = 0 $ for all $l\in L \iff \vphi(\frD_\la)=0$. } \qed
\end{proof}

With regards to extensions of one-dimensional modules by one-dimensional modules,
\[
  0 \to k_\mu  \to V \to k_\la \to 0,
\]
Lemma~\ref{lem:corone} says
\begin{equation} \label{eq:1dim-exts0}
  \Ext_L^1(k_\la, k_\mu) \cong (\frK_{\mu-\lambda}/\frD_{\mu - \lambda})^*.
\end{equation}

\begin{cor} \label{cor:abelian-1d-exts}
If $L$ is an abelian Lie algebra and $k_\lambda, k_\mu$ are two one-dimensional representations, then
\[
  \Ext^1_L (k_\lambda,k_\mu) =
  \begin{cases}
    0 & \text{if } \lambda \ne \mu, \\
    L^* & \text{if } \lambda = \mu.
  \end{cases}
\]
\end{cor}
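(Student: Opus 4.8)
The plan is to derive Corollary~\ref{cor:abelian-1d-exts} directly from the formula \eqref{eq:1dim-exts0}, namely $\Ext^1_L(k_\la, k_\mu) \cong (\frK_{\mu-\la}/\frD_{\mu-\la})^*$, by analyzing the two ideals $\frK_\nu = \Ker \nu$ and $\frD_\nu$ in the case where $L$ is abelian and $\nu = \mu - \la$. The key observation is that when $L$ is abelian, all brackets $[l,u]$ vanish, so the spanning set $\{\nu(l)u - [l,u] : l \in L,\ u \in \frK_\nu\}$ collapses to $\{\nu(l) u : l \in L,\ u \in \frK_\nu\}$.

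First I would treat the case $\la = \mu$, i.e.\ $\nu = 0$. Here $\frK_0 = L$ and, by Lemma~\ref{lem:corone}\eqref{lem-item:coronea} (or directly, since $L' = 0$ for abelian $L$), $\frD_0 = L' = 0$. Hence $\frK_0/\frD_0 = L$ and \eqref{eq:1dim-exts0} gives $\Ext^1_L(k_\la, k_\la) \cong L^*$, as claimed. Alternatively this is just the special case $\la = 0$ of Lemma~\ref{lem:corone}\eqref{lem-item:coronea} combined with the fact that $L_\ab = L$ when $L$ is abelian.

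Next I would treat the case $\la \ne \mu$, so $\nu = \mu - \la \ne 0$. Pick $z \in L$ with $\nu(z) = 1$. Using the abelian simplification, $\frD_\nu = \Span\{\nu(l) u : l \in L,\ u \in \frK_\nu\}$; taking $l = z$ shows $u = \nu(z) u \in \frD_\nu$ for every $u \in \frK_\nu$, so $\frD_\nu = \frK_\nu$. (Equivalently, one can invoke \eqref{eq:corone1}: since $L$ is abelian, $\frK_\nu' = 0$ and $\{u - [z,u] : u \in \frK_\nu\} = \frK_\nu$, so again $\frD_\nu = \frK_\nu$.) Therefore $\frK_\nu/\frD_\nu = 0$, and \eqref{eq:1dim-exts0} yields $\Ext^1_L(k_\la, k_\mu) = 0$.

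There is essentially no obstacle here; the corollary is an immediate specialization of Lemma~\ref{lem:corone}. The only point requiring a moment's care is making sure that in the nonzero case one genuinely obtains $\frD_\nu = \frK_\nu$ (and not merely a proper subspace), which is why exhibiting the single element $z$ with $\nu(z) = 1$ and noting $\nu(z) u = u$ is the crux of the argument.
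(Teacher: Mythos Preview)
Your proof is correct and follows essentially the same approach as the paper: both cases are reduced to \eqref{eq:1dim-exts0} and Lemma~\ref{lem:corone}, with the case $\lambda=\mu$ handled by part~\eqref{lem-item:coronea} and the case $\lambda\ne\mu$ by observing (via \eqref{eq:corone1} or directly) that $\frD_{\mu-\lambda}=\frK_{\mu-\lambda}$ when $L$ is abelian.
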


\begin{proof}
If $\lambda \ne \mu$, it follows easily from \eqref{eq:corone1} that $\frD_{\mu-\lambda} = \frK_{\mu-\lambda}$ and the result is a consequence of~\eqref{eq:1dim-exts0}.  If $\lambda = \mu$, the result is simply Lemma~\ref{lem:corone}\eqref{lem-item:coronea}. \qed
\end{proof}

To calculate some other cohomology groups of interest here, we will
use the exact sequence of low-degree terms of the Hochschild-Serre
spectral sequence (\cite[Th.~6]{hoch-serre}, see also
\cite[p.~233]{wei}):
\begin{equation} \label{hose}
  0 \to \rmH^1(L/K, V^K) \xrightarrow{\inf} \rmH^1(L,V)
  \xrightarrow{\res} \rmH^1(K,V)^{L/K}
  \xrightarrow{t} \rmH^2(L/K, V^K) \xrightarrow{\inf} \rmH^2(L,V)
\end{equation}
whose ingredients we now explain. In this exact sequence $K
\ideal L$ is an ideal of $L$, $V$ is an $L$-module and $V^K $
is considered as $L/K$-module with the induced action. The inflation
map $\inf$ is induced by mapping a derivation $\pa : L/K \to V^K$ to
$\iota \circ \pa \circ \pi$, where $\pi : L \twoheadrightarrow L/K$ is the canonical quotient
map and $\iota : V^K \hookrightarrow V$ is the injection.  The map $\res$ is given by restriction, and the transgression
map $t$ is induced by the differential defining cohomology. The Lie
algebra $L$ acts on $\Der(K, V)$ in the obvious way, such that
$\IDer(K,V)$ is an $L$-submodule.\lv{Namely, $(l \cdot d)(x) = l
\cdot d(x) - d( [l, x])$ for $l\in L, d\in \Der(K,V)$ and $x\in
K$. The subspace $\IDer(K,V)$ is an $L$-submodule since $l\cdot
\pa_v = \pa_{l \cdot v}$ for $\pa_v\in \IDer(K,V)$.} Hence $L$ acts
on the quotient $\Der(K,V)/ \IDer(K,V)= \rmH^1(K,V)$. The action
of $K$ on $\rmH^1(K,V)$ is trivial, so that the action of $L$
factors through $L/K$. \lv{The formula $x \cdot d = \pa_{d(x)}$ for
$x\in K$ and $d\in \Der(K, V)$ shows that $K$ acts trivially on
$\rmH^1(K,V)$. Hence $L/K$ acts on $\rmH^1(K,V)$.}

\begin{prop}\label{prop:feiginprop}
Let $\rho : L \to \End_k V$ be a finite-dimensional representation of
$L$, and let $K \subseteq \Ker \rho$ be an ideal such that $\frl=L/K$
is finite-dimensional reductive.
\begin{enumerate}
  \item \label{prop-item:feigenprop:completely-reducible} Suppose that either $\frl$ is semisimple, or $\rho$ is completely
    reducible with $\rho(z)$ invertible for some $z\in \frl_{\rm ab}$.
    Then
    \begin{equation} \label{feigin1}
     \rmH^1(L, V) \cong \Hom_\frl(K_\ab, V),
    \end{equation}
    the isomorphism being induced by restricting a derivation $\pa : L
    \to V$ to $K$.

  \item \label{prop-item:feigenprop:gab-zero-action} If $\frl_{\rm ab}\cdot V=0$, we have
  \begin{equation} \label{eq:redH1}
    \rmH^1(\frl, V) \cong \Hom_k(\frl_{\rm ab}, V^{\frl_\rss}),
  \end{equation}
  induced by restriction, and an exact sequence
  \begin{equation} \label{eq:redH2}
    0 \to \Hom_k(\frl_{\rm ab}, V^{\frl_\rss}) \xrightarrow{\inf} \rmH^1(L,V)
    \xrightarrow{\res} \Hom_\frl(K_\ab,V) \xrightarrow{t}
    \rmH^2(\frl, V) \to \cdots
  \end{equation}
\end{enumerate}
\end{prop}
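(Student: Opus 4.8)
The plan is to deduce both parts from the Hochschild--Serre exact sequence~\eqref{hose} applied to the ideal $K \ideal L$, together with the low-degree cohomology of a reductive Lie algebra. First I would recall the classical vanishing statement for reductive Lie algebras: if $\frl$ is finite-dimensional reductive and $W$ is a finite-dimensional $\frl$-module, then $\rmH^1(\frl, W) \cong \Hom_k(\frl_\ab, W^{\frl_\rss})$ and, moreover, $\rmH^1(\frl,W) = 0$ whenever the $\frl_\ab$-action on $W$ has no nonzero invariants on the $\frl_\rss$-isotypic trivial component --- concretely, $\rmH^1(\frl, W) = 0$ if $\frl$ is semisimple (Whitehead's lemma), and more generally $\rmH^1(\frl, W) = 0$ if $W$ is completely reducible and $\rho(z)$ is invertible for some $z \in \frl_\ab$, since then $W^{\frl} = 0$ and any derivation $\frl \to W$ is inner (one solves $\pa(l) = l\cdot v$ using $v = \rho(z)^{-1}\pa(z)$, noting $\pa(\frl_\rss) = 0$ by semisimplicity and that $\pa$ is $\frl_\ab$-equivariant). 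The same argument, run on $\rmH^2$, shows $\rmH^2(\frl,W) = 0$ under the semisimplicity hypothesis (Whitehead), though for the general reductive case $\rmH^2$ need not vanish, which is why~\eqref{eq:redH2} is only an exact sequence.

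For part~\eqref{prop-item:feigenprop:completely-reducible}, I apply~\eqref{hose} with $\frl = L/K$ and the given $L$-module $V$. Since $K \subseteq \Ker \rho$, the action of $K$ on $V$ is trivial, so $V^K = V$ and the $L/K = \frl$-module structure on $V^K$ is just $\rho$ viewed as an $\frl$-module. The first term becomes $\rmH^1(\frl, V)$, which vanishes under either hypothesis by the discussion above; the last term $\rmH^2(\frl, V^K) = \rmH^2(\frl, V)$ vanishes when $\frl$ is semisimple, and when $\rho$ is completely reducible with $\rho(z)$ invertible one argues $\rmH^2(\frl,V) = 0$ similarly (on a completely reducible module with an invertible central element the higher cohomology vanishes by the same averaging/contracting-homotopy argument using $\rho(z)^{-1}$). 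Hence~\eqref{hose} collapses to an isomorphism
\[
  \res : \rmH^1(L,V) \xrightarrow{\ \sim\ } \rmH^1(K,V)^{L/K}.
\]
Since $K$ acts trivially on $V$, equation~\eqref{eq:triv} gives $\rmH^1(K,V) \cong \Hom_k(K_\ab, V)$, and taking $L/K$-invariants yields $\rmH^1(K,V)^{L/K} \cong \Hom_k(K_\ab,V)^{\frl} = \Hom_\frl(K_\ab, V)$, where the $\frl$-action on $K_\ab$ is the one induced by the adjoint action of $L$ on $K$. Tracing through the definition of $\res$ in~\eqref{hose} confirms the isomorphism is induced by restricting a derivation $\pa : L \to V$ to $K$, as claimed in~\eqref{feigin1}.

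For part~\eqref{prop-item:feigenprop:gab-zero-action}, first~\eqref{eq:redH1} is exactly the reductive-case computation $\rmH^1(\frl, V) \cong \Hom_k(\frl_\ab, V^{\frl_\rss})$ recalled above, specialized to the hypothesis $\frl_\ab \cdot V = 0$ (which is what makes $V^{\frl_\rss}$ a trivial $\frl_\ab$-module so that $\Hom_k$ rather than $\Hom_\frl$ appears). Then I again apply~\eqref{hose} with the same $K \ideal L$ and $V$: now the leftmost term $\rmH^1(L/K, V^K) = \rmH^1(\frl, V)$ is identified with $\Hom_k(\frl_\ab, V^{\frl_\rss})$ via~\eqref{eq:redH1}, the term $\rmH^1(K,V)^{L/K}$ is identified with $\Hom_\frl(K_\ab, V)$ exactly as in part~(a), and the sequence~\eqref{hose} becomes precisely~\eqref{eq:redH2}, with the last displayed arrow being $\inf : \rmH^2(\frl, V) \to \rmH^2(L,V)$ (continued by further terms, hence the $\cdots$). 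I expect the main obstacle to be purely expository: carefully checking that the $L/K$-action on $\rmH^1(K,V)$ appearing in~\eqref{hose}, when transported through $\rmH^1(K,V) \cong \Hom_k(K_\ab,V)$, is the expected action induced by $\ad : L \to \Der(K)$ on the source and $\rho$ on the target, so that its invariants really are $\Hom_\frl(K_\ab,V)$ --- this is a standard but slightly delicate identification. Everything else is an assembly of the reductive vanishing theorems with the five-term exact sequence.
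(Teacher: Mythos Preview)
Your proposal is correct and takes essentially the same approach as the paper: both parts come from the Hochschild--Serre sequence~\eqref{hose} for $K \ideal L$, with the vanishing $\rmH^1(\frl,V) = \rmH^2(\frl,V) = 0$ in part~(a) (which the paper simply cites to Whitehead's lemmas and a theorem of Hochschild--Serre rather than sketching, as you do) and the direct computation of $\rmH^1(\frl,V)$ in part~(b). One small correction: the general formula $\rmH^1(\frl,W) \cong \Hom_k(\frl_\ab, W^{\frl_\rss})$ you state in your opening for arbitrary $W$ is false without the hypothesis $\frl_\ab \cdot W^{\frl_\rss} = 0$ (one must still quotient by the inner derivations of $\frl_\ab$ on $W^{\frl_\rss}$), but since you only invoke it in part~(b) where $\frl_\ab \cdot V = 0$ is assumed, this does not affect the argument.
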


\begin{proof}
\eqref{prop-item:feigenprop:completely-reducible}  Since $V^K=V$, hence $\rmH^1(K,V) \cong \Hom_k(K_\ab, V)$ by \eqref{eq:triv}, the claim is immediate from the Hochschild-Serre spectral sequence \eqref{hose}
  \lv{$$ \xymatrix@C=20pt{
  0 \ar[r] & \rmH^1(\frl, V) \ar[r]^>>>>{\inf} & \rmH^1(L,V)
  \ar[r]^>>>>{\res} &
  \rmH^1(K,V)^{\frl} \ar[r]^d & \rmH^2(\frl, V) \ar[r] &\cdots }
  $$}
as soon as $\rmH^1(\frl, V) = 0 = \rmH^2(\frl, V)$. That these last two equations hold if $\frl$ semisimple, is the assertion of Whitehead's Lemmas (see for example \cite[Cor.~7.8.10 and Cor.~7.8.12]{wei}). But it is also known that they hold in the case that $\rho$ is completely reducible and $\rho(z)$ is invertible for some $z \in \frl_\ab$ (\cite[Th.~10]{hoch-serre}, or see \cite[\S3, Exercise~12j]{Bou:lie}). \lv{We apply the quoted exercise, second part, with $\frl=\frl$ and $\frh=k z$. Since $\rmH^1(\frl,V)$ is exact in $V$, we can assume that $V$ is irreducible. }

\eqref{prop-item:feigenprop:gab-zero-action} To prove \eqref{eq:redH1} we use $\rmH^1(\frl, V) \cong \Der(\frl, V) /\IDer(\frl, V)$ by \eqref{eq:H-Der}.  For a linear map $\pa: \frl \to V$, let $\pa_\rss$ and $\pa_\ab$ denote the restriction to $\frl_\rss$ and $\frl_\ab$ respectively.  Since $\frl_\ab \cdot V =0$, $\pa$ is a derivation if and only if $\pa_\rss$ is a derivation and $\pa_\ab \in \Hom_k (\frl_\ab, V^{\frl_\rss})$.  Hence, $\pa \mapsto \pa_\ab$ is a well-defined linear map $\Der(\frl, V) \to \Hom_k(\frl_\ab, V^{\frl_\rss})$. It is surjective since any linear map $f: \frl_\ab \to V^{\frl_\rss}$ extends to a derivation $\pa : \frl \to V$ with $\pa_\rss = 0$. Its kernel is $\IDer(\frl,V)$ because for $\pa \in \Der(\frl,V)$ we have $\pa_\ab =0 \iff \pa = \pa_\rss \in \Der(\frl_\rss, V) = \IDer(\frl_\rss, V)$ and because the map $\IDer(\frl,V) \to \IDer(\frl_\rss,V)$, $\pa \mapsto \pa_\rss$, is an isomorphism. The exact sequence \eqref{eq:redH2} is the Hochschild-Serre sequence \eqref{hose} for $(L,K)$, using $K \cdot V= 0$, the isomorphism \eqref{eq:redH1}, and $\rmH^1(K,V)^\frl \cong \Hom_\frl(K_\ab,V)$ by \eqref{eq:triv}. \qed
\end{proof}

We conclude this section with a discussion of the extensions for Lie algebras that can be decomposed as direct sums.

\begin{prop} \label{prop:ext-direct-sums}
Let $L_1$, $L_2$ be Lie algebras. We denote by $\scF_1, \scF_2$ and $\scF$ the category of finite-dimensional representations of $L_1$, $L_2$ and $L= L_1 \boxplus L_2$ respectively.
\begin{enumerate}
  \item \label{prop-item:direct-rep} Every module $V$ in $\scF$ is a tensor product $V= V_1 \ot V_2$, where $V_i$, $i=1,2$, are modules in $\scF_i$, uniquely determined up to isomorphism. The module $V$ is irreducible if and only if the $V_i$ are irreducible.

  \item \label{prop-item:kuenneth} Let $U_i, V_i$ be modules in $\scF_i$ for $i=1,2$. Then
    \begin{align*}
      \Ext^1_L (U_1 \ot U_2, V_1 \ot V_2) &\cong \big( (U_1^* \ot V_1)^{L_1} \ot \Ext_{L_2}(U_2, V_2) \big) \\ &\qquad \oplus \big( \Ext^1_{L_1} (U_1, V_1) \ot (U_2^* \ot V_2)^{L_2}\big).
    \end{align*}
    In particular, if $U_i, V_i$, $i=1,2$, are irreducible, then
    \begin{align*}
      \Ext^1_L (U_1 &\otimes U_2, V_1 \otimes V_2) \\
      &\cong
      \begin{cases}
        0 & \text{if } U_1 \not \cong V_1,\ U_2 \not \cong V_2, \\
        \Ext^1_{L_2}(U_2,V_2) & \text{if } U_1 \cong V_1,\ U_2 \not \cong V_2, \\
        \Ext^1_{L_1}(U_1,V_1) & \text{if } U_1 \not \cong V_1,\ U_2 \cong V_2, \\
        \Ext^1_{L_1}(U_1,V_1) \oplus \Ext^1_{L_2}(U_2,V_2) & \text{if }
              U_1 \cong V_1,\ U_2 \cong V_2.
      \end{cases}
    \end{align*}
\end{enumerate}
\end{prop}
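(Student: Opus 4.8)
The plan is to prove part~\eqref{prop-item:direct-rep} first and then deduce part~\eqref{prop-item:kuenneth} via a Künneth-type argument for Lie algebra cohomology, using the identification $\Ext^1_L(U,V) \cong \rmH^1(L, \Hom_k(U,V))$ from \eqref{eq:ext-homology}. For \eqref{prop-item:direct-rep}, since $L = L_1 \boxplus L_2$ means $[L_1, L_2] = 0$ inside $L$, a finite-dimensional $L$-module $V$ restricts to modules over each $L_i$, and the two actions commute. For the decomposition $V \cong V_1 \ot V_2$ one argues in stages: first treat the irreducible case, where one can invoke a standard Schur-type argument (or cite \cite[Prop.~4.9]{NSS} / the classification results already quoted for $\g^\bx = \boxplus_{x \in \bx} \g^x$), and then the general case follows because the category of finite-dimensional representations of $L_i$ with commuting actions is equivalent to that of $L_1 \ot U(L_2)$-modules; concretely $V$ is a module over $U(L) \cong U(L_1) \ot U(L_2)$, and every such module of finite dimension over an algebraically closed field decomposes as claimed. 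The uniqueness up to isomorphism and the irreducibility criterion are then standard. I would keep this part brief, citing \cite{NSS} where possible.

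For part~\eqref{prop-item:kuenneth}, the key computation is
\[
  \rmH^1\big(L_1 \boxplus L_2,\, W_1 \ot W_2\big) \cong \big(\rmH^1(L_1, W_1) \ot W_2^{L_2}\big) \oplus \big(W_1^{L_1} \ot \rmH^1(L_2, W_2)\big)
\]
for finite-dimensional $L_i$-modules $W_i$, which is precisely the degree-$1$ case of the Künneth formula for Lie algebra cohomology (cf.\ \cite[Ch.~1, \S4]{fuks} or the chain-level Künneth theorem, since the Chevalley--Eilenberg complex of $L_1 \boxplus L_2$ is the tensor product of those of $L_1$ and $L_2$). Applying this with $W_i = U_i^* \ot V_i = \Hom_k(U_i, V_i)$ and using $\Hom_k(U_1 \ot U_2, V_1 \ot V_2) \cong \Hom_k(U_1,V_1) \ot \Hom_k(U_2,V_2) \cong (U_1^* \ot V_1) \ot (U_2^* \ot V_2)$ from Lemma~\ref{lem:geniso}, together with $(U_i^* \ot V_i)^{L_i} = \Hom_{L_i}(U_i, V_i)$, gives the stated formula. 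The "in particular" statement then follows immediately by feeding in Lemma~\ref{lem:schur-tensor}: $(U_i^* \ot V_i)^{L_i}$ is $0$ unless $U_i \cong V_i$, in which case it is one-dimensional and the corresponding tensor factor disappears, leaving the four cases.

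Alternatively — and this is perhaps cleaner to write since it stays within the tools already developed in the excerpt — one can avoid quoting a Künneth theorem by applying the Hochschild--Serre sequence \eqref{hose} to the ideal $K = L_2 \ideal L$ with $L/K \cong L_1$. Here one needs the two low-degree pieces: $\rmH^1(L_1, W^{L_2})$ and $\rmH^1(L_2, W)^{L_1}$ with $W = W_1 \ot W_2$. Since $L_1$ acts trivially on $W_2$ and $L_2$ trivially on $W_1$, one gets $W^{L_2} = W_1 \ot W_2^{L_2}$ and $\rmH^1(L_2, W) = W_1 \ot \rmH^1(L_2, W_2)$ with its natural $L_1$-action, so $\rmH^1(L_2,W)^{L_1} = W_1^{L_1} \ot \rmH^1(L_2, W_2)$. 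The remaining task is to show the transgression $t$ vanishes and the inflation into $\rmH^1(L,V)$ behaves well enough to splice the two pieces into a direct sum; this is where a splitting is needed — one can use the projection $L \twoheadrightarrow L_2$ (a retraction of the inclusion $K \hookrightarrow L$), which induces a section of $\res$ and forces $t = 0$ and the sequence to split canonically.

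The main obstacle I anticipate is the bookkeeping in the second approach: getting the splitting of \eqref{hose} and the identification of the restriction/inflation maps exactly right, and checking that the resulting direct-sum decomposition is the canonical one matching the tensor factors (as opposed to merely an abstract isomorphism). The Künneth route sidesteps this but pushes the work onto justifying that the Chevalley--Eilenberg complex of $L_1 \boxplus L_2$ is the tensor product complex and that the Künneth maps are the natural ones — routine but needing a careful citation. Either way, part~\eqref{prop-item:direct-rep} is essentially formal, and the content is entirely in the degree-$1$ Künneth identity for $\rmH^1$.
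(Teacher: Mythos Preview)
Your main route for part~\eqref{prop-item:kuenneth} is essentially the paper's: reduce to $\rmH^1(L_1 \boxplus L_2, W_1 \ot W_2)$ via \eqref{eq:ext-homology} with $W_i = U_i^* \ot V_i$, establish the degree-$1$ K\"unneth identity, and then read off the four cases using Lemma~\ref{lem:schur-tensor}. The one technical difference is in how the K\"unneth identity is proved. You propose to invoke K\"unneth directly on the Chevalley--Eilenberg \emph{cochain} complexes; the paper instead (following S.~Kumar) passes to \emph{homology} via the duality $\rmH^p(K,V) \cong \big(\rmH_p(K,V^*)\big)^*$ for finite-dimensional $V$, applies the ordinary K\"unneth formula $\bigoplus_{p+q=r} \rmH_p(L_1,M_1) \ot \rmH_q(L_2,M_2) \cong \rmH_r(L, M_1 \ot M_2)$, and dualizes back. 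The advantage of the homology detour is that it avoids the issue you flagged only implicitly: when the $L_i$ are infinite-dimensional, $\Hom_k(\Lambda^\bullet L_1 \ot \Lambda^\bullet L_2, W_1 \ot W_2)$ need not split as a tensor product of cochain complexes, so the cochain-level K\"unneth map requires care, whereas the chain complexes $\Lambda^\bullet L_i \ot M_i$ tensor cleanly over a field. Your Hochschild--Serre alternative with $K = L_2$ and the retraction $L \twoheadrightarrow L_2$ forcing $t = 0$ is a genuinely different and perfectly valid route that the paper does not take; it has the virtue of staying entirely within the tools of Section~\ref{sec:extgeneral}.

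One caution on part~\eqref{prop-item:direct-rep}: your sketch for the non-irreducible case (``every finite-dimensional $U(L_1) \ot U(L_2)$-module decomposes as claimed'') is not correct as stated --- a direct sum of two inequivalent exterior tensor products is generally not itself an exterior tensor product. The paper sidesteps this by simply citing \cite[Prop.~1.1]{NSS}; in practice only the irreducible case is ever used, so you should do the same rather than attempt to argue the general case.
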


We note that the formula in \eqref{prop-item:kuenneth} is mentioned
in \cite[Prop.~2]{CFK} for finite-dimensional Lie algebras and attributed to S.~Kumar.

\begin{proof}
Part \eqref{prop-item:direct-rep} is well known,  see for example \cite[Prop.~1.1]{NSS}. For the proof of \eqref{prop-item:kuenneth} we use
\eqref{eq:ext-homology} to rewrite the left hand side as $\rmH^1(L,
M_1 \ot M_2)$ for $M_1 = U_1^*\ot V_1$ and $M_2= U_2^* \ot V_2$.
Assuming for a moment the formula
\begin{equation} \label{eq:dirprodH1}
  \rmH^1(L, M_1 \ot M_2) \cong \big( M_1^{L_1} \ot \rmH^1(L_2, M_2) \big) \oplus \big( \rmH^1(L_1 , M_1) \ot M_2^{L_2}\big),
\end{equation}
we obtain the first formula in \eqref{prop-item:kuenneth} by another
application of \eqref{eq:ext-homology}. The second then follows from
Lemma~\ref{lem:schur-tensor}.

We give a proof of \eqref{eq:dirprodH1} due to S.~Kumar. First recall that $M_i^{L_i} = \rmH^0(L_i, M_i)$ by definition. Also, one knows \cite[3.1.9, 3.1.13]{kumar} that $\rmH^p(K,V) \cong \rmH^p(K, V^{**}) \cong (\rmH_p(K, V^*))^*$ for any finite-dimensional module of a Lie algebra $K$, relating the cohomology groups $\rmH^p$ with the homology groups $\rmH_p$. The advantage of homology is that it satisfies the K\"unneth formula
\[ \textstyle
  \bigoplus_{p+q=r} \rmH_p(L_1, M_1) \ot \rmH_q(L_2, M_2) \cong \rmH_r(L_1 \boxplus L_2, M_1 \ot M_2) \quad \text{for} \quad r \in \NN.
\]
Finally, we recall $(V_1 \ot V_1)^* \cong V_1^* \ot V_2^*$ if one of the vector spaces $V_i$ is finite-dimensional. With these tools at hand, we can now prove \eqref{eq:dirprodH1}:
\begin{align*}
  \rmH^1(&L, M_1 \ot M_2) \cong \rmH^1(L, (M_1^* \ot M_2^*)^*) \cong \big( \rmH_1(L, M_1^* \ot M_2^*)\big)^* \\
  &\cong \Big( \big(\rmH_0(L_1, M_1^*) \ot \rmH_1(L_2, M_2^*) \big) \oplus \big(\rmH_1(L_1, M_1^*) \ot \rmH_0(L_2, M_2^*) \big)\Big)^* \\
  &\cong \Big( \big(\rmH_0(L_1, M_1^*)\big)^* \ot \big(\rmH_1(L_2, M_2^*)\big)^*  \Big) \oplus \Big( \big(\rmH_1(L_1, M_1^*)\big)^* \ot \big(\rmH_0(L_2, M_2^*)\big)^* \Big) \\
  &\cong \big(\rmH^0(L_1, M_1) \ot \rmH^1(L_2, M_2) \big)  \oplus \big( \rmH^1(L_1, M_1) \ot \rmH^0(L_2, M_2) \big). \qquad \qquad \qed
\end{align*}
\end{proof}

%
\section{Extensions for equivariant map algebras} \label{sec:EMA-ext}
%

In this section we describe extensions of irreducible finite-dimensional modules of an equivariant map algebra $\frM=M(X,\g)^\Gamma$.  The reader is reminded that $X$ is an affine scheme of finite type (equivalently, $A$ is a finitely generated algebra) and $\g$ is a finite-dimensional reductive Lie algebra.

Let $R = k^n$ (as $k$-algebras) and let $\varepsilon_i$ be the element $(0,\dots,0,1,0,\dots,0)$, where the 1 appears in the $i$-th position.  Any $R$-module $M$ is
canonically a direct sum of $n$ uniquely determined submodules,
\[
  M= M_1 \oplus \cdots \oplus M_n, \quad M_i= \eps_i M,
\]
satisfying $\eps_i M_j = 0$ for $i\ne j$. Thus every $M_i$ is also a
$k$-vector space by identifying $k$ with the $i$th coordinate
subalgebra of $R$. Conversely, any direct sum $M=M_1 \oplus \cdots
\oplus M_n$ of $k$-vector spaces $M_i$ gives rise to an $R$-module
structure on $M$ by defining the action of the scalars in $R$ in the
obvious way.

The description of $R$-modules immediately extends to the category
of $R$-algebras. An $R$-algebra $\frl$ is naturally a direct product of
ideals, say $\frl = \boxplus_{i=1}^n \frl_i$, where each $\frl_i=\eps_i\frl$
is a $k$-algebra. Conversely, any direct product $\frl=\boxplus_{i=1}^n\frl_i$ of $k$-algebras $\frl_i$ can canonically be considered as an $R$-algebra.

Recall that a module of a Lie $R$-algebra $L$ is an $R$-module $M$
together with an $R$-bilinear map $L \times M \to M$, $(l,m) \mapsto
l \cdot m$ satisfying the usual rule for a Lie algebra action,
namely $[l_1, l_2] \cdot m = l_1 \cdot (l_2 \cdot m) - l_2 \cdot
(l_1 \cdot m)$ for $l_i \in L$ and $m\in M$. The following lemma is
immediate from the above.

\begin{lem} \label{lem:RLiealg}
Let $\frl=\frl_1\boxplus \cdots \boxplus \frl_n$ be a direct product of Lie $k$-algebras
$\frl_i$, $1\le i \le n$. As explained above we can view the $k$-algebra $\frl$ as an $R$-algebra for $R =
k^n$.

Every module of the Lie $R$-algebra $\frl$ is a direct sum of
uniquely determined $\frl$-submodules $M_i=\eps_i M$ such that $\frl_i \cdot M_j = 0$ for $i\ne j$. Conversely, given
$\frl_i$-modules $M_i$, $1\le i \le n$, the direct sum $M=M_1 \oplus
\cdots \oplus M_n$ becomes a module of the $R$-algebra $\frl$ with
respect to the obvious operations. \end{lem}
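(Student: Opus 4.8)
The statement is essentially a compatibility assertion between the decomposition of $R$-modules recalled just before the lemma and the Lie $R$-algebra action. The plan is to reduce everything mechanically to the already-established decomposition $M = M_1 \oplus \cdots \oplus M_n$ with $M_i = \eps_i M$ for $R$-modules, and then check that this decomposition interacts correctly with the bracket action. First I would note that $\frl = \frl_1 \boxplus \cdots \boxplus \frl_n$ with $\frl_i = \eps_i \frl$ as an $R$-algebra, so that the $\eps_i$ lie in (a suitable enlargement of) the centroid; concretely, for the module $M$ we already have $M = \bigoplus_i M_i$, $M_i = \eps_i M$, purely as an $R$-module, with $\eps_i M_j = 0$ for $i \ne j$, so it remains only to see that each $M_i$ is an $\frl$-submodule and that $\frl_i \cdot M_j = 0$ for $i \ne j$.

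The key computation uses $R$-bilinearity of the action $L \times M \to M$ together with the idempotent relations $\eps_i \eps_j = \de_{ij} \eps_i$ and $\sum_i \eps_i = 1$. For $l \in \frl$ and $m \in M_j$ we have $l \cdot m = l \cdot (\eps_j m) = \eps_j (l \cdot m) \in \eps_j M = M_j$, which shows each $M_j$ is an $\frl$-submodule. Next, writing $l = \eps_i l$ for $l \in \frl_i$ and $m \in M_j$ with $i \ne j$, we get $l \cdot m = (\eps_i l) \cdot (\eps_j m) = \eps_i \eps_j (l \cdot m) = 0$, which is exactly $\frl_i \cdot M_j = 0$. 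For the converse, given $\frl_i$-modules $M_i$, one sets $M = \bigoplus_i M_i$ with $R$-module structure $\eps_i$ acting as the projection onto $M_i$, and defines $l \cdot m = \sum_i (\eps_i l) \cdot (\eps_i m)$ using the given $\frl_i$-actions on the $M_i$; then $R$-bilinearity is clear from the construction, and the Lie action identity $[l_1,l_2]\cdot m = l_1 \cdot(l_2 \cdot m) - l_2 \cdot (l_1 \cdot m)$ follows componentwise since in the $i$-th component it is precisely the identity for the $\frl_i$-action on $M_i$ (the cross terms vanish because $\frl_i \cdot M_j = 0$ for $i \ne j$ by construction).

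There is no real obstacle here: the lemma is a bookkeeping statement and, as the paper says, ``immediate from the above.'' The only mild point requiring care is to make sure the bracket action is well-defined and $R$-bilinear in the converse direction — i.e.\ that the formula $l \cdot m = \sum_i (\eps_i l)\cdot(\eps_i m)$ genuinely yields an action of the Lie $R$-algebra $\frl$ and not merely of the underlying Lie $k$-algebra — but this is checked component by component using that each $\frl_i$ acts on $M_i$ and annihilates $M_j$ for $j \ne i$. I would state the proof in two or three sentences, invoking $R$-bilinearity and the orthogonal idempotents $\eps_i$, and leave the routine verification to the reader.
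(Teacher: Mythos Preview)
Your proposal is correct and is exactly the routine verification the paper has in mind: the paper gives no proof at all, stating only that the lemma ``is immediate from the above,'' and your argument via $R$-bilinearity and the orthogonal idempotents $\eps_i$ is precisely how one makes that immediacy explicit. There is nothing to add.
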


\begin{lem} \label{lem:noeth}
The fixed point subalgebra $A^\Ga$ is a finitely generated, hence Noetherian, $k$-algebra. Similarly, $A$ and $\frM$ are finitely generated, hence Noetherian, $A^\Ga$-modules.
\end{lem}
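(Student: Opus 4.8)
The statement has three parts: (i) $A^\Ga$ is a finitely generated $k$-algebra; (ii) $A$ is a finitely generated $A^\Ga$-module; (iii) $\frM$ is a finitely generated $A^\Ga$-module.

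Let me think about the standard approach. $\Gamma$ is a finite group acting on the finitely generated $k$-algebra $A$, and $k$ has characteristic zero (so $|\Gamma|$ is invertible). The finite generation of $A^\Gamma$ is classical (Emmy Noether's theorem on invariants of finite groups). The standard proof: $A$ is integral over $A^\Gamma$ because each $a \in A$ satisfies the monic polynomial $\prod_{\gamma \in \Gamma}(t - \gamma \cdot a)$, whose coefficients are $\Gamma$-invariant, hence lie in $A^\Gamma$. So $A$ is integral over $A^\Gamma$, and since $A$ is a finitely generated $k$-algebra, $A$ is module-finite over the subalgebra $B$ generated by the coefficients of these integral equations for a finite set of algebra generators of $A$; one then notes $B \subseteq A^\Gamma \subseteq A$ with $A$ module-finite over $B$, and $B$ is Noetherian (Hilbert basis theorem), so $A^\Gamma$ is a $B$-submodule of a finitely generated $B$-module, hence finitely generated as a $B$-module, hence finitely generated as a $k$-algebra. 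That simultaneously gives (i) and (ii): $A$ is module-finite over $B$, hence module-finite over the larger ring $A^\Gamma$.

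For part (iii): $\frM = M(X,\g)^\Gamma = (\g \otimes_k A)^\Gamma$. Now $\g \otimes_k A$ is a finitely generated $A$-module (indeed free of rank $\dim_k \g$), hence a finitely generated $A^\Gamma$-module by (ii) together with transitivity of finite generation (a finitely generated module over a ring that is module-finite over a subring is module-finite over that subring). Since $A^\Gamma$ is Noetherian by (i) and the Hilbert basis theorem, the $A^\Gamma$-submodule $\frM = (\g \otimes_k A)^\Gamma$ of the finitely generated $A^\Gamma$-module $\g \otimes_k A$ is itself finitely generated over $A^\Gamma$. That completes all three parts.

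The main point to get right — not really an obstacle, but the one place care is needed — is the chain of reductions in part (i)/(ii): one must exhibit the intermediate finitely generated subalgebra $B$ over which $A$ is module-finite, invoke the Hilbert basis theorem to know $B$ is Noetherian, and then use that submodules of finitely generated modules over a Noetherian ring are finitely generated. I would write this out as: pick $k$-algebra generators $a_1,\dots,a_r$ of $A$; let $B$ be the $k$-subalgebra of $A^\Gamma$ generated by all coefficients of the monic polynomials $\prod_{\gamma\in\Gamma}(t-\gamma\cdot a_j)$, $1\le j\le r$; observe each $a_j$ is integral over $B$, so $A = B[a_1,\dots,a_r]$ is a finitely generated $B$-module; $B$ is Noetherian; hence the intermediate $B$-submodules $A^\Gamma$ and $A$ of the finitely generated $B$-module $A$ are finitely generated over $B$, and a fortiori $A$ is a finitely generated $A^\Gamma$-module, while $A^\Gamma$, being module-finite over the finitely generated $k$-algebra $B$, is itself a finitely generated $k$-algebra. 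One may also simply cite a standard reference (e.g., the Artin–Tate lemma, or \cite[Ch.~V, \S1]{Bou75}) for the finite generation of $A^\Gamma$ and module-finiteness of $A$ over $A^\Gamma$, and then give the short Noetherian argument for $\frM$.
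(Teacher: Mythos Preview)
Your proposal is correct and follows essentially the same route as the paper: the paper simply cites Bourbaki for (i) and (ii), then observes that $\g\otimes A$ is a finitely generated (hence Noetherian) $A^\Ga$-module and that $\frM$, being a submodule, is therefore Noetherian as well. Your version just unpacks the Bourbaki citation by sketching the standard Noether/Artin--Tate argument, which is fine.
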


\begin{proof}
Since $A$ is a finitely generated $k$-algebra, so is $A^\Ga$ (\cite[V, \S1.9, Th.~2]{bou:ACb}).  Hence $A^\Ga$ is a Noetherian $k$-algebra.  Moreover, the same reference also shows that $A$ is a finitely generated $A^\Ga$-module, and hence Noetherian. Thus $\g\ot A$ is a finitely generated, hence Noetherian, $A^\Ga$-module. \lv{Here we use that a finitely generated module over a Noetherian ring is Noetherian (see [Eisenbud, I, Prop. 1.4]).}  But every submodule of a Noetherian module is again Noetherian. \qed
\end{proof}

Since $\frK_\bx$ acts trivially on any evaluation representation supported on $\Gamma \cdot \bx$, any extension between two evaluation representations supported on $\Gamma \cdot \bx$ will factor through $\frM/\frK_\bx'$.  It is therefore helpful to know the structure of this quotient.

\begin{prop}[{Structure of $\frM/\frK_\bx'$}] \label{prop:mdec}
For $\bx \in X_*$, define
\[
  I_\bx = \{a \in A : a\big( \ts \bigcup_{x\in \bx} \Ga \cdot x\big) = 0 \},\
  I_\bx^\Ga = I_\bx \cap A^\Ga, \
  \frN_\bx = \{ \nu \in \frK_\bx : I_\bx^\Ga \nu \subseteq \frK_\bx'\}.
\]
Then
\[
   \frK_\bx'  \ideal \frN_\bx \ideal \frK_\bx \ideal \frM
\]
is a sequence of ideals of the $A^\Ga$-algebra $\frM$.  The
quotient Lie algebra $\frM/ \frK_\bx'$ has the following structure.
\begin{enumerate}
  \item $\frK_{\bx,\ab} = \frK_\bx/\frK_\bx'$ is an abelian ideal of $\frM/\frK_\bx'$ and the quotient
      \[
        (\frM/\frK_\bx')\big/ (\frK_{\bx,\ab}) \cong \frM/\frK_\bx \cong \g^\bx.
      \]

  \item The adjoint representation of $\frM$ induces
  $\frg^\bx$-module structures on the quotients $\frM/\frK_\bx$ and
  $\frK_\bx/\frN_\bx$, and on the ideal $\frN_\bx/\frK_\bx'$ of $\frM/\frK_\bx'$.  In particular,
  \begin{enumerate}
    \item $\g^\bx$ acts on $\frM/\frK_\bx\cong \g^\bx$ by the adjoint representation
    and on $\frK_\bx/\frN_\bx$ by zero, and

    \item  $\frN_\bx/\frK_\bx'= \bigoplus_{x\in \bx} M^x$, where each $M^x$ is a
    finite-dimensional $\g^x$-module and $\g^x \cdot M^y=0$ for $x\ne y$.
  \end{enumerate}
\end{enumerate}
\end{prop}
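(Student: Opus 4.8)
The plan is to build up the chain of ideals one inclusion at a time, then analyze the successive quotients. First I would verify that $\frK_\bx' \ideal \frN_\bx \ideal \frK_\bx \ideal \frM$ is a chain of ideals in the $A^\Ga$-algebra $\frM$. That $\frK_\bx$ is an ideal of $\frM$ is immediate since it is the kernel of the Lie algebra homomorphism $\ev_\bx$; that it is an $A^\Ga$-submodule follows because $\ev_\bx$ is $A^\Ga$-linear (evaluation at $\Ga$-orbits of rational points). Since $\frK_\bx$ is an ideal, $\frK_\bx'=[\frK_\bx,\frK_\bx]$ is again an ideal of $\frM$ (a standard fact: $[\frM,[\frK_\bx,\frK_\bx]]\subseteq [[\frM,\frK_\bx],\frK_\bx]+[\frK_\bx,[\frM,\frK_\bx]]\subseteq \frK_\bx'$ by Jacobi and $[\frM,\frK_\bx]\subseteq\frK_\bx$). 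For $\frN_\bx$: it is an $A^\Ga$-submodule of $\frK_\bx$ by construction, and I would check that $[\frM,\frN_\bx]\subseteq\frN_\bx$, using that $I_\bx^\Ga$ is an ideal of $A^\Ga$ together with the Leibniz-type identity $I_\bx^\Ga[\al,\nu]\subseteq [\al, I_\bx^\Ga\nu]+[I_\bx^\Ga\al,\nu]$; since $I_\bx^\Ga\nu\subseteq\frK_\bx'$ and $\frK_\bx'$ is an ideal, and $I_\bx^\Ga\al\subseteq\frK_\bx$ so that $[I_\bx^\Ga\al,\nu]\subseteq\frK_\bx'$, we get $I_\bx^\Ga[\al,\nu]\subseteq\frK_\bx'$, i.e. $[\al,\nu]\in\frN_\bx$. (One should also record $\frN_\bx \ideal \frK_\bx$, which is the special case $\al\in\frK_\bx$.)

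Next, part (a): $\frK_{\bx,\ab}=\frK_\bx/\frK_\bx'$ is abelian by definition of the derived subalgebra, and it is an ideal of $\frM/\frK_\bx'$ because $\frK_\bx$ is an ideal of $\frM$. The isomorphism $(\frM/\frK_\bx')/\frK_{\bx,\ab}\cong\frM/\frK_\bx$ is the third isomorphism theorem, and $\frM/\frK_\bx\cong\g^\bx$ is exactly the statement that $\ev_\bx$ is a Lie algebra epimorphism, cited above from \cite[Cor.~4.6]{NSS}.

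For part (b), the key point is that since $\frK_\bx$ acts trivially by the adjoint action on $\frM/\frK_\bx$, on $\frK_\bx/\frN_\bx$, and on $\frN_\bx/\frK_\bx'$ (the last because $[\frK_\bx,\frN_\bx]\subseteq[\frK_\bx,\frK_\bx]=\frK_\bx'$), each of these quotients inherits an action of $\frM/\frK_\bx\cong\g^\bx$. On $\frM/\frK_\bx\cong\g^\bx$ this induced action is the adjoint action of $\g^\bx$ on itself; on $\frK_\bx/\frN_\bx$ I would show the action is zero, which amounts to $[\frM,\frK_\bx]\subseteq\frN_\bx$ --- and this follows because $[\frM,\frK_\bx]\subseteq\frK_\bx$ and moreover $I_\bx^\Ga[\frM,\frK_\bx]\subseteq\frK_\bx'$ by the computation above applied with $\al\in\frM$. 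For (b)(ii), I would use the $R=k^n$ decomposition with $R=(A^\Ga/I_\bx^\Ga)^{\,\oplus}$: the point is that $\frN_\bx/\frK_\bx'$ is annihilated by $I_\bx^\Ga$, hence is a module over $A^\Ga/I_\bx^\Ga$, and by the Chinese Remainder Theorem $A^\Ga/I_\bx^\Ga \cong k^{\bx}$ (one factor per $\Ga$-orbit in $\bx$), since the orbits $\Ga\cdot x$, $x\in\bx$, are disjoint finite sets of rational points. Applying Lemma~\ref{lem:RLiealg} (or rather its module analogue) to this $R$-algebra structure gives the direct sum decomposition $\frN_\bx/\frK_\bx' = \bigoplus_{x\in\bx}M^x$ with $M^x=\eps_x(\frN_\bx/\frK_\bx')$, and the idempotent relations force $\g^x\cdot M^y = 0$ for $x\ne y$; finite-dimensionality of $M^x$ follows from Noetherianity (Lemma~\ref{lem:noeth}), since $\frN_\bx$ is a submodule of the finitely generated $A^\Ga$-module $\frM$ and $I_\bx^\Ga$ has finite colength in $A^\Ga$, so $\frN_\bx/I_\bx^\Ga\frN_\bx$ --- and hence its quotient $\frN_\bx/\frK_\bx'$ --- is finite-dimensional over $k$.

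The main obstacle I expect is keeping the bookkeeping between the $A^\Ga$-module structure and the Lie-bracket structure straight --- in particular verifying cleanly that $I_\bx^\Ga$ annihilates $\frN_\bx/\frK_\bx'$ (so that the $R$-algebra machinery of Lemma~\ref{lem:RLiealg} applies) and that $[\frM,\frK_\bx]\subseteq\frN_\bx$, since both rely on the same Leibniz identity $a[\al,\be]=[\al,a\be]+[a\al,\be]$ for $a\in A^\Ga$, valid because $A^\Ga$ is central in the sense that $\frM$ is an $A^\Ga$-Lie algebra. Once that identity is in hand the rest is formal: the three isomorphism theorems, the trivial-action observation that promotes $\frM$-actions to $\g^\bx$-actions, and the idempotent decomposition. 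I would also need the identification $\g^x = \ev_x(\frM)$ acting on $M^x$, which is how $M^x$ acquires its $\g^x$-module structure as claimed.
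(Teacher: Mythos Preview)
Your proposal is correct and follows essentially the same route as the paper's proof. One correction, though: the identity you invoke at the end, $a[\alpha,\beta] = [\alpha,a\beta] + [a\alpha,\beta]$, is false. Since $\frM$ is an $A^\Gamma$-Lie algebra the bracket is $A^\Gamma$-\emph{bilinear}, so in fact $a[\alpha,\beta] = [\alpha,a\beta] = [a\alpha,\beta]$ (no sum); your version would give $a[\alpha,\beta]=2a[\alpha,\beta]$. This actually simplifies your arguments: to show $\frN_\bx$ is an ideal one has directly $I_\bx^\Gamma[\alpha,\nu] = [\alpha, I_\bx^\Gamma\nu] \subseteq [\frM,\frK_\bx'] \subseteq \frK_\bx'$, and similarly $I_\bx^\Gamma[\frM,\frK_\bx] = [I_\bx^\Gamma\frM,\frK_\bx] \subseteq [\frK_\bx,\frK_\bx] = \frK_\bx'$ yields $[\frM,\frK_\bx] \subseteq \frN_\bx$. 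The containments you derived remain valid, just for this simpler reason; everything else (the third isomorphism theorem, the $k^{|\bx|}$-decomposition via Lemma~\ref{lem:RLiealg}, and the finiteness via Lemma~\ref{lem:noeth}) matches the paper exactly.
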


\begin{proof}
The set $I_\bx$ is a $\Ga$-invariant ideal of $A$. Hence $I_\bx^\Ga \ideal A^\Ga$. The
algebra $A^\Ga$ acts naturally on $\frM$, and both $\frK_\bx$ and
$\frK_\bx'$ are clearly $A^\Ga$-submodules as well as ideals of $\frM$.
Moreover, the same is true for $\frN_\bx$: If $\al \in \frM$ and $\nu
\in \frN_\bx$ then $I_\bx^\Ga[\al, \nu] = [ \al, I_\bx^\Ga \nu] \subseteq [\frM,
\frK_\bx'] \subseteq \frK_\bx'$.  We thus have a chain $\frK_\bx'\ideal \frN_\bx
\ideal \frK_\bx$ of $A^\Ga$-invariant ideals of $\frM$, and consequently
an exact sequence
\begin{equation}
  0 \to \frN_\bx/\frK_\bx' \to \frK_{\bx,\ab} \to \frK_\bx/\frN_\bx \to 0
\end{equation}
of $\frM$-modules, each annihilated by $\frK_\bx$, i.e., an exact
sequence of $\frM/\frK_\bx \cong \g^\bx$-modules. We will analyze this
sequence further. First, since $I_\bx^\Ga\frM \subseteq \frK_\bx$, we have
$I_\bx^\Ga[\frM, \frK_\bx] = [I_\bx^\Ga \frM, \frK_\bx] \subseteq \frK_\bx'$, i.e.,
$[\frM, \frK_\bx] \subseteq \frN_\bx$. But this says that $\g^\bx \cong
\frM/\frK_\bx$ acts trivially on $\frK_\bx/\frN_\bx$.

By construction $\frN_\bx/\frK_\bx'$ and
$\frM/\frK_\bx$ are annihilated by $I_\bx^\Ga$, thus $\frN_\bx/\frK_\bx'$ is a
module of the $A^\Ga/I_\bx^\Ga$-algebra $\frM/\frK_\bx$. Since $A^\Ga/I_\bx^\Ga
\cong k^{|\bx|}$ (direct product of algebras), Lemma~\ref{lem:RLiealg}
implies that $\frN_\bx/\frK_\bx'$ is a direct sum $\frN_\bx/\frK_\bx' =
\bigoplus_{x\in \bx} M^x$, where each $M^x$ is a $\g^x$-module and
$\g^x\cdot M^y = 0$ for $x \ne y$.  Also, since $\frN_\bx$ is an
$A^\Ga$-submodule of the Noetherian $A^\Ga$-module $\frM$
(Lemma~\ref{lem:noeth}), $\frN_\bx$ is a finitely generated
$A^\Ga$-module. Hence $\frN_\bx/\frK_\bx'$ is a finitely generated
$k^{|\bx|}$-module, i.e., the $\g^x$-modules $M^x$ are all
finite-dimensional over $k$. \qed
\end{proof}

\begin{example}\label{ex:J}
Consider $\frM$ as in Example~\ref{eg:order-two}.  Let $I=I_\bx = I_0 \oplus I_1$ where $I_j = I \cap A_j$ for $j = 0,1$.  Then
\begin{align*}
  \frK_\bx &= (\g_{0,\rss} \ot I_0) \oplus (\g_{0,\ab} \ot I_0 ) \oplus (\g_1 \ot I_1), \\
  \frK'_\bx &= (\g_{0,\rss} \ot (I_0^2 + I_1^2)) \oplus (\g_{0,\ab} \ot I_1^2) \oplus (\g_1 \ot I_0 I_1), \quad \text{and} \\
  \frN_\bx & = (\g_{0,\rss} \ot I_0) \oplus (\g_{0,\ab} \ot J) \oplus (\g_1 \ot I_1), \quad \text{where} \\
  J &= \{ a\in I_0: aI_0 \subseteq I_1^2\}.
\end{align*}
Using the above, one can easily construct examples showing that $\frK'_\bx \subsetneq \frN_\bx \subsetneq \frK_\bx$  in general, even examples where $\frK_{\bx,\ab}$ is infinite-dimensional.  We will use the precise structure of $\frK_{\bx,\ab}$ in the proof of Proposition~\ref{prop:disjoint-support}.
\end{example}

The following well-known lemma describes the irreducible finite-dimensional representations of reductive Lie algebras.

\begin{lem}\label{lem:redirrep}
Any irreducible finite-dimensional representation of a finite-dimen\-sional reductive Lie
algebra $\g$ is a tensor product $V_\rss \ot V_\ab$ where
$V_\rss$ is an irreducible representation of the semisimple part
$\g_\rss$ of $\g$ and where $V_\ab$ is an
irreducible, hence one-dimensional, representation of the centre
$\g_\ab$ of $\g$. Equivalently, an irreducible representation of
$\g$ is an irreducible representation of $\g_\rss$, on which $\g_\ab$
acts by some linear form.
\end{lem}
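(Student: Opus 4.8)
The plan is to exploit the fact that $\g$ being reductive means precisely that $\g = \g_\rss \boxplus \g_\ab$ is a direct product of the semisimple ideal $\g_\rss = \g'$ and the abelian ideal $\g_\ab$ (its centre). With this decomposition in hand, the statement is essentially an instance of Proposition~\ref{prop:ext-direct-sums}\eqref{prop-item:direct-rep} applied to $L_1 = \g_\rss$ and $L_2 = \g_\ab$: any irreducible finite-dimensional $\g$-module $V$ is isomorphic to a tensor product $V_\rss \ot V_\ab$, where $V_\rss$ is an irreducible $\g_\rss$-module and $V_\ab$ is an irreducible $\g_\ab$-module, both determined up to isomorphism.

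The one extra ingredient is to identify $V_\ab$. Since $\g_\ab$ is abelian and $k$ is algebraically closed, Schur's Lemma forces every irreducible finite-dimensional $\g_\ab$-module to be one-dimensional; equivalently, each element of $\g_\ab$, commuting with the whole $\g$-action on the irreducible module $V$, acts by a scalar, so $\g_\ab$ acts via a linear form $\la \in \g_\ab^*$ and we may take $V_\ab = k_\la$. Since $\g_\rss$ acts on $V_\rss \ot k_\la$ only through the first tensor factor, this yields the ``equivalently'' reformulation: as a $\g_\rss$-module $V$ is just the irreducible module $V_\rss$, and $\g_\ab$ acts on it by the linear form $\la$.

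If one prefers a self-contained argument that avoids the K\"unneth-type machinery behind Proposition~\ref{prop:ext-direct-sums}, one can proceed directly: given $V$ irreducible finite-dimensional over $\g$, let $\la \in \g_\ab^*$ be the form by which the centre acts (again via Schur), and observe that any $\g_\rss$-submodule $W \subseteq V$ is automatically stable under $\g_\ab$ (which acts by scalars), hence under all of $\g$, so $W = 0$ or $W = V$. Thus $V$ is already irreducible over $\g_\rss$, and $V \cong V \ot k_\la$ as $\g$-modules, with $V$ on the right viewed as a $\g_\rss$-module. There is no serious obstacle here; the only point requiring a moment's care is checking that $\g_\rss$-irreducibility of $V$ is genuinely retained, i.e.\ that $\g_\ab$-stability of subspaces is automatic, which is immediate once $\g_\ab$ is known to act by scalars.
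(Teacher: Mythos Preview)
The paper states this lemma as well-known and does not include a proof at all, so there is no argument to compare against. Your proposal is correct: the first approach via Proposition~\ref{prop:ext-direct-sums}\eqref{prop-item:direct-rep} is a clean reduction, and the self-contained argument using Schur's Lemma (the centre acts by scalars, so $\g_\rss$-submodules are automatically $\g$-submodules) is exactly the standard proof one has in mind when calling this fact well-known.
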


\begin{prop}[Evaluation representations with disjoint support] \label{prop:disjoint-support}
If $V_1$ and $V_2$ are nontrivial irreducible evaluation representations with disjoint support, then $\Ext^1(V_1,V_2)=0$.
\end{prop}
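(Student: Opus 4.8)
\emph{Approach.} The plan is to identify $\Ext^1(V_1,V_2)$ with the Lie algebra cohomology group $\rmH^1(\frM,W)$, $W=\Hom_k(V_1,V_2)$, via \eqref{eq:ext-homology}, and then to feed a suitable evaluation kernel into the Hochschild-Serre sequence \eqref{hose}; the structure of $\frK_{\bx,\ab}$ from Proposition~\ref{prop:mdec} will make the relevant cohomology vanish. Concretely, write $V_i\cong\ev_{\psi_i}$ with $\psi_i\in\mathcal E$ nonzero and $\Supp\psi_1\cap\Supp\psi_2=\varnothing$, and pick $\bx_i\in X_*$ with one point from each $\Ga$-orbit in $\Supp\psi_i$. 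Disjointness of supports forces $\bx:=\bx_1\sqcup\bx_2\in X_*$, and since both $V_i$ are nontrivial, $\bx_1$ and $\bx_2$ are nonempty, so $|\bx|\ge 2$. Both $V_i$ factor through $\ev_\bx\colon\frM\twoheadrightarrow\g^\bx=\boxplus_{x\in\bx}\g^x$, and by Proposition~\ref{prop:ext-direct-sums}\eqref{prop-item:direct-rep} we may write $V_i=\bigotimes_{x\in\bx}V_i^x$ with $V_i^x$ irreducible over $\g^x$ and trivial unless $x\in\bx_i$. Then $W\cong V_1^*\ot V_2\cong\bigotimes_{x\in\bx}W^x$ with $W^x=(V_1^x)^*\ot V_2^x$; here $W^x\cong(V_1^x)^*$ when $x\in\bx_1$ and $W^x\cong V_2^x$ when $x\in\bx_2$, so in every case $W^x$ is a \emph{nontrivial} irreducible $\g^x$-module and $(W^x)^{\g^x}=0$. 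Hence $W$ is a nontrivial irreducible $\g^\bx$-module with $W^{\g^x}=0$ for all $x\in\bx$; in particular $W^{\g^\bx}=0$.

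\emph{Reduction via Hochschild-Serre.} Since $\frK_\bx$ acts trivially on $W$, \eqref{hose} applied to $\frK_\bx\ideal\frM$, together with \eqref{eq:triv} (which identifies the term $\rmH^1(\frK_\bx,W)^{\g^\bx}$ with $\Hom_{\g^\bx}(\frK_{\bx,\ab},W)$, the $\g^\bx\cong\frM/\frK_\bx$-action being the natural one), yields an exact sequence
\[
  0\to\rmH^1(\g^\bx,W)\xrightarrow{\inf}\Ext^1_\frM(V_1,V_2)\xrightarrow{\res}\Hom_{\g^\bx}(\frK_{\bx,\ab},W).
\]
It thus suffices to show that both ends vanish. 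For $\rmH^1(\g^\bx,W)$: as $\g^\bx$ is reductive and $W$ is a nontrivial irreducible module, the first Whitehead lemma gives $\rmH^1((\g^\bx)_\rss,W)=0$, and Hochschild-Serre for $(\g^\bx)_\rss\ideal\g^\bx$ reduces $\rmH^1(\g^\bx,W)$ to $\rmH^1((\g^\bx)_\ab,W^{(\g^\bx)_\rss})$; the latter is $0$ since either $W^{(\g^\bx)_\rss}=0$, or $W$ is a nontrivial one-dimensional $\g^\bx$-module and Corollary~\ref{cor:abelian-1d-exts} applies. (Alternatively, one invokes Proposition~\ref{prop:feiginprop}.)

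\emph{The essential point.} It remains to prove $\Hom_{\g^\bx}(\frK_{\bx,\ab},W)=0$, and this is where Proposition~\ref{prop:mdec} is used. That proposition provides a short exact sequence of $\g^\bx$-modules
\[
  0\to\frN_\bx/\frK_\bx'\to\frK_{\bx,\ab}\to\frK_\bx/\frN_\bx\to 0,
\]
on whose quotient $\g^\bx$ acts trivially; since $W^{\g^\bx}=0$, we get $\Hom_{\g^\bx}(\frK_\bx/\frN_\bx,W)=0$, so left-exactness of $\Hom_{\g^\bx}(-,W)$ gives an injection $\Hom_{\g^\bx}(\frK_{\bx,\ab},W)\hookrightarrow\Hom_{\g^\bx}(\frN_\bx/\frK_\bx',W)$. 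Finally $\frN_\bx/\frK_\bx'=\bigoplus_{x\in\bx}M^x$ with $\g^y\cdot M^x=0$ whenever $y\neq x$; for any fixed $x$, choosing $y\neq x$ (possible as $|\bx|\ge 2$), every $\g^\bx$-homomorphism $M^x\to W$ has image in $W^{\g^y}=0$, so $\Hom_{\g^\bx}(M^x,W)=0$. Hence $\Hom_{\g^\bx}(\frN_\bx/\frK_\bx',W)=0$, and therefore $\Ext^1(V_1,V_2)=0$.

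\emph{Main obstacle.} The substantive step is the last one. The module $\frK_{\bx,\ab}$ can be infinite-dimensional and has no simple closed description, so the vanishing of $\Hom_{\g^\bx}(\frK_{\bx,\ab},W)$ relies entirely on Proposition~\ref{prop:mdec}, which splits $\frK_{\bx,\ab}$ into a part on which $\g^\bx$ acts trivially (annihilated because $W^{\g^\bx}=0$) and a direct sum of submodules $M^x$ supported at single points of $\bx$ (annihilated because, having $|\bx|\ge 2$ and disjoint supports, $W$ is nontrivial as a $\g^y$-module for some $y\neq x$). Everything else --- passing to Lie algebra cohomology, the Hochschild-Serre bookkeeping, and $\rmH^1(\g^\bx,W)=0$ --- is routine.
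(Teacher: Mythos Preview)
Your proof is correct and follows essentially the same route as the paper's: reduce $\Ext^1_\frM(V_1,V_2)\cong\rmH^1(\frM,W)$ to $\Hom_{\g^\bx}(\frK_{\bx,\ab},W)$ via the Hochschild--Serre sequence (the paper packages this step as Proposition~\ref{prop:feiginprop}), and then kill that Hom-space using the decomposition of Proposition~\ref{prop:mdec}. Your treatment of the final step---using left-exactness of $\Hom$ and the observation $W^{\g^y}=0$ for every $y\in\bx$---is a slightly cleaner rephrasing of the paper's argument, which instead picks a nonzero $f$ and derives a contradiction from the fact that any copy of $W$ inside $\frN_\bx/\frK_\bx'$ would lie in a single $M^x$.
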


In the case $\Ga=\{1\}$ (so $\frM = \g \ot A$) and $\g$ is semisimple, this result is proven in \cite[Lem.~3.3]{kodera} using the theory of Weyl modules, a technique which is not currently available for arbitrary equivariant map algebras.

\begin{proof}
Choose $\bx_i \in X_*$, $i=1,2$, containing one point in each $\Gamma$-orbit of the support of $V_i$ and set $\bx = \bx_1 \cup \bx_2$.  As in Proposition~\ref{prop:mdec}, let $\frN=\frN_\bx$, $\frK=\frK_\bx$, and $V = \Hom_k (V_1,V_2) \cong V_1^* \otimes V_2$.  Since $\Ext^1_\frM (V_1,V_2) \cong \rmH^1(\frM,V)$, it suffices to show that $\rmH^1(\frM,V)$ is zero.  We know from Section~\ref{sec:EMAs} that $V$ can be viewed as a nontrivial irreducible $\g^\bx$-module that is nontrivial as a $\g^{\bx_i}$-module, $i=1,2$ (where we view $V_i$ as a trivial $\g^{\bx_j}$-module for $i \ne j$).

If $V$ is nontrivial as a $\g^\bx_\ab$-module, then Proposition~\ref{prop:feiginprop}\eqref{prop-item:feigenprop:completely-reducible} implies $\rmH^1(\frM,V) \cong \Hom_{\g^\bx}(\frK_\ab,V)$.  On the other hand, if $V$ is trivial as a $\g^\bx_\ab$-module (hence nontrivial as a $\g^\bx_\rss$-module), then Proposition~\ref{prop:feiginprop}\eqref{prop-item:feigenprop:gab-zero-action} implies that the map $\res : \rmH^1(\frM,V) \to \Hom_{\g^\bx} (\frK_\ab,V)$ is injective.  Hence, in either case, it suffices to show $\Hom_{\g^\bx}(\frK_\ab,V) = 0$.

We now use the structure of the $\g^\bx$-module $\frK_\ab$ as described in Proposition~\ref{prop:mdec}. Suppose $f \in \Hom_{\g^\bx}(\frK_\ab,V)$.  If $f(\frN/\frK') = 0$, then $f$ descends to a $\g^\bx$-module map $\frK/\frN \to V$.  This map must be zero since $\frK/\frN$ is a trivial $\g^\bx$-module and $V$ is a nontrivial irreducible $\g^\bx$-module.

On the other hand, if $f$ does not vanish on $\frN/\frK'$, it maps this space onto $V$, since $V$ is an irreducible $\g^\bx$-module.  It follows that $\frN/\frK'$ contains a $\g^\bx$-module $M$ isomorphic to $V$.  Then we must have $M\subseteq M^x$ for some $x \in \bx$.  But this contradicts the fact that $V$ is a nontrivial $\g^{\bx_i}$-module, $i=1,2$. \qed
\end{proof}

\lv{Proof: As $L$-module, $M \ot N^* \cong \Hom_k(N,M)$, see
Lemma~\ref{lem:geniso},  hence $(M \ot N^*)^L \cong \Hom_L(N,M)$, and
the latter space is at most one-dimensional by Schur and our
assumption $k=\bar k$, and is one-dimensional $\iff N \cong M$. }

\begin{theo}[Two evaluation representations]
\label{theo:two-eval}
Suppose $V$ and $V'$ are irreducible evaluation representations corresponding to $\psi, \psi' \in \mathcal{E}$ respectively.  Let $V = \bigotimes_{x \in \bx} V_x$ and $V' = \bigotimes_{x \in \bx} V_x'$ for some $\bx \in X_*$, where $V_x, V_x'$ are (possibly trivial) irreducible evaluation modules at the point $x \in \bx$.  Then the following are true.
\begin{enumerate}
  \item \label{theo-item:two-eval:multiple-orbit-diff}
    If $\psi$ and $\psi'$ differ on more than one $\Gamma$-orbit, then $\Ext^1_\frM(V,V')=0$.

  \item \label{theo-item:two-eval:one-orbit-diff}
    If $\psi$ and $\psi'$ differ on exactly one orbit $\Gamma \cdot x_0$, $x_0 \in \bx$, then
    \[
      \Ext^1_\frM(V,V') \cong \Ext^1_\frM(V_{x_0},V'_{x_0}).
    \]

  \item \label{theo-item:two-eval:isom} If $\psi = \psi'$ (so $V \cong V')$, then
    \[ \textstyle
      (\frM_\ab^*)^{|\bx| -1 } \oplus \Ext^1_\frM (V, V) \cong \bigoplus_{x\in \bx} \Ext^1_\frM (V_x, V_x).
    \]
\end{enumerate}
\end{theo}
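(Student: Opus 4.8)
The plan is to reduce everything to the structure result of Proposition~\ref{prop:mdec} together with the direct-sum decomposition provided by Lemma~\ref{lem:RLiealg}. The common thread in all three parts is that $\frK_\bx$ acts trivially on $V$ and on $V'$, so each extension factors through $\frM/\frK_\bx'$, and $\Ext^1_\frM(V,V') \cong \rmH^1(\frM, W)$ where $W = \Hom_k(V,V') \cong V^* \ot V'$. Since $\frK_\bx$ is an ideal with $\frK_\bx \cdot W = 0$, the Hochschild-Serre exact sequence \eqref{hose} for the pair $(\frM, \frK_\bx)$ reads, using that $\frM/\frK_\bx \cong \g^\bx$ is reductive and $W^{\frK_\bx} = W$,
\[
  0 \to \rmH^1(\g^\bx, W) \xrightarrow{\inf} \rmH^1(\frM, W) \xrightarrow{\res} \Hom_{\g^\bx}(\frK_{\bx,\ab}, W) \xrightarrow{t} \rmH^2(\g^\bx, W) \to \cdots .
\]
Write $W$ as a $\g^\bx$-module via Lemma~\ref{lem:geniso}: if $V = \bigotimes_x V_x$ and $V' = \bigotimes_x V'_x$ then $W \cong \bigotimes_x W_x$ with $W_x = V_x^* \ot V'_x$, an irreducible $\g^x$-module tensored trivially into the other factors only when $V_x \cong V'_x$; in general $W_x = \Hom_k(V_x, V'_x)$ is a $\g^x$-module containing a trivial summand iff $V_x \cong V'_x$.

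**First I would handle part (a).** Here $\psi$ and $\psi'$ differ on at least two orbits, say $\Gamma\cdot x_1$ and $\Gamma\cdot x_2$ with $x_1,x_2\in\bx$. I want to show $\rmH^1(\frM,W)=0$. Consider the intermediate subset $\by = \{x_1,x_2\}\subseteq\bx$ and the restriction map to $\g^\by$; alternatively, apply Proposition~\ref{prop:disjoint-support} directly after decomposing $W$ across the two points where $\psi$ and $\psi'$ differ. The cleanest route: both $\rmH^1(\g^\bx,W)$ and $\Hom_{\g^\bx}(\frK_{\bx,\ab},W)$ vanish. For the first term, note that $\g^\bx = \boxplus_x \g^x$ and $W = \bigotimes_x W_x$, so the Künneth-type formula \eqref{eq:dirprodH1} (iterated) expresses $\rmH^1(\g^\bx,W)$ as a sum of terms, each of which contains a factor $W_{x_i}^{\g^{x_i}} = (V_{x_i}^*\ot V'_{x_i})^{\g^{x_i}}$ for at least one $i\in\{1,2\}$; by Lemma~\ref{lem:schur-tensor} this vanishes since $V_{x_i}\not\cong V'_{x_i}$. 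For the Hom term, use the structure of $\frK_{\bx,\ab}$ from Proposition~\ref{prop:mdec}: it has a filtration with trivial $\g^\bx$-quotient $\frK_\bx/\frN_\bx$ and subquotient $\frN_\bx/\frK_\bx' = \bigoplus_{x\in\bx} M^x$ with $\g^y\cdot M^x = 0$ for $y\ne x$. A $\g^\bx$-homomorphism from such a module into $W$ must land in the sum of $\g^\bx$-isotypic components appearing in $\frK_{\bx,\ab}$; but every irreducible constituent of $\frK_{\bx,\ab}$ is either trivial or nontrivial only in a single $\g^x$-factor, whereas $W$ is nontrivial in the two factors $x_1,x_2$ simultaneously — exactly the argument used at the end of the proof of Proposition~\ref{prop:disjoint-support}. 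Hence $\rmH^1(\frM,W)=0$.

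**Next, part (b):** here $\psi,\psi'$ agree off $\Gamma\cdot x_0$, so $V_x\cong V'_x$ for all $x\ne x_0$ and $W_x$ has a one-dimensional trivial $\g^x$-summand for $x\ne x_0$, while $W_{x_0} = \Hom_k(V_{x_0},V'_{x_0})$ has no trivial summand. I would again analyze the Hochschild-Serre sequence above. The inflation term $\rmH^1(\g^\bx, W)$: applying \eqref{eq:dirprodH1} repeatedly, the only surviving summand is $\big(\bigotimes_{x\ne x_0} W_x^{\g^x}\big)\ot\rmH^1(\g^{x_0}, W_{x_0})$, and $\rmH^1(\g^{x_0},W_{x_0})=0$ by Whitehead's lemma when $\g^{x_0}$ is semisimple, and more generally vanishes by Proposition~\ref{prop:feiginprop}\eqref{prop-item:feigenprop:completely-reducible}-type reasoning when $W_{x_0}$ is completely reducible with no trivial summand — actually I should be careful: $\rmH^1(\g^{x_0},W_{x_0})$ need not vanish in general for reductive $\g^{x_0}$. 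Better: I'll show $\rmH^1(\frM,W)\cong\rmH^1(\frM_{(x_0)}, W_{x_0})$ where $\frM_{(x_0)} = M(X,\g)^\Gamma$ viewed with the single-point data, i.e.\ identify the extension problem for $(V,V')$ with that for $(V_{x_0},V'_{x_0})$. To do this cleanly, I would show that the natural surjection $\frM/\frK_\bx' \to \frM/\frK_{x_0}'$ (coming from $\frK_{x_0}'\supseteq\frK_\bx'$... one must check the containment goes the right way) together with the fact that $W$ differs from $W_{x_0}$ only by trivial tensor factors, induces an isomorphism on $\rmH^1$ with coefficients in $W$. Concretely, decompose $\frK_{\bx,\ab}$ via Lemma~\ref{lem:RLiealg} into the $x_0$-part and the rest, observe only the $x_0$-part can map nontrivially to $W$ (since $W$ is nontrivial precisely at $x_0$), and then match the Hochschild-Serre sequences. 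The main obstacle is exactly this bookkeeping: making precise the comparison between the ideals $\frK_\bx', \frN_\bx, \frK_\bx$ for $\bx$ and for $\{x_0\}$, and verifying that the restriction/inflation maps intertwine correctly so that the five-term sequences line up to give the stated isomorphism. Once the $\g^\bx$-module $\frK_{\bx,\ab}$ is seen to contribute to $\Hom_{\g^\bx}(-,W)$ only through its $M^{x_0}$-summand, and similarly $\rmH^2(\g^\bx,W)$ is controlled the same way, the isomorphism $\Ext^1_\frM(V,V')\cong\Ext^1_\frM(V_{x_0},V'_{x_0})$ drops out.

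**Finally, part (c),** the case $\psi=\psi'$, $V\cong V'$. Now $W = \End_k(V) \cong \bigotimes_x \End_k(V_x)$ and each $\End_k(V_x)$ has exactly a one-dimensional trivial $\g^x$-summand (Lemma~\ref{lem:schur-tensor}). The cleanest approach is via the direct-sum formula for extensions, Proposition~\ref{prop:ext-direct-sums}\eqref{prop-item:kuenneth}, applied iteratively to $\g^\bx = \boxplus_x \g^x$ — but note the subtlety that $\frM$ is \emph{not} literally $\boxplus_x \frM_{(x)}$, so we cannot apply that proposition directly to $\frM$. Instead, I work with $\rmH^1(\frM, W)$ through Hochschild-Serre as before: the inflation part is $\rmH^1(\g^\bx, W)$, and since each $W_x=\End_k(V_x)$ is completely reducible with $W_x^{\g^x}=k$, the iterated Künneth formula \eqref{eq:dirprodH1} gives $\rmH^1(\g^\bx,W) \cong \bigoplus_x \big(\bigotimes_{y\ne x} W_y^{\g^y}\big)\ot\rmH^1(\g^x,W_x) \cong \bigoplus_x \rmH^1(\g^x,W_x)$. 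For the remaining part, $\Hom_{\g^\bx}(\frK_{\bx,\ab},W)$ and the transgression into $\rmH^2(\g^\bx,W)$: using Proposition~\ref{prop:mdec}, $\frK_{\bx,\ab}$ has the trivial quotient $\frK_\bx/\frN_\bx$ of dimension $(|\bx|-1)\cdot ?$... here I need the sharper fact that $\dim_k(\frK_\bx/\frN_\bx)$ contributes exactly the $(\frM_\ab^*)^{|\bx|-1}$ term. This is the one genuinely new computation: I expect that $\Hom_{\g^\bx}(\frK_\bx/\frN_\bx, W) \cong \Hom_k(\frK_\bx/\frN_\bx, k)$ (as $W^{\g^\bx}=k$) contributes a copy of $(\frM_\ab^*)^{|\bx|-1}$ — the "$-1$" appearing because one factor's worth of abelianization is already accounted for in $\Ext^1_\frM(V,V)$ on the left — while $\Hom_{\g^\bx}(\frN_\bx/\frK_\bx',W) = \bigoplus_x\Hom_{\g^x}(M^x, W_x)$ assembles, together with the inflation pieces, into $\bigoplus_x \Ext^1_\frM(V_x,V_x)$. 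Then the transgression maps and $\rmH^2$-obstructions must be checked to be compatible across the direct sum so that the five-term sequences add up.

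\medskip

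\textbf{Summary of the main obstacle.} In all three parts the conceptual engine is the same — Hochschild-Serre for $(\frM,\frK_\bx)$ plus the explicit $\g^\bx$-module structure of $\frK_{\bx,\ab}$ from Proposition~\ref{prop:mdec} — and the vanishing statements follow from Lemma~\ref{lem:schur-tensor} much as in Proposition~\ref{prop:disjoint-support}. The delicate point, and where most of the work lies, is part~(c): tracking how the three terms $\frK_\bx/\frN_\bx$ (trivial, of "size $|\bx|-1$" after accounting for the $V\cong V'$ identification), $\frN_\bx/\frK_\bx' = \bigoplus M^x$, and the inflation terms $\bigoplus_x\rmH^1(\g^x,W_x)$ fit together — and crucially, checking that the transgression maps $t$ into $\rmH^2(\g^\bx,W)$ and the higher obstructions are compatible with the claimed direct-sum decomposition, so that one obtains an honest isomorphism of the assembled $\Ext$-groups rather than merely a filtration with the right subquotients.
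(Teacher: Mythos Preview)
Your approach is genuinely different from the paper's, and while the Hochschild--Serre strategy is natural, you are making the problem much harder than it needs to be and in part~(c) you run into a real gap.

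The paper never analyzes the Hochschild--Serre sequence for $(\frM,\frK_\bx)$ with the full $\bx$.  Instead it uses Corollary~\ref{cor:ext-moving-arguments} to shuffle tensor factors across the $\Ext$ group: for $x_0\in\bx$ with $\psi(x_0)\ne\psi'(x_0)$ one writes
\[
  \Ext^1_\frM(V,V') \;\cong\; \Ext^1_\frM\!\Big(\textstyle\bigotimes_{x\ne x_0}\big(V_x\otimes(V'_x)^*\big),\; V'_{x_0}\otimes V_{x_0}^*\Big).
\]
Now decompose each side into irreducibles: the second argument is a direct sum of nontrivial irreducible evaluation modules at $x_0$ (Lemma~\ref{lem:schur-tensor}), and the first argument is a direct sum of evaluation modules supported on $\Gamma\cdot(\bx\setminus\{x_0\})$.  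Proposition~\ref{prop:disjoint-support} kills every term except those where the first argument contributes the trivial module, and Lemma~\ref{lem:schur-tensor} says that happens (with multiplicity one) iff $V_x\cong V'_x$ for all $x\ne x_0$.  Parts~(a) and~(b) drop out immediately.  For part~(c), the paper writes each $V_x\otimes V_x^*\cong k_0\oplus\bigoplus_i V_x^i$, expands both sides of the claimed identity, uses part~(a) to kill every cross-point term $V_x^i\otimes V_y^j$ with $x\ne y$, and simply compares what is left; the discrepancy is exactly $|\bx|-1$ copies of $\Ext^1_\frM(k_0,k_0)\cong\frM_\ab^*$.

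Your route, by contrast, forces you to control the transgression into $\rmH^2(\g^\bx,W)$ and to compare five-term sequences for $\bx$ and $\{x_0\}$, neither of which you carry out.  More seriously, your plan for part~(c) rests on the assertion that $\Hom_k(\frK_\bx/\frN_\bx,k)$ contributes exactly $(\frM_\ab^*)^{|\bx|-1}$.  There is no reason for this: Example~\ref{ex:J} shows $\frK_{\bx,\ab}$ can be infinite-dimensional, and Proposition~\ref{prop:mdec} gives no control on the size or structure of the trivial quotient $\frK_\bx/\frN_\bx$ beyond the fact that $\g^\bx$ acts trivially on it.  The identity in part~(c) is not a statement about the abelianization of $\frK_\bx$ at all; it is a combinatorial bookkeeping identity obtained by expanding tensor products and counting how many copies of $\Ext^1_\frM(k_0,k_0)$ appear on each side.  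Your Hochschild--Serre machinery does not see this directly.
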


\begin{proof}
Suppose $\psi(x_0) \ne \psi'(x_0)$ for some $x_0 \in \bx$.  Then
\begin{align*}
  \Ext^1_\frM(V,V') &= \textstyle \Ext^1_\frM \left( \bigotimes_{x \in \bx} V_x,
  \bigotimes_{x \in \bx} V'_x \right) \\
  &\cong  \textstyle \Ext^1_\frM \left( \left( \bigotimes_{x \in \bx,\ x \ne x_0} V_x \right) \otimes
  \left( \bigotimes_{x \in \bx,\ x \ne x_0} (V'_x)^* \right), V'_{x_0}
  \otimes V_{x_0}^* \right).
\end{align*}
Recall that a tensor product of finite-dimensional completely reducible (e.g.\ irreducible) modules is again completely reducible. In particular, $V'_{x_0} \ot V^*_{x_0}$ is a direct sum of nontrivial irreducible $\g^{x_0}$-modules (hence $\frM$-modules) by Lemma~\ref{lem:schur-tensor}, since $V'_{x_0}$ and $V_{x_0}$ are not isomorphic. Using that $\Ext$ commutes with finite direct sums (\cite[Prop.~3.3.4]{wei}), it then follows
from Proposition~\ref{prop:disjoint-support} that $\Ext^1_\frM(V,V')=0$ unless $\left( \bigotimes_{x \in \bx,\ x \ne x_0} V_x \right) \otimes \left( \bigotimes_{x \in \bx,\ x \ne x_0} (V'_x)^* \right)$ contains a copy of the trivial module.  By Lemma~\ref{lem:schur-tensor}, this occurs if and only if $\bigotimes_{x \in \bx,\ x \ne x_0} V_x \cong \bigotimes_{x \in \bx,\ x \ne x_0} (V'_x)^*$, in which case it contains exactly one copy of the trivial module.  By \cite[Prop.~4.14]{NSS}, this is true if and only if $V_x \cong V_x'$ for all $x \in \bx$, $x \ne x_0$.  If this condition is satisfied, we have
\begin{gather*} \textstyle
  \Ext^1_\frM \left( \left( \bigotimes_{x \in \bx,\ x \ne x_0} V_x \right) \otimes
  \left( \bigotimes_{x \in \bx,\ x \ne x_0} (V'_x)^* \right), V'_{x_0}
  \otimes V_{x_0}^* \right) \\
  \cong \Ext^1_\frM (k_0, V_{x_0}' \otimes V_{x_0}^*)
  \cong \Ext^1_\frM (V_{x_0}, V_{x_0}').
\end{gather*}
This concludes the proof of parts~\eqref{theo-item:two-eval:multiple-orbit-diff} and~\eqref{theo-item:two-eval:one-orbit-diff}.

Now suppose $\psi = \psi'$.  Then for each $x \in \bx$, by Lemma~\ref{lem:schur-tensor} we have
\[ \ts
  V_x \otimes V_x^* \cong  k_0 \oplus \left( \bigoplus_{i \in J_x} V_x^i \right),
\]
where the $V_x^i$ are nontrivial irreducible $\g^\bx$-modules.  Then
\begin{align*}
  \Ext^1_\frM (V,V') &= \ts \Ext^1_\frM \left( \bigotimes_{x \in \bx} V_x, \bigotimes_{x \in \bx} V_x \right) \\
  &\cong \ts \Ext^1_\frM \left( \bigotimes_{x \in \bx} (V_x \otimes V_x^*), k_0 \right) \\
  &= \ts \Ext^1_\frM \left( \bigotimes_{x \in \bx}  \left( k_0 \oplus \left( \bigoplus_{i \in J_x} V_x^i \right) \right), k_0 \right) \\
  &= \ts \Ext^1_\frM \left( k_0 \oplus \left( \bigoplus_{x \in \bx,\ i \in J_x} V_x^i \right), k_0 \right) \\
  &= \ts \Ext^1_\frM \left( k_0, k_0 \right) \oplus \bigoplus_{x \in \bx,\ i \in J_x} \Ext^1_\frM \left( V_x^i, k_0 \right),
\end{align*}
where, in the second-to-last equality, we used part~\eqref{theo-item:two-eval:multiple-orbit-diff} to conclude that $\Ext_\frM^1(V^i_x \otimes V^j_y, k_0) = 0$ for $x \ne y$, $i \in J_x$, $j \in J_y$.  On the other hand, we have
\begin{align*} \ts
  \bigoplus_{x \in \bx} \Ext^1_\frM (V_x,V_x') &\cong \ts \bigoplus_{x \in \bx} \Ext^1_\frM \left( V_x \otimes V_x^*, k_0 \right) \\
  &= \ts \bigoplus_{x \in \bx} \Ext^1_\frM \left( k_0 \oplus \bigoplus_{i \in J_x} V_x^i, k_0 \right) \\
  &= \ts \bigoplus_{x \in \bx} \left(  \Ext^1_\frM(k_0,k_0) \oplus \bigoplus_{i \in J_x} \Ext^1_\frM (V_x^i, k_0 ) \right).
\end{align*}
Comparing these expressions, and using \eqref{eq:ext-homology} and Lemma~\ref{lem:corone}\eqref{lem-item:coronea} to replace the $\Ext_\frM^1(k_0,k_0)$ by $\frM_\ab^*$, yields part~\eqref{theo-item:two-eval:isom}. \qed
\end{proof}

\begin{rem}
The special case of Theorem~\ref{theo:two-eval} where $\Gamma$ is trivial and $\g$ is semisimple was proved by Kodera (\cite[Th.~3.6]{kodera}).  In this case, the term $(\frM_\ab^*)^{|\bx|-1}$ does not appear, since $\frM_\ab=0$ in the setting of \cite{kodera}.
\end{rem}

Theorem~\ref{theo:two-eval} reduces the determination of extensions between evaluation modules to the computation of extensions between single orbit evaluation representations supported on the same orbit.  It is thus important to have an explicit formula for these.

\begin{theo}[Evaluation representations supported on the same orbit]
\label{theo:ext-eval-point} Let $V$ and $V'$ be two irreducible finite-dimensional single orbit evaluation representations supported on the same orbit $\Gamma \cdot x$ for some $x \in X_\rat$.  Suppose that ${\g^x_\ab}$ acts on $V$ and $V'$ by linear forms $\la$ and $\la'$ respectively. Let
\[
   \frZ_x := \ev^{-1}_x({\g^x_\ab}) = \{ \al \in \frM : [\al, \frM] \subseteq \frK_x \}.
\]
Then
\begin{equation} \label{eq:evap1}
  \Ext_\frM^1(V, V') \cong
  \begin{cases}
    \Hom_{\g^x} (\frK_{x,\ab}, V^* \ot V') & \hbox{if $\la \ne \la'$}, \\
    \Hom_{\g^x_\rss}\,( \frZ_{x,\ab}, V^*\ot V') & \hbox{if $\la= \la'$}.
  \end{cases}
\end{equation}
In particular, if $\g^x$ is semisimple, then $\g^x=\g^x_\rss$, $\la=\la'=0$,
$\frK_x=\frZ_x$, and
\begin{equation} \label{eq:evap2}
  \Ext_\frM^1(V, V') \cong
  \Hom_{\g^x} (\frK_{x,\ab}, V^* \ot V').
\end{equation}
\end{theo}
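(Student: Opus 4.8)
The plan is to convert $\Ext^1_\frM(V,V')$ into a single Lie algebra cohomology group and then apply Proposition~\ref{prop:feiginprop}, choosing the ideal $K$ appropriately in each of the two cases. First I would put $W = \Hom_k(V,V') \cong V^* \ot V'$. Since $V$ and $V'$ are single orbit evaluation representations supported on $\Ga \cdot x$, each factors through $\ev_x : \frM \to \g^x$; hence $\frK_x$ acts trivially on $W$, so $W$ is naturally a finite-dimensional $\g^x$-module, completely reducible because it is a tensor product of completely reducible modules. By~\eqref{eq:ext-homology}, $\Ext^1_\frM(V,V') \cong \rmH^1(\frM, W)$, so it suffices to compute $\rmH^1(\frM,W)$. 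By Lemma~\ref{lem:redirrep}, the centre $\g^x_\ab$ acts on $V$, $V'$, $V^*$ by $\la$, $\la'$, $-\la$, and therefore on $W$ by the linear form $\la' - \la$; this dichotomy ($\la'-\la$ zero or not) is what separates the two cases.

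\emph{The case $\la \ne \la'$.} Here I would pick $z \in \g^x_\ab$ with $(\la'-\la)(z)\ne 0$, so that $z$ acts on $W$ as the nonzero scalar $(\la'-\la)(z)$, hence invertibly. Then I would apply Proposition~\ref{prop:feiginprop}\eqref{prop-item:feigenprop:completely-reducible} with $L = \frM$, the ideal $K = \frK_x$ (contained in the kernel of the action on $W$), and $\frl = L/K \cong \g^x$, which is reductive by the standing hypotheses: since the action on $W$ is completely reducible and the central element $z\in\frl_\ab$ acts invertibly, this gives $\rmH^1(\frM,W) \cong \Hom_{\g^x}(\frK_{x,\ab}, W)$, where $\frK_{x,\ab}$ carries the $\g^x$-module structure described in Proposition~\ref{prop:mdec}. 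This is the first line of~\eqref{eq:evap1}.

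\emph{The case $\la = \la'$.} Now $\g^x_\ab$ acts trivially on $W$, hence so does the whole ideal $\frZ_x = \ev_x^{-1}(\g^x_\ab)$, since it acts through $\ev_x$ with image $\g^x_\ab$. Because $\ev_x : \frM \to \g^x$ is a surjective Lie algebra homomorphism with kernel $\frK_x$ and $\g^x_\ab$ is an ideal of $\g^x$, the subspace $\frZ_x$ is an ideal of $\frM$ with $\frM/\frZ_x \cong \g^x/\g^x_\ab = \g^x_\rss$, which is semisimple; moreover $\frZ_x' = [\frZ_x,\frZ_x] \subseteq [\frM,\frZ_x] \subseteq \frK_x \subseteq \frZ_x$, so the adjoint action of $\frM$ on $\frZ_{x,\ab} = \frZ_x/\frZ_x'$ factors through $\frM/\frZ_x \cong \g^x_\rss$. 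Applying Proposition~\ref{prop:feiginprop}\eqref{prop-item:feigenprop:completely-reducible}, now using that $\frl$ is semisimple, with $L = \frM$, $K = \frZ_x$ and $\frl = \g^x_\rss$, yields $\rmH^1(\frM,W) \cong \Hom_{\g^x_\rss}(\frZ_{x,\ab},W)$, the second line of~\eqref{eq:evap1}. Finally, if $\g^x$ is semisimple then $\g^x_\ab = 0$, which forces $\la = \la' = 0$, $\g^x_\rss = \g^x$, and $\frZ_x = \ev_x^{-1}(0) = \frK_x$, so the two lines of~\eqref{eq:evap1} coincide and reduce to~\eqref{eq:evap2}.

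\emph{Main obstacle.} Almost everything above is bookkeeping around~\eqref{eq:ext-homology} and Proposition~\ref{prop:feiginprop}; the one point that needs an idea is the equal-weights case. Keeping $K = \frK_x$ there would only produce the exact sequence of Proposition~\ref{prop:feiginprop}\eqref{prop-item:feigenprop:gab-zero-action}, with a transgression into $\rmH^2(\g^x,W)$ that need not vanish when $\g^x$ has a nonzero centre. The fix is to enlarge $K$ from $\frK_x$ to $\frZ_x$: the quotient then becomes the semisimple algebra $\g^x_\rss$, Whitehead's lemmas kill $\rmH^1$ and $\rmH^2$ of the quotient, and the clean isomorphism of Proposition~\ref{prop:feiginprop}\eqref{prop-item:feigenprop:completely-reducible} applies on the nose. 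The remaining work — verifying that $\frZ_x$ is an ideal, that $\frM/\frZ_x \cong \g^x_\rss$, and that the induced $\g^x_\rss$-action on $\frZ_{x,\ab}$ is the expected one — is routine.
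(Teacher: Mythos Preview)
Your proof is correct and follows essentially the same approach as the paper's: reduce to $\rmH^1(\frM,W)$ via~\eqref{eq:ext-homology}, then apply Proposition~\ref{prop:feiginprop}\eqref{prop-item:feigenprop:completely-reducible} with $K=\frK_x$ (using an invertible central element) when $\la\ne\la'$, and with $K=\frZ_x$ (using semisimplicity of $\frM/\frZ_x\cong\g^x_\rss$) when $\la=\la'$. The only small omission is that the theorem statement asserts the equality $\ev_x^{-1}(\g^x_\ab)=\{\al\in\frM:[\al,\frM]\subseteq\frK_x\}$, which the paper verifies in one line; you implicitly use one inclusion but do not check the other, so you should add that short verification for completeness.
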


\begin{proof} Let $\al \in \frM$. Then $[\al, \frM] \subseteq \frK_x
\iff [\al(x), \be(x)]=0$ for all $\be \in \frM \iff \al(x) \in
{\g^x_\ab}$. This proves the characterization of $\frZ_x$.

We know from \eqref{eq:ext-homology} that $\Ext^1_\frM(V, V') \cong
\rmH^1(\frM, W)$ for the $\frM$-module $W= V^* \ot V'$. Note that ${\g^x_\ab}$ acts on $W$ by $\la' - \la$. By the definition of an evaluation module, the representation of $\frM$ on
$W$ factors through $\frM/\frK_x \cong \g^x$. Hence, in the case $\lambda \ne \lambda'$,
the isomorphism \eqref{eq:evap1} is a special case of
\eqref{feigin1}. In the case $\lambda = \lambda'$, ${\g^x_\ab}$ acts trivially on $W$ and
the representation of $\frM$ on $W$ factors through $\frM/\frZ_x$. Since $\frM/\frZ_x \cong \g^x_\rss$, the isomorphism
\eqref{eq:evap1} is also a consequence of \eqref{feigin1}. \qed
\end{proof}

\begin{rem}
  In fact, the proof of Theorem~\ref{theo:ext-eval-point} carries through to the case that $V$ and $V'$ are supported on multiple orbits and~\eqref{eq:evap1} remains true in this generality (with $x$ replaced by $\bx \in X_*$).  We choose to present the result in the single orbit case since Theorem~\ref{theo:two-eval} tells us that the extensions will be zero if $V$ and $V'$ differ on more than one orbit.
\end{rem}

Since the action of $\Ga$ leaves $\g_\rss$ and $\g_\ab$ invariant and hence also $\g_\rss \ot A$ and $\g_\ab \ot A$, we have a decomposition
\begin{equation} \label{eq:algebra-decomp}
  M(X,\g)^\Ga  =  M(X,\g_\rss)^\Ga \boxplus M(X,\g_\ab)^\Ga.
\end{equation}
The following proposition allows us to reduce to the case where $\g$ is semisimple.

\begin{prop} \label{prop:M-sum-extensions}
Suppose $V, V'$ are irreducible finite-dimensional representations of $\frM$.  Write $V=V_\rss \otimes k_\lambda$ and $V' = V'_\rss \otimes k_{\lambda'}$ for $V_\rss, V'_\rss$ irreducible representations of $M(X,\g_\rss)^\Gamma$ and $k_\lambda, k_{\lambda'}$ one-dimensional representations of $M(X,\g_\ab)^\Gamma$.
\begin{enumerate}
  \item \label{prop-item:M-sum-la-ne} If $\lambda \ne \lambda'$, then $\Ext^1_\frM(V,V') = 0$.

  \item \label{prop-item:M-sum:la-eq} If $\lambda = \lambda'$, then
    \[
      \Ext^1_\frM(V,V') \cong
      \begin{cases}
        \Ext^1_{M(X,\g_\rss)^\Gamma}(V_\rss,V'_\rss), & V_\rss \not \cong V'_\rss, \\
        \Ext^1_{M(X,\g_\rss)^\Gamma}(V_\rss,V'_\rss) \oplus (M(X,\g_\ab)^\Gamma)^*, & V_\rss \cong V'_\rss,
      \end{cases}
    \]
    where $\Ext^1_{M(X,\g_\rss)^\Gamma}(V_\rss,V'_\rss)$ is described in Theorem~\ref{theo:two-eval}.
\end{enumerate}
\end{prop}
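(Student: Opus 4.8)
The plan is to use the algebra decomposition \eqref{eq:algebra-decomp}, $\frM = M(X,\g_\rss)^\Ga \boxplus M(X,\g_\ab)^\Ga$, together with the K\"unneth-type formula for extensions over a direct product of Lie algebras from Proposition~\ref{prop:ext-direct-sums}\eqref{prop-item:kuenneth}. Write $L_1 = M(X,\g_\rss)^\Ga$ and $L_2 = M(X,\g_\ab)^\Ga$, so that $V = V_\rss \ot k_\la$ and $V' = V'_\rss \ot k_{\la'}$ with $V_\rss, V'_\rss$ irreducible $L_1$-modules and $k_\la, k_{\la'}$ one-dimensional $L_2$-modules. The second half of Proposition~\ref{prop:ext-direct-sums}\eqref{prop-item:kuenneth} then gives
\[
  \Ext^1_\frM(V,V') \cong
  \begin{cases}
    0 & \text{if } V_\rss \not\cong V'_\rss,\ k_\la \not\cong k_{\la'}, \\
    \Ext^1_{L_2}(k_\la, k_{\la'}) & \text{if } V_\rss \cong V'_\rss,\ k_\la \not\cong k_{\la'}, \\
    \Ext^1_{L_1}(V_\rss, V'_\rss) & \text{if } V_\rss \not\cong V'_\rss,\ k_\la \cong k_{\la'}, \\
    \Ext^1_{L_1}(V_\rss, V'_\rss) \oplus \Ext^1_{L_2}(k_\la,k_{\la'}) & \text{if } V_\rss \cong V'_\rss,\ k_\la \cong k_{\la'}.
  \end{cases}
\]

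Next I would compute $\Ext^1_{L_2}(k_\la, k_{\la'})$. Since $\g_\ab$ is abelian, $L_2 = M(X,\g_\ab)^\Ga$ is an abelian Lie algebra, so Corollary~\ref{cor:abelian-1d-exts} applies: $\Ext^1_{L_2}(k_\la,k_{\la'})$ is $0$ when $\la \ne \la'$ and is $L_2^* = (M(X,\g_\ab)^\Ga)^*$ when $\la = \la'$. Here one must be slightly careful: the one-dimensional representations $k_\la$ of $\frM$ and of $L_2$ correspond to elements of $\frM_\ab^*$ and $L_2^* = L_{2,\ab}^*$ respectively, and since $\frM' = L_1' \oplus L_2'$ with $L_1' = L_1$ (as $\g_\rss$ is semisimple, so $M(X,\g_\rss)^\Ga$ is perfect — this is worth recalling or citing) and $L_2' = 0$, we have $\frM_\ab \cong L_2$ and the identification $k_\la \cong k_{\la'}$ as $L_2$-modules is the same as $\la = \la'$ as elements of $\frM_\ab^*$, matching the hypothesis of the proposition.

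Combining the two displays gives the result. When $\la \ne \la'$: in the first case of the K\"unneth display $\Ext^1_\frM(V,V') = 0$, and in the second case it equals $\Ext^1_{L_2}(k_\la,k_{\la'}) = 0$ by Corollary~\ref{cor:abelian-1d-exts}; this proves \eqref{prop-item:M-sum-la-ne}. When $\la = \la'$: if $V_\rss \not\cong V'_\rss$ we land in the third case and get $\Ext^1_{L_1}(V_\rss,V'_\rss)$; if $V_\rss \cong V'_\rss$ we land in the fourth case and get $\Ext^1_{L_1}(V_\rss,V'_\rss) \oplus \Ext^1_{L_2}(k_\la,k_\la) \cong \Ext^1_{L_1}(V_\rss,V'_\rss) \oplus (M(X,\g_\ab)^\Ga)^*$, again by Corollary~\ref{cor:abelian-1d-exts}; this proves \eqref{prop-item:M-sum:la-eq}, the reference to Theorem~\ref{theo:two-eval} being the description of $\Ext^1_{M(X,\g_\rss)^\Ga}(V_\rss,V'_\rss)$ already established. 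I do not anticipate a serious obstacle here; the main point requiring care is the bookkeeping identification of one-dimensional modules over $\frM$, $L_2$, and $\frM_\ab^*$, i.e. checking that $M(X,\g_\rss)^\Ga$ is perfect so that the abelianization of $\frM$ is exactly $M(X,\g_\ab)^\Ga$ and the hypothesis $\la = \la'$ is the correct one. One should also note that $V_\rss$ and $V'_\rss$ are genuinely irreducible as $L_1$-modules (so that Schur's lemma, via Lemma~\ref{lem:schur-tensor}, is available in the K\"unneth formula), which follows from Remarks~\ref{rem:irred-classification}\eqref{rem-item:unique-decomp} and Proposition~\ref{prop:ext-direct-sums}\eqref{prop-item:direct-rep}.
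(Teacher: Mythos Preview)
Your approach is correct and matches the paper's: apply Proposition~\ref{prop:ext-direct-sums}\eqref{prop-item:kuenneth} to the decomposition \eqref{eq:algebra-decomp}, then evaluate the $L_2$-extensions using Corollary~\ref{cor:abelian-1d-exts}. That is exactly what the paper does.

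However, your side remark that $M(X,\g_\rss)^\Ga$ is perfect is \emph{false} in general and should be removed. Example~\ref{eg:order-two} (take $\g$ simple, so $\g = \g_\rss$) shows that $\frM_\ab \cong \g_{0,\ab} \ot (A_0/A_1^2)$ can be nonzero; perfectness of $M(X,\g_\rss)^\Ga$ is only established later under extra hypotheses (Lemma~\ref{lem:free-action-perfect}, $\Ga$ abelian and acting freely). Fortunately you do not need it: in the statement of the proposition, $k_\la$ and $k_{\la'}$ are already \emph{defined} as one-dimensional $L_2$-modules, so $\la, \la' \in L_2^*$ and the condition ``$\la = \la'$'' is literally equality there. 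There is no identification with $\frM_\ab^*$ to justify. Simply delete that paragraph of bookkeeping and the proof is clean.
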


\begin{proof}
This follows from Proposition~\ref{prop:ext-direct-sums}\eqref{prop-item:kuenneth} and Corollary~\ref{cor:abelian-1d-exts}. \qed
\end{proof}

We conclude this section with a discussion of extensions in the case of irreducible finite-dimensional representations that are not evaluation representations.  Since Corollary~\ref{cor:ext-moving-arguments} implies that
\[
  \Ext^1_\frM(V,V') \cong \Ext^1_\frM(k_0,V^* \otimes V'),
\]
our previous results apply if $V^* \otimes V'$ is an evaluation representation.  Thus it suffices to describe extensions between the trivial representation $k_0$ and irreducible representations which are not evaluation representations.

Fix $\bx \in X_*$, an evaluation representation $V_\rss$ supported on $\Gamma \cdot \bx$ with $\g^\bx_\ab$ acting trivially, and a one-dimensional representation $k_\lambda$ which is not an evaluation representation.  Let
\[
  V = V_\rss \otimes k_\lambda,\quad \frK = \frK_\bx \cap \frK_\lambda, \quad \frl = \frM/\frK.
\]
The representations $\rho_{V_\rss}$ and $\rho_V$ of $\frM$ on $V_\rss$ and $V$ respectively, as well as $\la$, factor through the canonical map $p : \frM \twoheadrightarrow \frl$, giving rise to representations $\bar \la \in \frl^*$, $\bar \rho_{V_\rss} : \frl \to \gl(V_\rss)$ and $\bar \rho_V : \frl \to \gl(V)$, defined by
\[
  \la = \bar \la \circ p, \quad \rho_{V_\rss} = \bar \rho_{V_\rss} \circ p, \quad\hbox{and} \quad \rho_V = \bar \rho_V \circ p.
\]
Since $\la(\frK_\bx) \ne 0$ (otherwise $\lambda$ factors through $\ev_\bx$ and so is an evaluation representation), there exists $z\in \frK_\bx \setminus \frK$ such that $\la(z)=1$. The canonical image $\bar z \in \frl$ of $z$ then has the property that $\bar \la (\bar z) = 1$. Since $z \in \frK_\bx$, we have $\ev_\bx(z)=0$.  Thus $z$ acts by zero on $V_\rss$, and so
\[
  \rho_V(z) = \Id.
\]

\begin{prop}\label{prop:reductive-homl}
  The Lie algebra $\frl$ is finite-dimensional reductive.  Moreover,
  \[
    \Ext^1_\frM(k_0,V) \cong \rmH^1(\frM, V) \cong \Hom_\frl(\frK_\ab, V).
  \]
\end{prop}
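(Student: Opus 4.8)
The plan is to handle the two claims separately: the second is a direct application of Proposition~\ref{prop:feiginprop}, and the first is a short structural observation that supplies one of its hypotheses.

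For the structure of $\frl$, I would start from the two ideals $\frK_\bx = \Ker \ev_\bx$ and $\frK_\la = \Ker \la$ of $\frM$. We have $\frM/\frK_\bx \cong \g^\bx$, while $\frK_\la$ has codimension one and contains $\frM'$ (as $\la \in \frM_\ab^*$), so it too is an ideal. Since $\frK_\la$ is a hyperplane and $\la(\frK_\bx) \ne 0$ gives $\frK_\bx \not\subseteq \frK_\la$, we must have $\frK_\bx + \frK_\la = \frM$; hence the canonical Lie algebra injection $\frl = \frM/(\frK_\bx \cap \frK_\la) \hookrightarrow \frM/\frK_\bx \boxplus \frM/\frK_\la$ is also surjective, giving $\frl \cong \g^\bx \boxplus (\frM/\frK_\la)$ with $\frM/\frK_\la$ one-dimensional and abelian. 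As $\g^\bx = \boxplus_{x\in\bx} \g^x$ is a finite direct product of reductive Lie algebras it is reductive, hence so is the finite-dimensional $\frl$, which proves the first assertion. Moreover, under this isomorphism the image $\bar z$ of the distinguished element $z$ corresponds to a pair whose $\g^\bx$-component is $0$ (since $z \in \frK_\bx$) and whose $\frM/\frK_\la$-component is nonzero (since $\bar\la(\bar z) = 1$), so $\bar z$ lies in the abelian summand; in particular $\bar z \in \frl_\ab$.

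For the cohomological statement, \eqref{eq:ext-homology} identifies $\Ext^1_\frM(k_0, V)$ with $\rmH^1(\frM, \Hom_k(k_0, V)) = \rmH^1(\frM, V)$, so it remains to compute $\rmH^1(\frM, V)$. I would invoke Proposition~\ref{prop:feiginprop}\eqref{prop-item:feigenprop:completely-reducible} with $L = \frM$, the ideal $K = \frK$, and $\rho = \rho_V$. Its hypotheses are exactly what the discussion preceding the statement has arranged: $\frK \subseteq \Ker \rho_V$ because $\rho_V$ factors through $p : \frM \twoheadrightarrow \frl$; $\frl = \frM/\frK$ is finite-dimensional reductive by the first part; $\rho_V$ is completely reducible because $V = V_\rss \otimes k_\la$ is irreducible; and $\bar z \in \frl_\ab$ acts on $V$ by $\rho_V(z) = \Id$, which is invertible. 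Proposition~\ref{prop:feiginprop}\eqref{prop-item:feigenprop:completely-reducible} then yields precisely $\rmH^1(\frM, V) \cong \Hom_\frl(\frK_\ab, V)$.

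I expect no serious obstacle here: the real content --- producing $z$ and establishing reductivity of $\frM/\frK$ --- has been front-loaded into the setup. The only point requiring attention is that one must land in the ``completely reducible'' branch of Proposition~\ref{prop:feiginprop}\eqref{prop-item:feigenprop:completely-reducible}, not the ``semisimple'' one, since $\frl$ is \emph{not} semisimple (it carries the nonzero abelian summand $\frM/\frK_\la$); this is exactly where the hypothesis that $k_\la$ is not an evaluation representation --- equivalently, $\la(\frK_\bx)\neq0$, which produces the central element $\bar z$ acting invertibly --- is essential.
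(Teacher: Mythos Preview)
Your proof is correct, and the cohomological half matches the paper exactly: both invoke \eqref{eq:ext-homology} for the first isomorphism and Proposition~\ref{prop:feiginprop}\eqref{prop-item:feigenprop:completely-reducible} for the second, using that $\bar z \in \frl_\ab$ acts by the identity on $V$.

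For the reductivity of $\frl$, however, your route differs from the paper's. The paper works with the short exact sequence $0 \to k\bar z \to \frl \to \g^\bx \to 0$ and argues that the induced surjection $\frl' \twoheadrightarrow (\g^\bx)' = \g^\bx_\rss$ is an isomorphism by checking $\bar z \notin \frl'$ (via $\bar\la(\frl')=0$, $\bar\la(\bar z)=1$). You instead observe that $\frK_\bx + \frK_\la = \frM$ (since $\frK_\la$ is a hyperplane not containing $\frK_\bx$) and apply a Chinese-Remainder argument to get the explicit splitting $\frl \cong \g^\bx \boxplus (\frM/\frK_\la)$. Your approach is a bit more direct: it yields the full structure of $\frl$ in one stroke and makes $\bar z \in Z(\frl) = \frl_\ab$ transparent, whereas the paper's exact-sequence argument establishes only what is needed (semisimplicity of $\frl'$) and treats the location of $\bar z$ implicitly. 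Both are short and rest on the same two facts, $\la(\frK_\bx)\ne 0$ and $\frM' \subseteq \frK_\la$.
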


\begin{proof}
  The first isomorphism is simply~\eqref{eq:ext-homology}.  It is obvious from the description of $\frl$ above that $\dim \frl < \infty$, that is, $\frK$ has finite codimension in $\frM$. To show that $\frl$ is reductive, it is equivalent to prove that $\frl'$ is semisimple. From the exact sequence
  \[
    0 \to \frK_\bx / \frK \to \frM/\frK \to \frM/\frK_\bx \to 0
  \]
  and $\la(z)=1$, it follows that we have an exact sequence
  \[
    0 \to k\bar z \to \frl \to \g^\bx \to 0.
  \]
  The epimorphism $\frl \to \g^\bx$ maps $\frl'$ onto the semisimple Lie algebra $(\g^\bx)'$. Since the kernel of the map $\frl \to \g^\bx$ is $k\bar z$, it is therefore enough to show that $\bar z \not \in \frl'$. We know that $\la(\frM')=0$. Since the epimorphism $\frM \to \frl$ maps $\frM'$ onto $\frl'$, $\bar \la(\frl')=0$ follows. But then $\bar \la(\bar z)= 1$ implies $\bar z \not \in \frl'$. Thus $\frl'\cong (\g^\bx)' = \g^\bx_\rss$ is semisimple.  The formula for $\rmH^1(\frM,V)$ is then an application of
  Proposition~\ref{prop:feiginprop}\eqref{prop-item:feigenprop:completely-reducible}. \qed
\end{proof}

\begin{rem}
  The above result should be compared to the $\lambda \ne \lambda'$ case of~\eqref{eq:evap1}.  Loosely speaking, Proposition~\ref{prop:reductive-homl} says that \eqref{eq:evap1} continues to hold in the case that $\lambda - \lambda'$ is not an evaluation representation.
\end{rem}

%
\section{Abelian group actions} \label{sec:abelian}
%

In this section, we focus on the case where the group $\Gamma$ is abelian.  In this context, we are able to give a more explicit description of the extensions between evaluation representations at a single point $x \in X$, where $\g^x$ is semisimple.

We know already from \eqref{eq:evap2} that $\Ext^1_\frM (V_1, V_2) \cong \Hom_{\g^x} (\frK_{x,\ab}, V_1^* \ot V_2)$ for evaluation representations $V_1$ and $V_2$ at $x$.  It is therefore crucial to understand the $\g^x$-module structure of $\frK_{x,\ab}$. It turns out that rather than dealing with $\frK_{x,\ab}=\frK_x/\frK_x'$ itself, a certain quotient $\frK_x/\frQ_x$ of $\frK_{x,\ab}$ is more accessible as a $\g^x$-module and will still provide us with some useful information about $\Ext^1_\frM(V_1, V_2)$.

Let $\Xi$ be the character group of $\Ga$. This is an abelian
group, whose group operation we will write additively. Hence, $0$ is the
character of the trivial one-dimensional representation, and if an
irreducible representation affords the character $\xi$, then $-\xi$
is the character of the dual representation.

If $\Ga$ acts on an algebra $B$ by automorphisms, it is well-known that
$B=\bigoplus_{\xi \in \Xi} B_\xi$ is a $\Xi$-grading, where $B_\xi$
is the isotypic component of type $\xi$. It follows that $\frM$ can be written as
\begin{equation} \label{eq:xrat0}
  \frM = \ts \bigoplus_{\xi \in \Xi} \, \g_\xi \ot A_{-\xi},
\end{equation}
since $\g = \bigoplus_\xi \g_\xi$ and $A=\bigoplus_\xi A_\xi$
are $\Xi$-graded and $(\g_\xi \ot A_{\xi'})^\Ga = 0$ if $\xi'\ne
-\xi$. The decomposition \eqref{eq:xrat0} is an algebra $\Xi$-grading.

If $V$ is any $\Xi$-graded vector space and $H \subseteq \Xi$ is any subset, we define
\[
  V_H = \ts \bigoplus_{\xi \in H} V_\xi.
\]
If $B$ is a $\Xi$-graded algebra and $V$ is a $\Xi$-graded $B$-module, i.e., $B_\tau \cdot V_\xi \subseteq V_{\xi + \tau}$ for all $\tau,\xi \in \Xi$, then it is clear that if $H$ is a subgroup of $\Xi$, then $B_H$ is a subalgebra of $B$ and
\[
  V = \ts \bigoplus_{\omega \in \Xi/H} V_\omega
\]
is a decomposition of $B_H$-modules.

\begin{lem}\label{lem:graded-tensor-products}
Suppose an abelian group $\Delta$ acts on a set $S$ and let $k[\Delta] = \bigoplus_{\delta \in \Delta} \, k e_\delta$ be the group algebra of $\Delta$, with multiplication $e_\delta e_\mu = e_{\delta + \mu}$, $\delta,\mu \in \Delta$.  Furthermore, suppose that
\begin{enumerate}
  \item $\frl = \bigoplus_{\delta \in \Delta} \frl_\delta$ is a $\Delta$-graded Lie algebra,

  \item $U= \bigoplus_{s \in S} U_s$ is an $\frl$-module with $\frl_\delta \cdot U_s \subseteq U_{\delta \cdot s}$,

  \item $V= \bigoplus_{s\in S} V_s$ is a $k[\Delta]$-module with $e_\delta \cdot V_s \subseteq V_{\delta \cdot s}$.
\end{enumerate}
Then $\frl$ acts on $W=\bigoplus_{s\in S} U_s \ot V_s$ by
\begin{equation} \label{eq:l-action-on-W}
  l_\delta \cdot (u_s \ot v_s) = (l_\delta \cdot u_s) \ot (e_\delta \cdot v_s)
\end{equation}
for $l_\delta \in \frl_\delta$, $u_s\in U_s$, and $v_s \in V_s$.
For every $\Delta$-orbit $O \subseteq S$ the subspace $W_O=\bigoplus_{s \in O}
U_s \ot V_s$ is $\frl$-invariant. If $\Delta$ acts freely on $O=\Delta \cdot s_0$ then as $\frl$-modules
\begin{equation}
  W_O \cong \ts \big( \bigoplus_{s\in O} U_s\big) \ot  V_{s_0}
\end{equation}
where on the right hand side $\frl$ acts only on the first factor.
\end{lem}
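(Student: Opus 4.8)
The plan is to prove everything by direct verification on homogeneous elements, using only the grading-compatibility hypotheses (a)--(c), and then to write down the claimed isomorphism explicitly.

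\emph{First,} I would check that~\eqref{eq:l-action-on-W}, extended bilinearly in both variables, really defines an $\frl$-module structure on $W=\bigoplus_{s\in S}U_s\ot V_s$. The only point that is not immediate is the Lie bracket identity. For $l_\delta\in\frl_\delta$ and $l_\mu\in\frl_\mu$, applying~\eqref{eq:l-action-on-W} twice to $u_s\ot v_s$ and using $e_\delta e_\mu=e_{\delta+\mu}$ in $k[\Delta]$ gives
\[
  l_\delta\cdot\bigl(l_\mu\cdot(u_s\ot v_s)\bigr)-l_\mu\cdot\bigl(l_\delta\cdot(u_s\ot v_s)\bigr)=\bigl([l_\delta,l_\mu]\cdot u_s\bigr)\ot\bigl(e_{\delta+\mu}\cdot v_s\bigr),
\]
and since $[l_\delta,l_\mu]\in\frl_{\delta+\mu}$ the right-hand side is exactly $[l_\delta,l_\mu]\cdot(u_s\ot v_s)$; bilinearity then yields the bracket relation in general. (Here one uses that $e_\delta(e_\mu v_s)=e_{\delta+\mu}v_s$ because $V$ is a $k[\Delta]$-module, and that $l_\mu\cdot u_s\in U_{\mu\cdot s}$, $e_\mu\cdot v_s\in V_{\mu\cdot s}$, so the second application of~\eqref{eq:l-action-on-W} is legitimate.)

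\emph{Second,} since $\frl_\delta$ sends $U_s\ot V_s$ into $U_{\delta\cdot s}\ot V_{\delta\cdot s}$ and $\delta\cdot s$ lies in the $\Delta$-orbit of $s$, each $W_O$ is an $\frl$-submodule, and $W$ decomposes as the direct sum of the $W_O$ over the orbits $O\subseteq S$.

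\emph{Third,} assume $\Delta$ acts freely on $O=\Delta\cdot s_0$, so that $\delta\mapsto\delta\cdot s_0$ is a bijection $\Delta\to O$; since $e_0$ acts as the identity on the $k[\Delta]$-module $V$, each $e_\delta\colon V_{s_0}\to V_{\delta\cdot s_0}$ is an isomorphism with inverse $e_{-\delta}$. I would then define
\[
  \Phi\colon W_O\longrightarrow\Bigl(\bigoplus_{s\in O}U_s\Bigr)\ot V_{s_0},\qquad \Phi(u_s\ot v_s)=u_s\ot(e_{-\delta}\cdot v_s)\ \text{ for } s=\delta\cdot s_0,
\]
which is well defined by freeness and is plainly a linear bijection, with inverse $u_s\ot v_{s_0}\mapsto u_s\ot(e_\delta\cdot v_{s_0})$. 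Equivariance is a one-line check: for $l_\mu\in\frl_\mu$ and $s=\delta\cdot s_0$ we have $l_\mu\cdot u_s\in U_{(\mu+\delta)\cdot s_0}$, hence
\[
  \Phi\bigl(l_\mu\cdot(u_s\ot v_s)\bigr)=(l_\mu\cdot u_s)\ot\bigl(e_{-(\mu+\delta)}e_\mu\cdot v_s\bigr)=(l_\mu\cdot u_s)\ot(e_{-\delta}\cdot v_s)=l_\mu\cdot\Phi(u_s\ot v_s),
\]
where the action on the right-hand module is the one in which $\frl$ operates only on the first tensor factor. Thus $\Phi$ is an isomorphism of $\frl$-modules.

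The proof is essentially bookkeeping; the only place where a hypothesis is used beyond formal manipulation is the freeness of the $\Delta$-action on $O$, which is precisely what makes $\Phi$ well defined. (Without freeness one would have to replace the plain tensor product on the right by an induced-type construction over the stabilizer, so this is the natural limitation of the statement rather than a genuine obstacle to the proof.)
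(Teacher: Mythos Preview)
Your proof is correct and follows exactly the same approach as the paper: the paper simply declares the action check and $\frl$-invariance of $W_O$ to be easy/obvious, and for the last claim writes down the very same map $\psi(u_s\ot v_s)=u_s\ot e_{-\lambda}\cdot v_s$ (for $s=\lambda\cdot s_0$) that you call $\Phi$. You have just filled in the details the paper omits.
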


\begin{proof}
That \eqref{eq:l-action-on-W} defines an action of $\frl$ is easily checked and that $W_O$ is an $\frl$-submodule is obvious. For the proof of the last claim we can use the $\frl$-module isomorphism
$\psi : W_O \to \bigoplus_{s\in O} U_s \ot V_{s_0}$ given by $\psi(u_s \ot v_s) = u_s \ot e_{-\la} \cdot v_s$ for $s=\la \cdot s_0$. \lv{$\psi$ is a vector space isomorphism since the action of $e_\la $ is invertible. For $s=\la \cdot s_0$ and $l_\mu \in \frl_\mu$ we have
\begin{align*}
  \psi \big( l_\mu \cdot (u_s \ot v_s)\big) &= \psi \big( (l_\mu \cdot u_s) \ot (e_\mu \cdot v_s)\big)
  = (l_\mu \cdot u_s) \ot \big( e_{-\la -\mu} \cdot (e_\mu \cdot v_s) \big) \\
  &= (l_\mu \cdot u_s) \ot e_{-\la}\cdot v_s = l_\mu \cdot \psi(u_s \ot v_s).
\end{align*} } \qed
\end{proof}

For $x\in X_\rat$, let
\[
  I=\{a\in A : a(y)= 0 \  \forall\ y\in \Gamma \cdot x\} = \textstyle \bigcap_{y\in \Gamma \cdot x} \fm_y,
\]
where we recall that $\fm_y$ is the maximal ideal corresponding to the rational point $y\in X$.  Clearly $I$ is a $\Ga$-invariant ideal of $A$.  Also define
\[
  \Xi_x = \{ \xi \in \Xi : \xi|_{\Ga_x} = 0 \} \cong \Xi(\Ga/\Ga_x).
\]

\begin{lem} \label{lem:A-mod-I-isom}
We have an isomorphism of algebras
\begin{equation} \label{eq:AI-isom-group-alg} \textstyle
  A/I = \bigoplus_{\xi \in \Xi_x} (A/I)_\xi \cong k[\Xi_x].
\end{equation}
In particular
\begin{equation} \label{eq:function-ideal-condition}
  \xi \not\in \Xi_x  \iff A_\xi = I_\xi,
\end{equation}
and so
\begin{equation} \label{eq:free-action-ideal-condition}
  \Gamma_x = \{1\} \iff A_\xi \ne I_\xi \ \forall\ \xi \in \Xi.
\end{equation}
\end{lem}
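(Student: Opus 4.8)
The plan is to realize $A/I$ concretely as a function algebra on the orbit $\Ga\cdot x$ and then to invoke Fourier duality for the finite abelian group $\Ga/\Ga_x$. First I would note that the points $y\in\Ga\cdot x$ are pairwise distinct rational points, so the ideals $\fm_y$, $y\in\Ga\cdot x$, are pairwise comaximal; hence the Chinese Remainder Theorem yields an algebra isomorphism
\[
  A/I \;\xrightarrow{\ \sim\ }\; \prod_{y\in\Ga\cdot x} A/\fm_y \;\cong\; k^{\Ga\cdot x},
\]
the last identification using $A/\fm_y\cong k$ (each $y$ is rational, being in the orbit of the rational point $x$). In other words, $A/I$ is the algebra $\mathrm{Fun}(\Ga\cdot x,k)$ of $k$-valued functions on the finite set $\Ga\cdot x$. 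Since $I$ is $\Ga$-invariant, this is an isomorphism of $\Ga$-algebras, where $\Ga$ acts on $\mathrm{Fun}(\Ga\cdot x,k)$ through its permutation action on $\Ga\cdot x$; as a $\Ga$-set, $\Ga\cdot x\cong\Ga/\Ga_x$, so $A/I\cong\mathrm{Fun}(\Ga/\Ga_x,k)$ as $\Ga$-algebras.

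Next I would use that $\Ga$, hence $\Ga/\Ga_x$, is abelian and that $k$ is algebraically closed of characteristic $0$: every irreducible representation of $\Ga/\Ga_x$ is then one-dimensional, given by a character, and the regular representation decomposes as $\mathrm{Fun}(\Ga/\Ga_x,k)\cong\bigoplus_{\chi}k\,\chi$, the sum over $\chi\in\Xi(\Ga/\Ga_x)$, each character occurring exactly once and realized by the function $\chi$ itself. Pulling back along $\Ga\twoheadrightarrow\Ga/\Ga_x$, the characters of $\Ga/\Ga_x$ are precisely the elements of $\Xi_x$; so the $\Ga$-isotypic component $(A/I)_\xi$ vanishes for $\xi\notin\Xi_x$ and equals the line $k\chi_\xi$ for $\xi\in\Xi_x$. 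Since $\chi_\xi\chi_\eta=\chi_{\xi+\eta}$ pointwise, the assignment $e_\xi\mapsto\chi_\xi$ is an algebra isomorphism $k[\Xi_x]\xrightarrow{\sim}A/I$ carrying the tautological grading of the group algebra to the $\Xi$-grading of $A/I$, which is \eqref{eq:AI-isom-group-alg}.

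For the remaining assertions I would use that $I$, being $\Ga$-invariant, is itself $\Xi$-graded with $I_\xi=I\cap A_\xi$, so $(A/I)_\xi\cong A_\xi/I_\xi$; hence $\xi\notin\Xi_x\iff(A/I)_\xi=0\iff A_\xi=I_\xi$, giving \eqref{eq:function-ideal-condition}, and equivalently $\xi\in\Xi_x\iff A_\xi\ne I_\xi$. Finally, since the characters of the finite abelian group $\Ga$ separate its points, $\bigcap_{\xi\in\Xi}\ker\xi=\{1\}$, so $\Ga_x=\{1\}$ if and only if $\Ga_x\subseteq\ker\xi$ for every $\xi\in\Xi$, i.e.\ $\Xi_x=\Xi$; combined with the previous equivalence, this is exactly the statement that $A_\xi\ne I_\xi$ for all $\xi\in\Xi$, which is \eqref{eq:free-action-ideal-condition}.

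No step here is deep. The one place requiring care is the bookkeeping in the Fourier-duality step: one must check that the $\Ga$-isotypic decomposition of $\mathrm{Fun}(\Ga/\Ga_x,k)$ matches the grading of $k[\Xi_x]$ as \emph{graded algebras}, not merely as graded vector spaces, and it is precisely here that the hypotheses that $\Ga$ is abelian and that $k$ is algebraically closed of characteristic $0$ are essential (for non-abelian $\Ga$, or in bad characteristic, the conclusion fails).
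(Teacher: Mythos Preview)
Your proof is correct and follows essentially the same approach as the paper's: both identify $A/I$ with the function algebra on the orbit $\Gamma\cdot x\cong\Gamma/\Gamma_x$ and then use the character decomposition of the regular representation of $\Gamma/\Gamma_x$ to obtain the isomorphism with $k[\Xi_x]$. Your presentation is somewhat more explicit (invoking the Chinese Remainder Theorem and Fourier duality by name), whereas the paper is terser and organizes the logic slightly differently---it first notes directly that $\xi\notin\Xi_x\Rightarrow A_\xi=I_\xi$ and then pins down the isomorphism by picking in each $(A/I)_\xi$ the unique function taking the value $1$ at $x$---but the underlying argument is the same.
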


\begin{proof}
It is easy to see that $\xi \not\in \Xi_x  \implies A_\xi = I_\xi$.  This implies the first equality in \eqref{eq:AI-isom-group-alg}.   Since $A/I$ is the coordinate ring of the finite set of points $\Gamma \cdot x$ on which $\Gamma/\Gamma_x$ acts simply transitively, it follows that for each $\xi \in \Xi_x$ there is a unique function (more precisely, coset of functions) in $(A/I)_\xi$ taking the value one at $x$.  The isomorphism in \eqref{eq:AI-isom-group-alg} is then given by identifying this function with $e_\xi$.  From this isomorphism, \eqref{eq:function-ideal-condition} follows, which in turn implies~\eqref{eq:free-action-ideal-condition}. \qed
\end{proof}

\begin{rem} \label{rem:A-noetherian-not-needed}
Lemma~\ref{lem:A-mod-I-isom} continues to hold for $x \in \maxSpec A$, without the assumption that $A$ is finitely generated.  One merely replaces $k$ by $A/\fm_x$ everywhere in the proof.
\end{rem}

We say that $\Gamma$ acts \emph{freely} on an affine scheme $X=\Spec A$ if it acts freely on $\maxSpec A$.  This is the case, for instance, for the multiloop algebras (Example~\ref{eg:multiloop}).

\begin{lem} \label{lem:free-actions}
Suppose a finite abelian group $\Gamma$ acts on a unital associative commutative $k$-algebra $A$ (and hence on $X = \Spec A$) by automorphisms.   Let $A = \bigoplus_{\xi \in \Xi} A_\xi$ be the associated grading on $A$, where $\Xi$ is the character group of $\Gamma$.  Then the following conditions are equivalent:
\begin{enumerate}
  \item \label{lem-item:free-actions:Gamma-free} $\Gamma$ acts freely on $X$,
  \item \label{lem-item:free-actions:A0-cond} $A_\xi A_{-\xi} = A_0$ for all $\xi \in \Xi$,
  \item \label{lem-item:free-actions:A-strong-grading} $A_\tau A_\xi = A_{\tau + \xi}$ for all $\tau,\xi \in \Xi$ (i.e. the grading on $A$ is \emph{strong}),
  \item \label{lem-item:free-actions:ideal-strong-grading} $\prod_{i=1}^n I_{\xi_i} = (I^n)_{\sum_{i=1}^n \xi_i}$ for all $n \ge 1$, $\xi_1,\dots,\xi_n \in \Xi$, and any $\Gamma$-invariant ideal $I$ of $A$.
\end{enumerate}
\end{lem}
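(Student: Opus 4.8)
The plan is to establish (b)$\iff$(c)$\iff$(d) by purely formal manipulations with the $\Xi$-grading, and then to prove (a)$\iff$(b), which is where the geometry enters via the isomorphism $A/I\cong k[\Xi_x]$ of Lemma~\ref{lem:A-mod-I-isom}. For (b)$\iff$(c): one always has $A_\tau A_\xi\subseteq A_{\tau+\xi}$, and if (b) holds then $A_{\tau+\xi}=A_0A_{\tau+\xi}=(A_\tau A_{-\tau})A_{\tau+\xi}\subseteq A_\tau(A_{-\tau}A_{\tau+\xi})\subseteq A_\tau A_\xi$ gives the reverse inclusion, while conversely (b) follows from (c) by taking the two degrees to be $\xi$ and $-\xi$. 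For (d)$\Rightarrow$(c), apply (d) to the $\Ga$-invariant ideal $I=A$ with $n=2$, which reads $A_{\xi_1}A_{\xi_2}=(A^2)_{\xi_1+\xi_2}=A_{\xi_1+\xi_2}$. For (c)$\Rightarrow$(d), the key is the auxiliary identity
\[
  I_\eta=A_{\eta-\xi}\,I_\xi\qquad(\eta,\xi\in\Xi),
\]
valid for every $\Ga$-invariant (hence $\Xi$-graded) ideal $I=\bigoplus_\xi I_\xi$ when the grading on $A$ is strong: the inclusion $\supseteq$ holds because $I$ is a graded ideal, and $\subseteq$ follows from $I_\eta=A_0I_\eta=(A_{\eta-\xi}A_{\xi-\eta})I_\eta=A_{\eta-\xi}(A_{\xi-\eta}I_\eta)\subseteq A_{\eta-\xi}I_\xi$ by (c). Granting this, $\prod_{i=1}^nI_{\xi_i}\subseteq(I^n)_{\sum_i\xi_i}$ is immediate, and since $(I^n)_\sigma=\sum_{\eta_1+\cdots+\eta_n=\sigma}I_{\eta_1}\cdots I_{\eta_n}$ it remains to check that $I_{\eta_1}\cdots I_{\eta_n}=I_{\xi_1}\cdots I_{\xi_n}$ whenever $\sum_i\eta_i=\sum_i\xi_i$; this follows by induction on $n$, since commutativity of $A$ and the identity above give $I_{\eta_1}I_{\eta_2}\cdots I_{\eta_n}=I_{\xi_1}\,(A_{\eta_1-\xi_1}I_{\eta_2})\,I_{\eta_3}\cdots I_{\eta_n}=I_{\xi_1}\,I_{\eta_2'}\,I_{\eta_3}\cdots I_{\eta_n}$ with $\eta_2'=\eta_1+\eta_2-\xi_1$, and $\eta_2'+\eta_3+\cdots+\eta_n=\xi_2+\cdots+\xi_n$ lets the inductive hypothesis finish.

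For (a)$\iff$(b) I would argue both directions directly. If $\Ga$ does not act freely, choose $x\in X_\rat$ with $\Ga_x\ne\{1\}$, so that $\Xi_x\subsetneq\Xi$; picking $\xi\in\Xi\setminus\Xi_x$, equation \eqref{eq:function-ideal-condition} gives $A_\xi=I_\xi\subseteq I$ for $I=\bigcap_{y\in\Ga\cdot x}\fm_y$, hence $A_\xi A_{-\xi}\subseteq I\cap A_0=I_0$; since $A/I\cong k[\Xi_x]$ has nonzero component in degree zero we get $I_0\subsetneq A_0$, so $A_\xi A_{-\xi}\ne A_0$ and (b) fails. Conversely, assume $\Ga$ acts freely and fix $\xi\in\Xi$. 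The subspace $A_\xi A_{-\xi}$ is an ideal of $A_0=A^\Ga$; were it proper it would be contained in some maximal ideal $\fm$ of $A^\Ga$, and since $A^\Ga\subseteq A$ is an integral extension, lying over would give $\fm=\fm_x\cap A^\Ga=I\cap A_0=I_0$ for some $x\in X_\rat$ (with $I=\bigcap_{y\in\Ga\cdot x}\fm_y$). But freeness gives $\Ga_x=\{1\}$, hence $\Xi_x=\Xi$ and $\xi\in\Xi_x$, so the image of $A_\xi A_{-\xi}$ under the graded surjection $A\to A/I\cong k[\Xi_x]$ equals $ke_\xi\cdot ke_{-\xi}=ke_0=(A/I)_0\ne0$, contradicting $A_\xi A_{-\xi}\subseteq I_0\subseteq I$. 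Therefore $A_\xi A_{-\xi}=A_0$, i.e.\ (b) holds.

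I expect the implication (a)$\Rightarrow$(b) to be the main obstacle: the crux is identifying the maximal ideals of $A^\Ga$ with the orbit ideals $I_0$ via integrality and lying over, after which freeness of the action together with the already-established computation $A/I\cong k[\Xi_x]$ forces each $A_\xi A_{-\xi}$ out of every maximal ideal of $A^\Ga$. The remaining steps are routine, the only mildly technical ingredient being the strong-grading identity $I_\eta=A_{\eta-\xi}I_\xi$ that drives the induction in (c)$\Rightarrow$(d).
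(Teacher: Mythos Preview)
Your proof is correct and largely parallels the paper's. The implications (b)$\Leftrightarrow$(c)$\Leftrightarrow$(d) are handled by both of you via the same ``strong grading'' tricks; your formulation through the identity $I_\eta=A_{\eta-\xi}I_\xi$ is a clean way to organize the induction, while the paper instead writes out explicit partitions of unity $1=\sum f_{i,1}\cdots f_{i,n}$ with $f_{i,j}\in A_{\xi_j-\tau_j}$, but these are the same argument in different clothing. The direction $\neg$(a)$\Rightarrow\neg$(b) is identical. The one genuine difference is in (a)$\Rightarrow$(b): the paper stays entirely inside $A$ by forming the $\Ga$-invariant ideal $J=AA_\xi$, observing $J_0=A_\xi A_{-\xi}\ne A_0$ forces $J\subsetneq A$, and then any maximal ideal $\fm_x\supseteq J$ satisfies $A_\xi=I_\xi$, contradicting \eqref{eq:free-action-ideal-condition}. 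Your route via integrality of $A$ over $A^\Ga$ and lying over is equally valid and arguably more conceptual, though it imports a bit more machinery. One small point: since Lemma~\ref{lem:free-actions} is stated without assuming $A$ finitely generated, the maximal ideal you obtain by lying over is a priori only in $\maxSpec A$, not necessarily $X_\rat$; this is harmless because the paper defines ``acts freely'' via $\maxSpec A$ and Remark~\ref{rem:A-noetherian-not-needed} extends Lemma~\ref{lem:A-mod-I-isom} accordingly, but you should write $x\in\maxSpec A$ rather than $x\in X_\rat$.
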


\begin{proof}
We will use Lemma~\ref{lem:A-mod-I-isom} without the assumption that $A$ is finitely generated (see Remark~\ref{rem:A-noetherian-not-needed}).

\eqref{lem-item:free-actions:Gamma-free} $\Rightarrow$ \eqref{lem-item:free-actions:A0-cond}:  Assume $\Gamma$ acts freely on $X$.  Towards a contradiction, suppose $A_\xi A_{-\xi} \ne A_0$ for some $\xi \in \Xi$.  Let $J = A A_\xi$ be the ideal generated by $A_\xi$.  Since $A_\xi$ is $\Gamma$-invariant, $J$ is a $\Gamma$-invariant ideal.  Note that $J_0 = A_\xi A_{-\xi} \ne A_0$ and so $J \ne A$.  Thus $J$ is contained in some maximal ideal $\fm_x$.  Since $J$ is $\Gamma$-invariant, we have that $J$ is contained in $I = \bigcap_{y \in \Gamma \cdot x} \fm_y$.  So $A_\xi = J_\xi \subseteq I_\xi$.  Thus $I_\xi = A_\xi$.  By \eqref{eq:free-action-ideal-condition} we have $\Gamma_x \ne \{1\}$, which contradicts the fact that $\Gamma$ acts freely on $X$.

\eqref{lem-item:free-actions:A0-cond} $\Rightarrow$ \eqref{lem-item:free-actions:Gamma-free}: Assume $\Gamma$ does not act freely on $X$.  Then there exists a point $x \in X_\rat$ such that $\Gamma_x \ne \{1\}$.  Let $I = \bigcap_{y \in \Gamma \cdot x} \fm_y$.  By \eqref{eq:function-ideal-condition}, we have $A_\xi = I_\xi$ for all $\xi \in \Xi \setminus \Xi_x$.   Choose some $\xi \in \Xi \setminus \Xi_x$ (which is possible since $\Gamma_x \ne \{1\}$).  Then $A_\xi A_{-\xi} = I_\xi A_{-\xi} \subseteq I_0 \ne A_0$.

\eqref{lem-item:free-actions:A0-cond} $\Rightarrow$ \eqref{lem-item:free-actions:A-strong-grading}: Assume~\eqref{lem-item:free-actions:A0-cond} is true.  Fix $\tau,\xi \in \Xi$.  It is clear that $A_\tau A_\xi \subseteq A_{\tau + \xi}$ for all $\tau,\xi$.  By~\eqref{lem-item:free-actions:A0-cond}, we can write
\[ \ts
  1 = \sum_{i=1}^n f_i g_i,\quad f_i \in A_{-\xi},\ g_i \in A_\xi.
\]
Then, for all $p \in A_{\tau + \xi}$, we have
\[  \ts
  p = p \left( \sum_{i=1}^n f_ig_i \right) = \sum_{i=1}^n (pf_i)g_i \in A_\tau A_\xi.
\]

\eqref{lem-item:free-actions:A-strong-grading} $\Rightarrow$ \eqref{lem-item:free-actions:ideal-strong-grading}:  Suppose \eqref{lem-item:free-actions:A-strong-grading} holds.  Let $I$ be a $\Gamma$-invariant ideal of $A$ and $\xi_1,\dots,\xi_n \in \Xi$.  Set $\xi = \sum \xi_i$.  It is clear that $\prod I_{\xi_i} \subseteq (I^n)_\xi$ and so it suffices to prove the reverse inclusion.  Since $(I^n)_{\xi}$ is the sum of all $\prod_{i=1}^n I_{\tau_i}$ for which $\sum \tau_i = \xi$, it is enough to show that $\prod_{i=1}^n I_{\tau_i} \subseteq \prod_{i=1}^n I_{\xi_i}$ for all $\tau_1,\dots,\tau_n \in \Xi$ satisfying $\sum \tau_i = \xi$.  It follows from \eqref{lem-item:free-actions:A-strong-grading} that $\prod_{i=1}^n A_{\xi_i - \tau_i} = A_0$.  Thus we can write
\[ \ts
  1 = \sum_{i=1}^m f_{i,1} \cdots f_{i,n},\quad f_{i,j} \in A_{\xi_j-\tau_j},\ i=1,\dots,m,\
   j = 1,\dots,n.
\]
Then for any $p_i \in I_{\tau_i}$, $1 \le i \le n$, we have
\[ \ts
  \prod_{i=1}^n p_i = \left(\sum_{i=1}^m f_{i,1} \cdots f_{i,n}\right) \prod_{i=1}^n p_i = \sum_{i=1}^m (p_1 f_{i,1}) \cdots (p_n f_{i,n}) \in \prod_{i=1}^n I_{\xi_i}.
\]

It is obvious that \eqref{lem-item:free-actions:ideal-strong-grading} $\Rightarrow$ \eqref{lem-item:free-actions:A-strong-grading} and \eqref{lem-item:free-actions:A-strong-grading} $\Rightarrow$ \eqref{lem-item:free-actions:A0-cond} and so the proof is complete. \qed
\end{proof}

Now,
\begin{align}\nonumber
  \frK_x &= \{ \al \in \g \ot A : \al(x) = 0 \}^\Ga = (\g \ot I)^\Ga
  = \ts \bigoplus_{\xi \in \Xi} \, \g_\xi \ot I_{-\xi}, \\  \label{eq:xrat2}
  \frM/\frK_x &\cong \ts \bigoplus_{\xi \in \Xi} \left(\g_\xi \ot A_{-\xi}\big/ I_{-\xi} \right) \cong \ts\bigoplus_{\xi \in \Xi_x} \g_\xi \ot
  (A/I)_{-\xi},\\ \nonumber
  \frK_x' &= \ts \bigoplus_{\xi \in \Xi}\, (\frK_x')_\xi,
  \quad \text{where} \quad (\frK_x')_\xi =
  \sum_{\ta \in \Xi} [\g_\ta, \g_{\xi-\ta}] \ot I_{-\ta} I_{\ta - \xi}. \nonumber
\end{align}
The ideal $I^2$ is $\Gamma$-invariant since $I$ is.  We define
\[
 \frQ_x := (\g \ot I^2)^\Ga =  \ts\bigoplus_{\xi \in \Xi} \frQ_\xi,
 \quad \text{where } \frQ_\xi = \g_\xi \ot (I^2)_{-\xi},
\]
which is a $\Xi$-graded ideal of $\frM$ containing $\frK_x'$.
Thus
\[
  \frK_x'  \ideal \frQ_x \ideal \frK_x \ideal \frM.
\]

Now,
\begin{equation}
  \g^x = \textstyle \bigoplus_{\xi \in \Xi_x} \, \g^x_\xi
\end{equation}
is a $\Xi_x$-graded Lie algebra.  We know that $\g^x \cong \frM/\frK_x$ acts on $\frK_{x,\ab}$, an
action induced by the adjoint action of  $\frM$ on the ideal $\frK_x$.
Since $\frQ_x$ is an ideal of $\frM$, the action of
$\frM $ on $\frK_{x,\ab}$ leaves $\frQ_x/\frK_x'$
invariant, so that $\frM/\frK_x$ also acts on the quotient
\begin{equation}
   \frK_{x,\ab} \Big/ \frQ_x \big/ \frK_x' \cong  \frK_x / \frQ_x \cong \ts \bigoplus_{\xi \in \Xi} \, \g_\xi \ot (I/I^2)_{-\xi}.
\end{equation}
The action of $\g^x$ on $\frK_x/\frQ_x$ is given in terms of the elements $e_\tau$ used in the isomorphism \eqref{eq:AI-isom-group-alg} as follows:
\begin{equation}
  l_\tau \cdot (u_\xi \ot v_{-\xi}) = [l_\tau, u_\xi] \ot e_{-\tau} \cdot
  v_{-\xi} \in \g_{\tau + \xi} \ot (I/I^2)_{-(\tau +  \xi)},
\end{equation}
for $l_\tau \in \g^x_\tau$, $u_\xi \in \g_\xi$ and $v_{-\xi} \in (I/I^2)_{-\xi}$.

\begin{lem}[Decomposition of the $\g^x$-module $\frK_x/\frQ_x$, $\Gamma$ abelian] \label{lem:deco}
We have a decomposition of $\g^x$-modules
\[ \textstyle
  \frK_x/\frQ_x = \bigoplus_{\omega \in \Xi/\Xi_x} (\frK_x/\frQ_x)_\omega,
\]
and
\[
  (\frK_x/\frQ_x)_\omega \cong \g_\omega \otimes (I/I^2)_{-a_\omega}
\]
as $\g^x$-modules, where $a_\omega$ is any representative of the coset $\omega \in \Xi/\Xi_x$, and where $\g^x$ acts trivially on each $(I/I^2)_{-a_\omega}$.  In particular
\[
  (\frK_x/\frQ_x)_{\Xi_x} \cong \g^x \otimes (I/I^2)_0
\]
as $\g^x$-modules.  For every $\g^x$-module $V$ we have
\begin{equation} \label{eq:KQ-homs}
    \Hom_{\g^x} (\frK_x/\frQ_x, V) \cong \ts \bigoplus_{\om \in \Xi/\Xi_x} \,
    \Hom_{\g^x}( \g_\omega, V) \ot (I/I^2)^*_{-a_\omega}.
\end{equation}
\end{lem}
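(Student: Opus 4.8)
The plan is to deduce the whole lemma from Lemma~\ref{lem:graded-tensor-products}, applied with the abelian group $\Delta=\Xi_x$ acting on the set $S=\Xi$ by translation. First I would assemble the ingredients. Since $\Gamma$ is abelian, $\g^x=\g^{\Gamma_x}=\bigoplus_{\xi\in\Xi_x}\g_\xi=\g_{\Xi_x}$, so the isotypic decomposition of $\g$ restricts to the grading $\frl=\g^x=\bigoplus_{\delta\in\Xi_x}\g^x_\delta$ of Lemma~\ref{lem:graded-tensor-products}(a). For the module I take $U=\g=\bigoplus_{\xi\in\Xi}\g_\xi$ with the adjoint action of $\g^x$; then $\g^x_\delta\cdot U_\xi=[\g^x_\delta,\g_\xi]\subseteq\g_{\delta+\xi}=U_{\delta\cdot\xi}$ for the translation action of $\Xi_x$ on $\Xi$. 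For the $k[\Xi_x]$-module of Lemma~\ref{lem:graded-tensor-products}(c) I take $V=I/I^2$, graded by $V_\xi:=(I/I^2)_{-\xi}$, with $e_\delta\in k[\Xi_x]$ acting as multiplication by the distinguished element $e_{-\delta}\in(A/I)_{-\delta}$ from Lemma~\ref{lem:A-mod-I-isom} (recall $I/I^2$ is an $A/I$-module); the relations $e_\delta e_\mu=e_{\delta+\mu}$ hold, and $e_\delta\cdot V_\xi=e_{-\delta}\cdot(I/I^2)_{-\xi}\subseteq(I/I^2)_{-\delta-\xi}=V_{\delta+\xi}=V_{\delta\cdot\xi}$, as required. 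With $W=\bigoplus_\xi U_\xi\ot V_\xi=\bigoplus_\xi\g_\xi\ot(I/I^2)_{-\xi}=\frK_x/\frQ_x$, the action~\eqref{eq:l-action-on-W} becomes $l_\tau\cdot(u_\xi\ot v_{-\xi})=[l_\tau,u_\xi]\ot(e_{-\tau}\cdot v_{-\xi})$, which is precisely the $\g^x$-action on $\frK_x/\frQ_x$ displayed just before the statement.

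Next I would observe that, for the translation action of the subgroup $\Xi_x$ on $\Xi$, the orbits are exactly the cosets $\omega\in\Xi/\Xi_x$ and the action on each orbit is free. Lemma~\ref{lem:graded-tensor-products} then immediately gives the decomposition $\frK_x/\frQ_x=\bigoplus_{\omega\in\Xi/\Xi_x}(\frK_x/\frQ_x)_\omega$ into $\g^x$-submodules $(\frK_x/\frQ_x)_\omega=\bigoplus_{\xi\in\omega}\g_\xi\ot(I/I^2)_{-\xi}$, together with the $\g^x$-module isomorphism $(\frK_x/\frQ_x)_\omega\cong\big(\bigoplus_{\xi\in\omega}\g_\xi\big)\ot V_{a_\omega}=\g_\omega\ot(I/I^2)_{-a_\omega}$, in which $\g^x$ acts only on the first factor, for any chosen representative $a_\omega\in\omega$. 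The displayed special case $(\frK_x/\frQ_x)_{\Xi_x}\cong\g^x\ot(I/I^2)_0$ is the orbit $\omega=\Xi_x$ with $a_\omega=0$, using once more $\g_{\Xi_x}=\g^x$.

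For the Hom formula I would first note that $I/I^2$ is finite-dimensional: $A$ is Noetherian (Lemma~\ref{lem:noeth}), so $I$ is a finitely generated $A$-module and $I/I^2$ a finitely generated module over the finite-dimensional algebra $A/I\cong k[\Xi_x]$; in particular only finitely many $(\frK_x/\frQ_x)_\omega$ are nonzero. Hence $\Hom_{\g^x}(\frK_x/\frQ_x,V)\cong\bigoplus_{\omega\in\Xi/\Xi_x}\Hom_{\g^x}\big((\frK_x/\frQ_x)_\omega,V\big)$, and combining this with the isomorphism of the previous paragraph and the standard identification $\Hom_{\g^x}(M\ot_k T,V)\cong\Hom_{\g^x}(M,V)\ot_k T^*$ for a finite-dimensional trivial module $T$ (obtained from $\Hom_k(M\ot T,V)\cong\Hom_k(M,V)\ot T^*$ by taking $\g^x$-invariants, which commutes with $\ot T^*$ since $T^*$ is trivial) yields $\Hom_{\g^x}\big((\frK_x/\frQ_x)_\omega,V\big)\cong\Hom_{\g^x}(\g_\omega,V)\ot(I/I^2)^*_{-a_\omega}$, which is~\eqref{eq:KQ-homs}.

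The only genuine work is the bookkeeping in the first paragraph: lining up the sign conventions (the index $V_\xi=(I/I^2)_{-\xi}$ and the twist $e_\delta\mapsto e_{-\delta}$) so that the action produced by Lemma~\ref{lem:graded-tensor-products} is \emph{literally} the induced $\g^x$-action on $\frK_x/\frQ_x$ rather than some isomorphic copy of it. Once that matching is in place, everything else is a transparent consequence of the orbit decomposition, the freeness of a translation action of a subgroup, and finite-dimensionality.
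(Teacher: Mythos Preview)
Your proof is correct and follows exactly the approach of the paper, which simply says the first part is a direct consequence of Lemma~\ref{lem:graded-tensor-products} and that~\eqref{eq:KQ-homs} then follows immediately. You have spelled out the bookkeeping (the choice $\Delta=\Xi_x$, $S=\Xi$, $U=\g$, $V_\xi=(I/I^2)_{-\xi}$, freeness of the translation action, and the finite-dimensionality of $I/I^2$) that the paper leaves implicit.
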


\begin{proof}
The first part is a direct consequence of Lemma~\ref{lem:graded-tensor-products}.  Then~\eqref{eq:KQ-homs} follows immediately. \qed
\end{proof}

\begin{rem}
Since the space $I/I^2$ is finite-dimensional, one could replace $(I/I^2)_{-a_\omega}^*$ by $(I/I^2)_{-a_\omega}$ in~\eqref{eq:KQ-homs}.  We choose to keep the dual because of the geometric interpretation as the tangent space (as opposed to the cotangent space).
\end{rem}

The exact sequence
\[
  0 \to \frQ_x/\frK_x' \to \frK_{x,\ab} \to \frK_x/\frQ_x \to 0
\]
of $\g^x$-modules gives rise to the exact sequence of $\g^x$-modules
\begin{equation} \label{eq:long-exact-gx-module-sequence}
  \begin{split}
  0 \to \Hom_{\g^x}(\frK_x/\frQ_x, V) \to \Hom_{\g^x}(\frK_{x,\ab}, V) \to
  \Hom_{\g^x}(\frQ_x/\frK_x', V) \\
  \to \Ext^1_{\g^x}( \frK_x/\frQ_x,V)  \to \cdots
  \end{split}
\end{equation}
for any $\g^x$-module $V$. \lv{Reference: Assem, page 250}

\begin{prop}[$\Gamma$ abelian] \label{prop:abelian-QK-ext-formula}
Let $V_1$ and $V_2$ be irreducible finite-dimensional evaluation representations supported on the orbit $\Gamma \cdot x$, $x\in X_\rat$, with $\g^x$ semisimple. Then, with the above notation,
\begin{equation} \label{eq:exab2}
  \begin{split}
  \Ext_\frM^1(V_1, V_2) \cong& \Hom_{\g^x}(\frQ_x/\frK_x', V_1^* \ot V_2) \\
  &\oplus \ts \big( \bigoplus_{\om \in \Xi/\Xi_x} \, \Hom_{\g^x} (\g_{\om}, V_1^* \ot V_2) \ot (I/I^2)^*_{-a_\om} \big),
  \end{split}
\end{equation}
where $a_\om$ is any representative of the coset $\om \in
\Xi/\Xi_x$.
\end{prop}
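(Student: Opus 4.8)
The plan is to reduce to a $\g^x$-module computation and then exploit the exact sequences already in place. First I would invoke \eqref{eq:evap2} of Theorem~\ref{theo:ext-eval-point}: since $\g^x$ is semisimple, $\Ext_\frM^1(V_1,V_2) \cong \Hom_{\g^x}(\frK_{x,\ab}, W)$, where $W := V_1^* \ot V_2$ is a finite-dimensional $\g^x$-module (both $V_1$ and $V_2$ are evaluation representations supported on $\Gamma \cdot x$, so they factor through $\frM/\frK_x \cong \g^x$). Thus it suffices to compute $\Hom_{\g^x}(\frK_{x,\ab}, W)$, and for this I would apply $\Hom_{\g^x}(-,W)$ to the short exact sequence $0 \to \frQ_x/\frK_x' \to \frK_{x,\ab} \to \frK_x/\frQ_x \to 0$ of $\g^x$-modules, obtaining the long exact sequence \eqref{eq:long-exact-gx-module-sequence}.

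The crucial point is that $\frK_x/\frQ_x \cong \bigoplus_{\xi\in\Xi}\g_\xi \ot (I/I^2)_{-\xi}$ is a \emph{finite-dimensional, completely reducible} $\g^x$-module. Finite-dimensionality holds because $\Xi$ is finite ($\Gamma$ is), each $\g_\xi$ is finite-dimensional, and $I/I^2$ is a finitely generated module over $A/I \cong k[\Xi_x]$, which is finite-dimensional over $k$. Complete reducibility follows from Lemma~\ref{lem:deco}, which presents $\frK_x/\frQ_x$ as a finite direct sum of modules $\g_\om \ot (I/I^2)_{-a_\om}$ with $\g^x$ acting only on $\g_\om$; since $\g$ is a completely reducible $\g^x$-module ($\g$ finite-dimensional, $\g^x$ semisimple) and each $\g_\om$ is a $\g^x$-submodule, each summand, hence $\frK_x/\frQ_x$ itself, is completely reducible. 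Consequently $\Ext^1_{\g^x}(\frK_x/\frQ_x, W) = 0$ by Whitehead's lemma, and \eqref{eq:long-exact-gx-module-sequence} collapses to
\[
  0 \to \Hom_{\g^x}(\frK_x/\frQ_x, W) \to \Hom_{\g^x}(\frK_{x,\ab}, W) \to \Hom_{\g^x}(\frQ_x/\frK_x', W) \to 0 .
\]
Being a short exact sequence of $k$-vector spaces, it splits, so $\Hom_{\g^x}(\frK_{x,\ab}, W) \cong \Hom_{\g^x}(\frK_x/\frQ_x, W) \oplus \Hom_{\g^x}(\frQ_x/\frK_x', W)$.

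To conclude, I would substitute \eqref{eq:KQ-homs} of Lemma~\ref{lem:deco}, namely $\Hom_{\g^x}(\frK_x/\frQ_x, W) \cong \bigoplus_{\om\in\Xi/\Xi_x}\Hom_{\g^x}(\g_\om, W)\ot(I/I^2)^*_{-a_\om}$, into the splitting above, and then feed the result back through $\Ext_\frM^1(V_1,V_2)\cong\Hom_{\g^x}(\frK_{x,\ab},W)$ with $W = V_1^* \ot V_2$; this is exactly \eqref{eq:exab2}.

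The bookkeeping is routine; the only step with real content — and the one I expect to need the most care — is the vanishing $\Ext^1_{\g^x}(\frK_x/\frQ_x, W) = 0$, which rests precisely on $\frK_x/\frQ_x$ being finite-dimensional and completely reducible over the semisimple algebra $\g^x$ (using finiteness of $\Gamma$, hence of $\Xi$, and Noetherianity of $A$ to get $\dim_k I/I^2 < \infty$).
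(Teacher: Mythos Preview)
Your proposal is correct and follows essentially the same approach as the paper: invoke \eqref{eq:evap2}, apply $\Hom_{\g^x}(-,W)$ to the short exact sequence $0\to\frQ_x/\frK_x'\to\frK_{x,\ab}\to\frK_x/\frQ_x\to 0$, show the $\Ext^1_{\g^x}$ term vanishes, and substitute \eqref{eq:KQ-homs}. The only cosmetic difference is in justifying $\Ext^1_{\g^x}(\frK_x/\frQ_x,W)=0$: the paper decomposes $\frK_x/\frQ_x$ via Lemma~\ref{lem:deco} into copies of the $\g_\om$ and notes $\Ext^1_{\g^x}(\g_\om,V)=0$ by semisimplicity, whereas you appeal directly to Whitehead's lemma once finite-dimensionality is established---your route is in fact slightly cleaner, since the complete-reducibility discussion is not even needed (finite-dimensionality of $\frK_x/\frQ_x$ and $W$ together with semisimplicity of $\g^x$ already gives $\Ext^1_{\g^x}(\frK_x/\frQ_x,W)\cong\rmH^1(\g^x,\Hom_k(\frK_x/\frQ_x,W))=0$).
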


\begin{proof} We abbreviate $V=V_1^*\ot V_2$. Recall from Theorem~\ref{theo:ext-eval-point}
that $\Ext^1_\frM(V_1, V_2) \cong \Hom_{\g^x} (\frK_{x,\ab}, V)$. The
claim therefore follows from \eqref{eq:KQ-homs} and the exact sequence \eqref{eq:long-exact-gx-module-sequence} as soon as we show that $\Ext^1_{\g^x}(\frK_x/\frQ_x, V)=0$. To prove this, we
use the $\g^x$-module decomposition of $\frK_x/\frQ_x$ established in
Lemma~\ref{lem:deco} and the fact that $\Ext$ commutes with finite sums (\cite[Prop.~3.3.4]{wei}):
\begin{equation*}
 \Ext^1_{\g^x}(\frK_x/\frQ_x, V) \cong  \ts \bigoplus_{\om \in \Xi/\Xi_x}
     \Ext^1_{\g^x}( \g_\om \ot (I/I^2)_{-a_\om}, V).
 \end{equation*}
The $\g^x$-module $\g_\om \ot (I/I^2)_{-a_\om}$ is a direct sum of
submodules $M_\be\cong \g_\om$ for $\be$ in some set $B$.
Hence
\[
 \Ext_{\g^x}^1(\g_\om \ot (I/I^2)_{-a_\om}, V) \cong \ts \bigoplus_{\be \in B} \Ext^1_{\g^x}(\g_\om, V)= 0
\]
since $\Ext^1_{\g^x}(\g_\om, V)= 0$ by semisimplicity of $\g^x$. \qed
\end{proof}
\lv{The proposition also holds for $\g^x$ reductive if one
imposes the following conditions: $\Ext^1_{\g^x}(\g_\om, V) = 0$ and
$\g^x_{\ab}$ acts on $V_i$ by $\la_i$, $i=1,2$ with $\la_1\ne
\la_2$. }

\begin{cor}[$\Gamma$ abelian, $\Ga_x$ trivial] \label{cor:ab-trivial-stablizer}
Suppose $\g$ is semisimple and $x \in X_\rat$ is such that $\Ga_x$ is trivial. Then for any two evaluation modules $V_1$, $V_2$ with support contained in $\Gamma \cdot x$ we have
\[
  \Ext^1_\frM(V_1, V_2) \cong \Hom_\g(\frQ_x/\frK_x', V_1^* \ot V_2) \oplus \left( \Hom_\g( \g, V_1^* \ot V_2) \ot (I/I^2)_0^* \right).
\]
\end{cor}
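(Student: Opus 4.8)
The plan is to obtain this as the special case of Proposition~\ref{prop:abelian-QK-ext-formula} in which the isotropy group $\Ga_x$ is trivial, so that the index set $\Xi/\Xi_x$ over which the direct sum in \eqref{eq:exab2} runs collapses to a single element.

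First I would verify that Proposition~\ref{prop:abelian-QK-ext-formula} applies to the data at hand. Since $\Ga_x = \{1\}$, the condition defining $\g^x$ is vacuous, so $\g^x = \g$; in particular $\g^x$ is semisimple because $\g$ is, which is exactly the standing hypothesis of Proposition~\ref{prop:abelian-QK-ext-formula}. (If one or both of $V_1,V_2$ happens to be the trivial module the argument still goes through, since the proof of Proposition~\ref{prop:abelian-QK-ext-formula} via Theorem~\ref{theo:ext-eval-point} makes no use of nontriviality; alternatively such cases can be checked directly.) Thus Proposition~\ref{prop:abelian-QK-ext-formula} gives the isomorphism \eqref{eq:exab2}.

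Next I would simplify the right-hand side of \eqref{eq:exab2} under the assumption $\Ga_x = \{1\}$. By definition $\Xi_x = \{\xi \in \Xi : \xi|_{\Ga_x} = 0\}$, and since $\Ga_x$ is trivial this is all of $\Xi$; hence the quotient group $\Xi/\Xi_x$ is trivial, its unique coset being $\om = \Xi$, for which we may take the representative $a_\om = 0$. In the notation $V_H$ for the $H$-graded part of a $\Xi$-graded space, this yields $\g_\om = \g_\Xi = \g$ and $(I/I^2)^*_{-a_\om} = (I/I^2)^*_0$, so the single remaining summand of the big direct sum in \eqref{eq:exab2} is $\Hom_\g(\g, V_1^* \ot V_2) \ot (I/I^2)^*_0$. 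Rewriting every $\Hom_{\g^x}$ as $\Hom_\g$ (again using $\g^x = \g$) turns the first summand into $\Hom_\g(\frQ_x/\frK_x', V_1^* \ot V_2)$, and substituting into \eqref{eq:exab2} produces the asserted isomorphism.

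There is no genuine obstacle here: the whole argument is a specialization of an already-established formula. The only point that demands a little care is the bookkeeping of what the symbols $\Xi_x$, $\om$, $a_\om$, and $\g_\om$ become when $\Ga_x$ is trivial, together with the observation $\g^x = \g$, which is what allows one to replace each $\g^x$-homomorphism space by the corresponding $\g$-homomorphism space in the final statement.
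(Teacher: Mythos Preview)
Your argument is correct and is exactly the approach taken in the paper: the corollary follows immediately from Proposition~\ref{prop:abelian-QK-ext-formula} once one observes that $\Ga_x=\{1\}$ forces $\g^x=\g$ and $\Xi_x=\Xi$, so the direct sum in \eqref{eq:exab2} collapses to the single term $\Hom_\g(\g, V_1^*\ot V_2)\ot (I/I^2)_0^*$.
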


\begin{proof}
This is immediate from \eqref{eq:exab2} since $\g^x = \g$ and $\Xi_x = \Xi$. \qed
\end{proof}

The following result generalizes \cite[Prop.~3.1]{kodera}, where the case of a trivial group $\Gamma$ is considered.

\begin{prop}[$\Gamma$ abelian and acting freely on $X$] \label{prop:free-abelian}
Suppose $\Gamma$ acts freely on $X$ and $\g$ is semisimple.  Then for any two evaluation modules $V_1$, $V_2$ at $x$ we have
\[
  \Ext^1_\frM(V_1, V_2) \cong \Hom_\g( \g, V_1^* \ot V_2) \ot (\fm_x/\fm_x^2)^*.
\]
\end{prop}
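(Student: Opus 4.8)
The plan is to deduce the formula from Corollary~\ref{cor:ab-trivial-stablizer} by checking that, when $\Gamma$ acts freely, two simplifications take place: the ideal $\frQ_x$ coincides with $\frK_x'$, and $(I/I^2)_0 \cong \fm_x/\fm_x^2$. A free action has trivial isotropy $\Gamma_x$ at $x$, so Corollary~\ref{cor:ab-trivial-stablizer} applies and gives
\[
  \Ext^1_\frM(V_1,V_2) \cong \Hom_\g\big(\frQ_x/\frK_x',\, V_1^* \ot V_2\big) \oplus \Big( \Hom_\g\big(\g,\, V_1^* \ot V_2\big) \ot (I/I^2)_0^* \Big);
\]
it then suffices to establish the two claims above, since neither depends on $V_1,V_2$.

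\emph{First} I would show $\frQ_x = \frK_x'$. Using the $\Xi$-graded description $(\frK_x')_\xi = \sum_{\tau\in\Xi}[\g_\tau,\g_{\xi-\tau}]\ot I_{-\tau}I_{\tau-\xi}$ recorded just after~\eqref{eq:xrat2}, I would feed in two facts. Because $\Gamma$ acts freely, Lemma~\ref{lem:free-actions}\eqref{lem-item:free-actions:ideal-strong-grading}, with $n=2$, gives $I_{-\tau}I_{\tau-\xi} = (I^2)_{-\xi}$ for every $\tau\in\Xi$. And since $\g$ is semisimple we have $\g=[\g,\g]$, so comparing degree-$\xi$ parts of the Lie algebra grading forces $\sum_{\tau\in\Xi}[\g_\tau,\g_{\xi-\tau}] = \g_\xi$ for each $\xi$. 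Combining, $(\frK_x')_\xi = \g_\xi\ot(I^2)_{-\xi} = \frQ_\xi$ for all $\xi$, i.e. $\frK_x' = \frQ_x$, and the first summand in the display vanishes.

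\emph{Second} I would identify $(I/I^2)_0$. Recall $I = \bigcap_{y\in\Gamma\cdot x}\fm_y$. Freeness makes the points of $\Gamma\cdot x$ distinct, hence the maximal ideals $\fm_y$, and so the $\fm_y^2$, pairwise comaximal; the Chinese Remainder Theorem then furnishes $\Gamma$-equivariant algebra isomorphisms $A/I \cong \prod_{y\in\Gamma\cdot x} A/\fm_y$ and $A/I^2 \cong \prod_{y\in\Gamma\cdot x} A/\fm_y^2$, compatible with the natural surjection $A/I^2 \twoheadrightarrow A/I$, where $\Gamma$ permutes factors through its simply transitive action on $\Gamma\cdot x$. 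Passing to $\Gamma$-invariants — which is exact in characteristic zero and, on the $\Xi$-graded algebra $A$, is the same as taking the degree-$0$ part — yields $A_0/I_0 \cong A/\fm_x$ and $A_0/(I^2)_0 \cong A/\fm_x^2$ compatibly, so
\[
  (I/I^2)_0 = I_0/(I^2)_0 \cong \Ker\big(A/\fm_x^2 \twoheadrightarrow A/\fm_x\big) = \fm_x/\fm_x^2.
\]
Dualizing and substituting into the display gives the proposition.

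There is no serious obstacle; the one point that needs care is the grading bookkeeping in the second step: that $I^2$ is a graded ideal, so that $(I/I^2)_0 = I_0/(I^2)_0$, and that the two Chinese Remainder isomorphisms are genuinely $\Gamma$-equivariant and intertwine the quotient $A/I^2 \to A/I$. Once that is set up, identifying the $\Gamma$-fixed part of the (regular-representation-type) permutation module $\bigoplus_{y\in\Gamma\cdot x}\fm_y/\fm_y^2$ with the single summand $\fm_x/\fm_x^2$ is routine. Alternatively one could bypass exactness of $(-)^\Gamma$ by constructing directly, via the Reynolds operator, a surjection $I_0 \twoheadrightarrow \fm_x/\fm_x^2$ and observing that its kernel is $(I^2)_0$, using that a $\Gamma$-invariant element of $\fm_x^2$ must lie in $\bigcap_y\fm_y^2 = I^2$.
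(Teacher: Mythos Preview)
Your proposal is correct and follows essentially the same approach as the paper: apply Corollary~\ref{cor:ab-trivial-stablizer}, use Lemma~\ref{lem:free-actions} together with $\g=[\g,\g]$ to conclude $\frK_x'=\frQ_x$, and then identify $(I/I^2)_0\cong\fm_x/\fm_x^2$. The paper carries out the last identification in one line via $I/I^2\cong\bigoplus_{y\in\Gamma\cdot x}\fm_y/\fm_y^2$ and reading off the $\Gamma$-invariants of this permutation module, whereas you route through the Chinese Remainder isomorphisms for $A/I$ and $A/I^2$; these are equivalent computations.
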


\begin{proof}
Let $I = \bigcap_{y \in \Gamma \cdot x} \fm_y$.  By Lemma~\ref{lem:free-actions}, we have $I_\tau I_\xi = (I^2)_{\tau + \xi}$ for all $\tau,\xi \in \Xi$.  Then
\[ \textstyle
  (\frK_x')_\xi = \left(\sum_\la [\g_\la, \g_{\xi-\la}] \right)\ot (I^2)_{-\xi} = \g_\xi \ot (I^2)_{-\xi} = \frQ_\xi,\quad \text{for all } \xi \in \Xi.
\]
By Corollary~\ref{cor:ab-trivial-stablizer}, we then have
\[
  \Ext^1_\frM(V_1, V_2) \cong \Hom_\g( \g, V_1^* \ot V_2) \ot (I/I^2)_0^*.
\]
Since the group $\Gamma$ acts freely on $X$, we have
\[ \textstyle
  (I/I^2)_0 = \left(\bigcap_{y \in \Gamma \cdot x} \fm_y / \bigcap_{y \in \Gamma \cdot x} \fm_y^2 \right)_0 \cong \left( \bigoplus_{y \in \Gamma_\cdot x} \fm_y/\fm_y^2 \right)_0 \cong \fm_x/\fm_x^2
\]
and the result follows.
\lv{I think it follows by our standard Bourbaki reference (\cite[Ch.~II, \S1.2, Prop.~5 and Prop.~6]{Bou:ACb1}) that the diagonal map $A \to A^{|\Gamma|}$ induces an isomorphism
\begin{equation} \label{eq:isom}
  A/\left( \bigcap_{y \in \Gamma \cdot x} \fm_y^2 \right) \cong \bigoplus_{y \in \Gamma \cdot x} A/\fm_y^2.
\end{equation}
By restriction, we have a map
\[
  \left( \bigcap_{y \in \Gamma \cdot x} \fm_y \right) / \left( \bigcap_{y \in \Gamma \cdot x} \fm_y^2 \right) \to \bigoplus_{y \in \Gamma \cdot x} \fm_y/\fm_y^2.
\]
(It is clear that the image of the restriction lies in the space on the right hand side.)
Since this map is the restriction of an isomorphism, it is injective.  So it suffices to show it is surjective and for this, it suffices to show that each summand is contained in the image (since this is a linear map).  Choose $y \in \Gamma \cdot x$.  By \eqref{eq:isom} we can find an element $f \in A$ such that
\[
  f \equiv 1 \mod \fm_y^2,\quad f \in \fm_z^2 \ \forall\ z \in \Gamma \cdot x,\ z \ne y.
\]
Then, for an arbitrary $g \in \fm_y$, we have
\[
  fg \equiv g \mod \fm_y^2,\ fg \in \fm_y \cap \bigcap_{\substack{z \in \Gamma \cdot x \\ z \ne y}} \fm_z^2.
\]}\qed
\end{proof}

\begin{rem} \label{rem:g-strong-grading}
The proof of Proposition~\ref{prop:free-abelian} shows that $\frK_x' = \frQ_x$ when $\g$ is semisimple and $I_\tau I_\xi = (I^2)_{\tau + \xi}$ for all $\tau,\xi \in \Xi$, where $I = \bigcap_{y \in \Gamma \cdot x} \fm_y$.  Another condition ensuring that $\frK_x'=\frQ_x$ is that the grading on $\g$ be strong, that is, $[\g_\tau,\g_\xi]=\g_{\tau + \xi}$ for all $\tau,\xi \in \Xi$.  Indeed, if this is the case, then
\[ \textstyle
  (\frK_x')_\xi = \g_\xi \ot \left( \sum_\mu I_{-\mu} I_{\mu - \xi} \right) = \g_\xi \ot (I^2)_{-\xi} = \frQ_\xi \quad \text{for all } \xi \in \Xi.
\]
Note that, since this condition is independent of the point $x$, it implies that $\frK'_x = \frQ_x$ for all $x \in X_\rat$.
\end{rem}

\begin{example} \label{ex:OnotKprime}
To show that in general $\frK_x'\subsetneq \frQ_x$ we use
Example~\ref{ex:J}. For any point $x\in X_\rat$ we have
\begin{align*}
   \frK_x' &= \big(\g_{0,\rss} \ot (I_0^2 + I_1^2) \big)
       \oplus (\g_{0,\ab} \ot I_1^2) \oplus  (\g_1 \ot I_0I_1), \\
   \frQ_x &= \big(\g_{0,\rss} \ot (I_0^2 + I_1^2) \big)
   \oplus \big( \g_{0,\ab} \ot (I_0^2 + I_1^2) \big)
   \oplus  (\g_1 \ot I_0I_1), \hbox{ and so} \\
   \frQ_x/\frK_x' &= \g_{0,\ab} \ot (I_0^2 + I_1^2) / I_1^2.
\end{align*}
Hence
\[
  \frQ_x = \frK_x' \iff \g_{0,\ab} = 0 \hbox{ or } I_0^2 \subseteq I_1^2 \iff \frN_x = \frK_x,
\]
where the last equivalence follows from Example~\ref{ex:J}. We note
that $\g_{0,\ab} \ne 0$ and $I_0^2 \not\subseteq I_1^2$ in case $\frM$ is the Onsager
algebra and $x \ne \pm 1$.  Indeed, in the notation of Section~\ref{sec:app-onsager}, set $a= x + x^{-1}$.  Then $I_0 = (z-a)A_0$ and $I_1 = (z-a)yA_0$.  Hence
\[
  I_1^2 = (z-a)^2y^2A_0 = (z-a)^2(z-2)(z+2)A_0 \subsetneq (z-a)^2 A_0 = I_0^2.
\]
\end{example}

%
\section{Block decompositions} \label{sec:block-decomps}
%

In this section we investigate the block decomposition of the category of finite-dimensional representations of an equivariant map algebra.  We first recall some basic facts about block decompositions in general.

Let $\mathcal{C}$ be an abelian category in which every object has finite length (for instance, the category $\mathcal{F}$ of finite-dimensional representations of an equivariant map algebra is such a category).  Then it is well known that every object can be written uniquely (up to isomorphism) as a direct sum of indecomposable objects.

\begin{defin}[Linked]
Two indecomposable objects $V_1$ and $V_2$ are \emph{linked} if there is no decomposition $\mathcal{C} = \mathcal{C}_1 \oplus \mathcal{C}_2$ as a sum of two abelian subcategories, such that $V_1 \in \mathcal{C}_1$, $V_2 \in \mathcal{C}_2$.
\end{defin}

It is easy to see that linkage is an equivalence relation.

\begin{prop} \label{prop:general-block-decomp}
Let $\cB$ be the set of equivalence classes of linked indecomposable objects.  For $\alpha \in \cB$, let $\mathcal{C}_\alpha$ be the full subcategory of $\mathcal{C}$ consisting of direct sums of objects from $\alpha$.  Then $\mathcal{C} = \bigoplus_{\alpha \in \cB} \mathcal{C}_\alpha$ and this is the unique decomposition of $\mathcal{C}$ into a sum of indecomposable abelian subcategories.
\end{prop}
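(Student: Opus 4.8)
The plan is to trade the category-theoretic definition of \emph{linked} for a concrete one phrased in terms of composition factors, establish the decomposition with the latter, and then show the two notions agree. For simple objects $S,T$ of $\mathcal{C}$, write $S\approx T$ if some indecomposable object of $\mathcal{C}$ has both $S$ and $T$ among its composition factors, and let $\equiv$ be the equivalence relation on simples generated by $\approx$; this makes sense because every object has finite length, so Jordan--H\"older applies. Since an indecomposable object $U$ itself witnesses $S\approx T$ for any two of its composition factors, each indecomposable object $V$ has all its composition factors in a single $\equiv$-class $\beta(V)$. For an $\equiv$-class $\gamma$, let $\mathcal{C}_\gamma$ be the full subcategory of objects all of whose composition factors lie in $\gamma$ (so the zero object lies in every $\mathcal{C}_\gamma$).

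First I would check that each $\mathcal{C}_\gamma$ is an abelian subcategory closed under subobjects, quotients, extensions and finite direct sums; this is immediate because, by Jordan--H\"older, the composition factors of a subquotient form a sub-multiset of, and those of an extension the union of, the composition factors of the ambient objects. The key point is then the vanishing $\Hom_{\mathcal{C}}(A,B)=0$ whenever $A\in\mathcal{C}_\gamma$ and $B\in\mathcal{C}_{\gamma'}$ with $\gamma\neq\gamma'$: a nonzero morphism would have nonzero image, which is simultaneously a subquotient of $A$ and of $B$ and hence would have a composition factor in $\gamma\cap\gamma'$, forcing $\gamma=\gamma'$. Combining this with the given unique decomposition of every object into indecomposables --- grouping the indecomposable summands of $V$ by their $\beta$-value yields $V=\bigoplus_\gamma V^\gamma$ with $V^\gamma\in\mathcal{C}_\gamma$ --- gives $\mathcal{C}=\bigoplus_\gamma\mathcal{C}_\gamma$.

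Next I would show each $\mathcal{C}_\gamma$ is indecomposable as an abelian category: in a splitting $\mathcal{C}_\gamma=\mathcal{D}_1\oplus\mathcal{D}_2$ every simple of $\gamma$, being indecomposable, lies in exactly one $\mathcal{D}_i$, and if $S\in\mathcal{D}_1$, $T\in\mathcal{D}_2$ were $\approx$-adjacent via an indecomposable $U$, then $U$ could lie in neither $\mathcal{D}_i$ (since $\mathcal{D}_i$ is closed under subquotients), a contradiction; hence all of $\gamma$, and therefore all of $\mathcal{C}_\gamma$, lies on one side. To reconcile with the definition of \emph{linked}: if $\beta(V_1)\neq\beta(V_2)$, the splitting $\mathcal{C}=\mathcal{C}_{\beta(V_1)}\oplus\bigl(\bigoplus_{\gamma\neq\beta(V_1)}\mathcal{C}_\gamma\bigr)$ separates them, so they are not linked; conversely, if $\beta(V_1)=\beta(V_2)=\gamma$, then in any splitting $\mathcal{C}=\mathcal{C}_1\oplus\mathcal{C}_2$ the indecomposable subcategory $\mathcal{C}_\gamma$ lies entirely in one summand, so $V_1,V_2$ are never separated and hence are linked. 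Thus the linkage classes are exactly the $\equiv$-classes, $\cB$ is identified with the set of $\equiv$-classes, $\mathcal{C}_\alpha=\mathcal{C}_\gamma$ for the corresponding class, and $\mathcal{C}=\bigoplus_{\alpha\in\cB}\mathcal{C}_\alpha$.

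For uniqueness, let $\mathcal{C}=\bigoplus_j\mathcal{D}_j$ be any decomposition into indecomposable abelian subcategories. Each indecomposable object of $\mathcal{C}$ lies in a unique $\mathcal{D}_j$; linked indecomposables must lie in the same $\mathcal{D}_j$, since otherwise $\mathcal{C}=\mathcal{D}_j\oplus(\bigoplus_{i\neq j}\mathcal{D}_i)$ would separate them; and within a fixed $\mathcal{D}_j$ all indecomposables are pairwise linked, as $\mathcal{D}_j$ admits no nontrivial splitting. Hence the $\mathcal{D}_j$ are precisely the $\mathcal{C}_\alpha$. \textbf{Main obstacle.} The only real work is the careful bookkeeping with composition factors --- verifying the closure properties of $\mathcal{C}_\gamma$, the $\Hom$-vanishing, and the passage between the composition-factor picture and the splitting definition of \emph{linked}; beyond that, everything rests on the already-granted Jordan--H\"older and Krull--Schmidt properties of $\mathcal{C}$.
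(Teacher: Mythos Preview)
The paper states this proposition without proof, treating it as a standard fact about abelian categories in which every object has finite length. So there is no ``paper's own proof'' to compare against; your task is to supply one, and your argument is sound.

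Your strategy---replacing the splitting-based definition of \emph{linked} by the equivalence relation $\equiv$ on simples generated by ``occur together as composition factors of some indecomposable,'' building the $\mathcal{C}_\gamma$ from that, and then reconciling---is the standard one and works. A few places are compressed but correct on inspection: (i) when you assert that in a splitting $\mathcal{C}=\mathcal{C}_1\oplus\mathcal{C}_2$ the indecomposable $\mathcal{C}_\gamma$ lies entirely in one summand, the implicit step is that $\mathcal{C}_\gamma$, being closed under subobjects, is closed under direct summands, so $\mathcal{C}_\gamma=(\mathcal{C}_\gamma\cap\mathcal{C}_1)\oplus(\mathcal{C}_\gamma\cap\mathcal{C}_2)$; (ii) in the uniqueness paragraph, the claim ``within a fixed $\mathcal{D}_j$ all indecomposables are pairwise linked'' again uses that a separating splitting $\mathcal{C}=\mathcal{E}_1\oplus\mathcal{E}_2$ of $\mathcal{C}$ would induce a nontrivial splitting $\mathcal{D}_j=(\mathcal{D}_j\cap\mathcal{E}_1)\oplus(\mathcal{D}_j\cap\mathcal{E}_2)$. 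You might make both of these explicit. Also note that the paper's $\mathcal{C}_\alpha$ is defined as direct sums of indecomposables from the linkage class $\alpha$, whereas your $\mathcal{C}_\gamma$ is defined via composition factors; your reconciliation step does show these agree, but it is worth flagging that this identification is part of what you are proving.
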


\begin{defin}[Block decomposition]
In the setting of Proposition~\ref{prop:general-block-decomp}, the subcategories $\mathcal{C}_\alpha$ are called the \emph{blocks} of $\mathcal{C}$ and the decomposition $\mathcal{C}=\bigoplus_\alpha \mathcal{C}_\alpha$ is called the \emph{block decomposition} of $\mathcal{C}$.
\end{defin}

By the Jordan-H\"older Theorem, one can uniquely specify the irreducible objects (with multiplicity) which occur as constituents of any $X \in \mathcal{C}$.

\begin{defin}[Ext-blocks]
On the set of irreducible objects of $\mathcal{C}$, consider the smallest equivalence relation such that two irreducible objects $V,V'$ are equivalent whenever they are isomorphic or $\Ext^1_\mathcal{C}(V,V') \ne 0$.  We call the equivalence classes for this equivalence relation \emph{ext-blocks} and let $\cB_\mathrm{ext}(\cC)$ denote the set of ext-blocks.  For $b \in \cB_\mathrm{ext}(\cC)$, let $\mathcal{C}_b$ denote the full subcategory of $\cC$ whose objects are precisely those objects in $\cC$ whose constituents all lie in $b$.
\end{defin}

For any object $M$ in $\cC$ and ext-block $b$, let $M_b$ denote the sum of all submodules of $M$ contained in $\cC_b$.  Note that $M_b$ is the largest submodule with this property.

\begin{lem}
For any objects $M,M'$ in $\cC$, we have
\begin{enumerate}
  \item $M = \bigoplus_{b \in \cB_\mathrm{ext}(\cC)} M_b$, and
  \item $\Hom_\frM (M,M') = \bigoplus_{b \in \cB_\mathrm{ext}(\cC)} \Hom_\frM (M_b,M_b')$.
\end{enumerate}
\end{lem}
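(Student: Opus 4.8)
The statement to prove is that every object $M$ of $\cC$ decomposes as $M = \bigoplus_{b} M_b$ over ext-blocks, and that $\Hom$ respects this decomposition. The plan is to derive both parts from the construction of $M_b$ as the maximal submodule lying in $\cC_b$, together with the Jordan--Hölder theorem and the definition of the ext-block equivalence relation. First I would establish part (a). The key observation is that if $N \subseteq M$ is a submodule all of whose composition factors lie in a single ext-block $b$, and $N' \subseteq M$ is another such submodule for a \emph{different} ext-block $b'$, then $N \cap N' = 0$: indeed $N \cap N'$ is a submodule whose factors lie in $b \cap b' = \varnothing$, hence it is zero. More generally the submodules $M_b$, $b \in \cB_\mathrm{ext}(\cC)$, form an independent family, so their sum inside $M$ is direct: $\sum_b M_b = \bigoplus_b M_b \subseteq M$.

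For the reverse inclusion I would argue by induction on the length of $M$ (finite, since every object of $\cC$ has finite length). If $M = 0$ there is nothing to prove. Otherwise pick an irreducible submodule $S \subseteq M$, lying in some ext-block $b_0$, and consider $\bar M = M/S$. By induction $\bar M = \bigoplus_b \bar M_b$. The main point is then to lift this decomposition: for each $b$, the preimage of $\bar M_b$ in $M$ is an extension of $\bar M_b$ by $S$, and one must show that for $b \neq b_0$ this extension splits off a copy of $S$ in a way compatible with the block structure, while the $b_0$-part absorbs $S$. Concretely, if $b \neq b_0$ then $\Ext^1_\cC(T, S) = 0$ for every composition factor $T$ of $\bar M_b$ — because $\Ext^1 \neq 0$ would force $T$ and $S$ into the same ext-block by the very definition of the equivalence relation — and hence, dévissant along a composition series of $\bar M_b$, the extension $0 \to S \to \pi^{-1}(\bar M_b) \to \bar M_b \to 0$ splits; the $\bar M_b$-complement then lies in $\cC_b$ and so inside $M_b$. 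For $b = b_0$, the preimage $\pi^{-1}(\bar M_{b_0})$ has all factors in $b_0$ and hence lies in $\cC_{b_0}$, so it sits inside $M_{b_0}$. Counting lengths shows $\sum_b M_b = M$, completing part (a). The main obstacle is exactly this lifting step: making precise that "$\Ext^1$ between factors in different ext-blocks vanishes" and then propagating a splitting through a composition series (a standard but slightly fussy dévissage using that $\Ext^1$ is additive in each variable over short exact sequences, cf.\ \cite[Prop.~3.3.4]{wei} and the long exact $\Ext$ sequence).

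Part (b) is then formal. Given the decomposition $M = \bigoplus_b M_b$ and $M' = \bigoplus_b M'_b$, a homomorphism $f : M \to M'$ restricts on each summand to $f|_{M_b} : M_b \to M'$; its image is a quotient of $M_b$, hence has all composition factors in $b$, so it lands in $M'_b$ (the largest submodule of $M'$ with that property). Thus $f$ is block-diagonal, which gives
\[
  \Hom_\frM(M,M') = \bigoplus_{b \in \cB_\mathrm{ext}(\cC)} \Hom_\frM(M_b, M'_b).
\]
I expect part (b) to be routine once (a) is in hand; the only thing to be careful about is to write $M'_b$ (not $M_b'$ in the sense of a different module — the notation in the statement should read $\Hom_\frM(M_b, (M')_b)$), and to note that the sum on the right is automatically direct since the $M'_b$ are independent submodules of $M'$.
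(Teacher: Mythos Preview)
Your argument is correct. The paper does not give a self-contained proof of this lemma; it simply cites \cite[II.7.1]{Jan} and remarks that the proof there (written for representations of algebraic groups) carries over verbatim. What you have written is essentially a reconstruction of that standard argument: induction on length, using that $\Ext^1$ between irreducibles in distinct ext-blocks vanishes by the very definition of the equivalence relation, then a d\'evissage through a composition series to split off the pieces belonging to other blocks. The independence of the $M_b$'s and the block-diagonality of morphisms in part~(b) are handled cleanly.

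One small remark on the independence step: when you pass from pairwise intersection $M_b \cap M_{b'} = 0$ to the full directness of $\sum_b M_b$, you implicitly use that $\sum_{b' \ne b} M_{b'}$, being a quotient of $\bigoplus_{b' \ne b} M_{b'}$, has all its composition factors outside $b$; this is fine but worth saying explicitly. Otherwise the write-up is in good shape, and in fact more informative than the paper's own treatment, which defers entirely to the literature.
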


\begin{proof}
This is proven in \cite[II.7.1]{Jan} in the setting of representations of algebraic groups. The proof there immediately translates to the current setting. \qed
\end{proof}

\lv{
\begin{proof}
Let $N = \sum_{b \in \cB_\mathrm{ext}(\cC)} M_b \subseteq M$.  We first prove
that this sum is direct.  If it were not direct, there would exist
some $b \in \cB_\mathrm{ext}(\cC)$ and some irreducible object $P$ such that
\[
  P \subseteq M_b \cap \left( \sum_{b' \ne b} M_{b'} \right).
\]
But then there would exist composition series of $M_b$ and of
$\sum_{b' \ne b} M_{b'}$ containing $P$, which is a contradiction.

Next we prove that $N = M$.  Suppose, on the contrary, that $N
\subsetneq M$.  Then there exists a subobject $N'$ of $M$,
containing $N$, such that $V=N'/N$ is irreducible.  Let $b$ be the
ext-block of $V$ and
\[
  N'' = \bigoplus_{b' \ne b} M_{b'} \subseteq N \subseteq N'.
\]
Note that the constituents of $N'/N''$ belong to $b$ since they are
either $V$ or constituents of $N/N'' \cong M_b$.  Therefore, the
exact sequence
\[
  0 \to N'' \to N' \to N'/N'' \to 0
\]
splits.  Indeed, since $\Ext^1$ commutes with direct sums, it is
sufficient to prove $\Ext^1_\mathcal{C}(M_b,M{b'}) = 0$ for $b' \ne
b$. This can be done by a double induction on the length of $M$ and
$M'$, using the long exact Ext-sequence.  Thus there exists an
subobject $E$ of $N'$ such that $N' = N'' \oplus E$.  Then $E \cong
N'/N''$ and so $E \subseteq M_b$.  But then $N' \subseteq N$, a
contradiction.

Finally, we prove $\Hom_\cC (M_b,M_b') = 0$ for $b \ne b'$. Suppose,
on the contrary, that $\Hom_\cC(M_b,M_b') \ne 0$ for some $b \ne
b'$.  Then, by passing to subquotients if necessary, we obtain a
nonzero morphism from an irreducible object in the ext-block $b$ to an
irreducible object in the ext-block $b'$.  But this contradicts that fact
that morphisms between non-isomorphic irreducible objects are always
zero. \qed
\end{proof}}

\begin{cor} \label{cor:block-decomp-from-ext-blocks}
The $\cC_b$, $b \in \cB_\mathrm{ext}(\cC)$, are the blocks of $\cC$.
\end{cor}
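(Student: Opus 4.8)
The plan is to show that the family $\{\cC_b : b \in \cB_\mathrm{ext}(\cC)\}$ coincides with the unique decomposition of $\cC$ into indecomposable abelian subcategories provided by Proposition~\ref{prop:general-block-decomp}; the corollary is then immediate from the definition of blocks. As a first step I would note that the preceding lemma already exhibits $\cC = \bigoplus_{b} \cC_b$ as a direct sum of abelian subcategories: every object is the direct sum of its components $M_b \in \cC_b$, the $\Hom$-spaces split accordingly with no cross terms, and each $\cC_b$---being the full subcategory of objects all of whose composition factors lie in $b$---is closed under subobjects, quotients, extensions and finite direct sums. Moreover each $\cC_b$ is nonzero, since $b$ is by definition a nonempty set of isomorphism classes of irreducible objects.

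The heart of the argument is to prove that each $\cC_b$ is indecomposable. Suppose, for a contradiction, that $\cC_b = \cC' \oplus \cC''$ with both summands nonzero. Since every nonzero finite-length abelian category contains a simple object, and since the simple objects of $\cC_b$ are precisely the irreducibles whose isomorphism class lies in $b$, this decomposition induces a partition $b = b' \sqcup b''$ into two nonempty subsets, where $b'$ (resp.\ $b''$) is the set of isomorphism classes of simple objects of $\cC'$ (resp.\ $\cC''$). But $b$ is a single class for the equivalence relation generated by ``$V \cong V'$ or $\Ext^1_\cC(V,V') \ne 0$''; regarding this relation as a graph on the isomorphism classes in $b$, connectedness forces an edge across the partition, so there are $V \in b'$ and $V' \in b''$ (necessarily non-isomorphic) with, say, $\Ext^1_\cC(V,V') \ne 0$. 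Choosing a non-split extension $0 \to V' \to E \to V \to 0$, the module $E$ is indecomposable---a nontrivial direct sum decomposition would separate the two non-isomorphic composition factors $V$ and $V'$, and, as $\Hom_\cC(V',V) = 0$, this would split the sequence---so $E$ lies wholly in $\cC'$ or wholly in $\cC''$. Whichever it is, that subcategory is closed under subobjects and quotients, hence contains both the subobject $V'$ and the quotient $V$ of $E$, contradicting $b' \cap b'' = \varnothing$. Thus $\cC_b$ is indecomposable.

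Finally, by the uniqueness assertion in Proposition~\ref{prop:general-block-decomp}, any decomposition of $\cC$ into indecomposable abelian subcategories equals the block decomposition; applied to $\cC = \bigoplus_b \cC_b$ this shows that the $\cC_b$ are exactly the blocks of $\cC$. The only real obstacle is the indecomposability step, and its two key ingredients---the connectivity of an ext-block and the indecomposability of a non-split extension of two non-isomorphic simple objects---are precisely what makes the passage from ext-blocks to genuine blocks work; everything else is bookkeeping with the preceding lemma.
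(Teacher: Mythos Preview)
Your proof is correct and follows essentially the route the paper has in mind: the paper states the corollary without proof, treating it as an immediate consequence of the preceding lemma together with the uniqueness in Proposition~\ref{prop:general-block-decomp}, and you have simply filled in those details. The only substantive step you supply beyond the lemma is the indecomposability of each $\cC_b$, and your argument via a non-split extension of non-isomorphic simples across a putative partition of $b$ is the standard one.
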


\begin{prop} \label{prop:block-direct-sum}
Let $L_1$, $L_2$ be Lie algebras. We denote by $\scF_1, \scF_2$ and $\scF$ the category of finite-dimensional representations of $L_1$, $L_2$ and $L= L_1 \boxplus L_2$ respectively. Let $\cB_i$, $i=1,2$, and $\cB$ be the blocks of the categories $\scF_i$ and $\scF$. The map, which assigns to irreducible $L_i$-modules $V_i$ in $\scF_i$ the block of $V_1 \ot V_2$ in $\scF$, induces a bijection between $\cB_1 \times \cB_2$ and $\cB$.
\end{prop}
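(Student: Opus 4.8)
The plan is to combine two ingredients: the classification of irreducibles of $L = L_1 \boxplus L_2$ in terms of pairs of irreducibles (Proposition~\ref{prop:ext-direct-sums}\eqref{prop-item:direct-rep}), and the Künneth-type formula for extensions (Proposition~\ref{prop:ext-direct-sums}\eqref{prop-item:kuenneth}). By Corollary~\ref{cor:block-decomp-from-ext-blocks}, the blocks of $\scF$ coincide with the ext-blocks, so it suffices to work with the equivalence relation on irreducible objects generated by $\Ext^1 \ne 0$. Thus I would first translate the statement into the language of ext-blocks: I must show that the map sending a pair $(V_1,V_2)$ of irreducibles to the ext-block of $V_1 \ot V_2$ descends to a well-defined bijection $\cB_1 \times \cB_2 \to \cB$, where $\cB_i = \cB_\mathrm{ext}(\scF_i)$.

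\textbf{Well-definedness and injectivity.} First I would observe that every irreducible object of $\scF$ is of the form $V_1 \ot V_2$ with $V_i$ irreducible, so the map on objects is at least surjective onto irreducibles, hence the induced map on ext-blocks (once shown well-defined) is surjective. For well-definedness, I need: if $V_1, V_1'$ lie in the same $\cB_1$-class and $V_2, V_2'$ in the same $\cB_2$-class, then $V_1 \ot V_2$ and $V_1' \ot V_2'$ lie in the same $\cB$-class. It is enough to handle a single ``link'': suppose $\Ext^1_{L_1}(V_1,V_1') \ne 0$ (or $V_1 \cong V_1'$) and $V_2 = V_2'$. Then by the second formula in Proposition~\ref{prop:ext-direct-sums}\eqref{prop-item:kuenneth}, with $U_2 = V_2 = V_2'$ (so $U_2 \cong V_2$), we get $\Ext^1_L(V_1 \ot V_2, V_1' \ot V_2) \supseteq \Ext^1_{L_1}(V_1,V_1') \ne 0$ in the case $V_1 \not\cong V_1'$, and $V_1\ot V_2 \cong V_1'\ot V_2$ in the case $V_1\cong V_1'$; either way they are in the same $\cB$-class. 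The symmetric argument handles links in the second factor, and chaining these gives well-definedness. For injectivity, suppose $V_1 \ot V_2$ and $V_1' \ot V_2'$ are in the same $\cB$-class. By definition of the ext-block equivalence relation, there is a chain of irreducibles of $\scF$ connecting them, each step being an isomorphism or a nonzero $\Ext^1$. Each such irreducible is of the form $W_1 \ot W_2$, and by the second formula of Proposition~\ref{prop:ext-direct-sums}\eqref{prop-item:kuenneth}, a nonzero $\Ext^1_L(W_1 \ot W_2, W_1' \ot W_2')$ forces either ($W_1 \cong W_1'$ and $\Ext^1_{L_2}(W_2,W_2') \ne 0$) or ($W_2 \cong W_2'$ and $\Ext^1_{L_1}(W_1,W_1') \ne 0$) or ($\Ext^1_{L_1}(W_1,W_1')\ne 0$ and $\Ext^1_{L_2}(W_2,W_2')\ne 0$); in every case $W_1, W_1'$ are $\cB_1$-linked and $W_2, W_2'$ are $\cB_2$-linked. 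Projecting the chain onto each factor therefore shows $V_1 \sim_{\cB_1} V_1'$ and $V_2 \sim_{\cB_2} V_2'$, giving injectivity.

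\textbf{Main obstacle.} The one point requiring care is surjectivity of the map $\cB_1 \times \cB_2 \to \cB$, i.e.\ that \emph{every} block of $\scF$ arises this way. This is immediate once we know every irreducible object of $\scF$ has the form $V_1 \ot V_2$ (Proposition~\ref{prop:ext-direct-sums}\eqref{prop-item:direct-rep}): every $\cB$-class contains some irreducible $V_1 \ot V_2$, which is the image of the class of $(V_1,V_2)$. So in fact there is no serious obstacle; the proof is a bookkeeping argument translating the Künneth formula into statements about the linkage relation. The only mild subtlety is being careful that the ext-block relation is the \emph{transitive} closure of the ``$\Ext^1 \ne 0$ or isomorphic'' relation, so one must argue step-by-step along chains rather than trying to get the conclusion in one shot; but since each individual step is controlled by the explicit four-case formula in Proposition~\ref{prop:ext-direct-sums}\eqref{prop-item:kuenneth}, the chaining goes through without difficulty.
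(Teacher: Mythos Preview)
Your proof is correct and follows exactly the paper's approach: reduce blocks to ext-blocks via Corollary~\ref{cor:block-decomp-from-ext-blocks}, then read off the ext-block equivalence on irreducibles from the classification of irreducibles and the K\"unneth formula in Proposition~\ref{prop:ext-direct-sums}. The paper's own proof is a two-line summary of this, whereas you have spelled out the well-definedness, injectivity, and surjectivity explicitly; your third case in the injectivity argument is in fact vacuous (by the first line of the four-case formula, $\Ext^1_L=0$ when neither factor is isomorphic), but this does not affect the conclusion.
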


\begin{proof}
To describe $\cB$ it suffices by Corollary~\ref{cor:block-decomp-from-ext-blocks} to describe the
ext-blocks of $\scF$. That they are given as stated is immediate from Proposition~\ref{prop:ext-direct-sums}. \qed
\end{proof}

\begin{example}\label{eg:ss-ab}
We can apply Proposition~\ref{prop:block-direct-sum} to an equivariant map algebra $\frM = M(X,\g)^\Ga$.  Recall the decomposition~\eqref{eq:algebra-decomp}. The finite-dimensional irreducible representations of the abelian Lie algebra $M(X,\g_\ab)^\Ga$ are all one-dimensional. Corollary~\ref{cor:abelian-1d-exts} then shows that the blocks are naturally enumerated by $(M(X,\g_\ab)^\Ga)^*$, and so by Proposition~\ref{prop:block-direct-sum} there is a natural bijection
\begin{equation} \label{eq:block-decomp}
  \cB\big( M(X,\g)^\Ga\big) \cong \cB\big( M(X,\g_\rss)^\Ga\big) \times (M(X,\g_\ab)^\Ga)^*,
\end{equation}
where $\cB\big( M(X,\g)^\Ga\big)$ and $\cB\big(M(X,\g_\rss)^\Ga\big)$ denote the blocks of the categories of finite-dimensional $M(X,\g)^\Gamma$-modules and $M(X,\g_\rss)^\Gamma$-modules respectively.  The decomposition \eqref{eq:algebra-decomp} is also helpful in deciding if $\frM$ is
extension-local, as defined below.
\end{example}

\begin{defin}[Category $\cF_\eval$ and spectral characters] \label{def:ss-spec-char}
Let $\cF_\eval$ be the full subcategory of $\cF$ consisting of modules whose constituents are evaluation modules.  For $x \in X_\rat$, we define $\cF_x$ to be the full subcategory of $\cF_\eval$  whose objects are those modules whose constituents are (finite-dimensional) evaluation modules with support contained in $\Gamma \cdot x$.

Let $\cB_x$ be the set of blocks of the category $\cF_x$.  For $\gamma \in \Gamma$, the categories $\cF_x$ and $\cF_{\gamma \cdot x}$ are the same and so $\cB_x = \cB_{\gamma \cdot x}$.  We can thus define an action of $\Gamma$ on $\cB_\eval := \bigsqcup_{x \in X_\rat} \cB_x$ by letting $\gamma : \cB_x \to \cB_{\gamma \cdot x}$, $\gamma \in \Gamma$, be the identification.  If $\chi$ is a map from $X_\rat$ to $\cB_\eval$, mapping $x \in X_\rat$ to an element of $\cB_x$, we define the \emph{support} of $\chi$ to be
\[
  \Supp \chi = \{x \in X_\rat : \chi(x) \ne 0\},
\]
where here $0$ denotes the block of the trivial module.  Let $\frB_\eval$ be the set of finitely supported equivariant maps from $X_\rat$ to $\cB_\eval$, mapping $x \in X_\rat$ to an element of $\cB_x$.
Adopting terminology from \cite{CM} and \cite{kodera} for the special case where $\frM= \g \ot A$ and $\g$ is semisimple, we call elements of $\frB_\eval$ \emph{spectral characters}.

For $\psi \in \mathcal{E}$, define $\chi_\psi \in \frB_\eval$ by letting $\chi_\psi(x)$, $x \in X_\rat$, be the block containing the isomorphism class $\psi(x)$.  If $V$ is an object of $\cF_\eval$ such that there exists $\chi \in \frB_\eval$ with the property that $\chi_\psi = \chi$ for every (isomorphism class of) irreducible constituent $\ev_\psi$ of $V$, then we say $V$ has \emph{spectral character} $\chi$.  For $\chi \in \frB_\eval$, let $\cF^\chi_\eval$ be the full subcategory of $\mathcal{F}_\eval$ containing precisely the objects with spectral character $\chi$.
\end{defin}

\begin{lem} \label{lem:eval-ext-block-spec-char}
Two irreducible evaluation modules in $\cF_\eval$ are in the same ext-block if and only if they have the same spectral character.
\end{lem}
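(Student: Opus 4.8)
The plan is to prove both implications by reducing—via Theorem~\ref{theo:two-eval}—to the ``change one $\Gamma$-orbit at a time'' situation, where everything is controlled by the blocks of the local subcategories $\cF_x$. I would first record the needed preliminaries about $\cF_x$. Each $\cF_x$ is closed in $\cF_\eval$ under subobjects, quotients, finite direct sums and extensions, since the composition factors of any such construction applied to objects of $\cF_x$ are again evaluation modules with support in $\Gamma\cdot x$; hence $\cF_x$ is an abelian category in which every object has finite length, $\Ext^1$ computed in $\cF_x$ agrees with $\Ext^1_\frM$, and by Corollary~\ref{cor:block-decomp-from-ext-blocks} its blocks coincide with its ext-blocks. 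Thus, for irreducible $\g^x$-modules $\rho,\rho'$, the single orbit evaluation modules $\ev_x\rho$ and $\ev_x\rho'$ lie in the same block of $\cF_x$ if and only if they are joined by a chain of irreducibles of $\cF_x$ in which consecutive terms are isomorphic or have nonzero $\Ext^1_\frM$. I would also note that isomorphic irreducible evaluation modules have the same spectral character, since $\psi\mapsto\ev_\psi$ is injective.

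For the implication ``same ext-block $\Rightarrow$ same spectral character'', it suffices to show that $\chi$ is unchanged across a single ext-link, i.e.\ that $\Ext^1_\frM(\ev_\psi,\ev_{\psi'})\neq 0$ forces $\chi_\psi=\chi_{\psi'}$; the full statement then follows because the ext-block relation is generated by such links together with isomorphism. The case $\psi=\psi'$ is trivial. Otherwise, pick $\bx\in X_*$ with one point of each orbit in $\Supp\psi\cup\Supp\psi'$ and apply Theorem~\ref{theo:two-eval}: nonvanishing of $\Ext^1_\frM(\ev_\psi,\ev_{\psi'})$ forces $\psi$ and $\psi'$ to differ on exactly one orbit $\Gamma\cdot x_0$ (part~\eqref{theo-item:two-eval:multiple-orbit-diff} rules out more) and yields $\Ext^1_\frM(\ev_{x_0}\psi(x_0),\ev_{x_0}\psi'(x_0))\cong\Ext^1_\frM(\ev_\psi,\ev_{\psi'})\neq 0$ by part~\eqref{theo-item:two-eval:one-orbit-diff}. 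By the preliminaries this means $\psi(x_0)$ and $\psi'(x_0)$ lie in the same block of $\cF_{x_0}$, i.e.\ $\chi_\psi(x_0)=\chi_{\psi'}(x_0)$; and $\chi_\psi(x)=\chi_{\psi'}(x)$ for all other $x$ since $\psi$ and $\psi'$ agree there. Hence $\chi_\psi=\chi_{\psi'}$.

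For the converse, assume $\chi_\psi=\chi_{\psi'}$. Fix representatives $x_1,\dots,x_n$ of the orbits in $\Supp\psi\cup\Supp\psi'$ and interpolate $\psi=\Psi_0,\Psi_1,\dots,\Psi_n=\psi'$, where $\Psi_j$ equals $\psi'$ on $\Gamma\cdot x_i$ for $i\le j$ and equals $\psi$ otherwise. It suffices to show $\ev_{\Psi_{j-1}}$ and $\ev_{\Psi_j}$ lie in one ext-block of $\cF_\eval$ for each $j$. These two functions differ only on $\Gamma\cdot x_j$, where they take the values $\psi(x_j)$ and $\psi'(x_j)$; since $\chi_\psi(x_j)=\chi_{\psi'}(x_j)$, these irreducible $\g^{x_j}$-modules lie in the same block of $\cF_{x_j}$, so by the preliminaries there is a chain $\psi(x_j)=\phi_0,\dots,\phi_m=\psi'(x_j)$ of irreducibles of $\cF_{x_j}$ with consecutive terms isomorphic or with nonzero $\Ext^1_\frM$. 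Replacing the $x_j$-value of $\Psi_{j-1}$ successively by $\phi_0,\dots,\phi_m$ produces evaluation modules $W_0=\ev_{\Psi_{j-1}},W_1,\dots,W_m=\ev_{\Psi_j}$ with consecutive $W_{i-1},W_i$ differing on at most the single orbit $\Gamma\cdot x_j$, so Theorem~\ref{theo:two-eval}\eqref{theo-item:two-eval:one-orbit-diff} gives $\Ext^1_\frM(W_{i-1},W_i)\cong\Ext^1_\frM(\ev_{x_j}\phi_{i-1},\ev_{x_j}\phi_i)$, which is nonzero whenever $\phi_{i-1}\not\cong\phi_i$. Hence $W_0,\dots,W_m$ all lie in one ext-block, and concatenating over $j$ places $\ev_\psi$ and $\ev_{\psi'}$ in the same ext-block.

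The routine parts are the choice of $\bx$ and the orbit-by-orbit interpolation; the substantive point—and the one I expect to require the most care—is that Theorem~\ref{theo:two-eval}\eqref{theo-item:two-eval:one-orbit-diff} is exactly what converts an ext-link in the purely local category $\cF_x$ into an ext-link between \emph{global} evaluation modules, and this relies on the (elementary but essential) observation that $\cF_x$ is closed under extensions, so that ``local'' $\Ext^1$ coincides with $\Ext^1_\frM$. One should also check the degenerate cases ($\psi=\psi'$, or some $\phi_i$ or $\psi(x_j)$ being the trivial class) cause no difficulty.
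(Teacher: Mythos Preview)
Your proof is correct and follows essentially the same strategy as the paper: reduce via Theorem~\ref{theo:two-eval} to single-orbit changes, then use the local block structure of $\cF_x$. The paper organizes the ``same spectral character $\Rightarrow$ same ext-block'' direction as an induction on the number of orbits where $\psi$ and $\psi'$ differ, while you write out an explicit orbit-by-orbit interpolation, but these are equivalent; one small point to tighten is that the chain in $\cF_{x_j}$ may have links in either direction, so when you lift it you should state that either $\Ext^1_\frM(W_{i-1},W_i)\neq 0$ or $\Ext^1_\frM(W_i,W_{i-1})\neq 0$, applying Theorem~\ref{theo:two-eval}\eqref{theo-item:two-eval:one-orbit-diff} with the roles of $V,V'$ swapped as needed.
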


\begin{proof}
We first prove that any two irreducible evaluation modules with the same spectral character lie in the same ext-block.  Let $\psi, \psi' \in \mathcal{E}$ be such that $\chi_\psi = \chi_{\psi'}$, and let $V,V'$ be evaluation representations corresponding to $\psi,\psi'$ respectively.  Write $V = \bigotimes_{x \in \bx} V_x$ and $V' = \bigotimes_{x \in \bx} V'_x$ for some $\bx \in X_*$ (allowing $V_x$ or $V_x'$ to be trivial if necessary).  We prove the result by induction on the number $n$ of points $x \in \Supp \psi \cup \Supp \psi'$ where $\psi(x) \ne \psi'(x)$.  If $n=0$, then $V \cong V'$ and the result is clear.  Suppose $n \ge 1$ and choose a point $y \in \Supp \psi \cup \Supp \psi'$ such that $\psi(y) \ne \psi'(y)$.  Thus $V_y \not \cong V'_y$.  Since $\chi_\psi = \chi_{\psi'}$, we know that $V_y$ and $V'_y$ lie in the same ext-block of $\cF_y$.  Thus there exists a sequence
\[
  V_y = V_y^0, V_y^1, \dots, V_y^\ell = V'_y
\]
of irreducible objects of $\cF_y$ such that $\Ext_\frM^1(V_y^i,V_y^{i+1}) \ne 0$ or\linebreak $\Ext_\frM^1(V_y^{i+1},V_y^i) \ne 0$ for all $i=0,\dots,\ell-1$.  For $i = 0,\dots,\ell$, define $V^i = V_y^i \otimes \bigotimes_{x \in \bx \setminus \{y\}} V_x$.  By Theorem~\ref{theo:two-eval}, we have $\Ext_\frM^1(V^i,V^{i+1}) \ne 0$ or $\Ext_\frM^1(V^{i+1},V^i) \ne 0$ for all $i = 0,\dots,\ell-1$.  Thus $V=V^0$ and $V^l$ lie in the same block.  If $\psi''$ is the element of $\mathcal{E}$ corresponding to the evaluation representation $V^l$, we have that $\psi''$ and $\psi'$ differ at $n-1$ points.  By the inductive hypothesis, $V^l$ and $V'$ lie in the same block.  Hence $V$ and $V'$ lie in the same block, completing the inductive step.

Next we prove that any two irreducible evaluation modules in the same ext-block have the same spectral character.  Let $V,V'$ be irreducible modules in the same ext-block corresponding to $\psi, \psi' \in \mathcal{E}$, respectively.  It suffices to consider the case where $V \not \cong V'$ and $\Ext^1_\frM(V,V') \ne 0$.  By Theorem~\ref{theo:two-eval}, $\psi$ and $\psi'$ differ on exactly one orbit $\Gamma \cdot x_0$ and
\[
  \Ext^1_\frM (V_{x_0},V'_{x_0}) \cong \Ext^1_\frM(V,V') \ne 0.
\]
Thus $V_{x_0}$ and $V'_{x_0}$ lie in the same block of $\cF_{x_0}$ and $\chi_\psi = \chi_{\psi'}$. \qed
\end{proof}

\begin{prop}[Block decomposition of $\cF_\eval$] \label{prop:eval-blocks}
The $\cF^\chi_\eval$, $\chi \in \frB_\eval$, are the blocks of $\cF_\eval$.  Thus $\cF_\eval = \bigoplus_{\chi \in \frB_\eval} \cF^\chi_\eval$ is the block decomposition of $\cF_\eval$.
\end{prop}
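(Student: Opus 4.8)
The plan is to obtain the block decomposition of $\cF_\eval$ from the general ext-block formalism of Corollary~\ref{cor:block-decomp-from-ext-blocks}, together with Lemma~\ref{lem:eval-ext-block-spec-char}. First I would check that $\cF_\eval$ is an admissible setting for that formalism: it is a full subcategory of $\cF$ closed under subobjects, quotients and extensions (being defined by a condition on Jordan--H\"older constituents), hence a Serre subcategory, so it is an abelian category in which every object has finite length, and its irreducible objects are precisely the irreducible evaluation modules $\ev_\psi$, $\psi \in \mathcal{E}$, each of which carries the well-defined spectral character $\chi_\psi$. By Corollary~\ref{cor:block-decomp-from-ext-blocks}, the blocks of $\cF_\eval$ are the subcategories $(\cF_\eval)_b$ indexed by the ext-blocks $b \in \cB_\mathrm{ext}(\cF_\eval)$, so it remains to identify $\cB_\mathrm{ext}(\cF_\eval)$ with $\frB_\eval$ in such a way that $(\cF_\eval)_b$ becomes $\cF^\chi_\eval$.

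Next I would invoke Lemma~\ref{lem:eval-ext-block-spec-char}, which says that two irreducible evaluation modules lie in the same ext-block if and only if they have the same spectral character. This shows at once that all irreducible objects of a fixed ext-block $b$ share a common spectral character $\chi_b \in \frB_\eval$, and that $b \mapsto \chi_b$ is an injection $\cB_\mathrm{ext}(\cF_\eval) \hookrightarrow \frB_\eval$. For surjectivity, given $\chi \in \frB_\eval$ I would construct a $\psi \in \mathcal{E}$ with $\chi_\psi = \chi$: for each $\Gamma$-orbit in $\Supp \chi$ pick a representative $x$ and an irreducible class $\psi(x) \in \mathcal{R}_x$ lying in the block $\chi(x) \in \cB_x$ (possible, since each block of $\cF_x$ contains an irreducible object, necessarily of the form $\ev_x \rho$), extend $\psi$ $\Gamma$-equivariantly over the orbit, and let $\psi$ take the trivial value off $\Supp \chi$. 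Equivariance of $\chi$ together with the definition of the $\Gamma$-action on $\cB_\eval$ then give $\chi_\psi = \chi$, so $\ev_\psi$ is an irreducible object of $\cF_\eval$ with spectral character $\chi$ and its ext-block maps to $\chi$.

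Finally, for an ext-block $b$ with $\chi_b = \chi$, unwinding the definitions gives $(\cF_\eval)_b = \cF^\chi_\eval$: an object $V$ of $\cF_\eval$ lies in $(\cF_\eval)_b$ precisely when every constituent of $V$ lies in $b$, which by Lemma~\ref{lem:eval-ext-block-spec-char} happens precisely when every constituent of $V$ has spectral character $\chi$, i.e.\ when $V \in \cF^\chi_\eval$. Combined with Corollary~\ref{cor:block-decomp-from-ext-blocks}, this identifies the $\cF^\chi_\eval$ as the blocks of $\cF_\eval$; uniqueness of the block decomposition (Proposition~\ref{prop:general-block-decomp}) then yields $\cF_\eval = \bigoplus_{\chi \in \frB_\eval} \cF^\chi_\eval$. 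The substantive input is Lemma~\ref{lem:eval-ext-block-spec-char} (which itself rests on Theorem~\ref{theo:two-eval}); the remaining work is bookkeeping, the only mildly delicate points being the equivariant construction of $\psi$ in the surjectivity step and the verification that $\cF_\eval$ meets the finite-length hypothesis needed to speak of blocks at all.
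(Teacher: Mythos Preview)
Your proposal is correct and follows the same approach as the paper, which simply says the result is immediate from Corollary~\ref{cor:block-decomp-from-ext-blocks} and Lemma~\ref{lem:eval-ext-block-spec-char}. Your version just fills in the bookkeeping the paper leaves implicit (that $\cF_\eval$ is a Serre subcategory so the ext-block machinery applies, and the surjectivity of $b \mapsto \chi_b$); the substantive content is identical.
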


\begin{proof}
This follows immediately from Corollary~\ref{cor:block-decomp-from-ext-blocks} and Lemma~\ref{lem:eval-ext-block-spec-char}. \qed
\end{proof}

\begin{rem} \label{rem:action-on-weight-lattice}
Suppose $\g^x$ is semisimple for some $x \in X_\rat$ and fix a triangular decomposition and a set of simple roots of $\g^x$.  Then the irreducible finite-dimensional modules of $\g^x$ are parameterized (according to their highest weight) by the set $P^+_x$ of dominant weights of $\g^x$.  Thus $\cB_x$ is always isomorphic to some quotient $\bar P^+_x$ of $P^+_x$ with respect to the equivalence relation defining ext-blocks.  In many specific examples of equivariant map algebras, we can give a precise description of this quotient (see Section~\ref{sec:applications}).

Even though $\cB_x = \cB_{\gamma \cdot x}$ for all $x \in X_\rat$ and $\gamma \in \Gamma$, the isomorphism $\cB_x \cong \bar P_x^+$ depends on $x$.  It is well known that for a semisimple Lie algebra $\frs$, $\Aut \frs \cong \Int \frs \rtimes \Out \frs$, where $\Int \frs$ is the group of inner automorphisms of $\frs$ and $\Out \frs$ is the group of diagram automorphisms of $\frs$.  The diagram automorphisms act naturally on $P^+$, the set of dominant weights of $\frs$.  If $\rho$ is an irreducible representation of $\frs$ of highest weight $\lambda \in P$ and $\gamma$ is an automorphism of $\frs$, then $\rho \circ \gamma^{-1}$ is the irreducible representation of $\frs$ of highest weight $\gamma_{\mathrm{Out}} \cdot \lambda$, where $\gamma_{\mathrm{Out}}$ is the outer part of the automorphism $\gamma$ (see \cite[VIII, \S7.2, Rem.~1]{Bou75}).  So the group $\Gamma$ acts naturally on each $P^+$ via the quotient $\Aut \frs \twoheadrightarrow \Out \frs$.  In the case that $\Gamma$ acts freely on $X$ (so that $\g^x = \g$ for all $x \in X_\rat$), $\g$ is semisimple, and $\bar P^+ := \bar P^+_x = \bar P^+_y$ for all $x,y \in X_\rat$, the set $\frB_\eval$ can then be identified with the set of finitely-supported equivariant maps from $X$ to $\bar P^+$.  For example, we will see that this is the case when $\Gamma$ is abelian (and acts freely on $X$), in which case $\bar P^+ \cong P/Q$.  In particular, this holds for untwisted map algebras and multiloop algebras (see Section~\ref{sec:applications}).
\end{rem}

\begin{defin}[Extension-local]
We say an equivariant map algebra $\frM$ is \emph{extension-local}
if $\Ext_\frM^1(V,k_\lambda)=0$ whenever $V$ is an irreducible
finite-dimensional evaluation representation and $k_\lambda$ is any
one-dimensional representation that is not an evaluation
representation.  Equivalently, by~\eqref{eq:ext-homology}, $\frM$ is
extension-local if $\rmH^1(\frM,V \otimes k_\lambda) = 0$ for $V$
and $k_\lambda$ as above.
\end{defin}

\begin{rem} \label{rem:ext-local}
If all irreducible finite-dimensional representations of $\frM$ are evaluation representations, then $\frM$ is extension-local.  This is the case, for example, if $\frM$ is perfect and in all of the main examples of equivariant map algebras, including untwisted map algebras, multiloop algebras, and generalized Onsager algebras (see \cite{NSS} and Section~\ref{sec:applications}). It is also immediate from Corollary~\ref{cor:abelian-1d-exts} that an abelian equivariant map algebra $\frM$ is extension-local.  In fact, the authors are not aware of any equivariant map algebras that are not extension-local.
\end{rem}

\begin{lem} \label{lem:ext-local-if-Mgss-perfect}
Let $\frM = \frM_1 \boxplus \frM_2$ be a direct product of equivariant map algebras, either of the form $\frM_i= M(X_i,\g)^\Ga$ where $X=X_1\sqcup X_2$ is a disjoint union of $\Ga$-invariant affine schemes, or of the form $\frM_i = M(X,\g_i)^\Ga$ where $\g=\g_1 \boxplus \g_2$ is a direct product of $\Ga$-invariant ideals $\g_i$.  Then $\frM$ is extension-local if and only if both $\frM_1$ and $\frM_2$ are so. In particular, an equivariant map algebra $\frM$ is extension-local if and only if $M(X,\g_\rss)^\Ga$ is extension-local.
\end{lem}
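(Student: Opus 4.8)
\textit{Proof proposal.}

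The plan is to reduce the statement to Proposition~\ref{prop:ext-direct-sums} via the following bookkeeping observation, valid in either of the two structural cases: if an irreducible finite-dimensional $\frM$-module is written as $V = V_1 \ot V_2$ with $V_i$ an irreducible $\frM_i$-module (Proposition~\ref{prop:ext-direct-sums}\eqref{prop-item:direct-rep}), then $V$ is an evaluation representation of $\frM$ if and only if each $V_i$ is an evaluation representation of $\frM_i$. In the first case every $\bx \in X_*$ splits as $\bx = \bx_1 \sqcup \bx_2$ with $\bx_i \in (X_i)_*$, one has $\g^\bx = \g^{\bx_1} \boxplus \g^{\bx_2}$, and $\ev_\bx$ is the product of the maps $\frM \twoheadrightarrow \frM_i \xrightarrow{\ev_{\bx_i}} \g^{\bx_i}$; hence for $\psi \in \mathcal{E}$ with $\psi_i = \psi|_{(X_i)_\rat}$ one gets $\ev_\psi \cong \ev_{\psi_1} \ot \ev_{\psi_2}$, and conversely a tensor product of evaluation representations of the $\frM_i$ is an evaluation representation of $\frM$. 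In the second case one argues identically, using $\g^x = \g_1^x \boxplus \g_2^x$ for each $x \in X_\rat$ and factoring each irreducible $\rho_x$ of $\g^x$ through this decomposition. The ``only if'' direction then follows from the uniqueness clause in Proposition~\ref{prop:ext-direct-sums}\eqref{prop-item:direct-rep}: if $V_1 \ot V_2$ is isomorphic to a tensor product $W_1 \ot W_2$ of evaluation representations, then $V_i \cong W_i$. Two consequences I will use: a one-dimensional representation $k_{\lambda_1} \ot k_{\lambda_2}$ of $\frM$ fails to be an evaluation representation exactly when at least one $k_{\lambda_i}$ is not an evaluation representation of $\frM_i$; and an irreducible evaluation representation is never isomorphic to a representation that is not an evaluation representation (the property of being an evaluation representation being invariant under isomorphism).

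Granting this, I would first show that if $\frM_1$ and $\frM_2$ are extension-local then so is $\frM$. Let $V = V_1 \ot V_2$ be an irreducible evaluation representation of $\frM$ and $k_\lambda = k_{\lambda_1} \ot k_{\lambda_2}$ a one-dimensional representation that is not an evaluation representation; say $k_{\lambda_1}$ is not an evaluation representation of $\frM_1$. Then $V_1 \not\cong k_{\lambda_1}$, so the irreducible version of Proposition~\ref{prop:ext-direct-sums}\eqref{prop-item:kuenneth} gives $\Ext^1_\frM(V, k_\lambda) = 0$ when $V_2 \not\cong k_{\lambda_2}$, and $\Ext^1_\frM(V, k_\lambda) \cong \Ext^1_{\frM_1}(V_1, k_{\lambda_1})$ when $V_2 \cong k_{\lambda_2}$; the latter vanishes by extension-locality of $\frM_1$, since $V_1$ is an irreducible evaluation representation and $k_{\lambda_1}$ is a one-dimensional non-evaluation representation of $\frM_1$. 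Hence $\Ext^1_\frM(V, k_\lambda) = 0$ in all cases, so $\frM$ is extension-local.

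Conversely, assuming $\frM$ extension-local, I would show $\frM_1$ is extension-local (the argument for $\frM_2$ being symmetric). Given an irreducible evaluation representation $V_1$ of $\frM_1$ and a one-dimensional non-evaluation representation $k_{\lambda_1}$ of $\frM_1$, let $k_0$ be the trivial representation of $\frM_2$, which is an evaluation representation. By the first paragraph $V_1 \ot k_0$ is an irreducible evaluation representation of $\frM$, while $k_{\lambda_1} \ot k_0$ is a one-dimensional representation of $\frM$ that is not an evaluation representation. Since $V_1 \not\cong k_{\lambda_1}$ and $k_0 \cong k_0$, Proposition~\ref{prop:ext-direct-sums}\eqref{prop-item:kuenneth} gives $\Ext^1_\frM(V_1 \ot k_0,\, k_{\lambda_1} \ot k_0) \cong \Ext^1_{\frM_1}(V_1, k_{\lambda_1})$, which is $0$ by extension-locality of $\frM$; as $V_1$ and $k_{\lambda_1}$ were arbitrary, $\frM_1$ is extension-local. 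For the final assertion, apply the equivalence just proved to $\frM = M(X,\g_\rss)^\Ga \boxplus M(X,\g_\ab)^\Ga$, coming from the decomposition $\g = \g_\rss \boxplus \g_\ab$ into $\Ga$-invariant ideals, and note that the abelian factor $M(X,\g_\ab)^\Ga$ is extension-local by Corollary~\ref{cor:abelian-1d-exts} (cf.\ Remark~\ref{rem:ext-local}).

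The main obstacle is the bookkeeping fact of the first paragraph: identifying the evaluation representations of $\frM$ among the tensor products $V_1 \ot V_2$ in each of the two structural cases. Once that identification is in hand, everything else is a direct application of Proposition~\ref{prop:ext-direct-sums} together with the elementary remark that an evaluation representation and a one-dimensional non-evaluation representation can never be isomorphic.
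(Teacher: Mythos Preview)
Your proof is correct and follows essentially the same approach as the paper: both reduce to the K\"unneth-type formula of Proposition~\ref{prop:ext-direct-sums}\eqref{prop-item:kuenneth} after the bookkeeping observation that $V = V_1 \ot V_2$ is an evaluation representation of $\frM$ precisely when each $V_i$ is one of $\frM_i$ (and likewise for $k_\lambda = k_{\lambda_1} \ot k_{\lambda_2}$). The only cosmetic differences are that you invoke the specialized irreducible form of the K\"unneth formula and argue the forward direction directly by a case split on whether $V_2 \cong k_{\lambda_2}$, whereas the paper writes out the general two-summand formula \eqref{eq:direct-sum-proof-formula} and argues the forward direction contrapositively (assuming $\Ext^1_\frM(V,k_\lambda) \ne 0$ and deducing that $k_\lambda$ is an evaluation representation); the converse and the final reduction to $M(X,\g_\rss)^\Ga$ are handled identically.
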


\begin{proof}
Suppose that $\frM_1$ and $\frM_2$ are extension-local, and let $V$ be an irreducible finite-dimensional evaluation representation and $k_\lambda$ any one-dimensional representation of $\frM$. Obviously, $k_\la \cong k_{\la_1} \ot k_{\la_2}$ for $\la_i = \la |_{\frM_i}$. Similarly, it follows from our assumptions on the $\frM_i$ that $V\cong V_1\ot V_2$ where $V_i$, $i=1,2$, are evaluation representations of $\frM_i$ respectively.  Indeed, in the first case we decompose $\bx = \bx_1 \sqcup \bx_2$ with $\bx_i = \bx\cap X_i$ and get $\g^\bx = \g^{\bx_1} \boxplus \g^{\bx_2}$. In the second case, we get $\g^\bx = \g_1^\bx \boxplus \g_2^\bx$.  Hence, by
Proposition~\ref{prop:ext-direct-sums}\eqref{prop-item:kuenneth},
\begin{equation} \label{eq:direct-sum-proof-formula}
 \Ext^1_\frM(V, k_\la) \cong
    \big( (V_1^* \ot k_{\la_1})^{\frM_1} \ot \Ext^1_{\frM_2}(V_2, k_{\la_2})\big)
  \oplus \big( \Ext^1_{\frM_1} (V_1, k_{\la_1}) \ot (V_2^* \ot k_{\la_2})^{\frM_2}
  \big).
\end{equation}
If $\Ext^1_\frM(V, k_\la) \ne 0$, say $(V_1^* \ot k_{\la_1})^{\frM_1} \ot \Ext^1_{\frM_2}(V_2, k_{\la_2}) \ne 0$, then $V_1 \cong k_{\la_1}$ is an evaluation representation by Lemma~\ref{lem:schur-tensor}. Since $\Ext^1_{\frM_2}(V_2, k_{\la_2})\ne 0$, $k_{\la_2}$ is also an evaluation representation since $\frM_2$ is extension-local.  But then so is $k_\la$.

Conversely, assume that $\frM$ is extension-local. By symmetry it is enough to prove that $\frM_1$ is also extension-local.  Let $V_1$ be an evaluation representation and $k_{\la_1}$ a one-dimensional representation of $\frM_1$ that is not an evaluation representation. Put $V_2=k_0$ and $\la_2=0$. Then $k_\la \cong k_{\la_1} \ot k_{\la_2}$ is not an evaluation representation of
$\frM$, while $V= V_1 \ot V_2$ is so.  Hence $\Ext^1_\frM(V,k_\la) = 0$ since $\frM$ is extension-local.  But then $\Ext^1_{\frM_1}(V_1, k_{\la_1}) = 0$ follows from~\eqref{eq:direct-sum-proof-formula} since $(V_2^* \ot k_{\la_2})^{\frM_2} \cong k_0^{\frM_2} = k_0\ne 0$.

The remaining assertion is immediate from the fact that the abelian algebra $\frM(X,\g_\ab)^\Gamma$ is extension-local. \qed
\end{proof}

Note that under the identification of $\frM_\ab^*$ with one-dimensional representations of $\frM$, vector addition in $\frM_\ab^*$ corresponds to the tensor product of representations.  It follows that the space of one-dimensional evaluation representations is a vector subspace of $\frM_\ab^*$, which we will denote by $\frM_{\ab,\eval}^*$.  We fix a vector space complement $\frM_{\ab,\noneval}^*$ so that $\frM_\ab^* = \frM_{\ab,\eval}^* \oplus \frM_{\ab,\noneval}^*$.

\begin{rem}
If the set $\tilde X = \{x \in X_\rat : [\g^x,\g^x] \ne \g^x\}$ is finite, there is a canonical choice of complement.  Namely, fix a set $\bx$ of points of $X_\rat$ containing exactly one point from each $\Gamma$-orbit in $\tilde X$.  Then, by \cite[(5.8)]{NSS}, we have $\frM_\ab^* \cong \frM_{\ab,\eval}^* \oplus \frM_{\ab,\noneval}^*$, where $\frM_{\ab,\eval}^* = \left( \bigoplus_{x \in \bx} \g^x/[\g^x,\g^x] \right)^*$ and $\frM_{\ab,\noneval}^*$ is the dual of the kernel of the canonical map $\frM/[\frM,\frM] \twoheadrightarrow \bigoplus_{x \in \bx} \g^x/[\g^x,\g^x]$ induced by evaluation at $\bx$.
\end{rem}

\begin{defin}[Spectral characters] \label{def:general-spectral-char}
Let $\mathfrak{B} = \mathfrak{B}_\eval \times \frM_{\ab,\noneval}^*$.  Using Remark~\ref{rem:irred-classification}\eqref{rem-item:unique-decomp}, every irreducible finite-dimensional representation of $\frM$ can be written as $V_\eval \otimes k_\lambda$ for $V_\eval \in \cF_\eval$ (unique up to isomorphism and corresponding to some $\psi \in \mathcal{E}$) and unique $\lambda \in \frM_{\ab,\noneval}^*$.  Note that this factorization depends on the choice of the complement $\frM^*_{\ab,\noneval}$.  For such a representation, we define $\chi_{\psi,\lambda} = (\chi_\psi, \lambda) \in \mathfrak{B}$.
If $V$ is an object of $\cF$ such that there exists $\chi \in \mathfrak{B}$ with the property that $\chi_{\psi,\lambda}=\chi$ for every (isomorphism class of) irreducible constituent $\ev_\psi \otimes \lambda$ of $V$, we say $V$ has \emph{spectral character} $\chi$.  Under the natural embedding of $\frB_\eval$ into $\frB$ via $\chi \mapsto (\chi, 0)$, this definition of spectral character restricts to the previous one (Definition~\ref{def:ss-spec-char}).  For $\chi \in \mathfrak{B}$, let $\cF^\chi$ be the full subcategory of $\cF$ containing precisely the objects with spectral character $\chi$.
\end{defin}

We note that the decomposition $\mathfrak{B} = \mathfrak{B}_\eval \times \frM_{\ab,\noneval}^*$ of Definition~\ref{def:general-spectral-char} is different from the one in \eqref{eq:block-decomp}.

\begin{lem} \label{lem:general-ext-block-spec-char}
If $\frM$ is extension-local, then two irreducible modules in $\cF$ are in the same ext-block if and only if they have the same spectral character.
\end{lem}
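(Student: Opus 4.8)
The plan is to fix, for the two irreducible modules $V$ and $V'$, the decompositions $V = V_\eval \otimes k_\lambda$ and $V' = V'_\eval \otimes k_{\lambda'}$ provided by Definition~\ref{def:general-spectral-char}, where $V_\eval, V'_\eval \in \cF_\eval$ are irreducible (corresponding to $\psi, \psi' \in \mathcal{E}$) and $\lambda, \lambda' \in \frM_{\ab,\noneval}^*$, and then to prove directly that $V$ and $V'$ lie in the same ext-block if and only if $\lambda = \lambda'$ and $\chi_\psi = \chi_{\psi'}$ (i.e.\ $\chi_{\psi,\lambda} = \chi_{\psi',\lambda'}$). The one observation used throughout is that tensoring by a fixed one-dimensional representation $k_\mu$ is exact and, by~\eqref{eq:ext-homology}, induces isomorphisms $\Ext^1_\frM(M \otimes k_\mu, N \otimes k_\mu) \cong \rmH^1(\frM, M^* \otimes N) \cong \Ext^1_\frM(M,N)$ for finite-dimensional $M,N$.

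For the ``if'' direction I would start from $\chi_\psi = \chi_{\psi'}$ and apply Lemma~\ref{lem:eval-ext-block-spec-char} to get a chain of irreducible objects of $\cF_\eval$ joining $V_\eval$ to $V'_\eval$ with a nonzero $\Ext^1_\frM$ in one direction or the other at each step; tensoring the whole chain with $k_\lambda$ and using the observation above produces a chain of irreducibles of $\cF$ joining $V = V_\eval \otimes k_\lambda$ to $V'_\eval \otimes k_\lambda = V'$ (here using $\lambda = \lambda'$), so $V$ and $V'$ lie in the same ext-block.

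For the ``only if'' direction it suffices to treat $V \not\cong V'$ with $\Ext^1_\frM(V,V') \ne 0$. Using Corollary~\ref{cor:ext-moving-arguments} I would rewrite $\Ext^1_\frM(V,V') \cong \Ext^1_\frM\big(k_0, (V_\eval^* \otimes V'_\eval) \otimes k_{\lambda'-\lambda}\big)$. Choosing $\bx \in X_*$ with one point in each $\Gamma$-orbit of $\Supp\psi \cup \Supp\psi'$, the module $V_\eval^* \otimes V'_\eval$ factors through the reductive Lie algebra $\g^\bx$ and is a tensor product of completely reducible finite-dimensional modules, hence decomposes as a finite direct sum $\bigoplus_j E_j$ of irreducible $\g^\bx$-modules, each of which is an irreducible evaluation representation of $\frM$ by \cite[Prop.~4.9]{NSS}. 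Since $\Ext^1$ commutes with finite direct sums, $\Ext^1_\frM(V,V') \cong \bigoplus_j \rmH^1(\frM, E_j \otimes k_{\lambda'-\lambda})$. Now $\lambda'-\lambda \in \frM_{\ab,\noneval}^*$; if $\lambda \ne \lambda'$ then $\lambda'-\lambda$ is a nonzero element of $\frM_{\ab,\noneval}^*$, so $k_{\lambda'-\lambda}$ is not an evaluation representation, and extension-locality of $\frM$ forces every summand $\rmH^1(\frM, E_j \otimes k_{\lambda'-\lambda})$ to vanish, contradicting $\Ext^1_\frM(V,V') \ne 0$. Hence $\lambda = \lambda'$, and then the observation gives $\Ext^1_\frM(V_\eval, V'_\eval) \cong \Ext^1_\frM(V,V') \ne 0$, so $V_\eval$ and $V'_\eval$ lie in the same ext-block of $\cF_\eval$ and $\chi_\psi = \chi_{\psi'}$ by Lemma~\ref{lem:eval-ext-block-spec-char}; together with $\lambda = \lambda'$ this yields $\chi_{\psi,\lambda} = \chi_{\psi',\lambda'}$.

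The step requiring the most care is the use of extension-locality in the ``only if'' direction: one must correctly recognize $E_j \otimes k_{\lambda'-\lambda}$ as a tensor product of an irreducible evaluation representation with a \emph{genuinely non-evaluation} one-dimensional representation. This hinges on the directness of the splitting $\frM_\ab^* = \frM_{\ab,\eval}^* \oplus \frM_{\ab,\noneval}^*$ (so a nonzero element of $\frM_{\ab,\noneval}^*$ never lies in $\frM_{\ab,\eval}^*$) and on the fact that $V_\eval^* \otimes V'_\eval$ decomposes as a direct sum of \emph{evaluation} modules; everything else is formal bookkeeping with exactness of tensoring by one-dimensional representations and additivity of $\Ext^1$.
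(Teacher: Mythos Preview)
Your proof is correct and follows essentially the same approach as the paper's: both directions rely on Lemma~\ref{lem:eval-ext-block-spec-char} together with the tensoring-by-$k_\mu$ trick, and the ``only if'' direction uses extension-locality to force $\lambda=\lambda'$ via the decomposition $\frM_\ab^*=\frM_{\ab,\eval}^*\oplus\frM_{\ab,\noneval}^*$. The only cosmetic differences are that you reduce to a single generating step $\Ext^1_\frM(V,V')\ne 0$ (valid since ``same spectral character'' is an equivalence relation) and you make explicit the decomposition of $V_\eval^*\otimes V'_\eval$ into irreducible evaluation modules, a step the paper leaves implicit when invoking extension-locality.
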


\begin{proof}
We first prove that any two irreducible modules with the same spectral character lie in the same ext-block.  Suppose $V \otimes k_\lambda$ and $V' \otimes k_{\lambda'}$ have the same spectral character for some $V,V' \in \cF_\eval$ and $\lambda,\lambda' \in \frM_{\ab,\noneval}^*$.  It follows from Definition~\ref{def:general-spectral-char} that $V$ and $V'$ also have the same spectral character and that $\lambda = \lambda'$.  Thus, by Lemma~\ref{lem:eval-ext-block-spec-char}, $V$ and $V'$ are in the same ext-block of $\cF_\eval$.  Therefore, there exists a sequence $V = V^0, V^1, \dots, V^\ell = V'$ such that $\Ext^1_\frM(V^i, V^{i+1}) \ne 0$ or $\Ext^1_\frM(V^{i+1},V^i) \ne 0$ for $0 \le i < \ell$.  Then, for $0 \le i < \ell$,
\begin{gather*}
  \Ext^1_\frM(V^i \otimes k_\lambda, V^{i+1} \otimes k_{\lambda'}) = \Ext^1_\frM(V^i, V^{i+1}) \ne 0, \quad \text{or} \\
  \Ext^1_\frM(V^{i+1} \otimes k_{\lambda'}, V^i \otimes k_\lambda) = \Ext^1_\frM(V^{i+1}, V^i) \ne 0.
\end{gather*}
Thus $V \otimes k_\lambda$ and $V' \otimes k_\lambda$ lie in the same ext-block.

Next we prove that any two irreducible modules in the same ext-block have the same spectral character.  Let $V \otimes k_\lambda$ and $V' \otimes k_{\lambda'}$ be two irreducible modules in the same ext-block with $V,V' \in \cF_\eval$ and $\lambda,\lambda' \in \frM_{\ab,\noneval}^*$.  Then there exist sequences $V = V^0, V^1, \dots, V^\ell = V'$ in $\cF_\eval$ and $\lambda = \lambda^0, \lambda^1, \dots, \lambda^\ell = \lambda'$ in $\frM_{\ab,\noneval}^*$ such that $\Ext^1_\frM(V^i \otimes k_{\lambda^i}, V^{i+1} \otimes k_{\lambda^{i+1}}) \ne 0$ or $\Ext^1_\frM(V^{i+1} \otimes k_{\lambda^{i+1}}, V^i \otimes k_{\lambda^i}) \ne 0$ for $0 \le i < \ell$.  Thus $\Ext^1_\frM(V^i \otimes (V^{i+1})^*, k_{\lambda^{i+1}-\lambda^i}) \ne 0$ or $\Ext^1_\frM(V^{i+1} \otimes (V^i)^*, k_{\lambda^i-\lambda^{i+1}}) \ne 0$ for $0 \le i < \ell$.  Since $\frM$ is extension-local, this implies that $k_{\lambda^{i+1}-\lambda^i}$ is an evaluation module for each $0 \le i < \ell$.  But then $\lambda^{i+1}-\lambda^i \in \frM_{\ab,\eval}^*$ and so $\lambda^{i+1} - \lambda^i=0$.  Therefore $\lambda = \lambda^0 = \lambda^1 = \dots = \lambda^\ell = \lambda'$.  This in turn implies that $\Ext^1_\frM(V^i, V^{i+1}) \ne 0$ or $\Ext^1_\frM(V^{i+1},V^i) \ne 0$ for $0 \le i < \ell$.  Therefore $V$ and $V'$ are in the same ext-block of $\cF_\eval$ and so have the same spectral character by Lemma~\ref{lem:eval-ext-block-spec-char}.  It follows that $V \otimes k_\lambda$ and $V' \otimes k_{\lambda'}$ have the same spectral character. \qed
\end{proof}

\begin{theo}[Block decomposition of $\cF$] \label{theo:blocks}
For an extension-local equivariant map algebra $\frM$, the $\cF^\chi$, $\chi \in \frB_X$, are the blocks of $\cF$.  Thus $\cF = \bigoplus_{\chi \in \frB_X} \cF^\chi$ is the block decomposition of $\cF$.
\end{theo}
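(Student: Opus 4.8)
The plan is to read this off from the results already in place, with essentially no further computation. By Corollary~\ref{cor:block-decomp-from-ext-blocks}, the blocks of $\cF$ are exactly the subcategories $\cF_b$ attached to the ext-blocks $b \in \cB_\mathrm{ext}(\cF)$, so it suffices to match the ext-blocks up with the subcategories $\cF^\chi$.

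First I would invoke Lemma~\ref{lem:general-ext-block-spec-char}, which is where the hypothesis that $\frM$ is extension-local enters: two irreducible objects of $\cF$ lie in the same ext-block precisely when they share the same spectral character $\chi \in \mathfrak{B}$. Hence assigning to each spectral character the set of irreducibles carrying it sets up a bijection between the spectral characters that actually occur and the ext-blocks of $\cF$; and for such a $\chi$ the ext-block subcategory $\cF_b$ coincides with $\cF^\chi$, since both are the full subcategory of objects all of whose Jordan--H\"older constituents have spectral character $\chi$ (using the Jordan--H\"older theorem to make ``constituents'' well-defined, and Definition~\ref{def:general-spectral-char}).

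Next I would verify that every $\chi \in \mathfrak{B}$ is realized, so that the family $\{\cF^\chi\}_{\chi \in \mathfrak{B}}$ contains no empty members and genuinely indexes the ext-blocks. Write $\chi = (\chi_0, \lambda)$ with $\chi_0 \in \frB_\eval$ and $\lambda \in \frM_{\ab,\noneval}^*$. For each $\Gamma$-orbit meeting $\Supp \chi_0$, choose a point $x$ on it and an irreducible evaluation module lying in the block $\chi_0(x) \in \cB_x$ of $\cF_x$ (every block is nonempty); this determines a value $\psi(x) \in \mathcal{R}_x$, and extending $\Gamma$-equivariantly over the orbit (legitimate because $\cB_x = \cB_{\gamma \cdot x}$) yields a $\psi \in \mathcal{E}$ with $\chi_\psi = \chi_0$ by construction. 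Then $\ev_\psi \otimes k_\lambda$ is an irreducible object of $\cF$ with $\chi_{\psi,\lambda} = \chi$, so $\cF^\chi \ne 0$.

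Finally, combining the previous steps with the decomposition $M = \bigoplus_b M_b$ over ext-blocks (the lemma obtained by importing \cite[II.7.1]{Jan}) gives $\cF = \bigoplus_{\chi \in \mathfrak{B}} \cF^\chi$, and Corollary~\ref{cor:block-decomp-from-ext-blocks} identifies this as the unique block decomposition. I do not expect a genuine obstacle: the whole argument is an application of Lemma~\ref{lem:general-ext-block-spec-char} and Corollary~\ref{cor:block-decomp-from-ext-blocks}. The only point needing a moment's care is the surjectivity check of the third paragraph --- confirming that $\mathfrak{B}$ is precisely the set of realized spectral characters rather than strictly larger --- which is why I would spell that construction of $\psi$ out explicitly.
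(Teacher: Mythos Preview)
Your proof is correct and takes exactly the same approach as the paper, which simply cites Corollary~\ref{cor:block-decomp-from-ext-blocks} and Lemma~\ref{lem:general-ext-block-spec-char} without further comment. Your explicit verification that every $\chi \in \mathfrak{B}$ is realized by some irreducible is a useful addition that the paper leaves implicit.
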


\begin{proof}
This follows immediately from Corollary~\ref{cor:block-decomp-from-ext-blocks} and Lemma~\ref{lem:general-ext-block-spec-char}. \qed
\end{proof}

%
\section{Applications} \label{sec:applications}
%

In this section, we apply our results on extensions and block decompositions to various specific examples of equivariant map algebras which have a prominent place in the literature.

\subsection{Free abelian group actions and multiloop algebras} \label{sec:app-multiloop}

Suppose that the group $\Gamma$ is abelian and acts freely on $X$.  As noted in Section~\ref{sec:abelian}, we have a decomposition
\[ \textstyle
  \frM = \bigoplus_{\xi \in \Xi} \g_\xi \otimes A_{-\xi},
\]
where $\Xi$ is the character group of $\Gamma$.  Since the action of $\Gamma$ must preserve $\g_\ab$ and $\g_\rss$, we also have decompositions $\g_\ab = \bigoplus_{\xi \in \Xi} \g_{\ab,\xi}$ and $\g_\rss = \bigoplus_{\xi \in \Xi} \g_{\rss,\xi}$.  Using Lemma~\ref{lem:free-actions}, we have
\begin{gather*} \textstyle
  \frM' = \bigoplus_{\xi,\tau \in \Xi} [\g_\xi,\g_\tau] \otimes A_{-\xi - \tau} = \bigoplus_{\xi \in \Xi} \left( \bigoplus_{\tau \in \Xi} [\g_\tau, \g_{\xi-\tau}] \right) \otimes A_{-\xi} \\
  \textstyle = \bigoplus_{\xi \in \Xi} \g_{\rss,\xi} \otimes A_{-\xi} = (\g_\rss \otimes A)^\Gamma = M(X,\g_\rss)^\Gamma.
\end{gather*}
Therefore
\[ \textstyle
  \frM_\ab \cong \bigoplus_{\xi \in \Xi} \g_{\ab,\xi} \otimes A_{-\xi} = (\g_\ab \otimes A)^\Gamma = M(X,\g_\ab)^\Gamma.
\]
We thus have
\[
  \frM \cong \frM' \boxplus \frM_\ab,
\]
where $\frM'$ is perfect (i.e.\ $\frM'=\frM$), as is easily seen by computations similar to the above.

\begin{lem} \label{lem:free-action-perfect}
If $\frM$ is an equivariant map algebra with $\g$ semisimple and $\Gamma$ abelian and acting freely on $X$, then $\frM$ is perfect.  Thus $\frM$ has no nontrivial one-dimensional representations and all irreducible finite-dimensional representations of $\frM$ are evaluation representations.
\end{lem}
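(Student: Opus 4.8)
The plan is to first establish that $\frM$ is perfect and then read off the two stated consequences. For perfectness, note that since $\g$ is semisimple we have $\g_\rss = \g$ and $\g_\ab = 0$, so the computation carried out in the paragraph preceding the lemma applies verbatim and already yields $\frM' = M(X,\g_\rss)^\Ga = M(X,\g)^\Ga = \frM$. If one prefers a self-contained argument, I would argue directly from the $\Xi$-grading \eqref{eq:xrat0}: $\frM' = [\frM,\frM]$ is a $\Xi$-graded ideal, spanned by brackets $[u \ot a, v \ot b] = [u,v] \ot ab$ with $u \in \g_\xi$, $a \in A_{-\xi}$, $v \in \g_\tau$, $b \in A_{-\tau}$; since $\Ga$ acts freely on $X$, Lemma~\ref{lem:free-actions}\eqref{lem-item:free-actions:A-strong-grading} gives $A_{-\xi}A_{-\tau} = A_{-\xi-\tau}$, so the component of $\frM'$ in degree $\si$ equals $\big( \sum_{\xi + \tau = \si} [\g_\xi, \g_\tau]\big) \ot A_{-\si}$; and comparing $\Xi$-homogeneous components in the identity $\g = [\g,\g]$ (valid because $\g$ is semisimple) shows $\sum_{\xi+\tau=\si}[\g_\xi,\g_\tau] = \g_\si$ for every $\si \in \Xi$. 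Hence the degree-$\si$ component of $\frM'$ is $\g_\si \ot A_{-\si}$, which is the degree-$\si$ component of $\frM$, so $\frM' = \frM$.

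Given that $\frM$ is perfect, we have $\frM_\ab = \frM/\frM' = 0$, hence $\frM_\ab^* = 0$. As recalled in Section~\ref{sec:EMAs}, one-dimensional representations of $\frM$ are parameterized by $\frM_\ab^*$, so the trivial representation is the only one-dimensional representation of $\frM$. Finally, by Proposition~\ref{prop:irred-classification} every irreducible finite-dimensional representation of $\frM$ has the form $k_\la \ot \ev_\psi$ with $\la \in \frM_\ab^*$ and $\psi \in \mathcal{E}$; since $\frM_\ab^* = 0$ forces $\la = 0$, such a representation is just $\ev_\psi$, i.e., an evaluation representation.

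The only point requiring care is the graded bookkeeping in the first step: one must note that $[\frM,\frM]$ is a $\Xi$-graded subspace (automatic because \eqref{eq:xrat0} is an algebra grading), invoke the strong-grading property of $A$ exactly where freeness of $\Ga$ enters (Lemma~\ref{lem:free-actions}), and use semisimplicity of $\g$ to conclude that each graded piece $\sum_{\xi+\tau=\si}[\g_\xi,\g_\tau]$ exhausts $\g_\si$ rather than a proper subspace. None of this is deep; the remaining two assertions are immediate consequences of the classification of irreducibles in Proposition~\ref{prop:irred-classification} together with the identification of one-dimensional representations with $\frM_\ab^*$.
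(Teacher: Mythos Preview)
Your proof is correct and follows essentially the same approach as the paper: the paper's own proof simply invokes the computation in the preceding paragraph (which is exactly your $\Xi$-graded argument using Lemma~\ref{lem:free-actions} and $[\g,\g]=\g$) to get $\frM'=\frM$, and then cites Proposition~\ref{prop:irred-classification} for the remaining claims. Your write-up merely spells out in more detail what the paper leaves to the reader.
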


\begin{proof}
The first statement follows immediately from the above discussion.  The second is then a result of Proposition~\ref{prop:irred-classification}. \qed
\end{proof}

\begin{cor} \label{cor:free-action-ext-local}
If $\Gamma$ is abelian and acts freely on $X$, then $\frM$ is extension-local.
\end{cor}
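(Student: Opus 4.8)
The plan is to reduce to the case of semisimple $\g$, which has effectively already been handled, by means of the reduction lemma for extension-locality. Precisely, the final assertion of Lemma~\ref{lem:ext-local-if-Mgss-perfect} states that an equivariant map algebra $\frM = M(X,\g)^\Ga$ is extension-local if and only if $M(X,\g_\rss)^\Ga$ is extension-local. So it suffices to show that $M(X,\g_\rss)^\Ga$ is extension-local, i.e.\ we may as well assume $\g$ is semisimple from the outset.

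With $\g$ semisimple and $\Gamma$ abelian acting freely on $X$, Lemma~\ref{lem:free-action-perfect} applies and shows that $\frM$ is perfect, so that $\frM_\ab = 0$. Hence the only one-dimensional representation of $\frM$ is the trivial one $k_0$, which is an evaluation representation. In particular there is no one-dimensional representation $k_\la$ which fails to be an evaluation representation, so the condition defining extension-locality (the vanishing of $\Ext^1_\frM(V,k_\la)$ for $V$ an irreducible evaluation representation and $k_\la$ a non-evaluation one-dimensional representation) holds vacuously; compare Remark~\ref{rem:ext-local}. Therefore $M(X,\g_\rss)^\Ga$ is extension-local, and by the previous paragraph so is $\frM$ for arbitrary reductive $\g$.

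The corollary is thus an immediate consequence of Lemma~\ref{lem:free-action-perfect} together with Lemma~\ref{lem:ext-local-if-Mgss-perfect}, and presents no genuine obstacle. The one point worth isolating is the elementary observation that a perfect Lie algebra has trivial abelianization, hence no nontrivial one-dimensional representations, which is exactly what makes the extension-locality condition vacuous for the semisimple part.
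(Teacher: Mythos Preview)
Your proof is correct and follows exactly the paper's approach: reduce to the semisimple case via Lemma~\ref{lem:ext-local-if-Mgss-perfect}, then invoke Lemma~\ref{lem:free-action-perfect} to conclude that $\frM$ is perfect and hence (vacuously) extension-local. The paper's proof is the one-line ``This follows easily from Lemmas~\ref{lem:ext-local-if-Mgss-perfect} and~\ref{lem:free-action-perfect},'' and you have simply unpacked this.
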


\begin{proof}
This follows easily from Lemmas~\ref{lem:ext-local-if-Mgss-perfect} and Lemma~\ref{lem:free-action-perfect}. \qed
\end{proof}

By the above discussion and Proposition~\ref{prop:M-sum-extensions}, to describe the extensions between irreducibles, it suffices to consider the case where $\g$ is semisimple, and hence $\frM$ is perfect.

\begin{prop}[Extensions for $\Gamma$ abelian and acting freely on $X$] \label{prop:free-action-extensions}
\ Suppose $\g$ is semisimple and $V, V'$ are irreducible finite-dimensional representations of $\frM$.  Write $V = \bigotimes_{x \in \bx} V_x$ and $V' = \bigotimes_{x \in \bx} V'_x$ for some $\bx \in X_*$ and evaluation representations $V_x, V'_x$ at $x \in \bx$. Then we have the following description of the extensions of $V$ by $V'$.
\begin{enumerate}
  \item \label{prop-item:free-action-ext-b} If $V_x \not \cong V'_x$ for more than one $x \in \bx$, then $\Ext^1_\frM(V,V')=0$.

  \item \label{prop-item:free-action-ext-c} If $V_{x_0} \not \cong V'_{x_0}$ for some $x_0 \in \bx$, and $V_x \cong V'_x$ for all $x \in \bx \setminus \{x_0\}$, then
      \[
        \Ext^1_\frM(V,V') = \Hom_{\g} (\g, V_{x_0}^* \otimes V'_{x_0}) \otimes (\fm_{x_0}/\fm_{x_0}^2)^*.
      \]

  \item \label{prop-item:free-action-ext-d} If $V \cong V'$, then
      \[ \textstyle
        \Ext^1_\frM(V,V') = \bigoplus_{x \in \bx} \Hom_\g (\g, V_x^* \otimes V'_x) \otimes (\fm_x/\fm_x^2)^*.
      \]
\end{enumerate}
\end{prop}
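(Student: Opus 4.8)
The plan is to reduce everything to results already established: Theorem~\ref{theo:two-eval}, which reduces extensions between evaluation representations to extensions between single point evaluation representations at a common point, and Proposition~\ref{prop:free-abelian}, which evaluates the latter explicitly in the present (free abelian) setting.

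First I would record that, since $\g$ is semisimple and $\Gamma$ acts freely on $X$, Lemma~\ref{lem:free-action-perfect} shows $\frM$ is perfect; in particular $\frM_\ab = 0$, so $\frM_\ab^* = 0$, and every irreducible finite-dimensional representation of $\frM$ is an evaluation representation. Thus $V$ and $V'$ are irreducible evaluation representations, corresponding to some $\psi,\psi' \in \mathcal{E}$ supported in $\bigcup_{x\in\bx}\Gamma\cdot x$, and the $V_x$ (resp.\ $V_x'$) are the irreducible representations of $\g^x = \g$ in the class $\psi(x)$ (resp.\ $\psi'(x)$). Since the points of $\bx$ represent distinct $\Gamma$-orbits, the condition $V_x \cong V_x'$ is equivalent to $\psi(x) = \psi'(x)$; hence $\psi$ and $\psi'$ differ exactly on those orbits $\Gamma\cdot x$, $x\in\bx$, for which $V_x \not\cong V_x'$.

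With this dictionary in hand the three cases are immediate. For~\eqref{prop-item:free-action-ext-b}, if $V_x \not\cong V_x'$ for more than one $x\in\bx$ then $\psi$ and $\psi'$ differ on more than one orbit, so $\Ext^1_\frM(V,V')=0$ by Theorem~\ref{theo:two-eval}\eqref{theo-item:two-eval:multiple-orbit-diff}. For~\eqref{prop-item:free-action-ext-c}, if $V_{x_0}\not\cong V_{x_0}'$ and $V_x\cong V_x'$ for all $x\ne x_0$, then $\psi$ and $\psi'$ differ only on $\Gamma\cdot x_0$, so Theorem~\ref{theo:two-eval}\eqref{theo-item:two-eval:one-orbit-diff} gives $\Ext^1_\frM(V,V')\cong \Ext^1_\frM(V_{x_0},V_{x_0}')$, and Proposition~\ref{prop:free-abelian} identifies this with $\Hom_\g(\g, V_{x_0}^*\ot V_{x_0}')\ot(\fm_{x_0}/\fm_{x_0}^2)^*$. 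For~\eqref{prop-item:free-action-ext-d}, if $V\cong V'$ then $\psi = \psi'$ (so $V_x\cong V_x'$ for every $x\in\bx$, by injectivity of $\psi\mapsto\ev_\psi$), and Theorem~\ref{theo:two-eval}\eqref{theo-item:two-eval:isom} gives $(\frM_\ab^*)^{|\bx|-1}\oplus\Ext^1_\frM(V,V)\cong\bigoplus_{x\in\bx}\Ext^1_\frM(V_x,V_x)$; since $\frM_\ab^* = 0$ this is $\Ext^1_\frM(V,V)\cong\bigoplus_{x\in\bx}\Ext^1_\frM(V_x,V_x)$, and Proposition~\ref{prop:free-abelian} applied to each summand gives the claimed formula.

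There is essentially no obstacle: the content lies in Theorem~\ref{theo:two-eval} and Proposition~\ref{prop:free-abelian}. The only points needing care are the vanishing of the correction term $(\frM_\ab^*)^{|\bx|-1}$ in~\eqref{prop-item:free-action-ext-d}, which is exactly where perfectness of $\frM$ enters, and the bookkeeping translating ``$\psi$ and $\psi'$ differ on $n$ orbits'' into ``$V_x\not\cong V_x'$ for $n$ indices $x\in\bx$'', which uses that distinct points of $\bx$ lie in distinct $\Gamma$-orbits.
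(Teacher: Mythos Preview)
Your proposal is correct and follows essentially the same approach as the paper: parts~\eqref{prop-item:free-action-ext-b} and~\eqref{prop-item:free-action-ext-c} via Theorem~\ref{theo:two-eval} and Proposition~\ref{prop:free-abelian}, and part~\eqref{prop-item:free-action-ext-d} via Theorem~\ref{theo:two-eval}\eqref{theo-item:two-eval:isom} together with the observation that $\frM_\ab=0$. Your write-up is somewhat more detailed in spelling out the dictionary between ``$\psi,\psi'$ differ on $n$ orbits'' and ``$V_x\not\cong V_x'$ for $n$ indices'', but the argument is the same.
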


\begin{proof}
Parts~\eqref{prop-item:free-action-ext-b} and~\eqref{prop-item:free-action-ext-c} follow from Theorem~\ref{theo:two-eval} and Proposition~\ref{prop:free-abelian}. Part~\eqref{prop-item:free-action-ext-d} is a consequence of Proposition~\ref{prop:free-abelian} and Theorem~\ref{theo:two-eval}\eqref{theo-item:two-eval:isom}, where we note that $\frM_\ab=0$. \qed
\end{proof}

Because of their prominence in the literature, we state for reference the special case where $\frM$ is a multiloop algebra.

\begin{cor}[Extensions for multiloop algebras]
Suppose
\[
  \frM = (\g,\sigma_1,\dots,\sigma_n,m_1,\dots,m_n)
\]
is a multiloop algebra as in Example~\ref{eg:multiloop}, and $V,V'$ are irreducible finite-dimensional representations of $\frM$.  Write $V = \bigotimes_{x \in \bx} V_x$, $V'= \bigotimes_{x \in \bx} V'_x$ for some $\bx \in X_*$ and (possibly trivial) evaluation representations $V_x, V'_x$ at $x \in \bx$.
\begin{enumerate}
  \item If $V_x \not \cong V'_x$ for more than one $x \in \bx$, then $\Ext^1_\frM(V,V')=0$.

  \item If $V_{x_0} \not \cong V'_{x_0}$ for some $x_0 \in \bx$, and $V_x \cong V'_x$ for all $x \in \bx \setminus \{x_0\}$, then
      \[
        \Ext^1_\frM(V,V') \cong \Hom_\g(\g,V_{x_0}^* \otimes V'_{x_0}) \otimes k^n.
      \]

  \item If $V \cong V'$, then
  \[ \textstyle
    \Ext^1_\frM(V,V) \cong \bigoplus_{x \in \bx} \Hom_\g(\g,V_x^* \otimes V_x) \otimes k^n.
  \]
\end{enumerate}
\end{cor}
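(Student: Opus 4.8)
The plan is to recognize that a multiloop algebra is exactly the kind of equivariant map algebra treated in Proposition~\ref{prop:free-action-extensions}, so that the corollary reduces to identifying a cotangent space. First I would recall from Example~\ref{eg:multiloop} that for the multiloop algebra $M(\g,\sigma_1,\dots,\sigma_n,m_1,\dots,m_n)$ the Lie algebra $\g$ is semisimple, the scheme is $X = (k^\times)^n = \Spec k[t_1^{\pm 1},\dots,t_n^{\pm 1}]$, and the group $\Gamma \cong (\Z/m_1\Z) \times \dots \times (\Z/m_n\Z)$ is abelian and acts freely on $X$ (this being the motivating example of a free action noted just before Lemma~\ref{lem:free-actions}). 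Hence all the hypotheses of Proposition~\ref{prop:free-action-extensions} are in force, and its three cases match verbatim the three cases of the corollary except that $(\fm_x/\fm_x^2)^*$ appears in place of $k^n$.

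So the only point left to check is that $(\fm_x/\fm_x^2)^* \cong k^n$ for every $x \in X_\rat$. Since $X = (k^\times)^n$ is a smooth affine variety of dimension $n$, each rational point $x$ is a smooth point and the Zariski cotangent space $\fm_x/\fm_x^2$ is $n$-dimensional, so dualizing gives $(\fm_x/\fm_x^2)^* \cong k^n$. If one prefers to avoid citing smoothness, one argues directly: writing $x = (a_1,\dots,a_n)$ with $a_i \in k^\times$, we have $\fm_x = (t_1 - a_1,\dots,t_n - a_n)$ in $k[t_1^{\pm 1},\dots,t_n^{\pm 1}]$, and the classes of $t_1 - a_1,\dots,t_n - a_n$ modulo $\fm_x^2$ are readily seen to form a $k$-basis of $\fm_x/\fm_x^2$. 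Substituting this identification into Proposition~\ref{prop:free-action-extensions}\eqref{prop-item:free-action-ext-b}--\eqref{prop-item:free-action-ext-d} produces the three displayed formulas.

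I do not anticipate any real obstacle here: all of the genuine work — the reduction to single-orbit evaluation modules via Theorem~\ref{theo:two-eval}, the free abelian case via Proposition~\ref{prop:free-abelian}, and the assembly in Proposition~\ref{prop:free-action-extensions} — is already in place, and the only new ingredient is the elementary fact that a torus is smooth of dimension $n$ at each of its points. The one minor thing to keep track of is that the chosen $\bx$ meets each $\Gamma$-orbit of $\Supp V \cup \Supp V'$ in at most one point, which is built into the definition of $X_*$ and makes the bookkeeping of Proposition~\ref{prop:free-action-extensions} applicable here as well.
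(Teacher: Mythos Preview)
Your proposal is correct and matches the paper's own proof essentially verbatim: the paper simply notes that $(\fm_x/\fm_x^2)^* \cong k^n$ for all $x \in X_\rat$ and invokes Proposition~\ref{prop:free-action-extensions}. Your additional explicit computation of a basis for $\fm_x/\fm_x^2$ is more detail than the paper provides, but the argument is the same.
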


\begin{proof}
In the case of the multiloop algebra, we have $(\fm_x/\fm_x^2)^* \cong k^n$ for all $x \in X_\rat$.  The result then follows from Proposition~\ref{prop:free-action-extensions}. \qed
\end{proof}

\begin{rem}[Extensions for untwisted map algebras]
Note that Proposition~\ref{prop:free-action-extensions} also describes the extensions for untwisted map algebras since for these the group $\Gamma = \{1\}$ clearly acts freely on $X$.  In this case, Proposition 6.3 specializes to \cite[Th.~3.6]{kodera} (see also \cite[\S 3.8]{CG05}).
\end{rem}

\begin{prop}[Block decompositions for $\Gamma$ abelian and acting freely on $X$] \label{prop:free-group-block-decomp}
Suppose that for all $x \in X_\rat$, the tangent space $(\fm_x/\fm_x^2)^* \ne 0$.  For example, assume that $X$ is an irreducible algebraic variety of positive dimension.  Then the blocks of $\frM$ are naturally enumerated by $\frB^\rss \times \left( M(X,\g_\ab)^\Gamma \right)^*$, where $\frB^\rss$ is the set of spectral characters for $M(X,\g_\rss)^\Gamma$, which can be identified with the set of finitely-supported equivariant maps $X_\rat \to P/Q$ (see Remark~\ref{rem:action-on-weight-lattice} for a description of the action of $\Gamma$ on $P/Q$).
\end{prop}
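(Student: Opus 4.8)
The plan is to reduce the block decomposition of $\frM = M(X,\g)^\Ga$ to the semisimple case and then describe the blocks of each category $\cF_x$ explicitly. For the reduction, I would combine~\eqref{eq:algebra-decomp} with~\eqref{eq:block-decomp} of Example~\ref{eg:ss-ab}, which enumerates the blocks of $\frM$ by $\cB\big(M(X,\g_\rss)^\Ga\big) \times \big(M(X,\g_\ab)^\Ga\big)^*$. Since $\Ga$ is abelian and acts freely on $X$ and $\g_\rss$ is semisimple, Lemma~\ref{lem:free-action-perfect} says $M(X,\g_\rss)^\Ga$ is perfect, so by Proposition~\ref{prop:irred-classification} all its irreducible finite-dimensional representations are evaluation representations; hence $\cF = \cF_\eval$ for this algebra and Proposition~\ref{prop:eval-blocks} identifies its blocks with $\frB_\eval = \frB^\rss$. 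It therefore suffices to prove the description of $\frB^\rss$, and for this we may assume $\g = \g_\rss$ is semisimple.

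Next I would identify each $\cB_x$, $x\in X_\rat$. Freeness forces $\Ga_x = \{1\}$ and $\g^x = \g$, so the irreducible objects of $\cF_x$ are the irreducible evaluation representations supported on $\Ga \cdot x$, parameterized by $\mathcal{R}_x$, that is, by the set $P^+$ of dominant weights of $\g$ via $\lambda \leftrightarrow V(\lambda)$. By Corollary~\ref{cor:block-decomp-from-ext-blocks}, $\cB_x$ is $P^+$ modulo the equivalence relation generated by $\lambda \sim \mu$ whenever $\Ext^1_\frM\big(V(\lambda), V(\mu)\big) \ne 0$. By Proposition~\ref{prop:free-abelian},
\[
  \Ext^1_\frM\big(V(\lambda), V(\mu)\big) \cong \Hom_\g\big(\g,\, V(\lambda)^* \ot V(\mu)\big) \ot (\fm_x/\fm_x^2)^*,
\]
and since $(\fm_x/\fm_x^2)^* \ne 0$ by hypothesis this is nonzero exactly when $\Hom_\g\big(\g, V(\lambda)^* \ot V(\mu)\big) \cong \Hom_\g\big(\g \ot V(\lambda), V(\mu)\big)$ is nonzero (a standard adjunction, cf.\ Lemma~\ref{lem:geniso}), i.e.\ exactly when $V(\mu)$ occurs as a constituent of $\g \ot V(\lambda)$.

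Finally I would show that this equivalence relation on $P^+$ has classes the fibres of the natural surjection $P^+ \twoheadrightarrow P/Q$ (surjective since the Weyl group acts trivially on $P/Q$, so every $Q$-coset in $P$ contains a dominant weight). One inclusion is immediate: the weights of the adjoint module $\g$ all lie in $Q$, so a constituent $V(\mu)$ of $\g \ot V(\lambda)$ has $\mu \in \lambda + Q$, whence linked weights are congruent modulo $Q$. The reverse inclusion — that any two dominant weights congruent modulo $Q$ can be joined by a chain of such adjoint-tensor constituent relations — is the representation-theoretic heart of the argument, and is exactly Proposition~\ref{prop:kumar} of the appendix; this is the step I expect to be the main obstacle, which is why it is isolated there. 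Granting it, $\cB_x \cong P/Q$ for every $x$ (the identifications being made via one fixed triangular decomposition of $\g$, legitimate since $\g^x = \g$ for all $x$); combining these with the description in Remark~\ref{rem:action-on-weight-lattice} of the induced $\Ga$-action on $P/Q$ (through $\Ga \to \Out(\g) \to \Aut(P/Q)$), one identifies $\frB^\rss = \frB_\eval$ with the set of finitely-supported $\Ga$-equivariant maps $X_\rat \to P/Q$. Together with the reduction step, this yields the proposition.
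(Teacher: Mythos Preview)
Your approach is essentially identical to the paper's: reduce to the semisimple part via Example~\ref{eg:ss-ab}, use perfectness (Lemma~\ref{lem:free-action-perfect}) and Proposition~\ref{prop:eval-blocks} to identify blocks with $\frB_\eval$, compute $\cB_x$ via Proposition~\ref{prop:free-abelian}, and finish with the appendix. One correction: the chain statement you need is Corollary~\ref{cor:kumar} (applied with $U=\g$, so $\Span_\Z \wt U = Q$), not Proposition~\ref{prop:kumar}; the proposition only places an irreducible inside some $U^{\otimes m}$, and Lemma~\ref{lem:gen} is needed to break this into a chain of single-tensor steps. The paper also observes that this special case is \cite[Prop.~1.2]{CM}.
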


\begin{proof}
By Example~\ref{eg:ss-ab}, it suffices to consider the case where $\g$ is semisimple.  Then Proposition~\ref{prop:free-abelian} implies that for $V,V'$ irreducible evaluation modules at $x \in X_\rat$, we have
\[
   \Ext^1_\frM(V,V') \ne 0 \iff \Hom_\g(\g\ot V,V') \ne 0.
\]
Thus, the conditions for a non-vanishing Ext-group are the same as for the
untwisted map algebra.  The fact that $\cB_x \cong P/Q$ then follows from \cite[Prop.~1.2]{CM} (or from Corollary~\ref{cor:kumar} with $U=\g$, hence $\Span_\Z \wt U=Q$).  The remainder of the statement is an immediate consequence of Theorem~\ref{theo:blocks} (or Proposition~\ref{prop:eval-blocks}) and the fact that $\frM$ is perfect and hence has no nontrivial one-dimensional representations. \qed
\end{proof}

\begin{cor}[Blocks for multiloop algebras] \label{cor:multi}
The blocks of the category $\cF$ of finite-dimensional representations of the multiloop algebra
\[
  M(\g,\sigma_1,\dots,\sigma_n,m_1,\dots,m_n),
\]
$\g$ semisimple, are naturally enumerated by finitely-supported equivariant maps from $X$ to $P/Q$.
\end{cor}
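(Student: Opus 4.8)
The plan is to obtain Corollary~\ref{cor:multi} as a direct specialization of Proposition~\ref{prop:free-group-block-decomp}. First I would check that the multiloop algebra meets the hypotheses of that proposition. By construction (Example~\ref{eg:multiloop}) the group $\Gamma \cong (\Z/m_1\Z) \times \cdots \times (\Z/m_n\Z)$ is finite abelian, and I would verify that it acts freely on $X = (k^\times)^n$: the element $\gamma_1^{a_1} \cdots \gamma_n^{a_n}$ sends $(z_1,\dots,z_n)$ to $(\xi_1^{a_1} z_1, \dots, \xi_n^{a_n} z_n)$, which, since all the $z_i$ are nonzero, fixes the point only when $\xi_i^{a_i} = 1$ for every $i$, i.e.\ when $m_i \mid a_i$ for every $i$, i.e.\ when the element is trivial.

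Next I would observe that $X = (k^\times)^n = \Spec k[t_1^{\pm 1},\dots,t_n^{\pm 1}]$ is a smooth irreducible affine variety of dimension $n \ge 1$. Because $k$ is algebraically closed, every maximal ideal of its coordinate ring has residue field $k$, so $X_\rat = \maxSpec$, and the cotangent space at each rational point $x$ is $\fm_x/\fm_x^2 \cong k^n$, whence $(\fm_x/\fm_x^2)^* \cong k^n \ne 0$. This is exactly the nonvanishing-of-tangent-space hypothesis of Proposition~\ref{prop:free-group-block-decomp}.

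Finally, since $\g$ is semisimple we have $\g = \g_\rss$ and $\g_\ab = 0$, hence $M(X,\g_\ab)^\Gamma = 0$ and its dual is trivial; Proposition~\ref{prop:free-group-block-decomp} then identifies the set of blocks of $\cF$ with $\frB^\rss \times \big(M(X,\g_\ab)^\Gamma\big)^*$, which reduces to $\frB^\rss$. By the same proposition $\frB^\rss$ is the set of finitely-supported $\Gamma$-equivariant maps $X_\rat \to P/Q$, with $\Gamma$ acting on $P/Q$ as in Remark~\ref{rem:action-on-weight-lattice} (the action being well-defined globally because $\g^x = \g$ for every $x$, as $\Gamma$ acts freely). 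Since $X_\rat = X$ here, this yields the claimed enumeration. I do not expect any genuine obstacle: the only points needing care are the elementary verification that the $\Gamma$-action on $(k^\times)^n$ is free and the observation that every point of $(k^\times)^n$ is rational with an $n$-dimensional tangent space.
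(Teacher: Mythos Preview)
Your proposal is correct and follows exactly the paper's approach: the paper's proof is the single line ``This follows immediately from Proposition~\ref{prop:free-group-block-decomp} since $\g$ is semisimple,'' and you have simply unpacked the verification of the hypotheses (abelian $\Gamma$, free action, nonzero tangent spaces) and the observation that $\g_\ab = 0$ kills the second factor.
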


\begin{proof}
This follows immediately from Proposition~\ref{prop:free-group-block-decomp} since $\g$ is semisimple. \qed
\end{proof}

\begin{rem} \label{rem:blocks-loops}
A special case of multiloop algebras are the untwisted and twisted loop algebras.  For them, block decompositions were described in \cite{CM} and \cite{S} respectively.
\end{rem}

\begin{cor}[Blocks for untwisted map algebras]
Suppose that, for all\linebreak $x \in X_\rat$, the tangent space $(\fm_x/\fm_x^2)^* \ne 0$.  For example, assume that $X$ is an irreducible algebraic variety of positive dimension.  Then the blocks of the category $\cF$ of finite-dimensional representations of an untwisted map algebra $\g \otimes A$ are naturally enumerated by $\frB^\rss \times (\g_\ab \otimes A)^*$ where $\frB^\rss$ is the set of finitely-supported maps from $X$ to $P/Q$.
\end{cor}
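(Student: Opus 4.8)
The plan is to obtain this as the special case $\Gamma = \{1\}$ of Proposition~\ref{prop:free-group-block-decomp}. First I would observe that the trivial group is abelian and acts (trivially, hence) freely on $X$, so $\frM = M(X,\g)^{\{1\}} = \g \otimes A$ falls within the scope of that proposition. Moreover, the standing hypothesis of the present corollary --- that $(\fm_x/\fm_x^2)^* \ne 0$ for all $x \in X_\rat$ --- is precisely the hypothesis of Proposition~\ref{prop:free-group-block-decomp}, and as noted there it is implied by $X$ being an irreducible variety of positive dimension (since then $\dim_k \fm_x/\fm_x^2 \ge \dim X > 0$). Thus Proposition~\ref{prop:free-group-block-decomp} applies verbatim and tells us that the blocks of $\frM$ are naturally enumerated by $\frB^\rss \times \big(M(X,\g_\ab)^\Gamma\big)^*$.

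It then remains only to unwind the two factors in the untwisted case. For the second factor, $\Gamma = \{1\}$ gives $M(X,\g_\ab)^{\{1\}} = \g_\ab \otimes A$. For the first factor, Proposition~\ref{prop:free-group-block-decomp} identifies $\frB^\rss$ with the set of finitely-supported $\Gamma$-equivariant maps $X_\rat \to P/Q$, where $\Gamma$ acts on $P/Q$ through $\Aut \g_\rss \twoheadrightarrow \Out \g_\rss$ (Remark~\ref{rem:action-on-weight-lattice}); when $\Gamma$ is trivial this action is trivial and equivariance is vacuous, so $\frB^\rss$ is simply the set of finitely-supported maps $X_\rat \to P/Q$. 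Since $X_\rat = \maxSpec A$ identifies the rational points with the points of $X$ relevant here, this is the set $\frB^\rss$ appearing in the statement.

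I do not anticipate a genuine obstacle: the corollary is a direct translation of Proposition~\ref{prop:free-group-block-decomp} to the setting $\Gamma = \{1\}$, and the only point requiring any care is the (trivial) bookkeeping of the $\Gamma$-action on $P/Q$ when $\Gamma$ is trivial. The substantive input --- that two irreducible evaluation modules at $x$ extend one another if and only if $\Hom_\g(\g \otimes V, V') \ne 0$, and that the resulting quotient of $P^+$ is $P/Q$ via \cite[Prop.~1.2]{CM} (equivalently Corollary~\ref{cor:kumar} with $U = \g$, so that $\Span_\Z \wt U = Q$) --- has already been carried out in the proof of Proposition~\ref{prop:free-group-block-decomp}, so nothing new needs to be proved here.
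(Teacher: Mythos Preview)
Your proposal is correct and follows exactly the paper's approach: the paper's proof is the single line ``This is an immediate consequence of Proposition~\ref{prop:free-group-block-decomp},'' and you have simply spelled out the specialization $\Gamma = \{1\}$ in more detail.
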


\begin{proof}
This is an immediate consequence of Proposition~\ref{prop:free-group-block-decomp}. \qed
\end{proof}

\subsection{Order two groups} \label{sec:app-order-2}

With an aim towards describing extensions and block decompositions for the generalized Onsager algebras, we consider in this subsection the case where the group $\Gamma$ is of order two (see Example~\ref{eg:order-two}).

Let $\frM = M(X,\g)^\Gamma$ be an equivariant map algebra with $\Gamma = \{1,\sigma\}$ a group of order two, and $\g$ reductive.  By Proposition~\ref{prop:M-sum-extensions}, the extensions between irreducible finite-dimensional representations of $\frM$ are determined by extensions between representations of $M(X,\g_\rss)^\Gamma$.  By Example~\ref{eg:ss-ab}, the same is true for the blocks of $\cF$.

A simple ideal $\frs$ of $\g$ is either invariant under the action of $\sigma$ or is mapped onto another simple ideal.  In the latter case, $(\frs \oplus \sigma(\frs), \sigma) \cong (\frs \boxplus \frs, \ex)$ as algebras with involutions, where $\ex$ is the exchange involution of $\frs \boxplus \frs$ defined by $\ex(u,v) = (v,u)$.  Therefore, $\frM$ is a direct product of equivariant map algebras of type $M(X,\frl)^\Gamma$ with $\frl$ simple or with $\frl = \frs \boxplus \frs$ and $\sigma$ acting by $\ex$.  In view of Propositions~\ref{prop:ext-direct-sums} and~\ref{prop:block-direct-sum} it is therefore enough to consider these two cases separately.

\begin{lem} \label{lem:exchange-extensions}
Suppose $\frM = M(X,\g)^\Gamma$ where $\g = \frs \boxplus \frs$ for a simple Lie algebra $\frs$ and $\sigma$ acting on $\g$ by the exchange involution.  Then $\frM$ is perfect, and for two evaluation representations $V,V'$ with support in $\Gamma \cdot x$, we have
\[
  \Ext^1_\frM(V,V') \cong
  \begin{cases}
    \Hom_\g(\g,V^* \otimes V') \otimes (I/I^2)^*_0, & x \notin X_\rat^\Gamma, \\
    \Hom_\frs(\frs,V^* \otimes V') \otimes (I/I^2)^*, & x \in X_\rat^\Gamma,
  \end{cases}
\]
where $I = \{a \in A : a(\Gamma \cdot x)=0\}$ and $X_\rat^\Gamma = \{x \in X_\rat : \sigma \cdot x = x\}$ is the set of $\Gamma$-fixed points of $X_\rat$.
\end{lem}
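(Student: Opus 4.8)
The plan is to exploit that $\Gamma = \{1,\sigma\}$ is abelian, so that the machinery of Section~\ref{sec:abelian} applies directly, together with two elementary structural observations about the pair $(\g,\sigma)$. Write $\g_0 = \g^\Gamma = \{(u,u) : u \in \frs\}$ and $\g_1 = \{(u,-u) : u \in \frs\}$ for the $\pm 1$-eigenspaces of $\sigma$, so that $\frM = (\g_0 \otimes A_0) \oplus (\g_1 \otimes A_1)$, where $A = A_0 \oplus A_1$ is the induced $\Z/2\Z$-grading. The first observation is that $\g_0$ and $\g_1$ are both isomorphic, as modules over $\g_0 \cong \frs$ (acting adjointly), to the adjoint module $\frs$; for $\g_1$ this is witnessed by $(u,-u) \mapsto u$, since $[(w,w),(u,-u)] = ([w,u],-[w,u])$. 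The second is that, since $[\g_0,\g_0] = \g_0$, $[\g_0,\g_1] = \g_1$ and $[\g_1,\g_1] = \g_0$, the $\Xi$-grading on $\g$ (with $\Xi = \Z/2\Z$ the character group of $\Gamma$) is strong, so $\frK_x' = \frQ_x$ for every $x \in X_\rat$ by Remark~\ref{rem:g-strong-grading}. Perfectness of $\frM$ is then immediate: $[\g_0 \otimes A_0, \g_0 \otimes A_0] = \g_0 \otimes A_0$ and $[\g_0 \otimes A_0, \g_1 \otimes A_1] = \g_1 \otimes A_1$ (using $A_0 A_0 = A_0$, $A_0 A_1 = A_1$), so $\frM' = \frM$.

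Next I would split into the two cases, writing $W = V^* \otimes V'$ and assuming $V,V'$ irreducible (the general case follows by additivity of $\Ext$ in each argument). If $x \notin X_\rat^\Gamma$, then $\Gamma_x = \{1\}$ and $\g^x = \g$, which is semisimple, so Corollary~\ref{cor:ab-trivial-stablizer} applies; its first summand $\Hom_\g(\frQ_x/\frK_x', W)$ vanishes because $\frQ_x = \frK_x'$, leaving $\Ext^1_\frM(V,V') \cong \Hom_\g(\g, W) \otimes (I/I^2)_0^*$. If $x \in X_\rat^\Gamma$, then $\Gamma_x = \Gamma$, so $\g^x = \g_0 \cong \frs$ is semisimple, $\Xi_x = \{0\}$, and $\Xi/\Xi_x = \Xi = \Z/2\Z$; Proposition~\ref{prop:abelian-QK-ext-formula} applies, the $\frQ_x/\frK_x'$-term again dropping out, and yields
\[
  \Ext^1_\frM(V,V') \cong \big(\Hom_{\g_0}(\g_0, W) \otimes (I/I^2)_0^*\big) \oplus \big(\Hom_{\g_0}(\g_1, W) \otimes (I/I^2)_{\xi_1}^*\big),
\]
where $\xi_1$ is the nontrivial character (so $-\xi_1 = \xi_1$). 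By the first observation, $\Hom_{\g_0}(\g_0,W) \cong \Hom_\frs(\frs,W) \cong \Hom_{\g_0}(\g_1,W)$, and since $(I/I^2)_0 \oplus (I/I^2)_{\xi_1} = I/I^2$ the right-hand side collapses to $\Hom_\frs(\frs, W) \otimes (I/I^2)^*$, as claimed.

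There is no real obstacle: the argument is bookkeeping inside Section~\ref{sec:abelian}. The two steps deserving care are the verification that the grading on $\g$ is strong — this is precisely what kills the $\frQ_x/\frK_x'$-contribution in both Corollary~\ref{cor:ab-trivial-stablizer} and Proposition~\ref{prop:abelian-QK-ext-formula} — and the $\g_0$-module identification $\g_1 \cong \frs$, which is what lets the two $\Xi$-graded summands in the fixed-point case be recombined into a single copy of $(I/I^2)^*$.
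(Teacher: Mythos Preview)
Your proof is correct and follows the same route as the paper's: observe that the $\Xi$-grading on $\g$ is strong, so that $\frK_x' = \frQ_x$ by Remark~\ref{rem:g-strong-grading} (and $\frM$ is perfect), then apply Proposition~\ref{prop:abelian-QK-ext-formula}. You have simply spelled out the details the paper leaves implicit --- in particular the $\g_0$-module identification $\g_1 \cong \frs$ that recombines the two $\Xi/\Xi_x$-summands in the fixed-point case into a single $\Hom_\frs(\frs,W)\otimes (I/I^2)^*$.
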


\begin{proof}
It is easy to see that $\g = \g_0 \oplus \g_1$ is a strong grading.  Thus $\frM$ is perfect and, by Remark~\ref{rem:g-strong-grading}, $\frK_x' = \frQ_x$ for all $x \in X_\rat$.  The formula for the extensions then follows from Proposition~\ref{prop:abelian-QK-ext-formula}. \qed
\end{proof}

\begin{lem} \label{lem:local-blocks-exchange}
Suppose $\frM = M(X,\g)^\Gamma$ where $\g = \frs \boxplus \frs$ for a simple Lie algebra $\frs$ and $\sigma$ acting on $\g$ by the exchange involution.  Furthermore, suppose that for all $x \in X_\rat$, the tangent space $(\fm_x/\fm_x^2)^* \ne 0$.   Then the set of blocks $\cB_x$ of the category $\cF_x$ is
\[
  \cB_x \cong
  \begin{cases}
    P/Q, & x \notin X_\rat^\Gamma, \\
    P_0/Q_0, & x \in X_\rat^\Gamma,
  \end{cases}
\]
where $P_0$ and $Q_0$ are the weight and root lattices of $\frs$ respectively.  Furthermore, the blocks of $\cF$ are naturally enumerated by the set of finitely supported equivariant maps
\[
  X_\rat \to (P/Q) \sqcup (P_0/Q_0)
\]
such that $x$ is mapped to $P/Q$ if $x \notin X_\rat^\Gamma$ and to $(P_0/Q_0)$ if $x \in X_\rat^\Gamma$.
\end{lem}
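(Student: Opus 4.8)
The plan is to feed the extension formula of Lemma~\ref{lem:exchange-extensions} into the general block machinery of Section~\ref{sec:block-decomps}. First I would record that $\frM$ is perfect: since $\g = \g_0 \oplus \g_1$ is a strong grading (as observed in the proof of Lemma~\ref{lem:exchange-extensions}), we have $\frM' = (\g_0 \otimes A_0) \oplus (\g_1 \otimes A_1) = \frM$. Hence $\frM$ has no nontrivial one-dimensional representations, all of its irreducible finite-dimensional representations are evaluation representations, and $\frM$ is extension-local (Remark~\ref{rem:ext-local}). By Theorem~\ref{theo:blocks} the blocks of $\cF$ are then the $\cF^\chi$, $\chi \in \frB$, and $\frB = \frB_\eval$ because $\frM_\ab = 0$; that is, the blocks are indexed by the finitely supported $\Gamma$-equivariant maps $X_\rat \to \cB_\eval = \bigsqcup_{x \in X_\rat} \cB_x$ sending each $x$ to an element of $\cB_x$. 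Everything thus reduces to computing $\cB_x$ for each $x \in X_\rat$, and the dichotomy in the statement is exactly the dichotomy $\g^x = \frs \boxplus \frs$ (when $\Gamma_x = \{1\}$, i.e.\ $x \notin X_\rat^\Gamma$) versus $\g^x = \g_0 \cong \frs$ (when $\Gamma_x = \Gamma$, i.e.\ $x \in X_\rat^\Gamma$).

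Next I would dispatch the fixed-point case $x \in X_\rat^\Gamma$, which is the easy one. Here $\g^x \cong \frs$ is simple, the orbit is the single point $x$, the irreducible evaluation representations supported there are the $V(\mu)$ with $\mu \in P_0^+$ (the set of dominant weights of $\frs$), and $(\fm_x/\fm_x^2)^* \ne 0$ by hypothesis. Lemma~\ref{lem:exchange-extensions} therefore gives
\[
  \Ext_\frM^1(V(\la), V(\mu)) \ne 0 \iff \Hom_\frs(\frs, V(\la)^* \otimes V(\mu)) \ne 0,
\]
and the equivalence relation generated by this condition (together with isomorphism) has as its classes the cosets of $Q_0$ in $P_0$, by \cite[Prop.~1.2]{CM} (or by Corollary~\ref{cor:kumar} with $U = \frs$, so that $\Span_\Z \wt U = Q_0$). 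Thus $\cB_x \cong P_0/Q_0$, where $P_0$, $Q_0$ are the weight and root lattices of $\frs$.

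The case $x \notin X_\rat^\Gamma$ is the main work. Now $\g^x = \frs \boxplus \frs$ is semisimple, the group $\Gamma$ acts freely on the two-point orbit $\Gamma \cdot x$, and the computation in the proof of Proposition~\ref{prop:free-abelian} (which uses only freeness on $\Gamma \cdot x$), together with $(\fm_x/\fm_x^2)^* \ne 0$, shows $(I/I^2)_0 \cong \fm_x/\fm_x^2 \ne 0$, where $I = \{a \in A : a(\Gamma \cdot x) = 0\}$. The irreducible evaluation representations supported on $\Gamma \cdot x$ are the external products $V(\la_1) \otimes V(\la_2)$, $(\la_1, \la_2) \in P_0^+ \times P_0^+$, and Lemma~\ref{lem:exchange-extensions} reduces the non-vanishing of $\Ext_\frM^1(V(\la_1) \otimes V(\la_2), V(\mu_1) \otimes V(\mu_2))$ to that of $\Hom_\g(\g, (V(\la_1)^* \otimes V(\mu_1)) \otimes (V(\la_2)^* \otimes V(\mu_2)))$. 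Writing $\g \cong (\frs \otimes k_0) \oplus (k_0 \otimes \frs)$ as a module over $\frs \boxplus \frs$ (the nontrivial factor being the adjoint module of the appropriate copy of $\frs$), factoring the $\Hom$-space over the direct product, and applying Schur's Lemma (Lemma~\ref{lem:schur-tensor}), one finds that it is nonzero precisely when either $\la_1 = \mu_1$ and $\Hom_\frs(\frs, V(\la_2)^* \otimes V(\mu_2)) \ne 0$, or $\la_2 = \mu_2$ and $\Hom_\frs(\frs, V(\la_1)^* \otimes V(\mu_1)) \ne 0$. Since the nonzero weights of the adjoint module lie in $Q_0$, every such transition moves $(\la_1, \la_2)$ within its coset of $Q_0 \times Q_0 \subseteq P_0 \times P_0$; conversely, applying the one-coordinate transitions with \cite[Prop.~1.2]{CM} in each coordinate and composing, any two dominant pairs in the same coset are connected. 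Hence $\cB_x \cong (P_0/Q_0) \times (P_0/Q_0) \cong P/Q$, where $P$, $Q$ are the weight and root lattices of $\g^x = \frs \boxplus \frs$. Feeding the two computations back into the description of $\frB_\eval$ from the first step then yields the asserted enumeration of blocks.

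I expect the only genuinely non-routine step to be the $x \notin X_\rat^\Gamma$ case: identifying the $\frs \boxplus \frs$-module structure on $\g$ and on $V(\la_1)^* \otimes V(\mu_1) \otimes V(\la_2)^* \otimes V(\mu_2)$, splitting the $\Hom$-space correctly, and then verifying that the ext-generated equivalence relation on $P_0^+ \times P_0^+$ is precisely the partition into $Q_0 \times Q_0$-cosets. The containment in a single coset is immediate from the weights of the adjoint module; the converse — that transitions connect an entire coset — is where \cite[Prop.~1.2]{CM} (equivalently Corollary~\ref{cor:kumar}) must be invoked, once in each coordinate. A secondary point to verify, where the positive-dimensionality (tangent-space) hypothesis enters, is the non-vanishing of $(I/I^2)_0$.
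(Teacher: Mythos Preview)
Your proof is correct and follows essentially the same approach as the paper: verify non-vanishing of the relevant piece of $(I/I^2)$ in each case, apply Lemma~\ref{lem:exchange-extensions} to reduce the ext-block relation to a $\Hom_{\g^x}(\g^x,\,\cdot\,)$ condition, and then invoke \cite[Prop.~1.2]{CM} (equivalently Corollary~\ref{cor:kumar}) to identify $\cB_x$; perfection of $\frM$ plus Theorem~\ref{theo:blocks} handles the global block enumeration. The only difference is that in the non-fixed case the paper applies \cite[Prop.~1.2]{CM} directly to the semisimple algebra $\g = \frs \boxplus \frs$ (so $\cB_x \cong P/Q$ in one step), whereas you decompose $\g \cong (\frs \otimes k_0) \oplus (k_0 \otimes \frs)$ and argue coordinate-by-coordinate to reach $(P_0/Q_0) \times (P_0/Q_0) \cong P/Q$ --- a correct but unnecessary unpacking.
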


\begin{proof}
Let $I = \{a \in A : a(\Gamma \cdot x) = 0\}$.  If $x \in X_\rat^\Gamma$, then $I = \fm_x$ and $I/I^2 = \fm_x/\fm_x^2 \ne 0$, hence $(I/I^2)^* \ne 0$.  On the other hand, if $x \notin X_\rat^\Gamma$, then $I = \fm_x \cap \fm_{\sigma \cdot x}$ and $I/I^2 \cong (\fm_x/\fm_x^2) \oplus (\fm_{\sigma \cdot x}/\fm_{\sigma \cdot x}^2)$, with $\sigma$ acting by interchanging the two summands.  Thus $(I/I^2)^*_0 \ne 0$.  The description of $\cB_x$ then follows from Lemma~\ref{lem:exchange-extensions} and \cite[Prop.~1.2]{CM} (or Corollary~\ref{cor:kumar}). Since $\frM$ is perfect (hence extension-local) by Lemma~\ref{lem:exchange-extensions}, the description of the blocks is a consequence of Theorem~\ref{theo:blocks} (or Proposition~\ref{prop:eval-blocks}). \qed
\end{proof}

We now turn our attention to the remaining case where $\g$ is simple.  Note that if $\g_0$ is semisimple, one easily sees that $\frM$ is perfect.  For a point $x \in X_\rat$, let $\fm_{\bar x} = \fm_x \cap A_0$ denote the maximal ideal of $A_0$ corresponding to the image $\bar x$ of $x$ in the quotient $X \gq \Gamma = \Spec A_0 = \Spec A^\Gamma$.

\begin{lem} \label{lem:g-simple-order-2-ext-local}
If $\g$ is simple and $\Gamma$ is of order two, then $\frM$ is extension-local.
\end{lem}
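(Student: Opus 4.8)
The plan is to reduce, via Corollary~\ref{cor:ext-moving-arguments} and Proposition~\ref{prop:reductive-homl}, to the vanishing of a single space of $\g^\bx$-module homomorphisms, after first disposing of the cases in which $\frM$ is perfect. If $\Ga$ acts trivially on $\g$, then $\frM$ is an untwisted map algebra (as in the examples of Section~\ref{sec:EMAs}) for the simple, hence perfect, Lie algebra $\g$ or $\g^\Ga=\g$, so it is extension-local by Remark~\ref{rem:ext-local}. If $\Ga$ acts nontrivially on $\g$ but $\g_0=\g^\Ga$ is semisimple, then $\frM$ is perfect (as observed just before the statement), hence again extension-local. So I may assume $\Ga$ acts nontrivially on $\g$ and $\g_0$ is not semisimple; by Example~\ref{eg:order-two} this forces $\dim\g_{0,\ab}=1$, and we are in the setting of Examples~\ref{eg:order-two} and~\ref{ex:J}.

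Now let $V$ be an irreducible finite-dimensional evaluation representation and $k_\la$ a one-dimensional representation that is not an evaluation representation. By Corollary~\ref{cor:ext-moving-arguments}, $\Ext^1_\frM(V,k_\la)\cong\Ext^1_\frM(k_0,V^*\ot k_\la)$. Writing $V\cong V_\rss\ot k_\mu$ with $V_\rss$ an evaluation representation on which $\g^\bx_\ab$ acts trivially and $k_\mu$ a one-dimensional evaluation representation (Remark~\ref{rem:irred-classification}\eqref{rem-item:unique-decomp}), we get $V^*\ot k_\la\cong V_\rss^*\ot k_{\la-\mu}$, and $k_{\la-\mu}$ is not an evaluation representation, since otherwise $k_\la=k_{\la-\mu}\ot k_\mu$ would be one, the one-dimensional evaluation representations forming a subspace of $\frM^*_\ab$. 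Hence it suffices to show $\rmH^1(\frM,V_\rss^*\ot k_{\la'})=0$ where $\la':=\la-\mu$; by Proposition~\ref{prop:reductive-homl} this group equals $\Hom_\frl(\frK_\ab,V_\rss^*\ot k_{\la'})$, where $\frK=\frK_\bx\cap\frK_{\la'}$, $\frl=\frM/\frK$ is reductive with $\frl'\cong\g^\bx_\rss$, and there is a central element $\bar z\in\frl$, the image of some $z\in\frK_\bx$ with $\la'(z)=1$, acting as the identity on $V_\rss^*\ot k_{\la'}$. Centrality of $\bar z$ follows from $[z,\frM]\subseteq\frM'\cap\frK_\bx\subseteq\frK$, and $\bar z$ acts as the identity on $V_\rss^*\ot k_{\la'}$ because $z\in\frK_\bx$ acts by $0$ on $V_\rss^*$ and by $\la'(z)=1$ on $k_{\la'}$. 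Since $\bar z$ is central, every $\frl$-homomorphism $\phi\colon\frK_\ab\to V_\rss^*\ot k_{\la'}$ annihilates $(\bar z-\Id)\frK_\ab$, so $\phi=0$ as soon as $\bar z-\Id$ acts surjectively on $\frK_\ab$.

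The remaining, and main, task is to analyze the action of $\bar z$ on the $\frl$-module $\frK_\ab=\frK/\frK'$. Here I would use the explicit $\Z/2$-graded formulas of Example~\ref{ex:J}: with $I=I_\bx$ and $\ell'$ the linear form on $A_0/A_1^2$ corresponding to $\la'$, one has $\frK=(\g_{0,\rss}\ot I_0)\oplus(\g_{0,\ab}\ot(I_0\cap\Ker\ell'))\oplus(\g_1\ot I_1)$, and one writes down $\frK'$ and $\frK_\bx'$ likewise. Since $z\in\frK_\bx$ we have $\ad(z)\frK\subseteq\frK_\bx'\subseteq\frK$, so $\bar z$ maps $\frK_\ab$ into the submodule $\frK_\bx'/\frK'$. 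When $\g\ne\lsl_2$, using $[\g_{0,\rss},\g_1]=\g_1$ and $[\g_1,\g_1]=\g_0$ (Example~\ref{eg:order-two}\eqref{eg-item:order-two:a}) one checks that $\ad(z)$ carries $\frK_\bx'$ back into $\frK'$, whence $\bar z^2=0$ on $\frK_\ab$; combined with $\bar z=\Id$ on the target this forces $\phi=0$.

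The genuine obstacle is the case $\g=\lsl_2$ with $\sigma$ a Chevalley involution, so that $\g_0=\g_{0,\ab}$ is abelian: there $\frK'$ is strictly smaller and $\bar z$ need not be nilpotent on $\frK_\ab$. In that case I would instead use that $V_\rss$ is necessarily supported on non-fixed points (where $\g^x=\lsl_2$ is simple), together with a direct computation of $\frD_{\la'}$ in the spirit of Lemma~\ref{lem:corone}: because $\g_1\ot A_1\subseteq\frK'_{\la'}$ (as $[\g_0,\g_1]=\g_1$ and $A_0A_1=A_1$) and $[c\ot f_0,\g_{0,\ab}\ot(I_0\cap\Ker\ell')]=0$ for a suitable $f_0$, one obtains $\frD_{\la'}=\frK_{\la'}$, hence $\rmH^1(\frM,k_{\la'})=0$, when $V_\rss$ is trivial; the general case should reduce to this by restricting attention to the $\g^\bx_\rss$-isotypic part of $\frK_\ab$ of type $V_\rss^*$, on which the part of $\frK_\ab$ detecting $\la'$ contributes trivially. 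This last reduction is the delicate point of the argument.
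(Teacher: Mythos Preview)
Your reduction via Proposition~\ref{prop:reductive-homl} to showing that $\bar z-\Id$ acts surjectively on $\frK_\ab$ is exactly the paper's strategy, and your argument for $\g\ne\lsl_2$ is correct (indeed $\bar z$ already vanishes on $\frK_\ab$ there, since $[z,\frK]\subseteq\g_1\ot I_0I_1=[\g_{0,\rss},\g_1]\ot I_0I_1\subseteq\frK'$). The genuine gap is the $\lsl_2$ case with nontrivial $V_\rss$: your isotypic reduction is not carried out, and it is unclear how to make it work, since $\frK_\ab$ does not visibly decompose into a piece ``detecting $\lambda'$'' and a complementary $\g^\bx_\rss$-stable piece. (Your justification for $\g_1\ot A_1\subseteq\frK_{\lambda'}'$ is also off: one needs $BA_1=A_1$ for $B=\Ker\ell'$, not $A_0A_1=A_1$; this does hold, but only because $B$ fails to be an ideal when $k_{\lambda'}$ is not an evaluation representation.)

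The paper avoids the case split altogether by proving that $\bar z$ is \emph{nilpotent} on $\frK_\ab$, uniformly in $\g$. Writing $\frK_\lambda=(\g_{0,\rss}\ot A_0)\oplus(\g_{0,\ab}\ot U)\oplus(\g_1\ot A_1)$ with $U\subseteq A_0$ of codimension one and choosing $z\in\g_{0,\ab}\ot I_0$ with $\lambda(z)=1$, one has $z^m\cdot\frK\subseteq\g_1\ot I_0^m I_1$ for all $m\ge1$. The missing idea is a Nullstellensatz step: since $k_\lambda$ is not an evaluation representation, $\frK_{\bx\cup\{y\}}\not\subseteq\frK_\lambda$ for every $y\notin\Ga\cdot\bx$, and from this one checks that the zero locus of $U\cap I_0$ in $\maxSpec A_0$ is exactly $\bar\bx$. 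As $I_0$ is radical, the ideal $A_0(U\cap I_0)$ therefore contains some power $I_0^m$, whence $I_0^m I_1\subseteq(U\cap I_0)I_1$ and $z^m\cdot\frK\subseteq\frK'$. Nilpotency of $\bar z$ on $\frK_\ab$ together with $\bar z=\Id$ on the target then forces $\Hom_\frl(\frK_\ab,V\ot k_\lambda)=0$, with no separate treatment of $\lsl_2$ needed.
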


\begin{proof}
Suppose $\lambda$ is a one-dimensional representation that is not an evaluation representation.  We have
\[
  \frM = (\g_0 \otimes A_0) \oplus (\g_1 \otimes A_1),\quad \frM' = (\g_{0,\rss} \otimes A_0) \oplus (\g_{0,\ab} \otimes A_1^2) \oplus (\g_1 \otimes A_1).
\]
Therefore, since $\lambda$ is nontrivial, we must have $\g_{0,\ab} \ne 0$ (so $\dim \g_{0,\ab} = 1$) and $\frK_\lambda$ must be of the form
\[
  \frK_\lambda = (\g_{0,\rss} \otimes A_0) \oplus (\g_{0,\ab} \otimes U) \oplus (\g_1 \otimes A_1)
\]
for some subspace $U$ of codimension one in $A_0$ such that $A_1^2 \subseteq U \subseteq A_0$. Let $V$ be an evaluation representation, with support contained in $\Gamma \cdot \bx$ for some $\bx \in X_*$, and let $I=I_\bx$.  Then
\[
  \frK_\bx = (\g_0 \otimes I_0) \oplus (\g_1 \otimes I_1), \quad \frK = \frK_\lambda \cap \frK_\bx = (\g_{0,\rss} \otimes I_0) \oplus (\g_{0,\ab} \otimes U \cap I_0) \oplus (\g_1 \otimes I_1).
\]
Thus
\begin{align*}
  \frK' &= (\g_{0,\rss} \otimes (I_0^2 + I_1^2)) \oplus (\g_{0,\ab} \otimes I_1^2) \oplus ([\g_{0,\rss},\g_1] \otimes I_0I_1 + \g_1 \otimes (U \cap I_0) I_1) \\
  &\supseteq (\g_{0,\rss} \otimes (I_0^2 + I_1^2)) \oplus (\g_{0,\ab} \otimes I_1^2) \oplus (\g_1 \otimes JI_1),
\end{align*}
where $J = A_0(U \cap I_0)$ is the ideal of $A_0$ generated by $U \cap I_0$ and we have used the fact that $[\g_{0,\ab},\g_1]=\g_1$ (see Example~\ref{eg:order-two}\eqref{eg-item:order-two:c}).

Since $k_\lambda$ is not an evaluation representation, we have $\frK_\bx \not \subseteq \frK_\lambda$.  Thus we can choose $z \in \g_{0,\ab} \otimes I_0$ such that $\lambda(z)=1$.  Then $z$ acts as the identity on $V \otimes k_\lambda$.  From the above we see that, for $m \ge 1$,
\[
  z^m \cdot \frK \subseteq z^m \cdot \frK_\bx \subseteq \g_1 \otimes I_0^m I_1,
\]
where $z^m \cdot \alpha$ denotes $z \cdot (z \cdot \dots (z \cdot \alpha) \dots )$, with $z$ acting $m$ times.

For a subset $B \subseteq A_0$, let $Z(B) = \{\bar x \in \maxSpec A_0 : f(\bar x)=0 \ \forall\ f \in B\}$ denote the zero set of $B$.  So $Z(I_0) = \bar \bx$, where $\bar \bx = \{\bar x : x \in \bx\}$.  We claim that $Z(U \cap I_0) = \bar \bx$.  It is clear that $Z(U \cap I_0) \supseteq Z(I_0)$, and so it remains to show the reverse inclusion.  Note first that $I_0 = \bigcap_{\bar x \in \bar \bx} \fm_{\bar x}$.  Suppose there exists $\bar y \in Z(U \cap I_0)$ such that $\bar y \notin \bar \bx$. Since the quotient map $X \to X
\gq \Ga$ is open and surjective, $\bar y$ is the image in $X \gq \Gamma$ of some point $y \in X_\rat$.  Thus
\[ \ts
  \frK_{\bx \cup \{y\}} = (\g_0 \otimes I') \oplus (\g_1 \otimes ( I_1 \cap \fm_y)), \quad \text{where} \quad I' = \fm_{\bar y} \prod_{\bar x \in \bar \bx} \fm_{\bar x}.
\]
Because $k_\lambda$ is not an evaluation representation, $\frK_{\bx \cup \{y\}} \not \subseteq \frK_\lambda$ and so $I' \not \subseteq U$.  Fix a nonzero $p \in I' \setminus U$.  Since $U$ has
codimension one in $A_0$, we have $U \oplus kp = A_0$.  Choose $f \in A_0$ such that $f(x) = 0$ for all $x \in \bx$ and $f(y)=1$. Then $f=f_U + ap$ for some $f_U \in U$ and $a \in k$. It follows that $f_U(x)=0$ for all $x \in \bx$ and $f_U(y)=1$.  Hence $f_U \in U \cap I_0$ and $y \notin Z(U \cap I_0)$.  This contradiction proves our claim.

Since $I_0$ is a radical ideal, it follows from \cite[Prop.~7.14]{AM69} that $J$ contains some power $I_0^m$ of $I_0$.  Thus $z^m \cdot \frK \subseteq \frK'$.  Hence $z$ acts nilpotently on $\frK_\ab$ and so
\[
  \rmH^1(\frM,V \otimes k_\lambda) = \Hom_\frl(\frK_\ab,V \otimes k_\lambda) = 0,
\]
where the first equality holds by Proposition~\ref{prop:reductive-homl}. \qed
\end{proof}

\begin{prop} \label{prop:order-2-ext-local}
An arbitrary equivariant map algebra $M(X,\g)^\Gamma$ with $\g$ reductive and $\Gamma$ of order two is extension-local.
\end{prop}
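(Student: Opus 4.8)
The plan is to reduce the statement, in two stages, to the cases already settled in Lemmas~\ref{lem:exchange-extensions} and~\ref{lem:g-simple-order-2-ext-local}. First I would invoke the last sentence of Lemma~\ref{lem:ext-local-if-Mgss-perfect}: an equivariant map algebra is extension-local if and only if $M(X,\g_\rss)^\Ga$ is. Hence I may assume from the outset that $\g$ is semisimple.

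Next I would decompose $\g$ into $\Ga$-invariant ideals by grouping its simple ideals according to their $\si$-orbits, exactly as in the discussion preceding Lemma~\ref{lem:exchange-extensions}. A simple ideal fixed (setwise) by $\si$ contributes a $\si$-invariant simple ideal $\frl_i$, while a pair $\{\frs,\si(\frs)\}$ contributes a $\si$-invariant ideal $\frl_i \cong \frs\boxplus\frs$ on which $\si$ acts by the exchange involution. This writes $\g = \frl_1\boxplus\cdots\boxplus\frl_r$ as a direct product of $\Ga$-invariant ideals, so that $M(X,\g)^\Ga = M(X,\frl_1)^\Ga\boxplus\cdots\boxplus M(X,\frl_r)^\Ga$. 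Iterating the ``$\g=\g_1\boxplus\g_2$'' case of Lemma~\ref{lem:ext-local-if-Mgss-perfect} by induction on $r$, the algebra $M(X,\g)^\Ga$ is extension-local if and only if each $M(X,\frl_i)^\Ga$ is.

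Finally I would dispatch the two kinds of factors. If $\frl_i$ is simple, then $M(X,\frl_i)^\Ga$ is extension-local by Lemma~\ref{lem:g-simple-order-2-ext-local} (when $\si$ acts trivially on $\frl_i$ this is immediate, since then $M(X,\frl_i)^\Ga = \frl_i\ot A^\Ga$ is perfect and so has no nontrivial one-dimensional representations). If $\frl_i=\frs\boxplus\frs$ with $\si$ the exchange involution, then $M(X,\frl_i)^\Ga$ is perfect by Lemma~\ref{lem:exchange-extensions}; in particular its only one-dimensional representation is the trivial one, which is an evaluation representation, so the condition defining extension-locality is vacuously satisfied (cf.\ Remark~\ref{rem:ext-local}). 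Combining these observations with the two reduction steps yields the claim. I do not expect a genuine obstacle here: the whole argument is an assembly of Lemmas~\ref{lem:ext-local-if-Mgss-perfect}, \ref{lem:exchange-extensions} and~\ref{lem:g-simple-order-2-ext-local}, and the only point requiring a moment's care is checking that the $\si$-orbit grouping really produces $\Ga$-invariant ideals and that the two-factor form of Lemma~\ref{lem:ext-local-if-Mgss-perfect} iterates to finitely many factors, both of which are routine.
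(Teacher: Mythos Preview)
Your proposal is correct and follows essentially the same route as the paper: reduce to $\g_\rss$ via Lemma~\ref{lem:ext-local-if-Mgss-perfect}, decompose into $\Ga$-invariant simple or exchange-pair ideals, and then invoke Lemmas~\ref{lem:g-simple-order-2-ext-local} and~\ref{lem:exchange-extensions} (the latter via perfectness) for the two types of factors. Your version is slightly more explicit about iterating the two-factor form of Lemma~\ref{lem:ext-local-if-Mgss-perfect} and about the case where $\si$ acts trivially on a simple ideal, but these are the only differences.
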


\begin{proof}
By Lemma~\ref{lem:ext-local-if-Mgss-perfect}, it suffices to show $M(X,\g_\rss)^\Gamma$ is extension-local.  The same lemma, together with the discussion at the beginning of this subsection, shows that it suffices to consider the cases where $\g$ is simple or $(\g,\sigma) \cong (\frs \boxplus \frs, \ex)$ with $\frs$ simple.  Thus the result follows from Lemmas~\ref{lem:exchange-extensions} (since perfect Lie algebras are extension-local) and~\ref{lem:g-simple-order-2-ext-local}. \qed
\end{proof}

The following lemma will allow us to give an explicit description of the block decompositions for generalized Onsager algebras in Section~\ref{sec:app-onsager}.

\begin{lem} \label{lem:order-two-coh}
Let $\frM$ be an equivariant map algebra as in Example~{\rm \ref{eg:order-two}}, i.e., $\g$ is simple and $\Ga$ has order $2$ acting nontrivially on $\g$. Assume further that $k_\la$ is a nontrivial one-dimensional representation. Then one of the following holds:

\begin{enumerate}
  \item $\rmH^1(\frM, k_\la) = 0$, or

  \item $\g=\lsl_2(k)$, $k_\la$ is an evaluation representation at some $x\in X^\Ga_\rat$, uniquely determined by $\la$, and $\la = \rho_x \circ \ev_x$, where $\rho_x \in \g_0^*$ is one of the two irreducible subrepresentations of the $\g_0$-module $\g_1$. In this case,
      \[
        \rmH^1(\frM, k_\la) \cong (A_1 /\fm_{\bar x} A_1)^*.
      \]
      In particular, if $A$ is a domain and $\Gamma$ acts nontrivially on $A$, then\linebreak $\rmH^1(\frM,k_\lambda) \ne 0$.  In the case of the (usual) Onsager algebra, $x=\pm 1$ and $\rmH^1(\frM, k_\la)$ is one-dimensional.
\end{enumerate}
\end{lem}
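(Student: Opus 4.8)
Here is a proposed plan.

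\smallskip

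The plan is to carry the computation through the results of Sections~\ref{sec:EMA-ext}--\ref{sec:abelian} until it becomes a single $\Hom$-space of $\g_0$-modules, and then to settle that by a weight argument. First I would observe that, since $k_\la$ is nontrivial, $\frM_\ab\cong\g_{0,\ab}\ot(A_0/A_1^2)\ne 0$, so $\g_{0,\ab}\ne 0$; hence $\dim\g_{0,\ab}=1$ and the structure of Example~\ref{eg:order-two}\eqref{eg-item:order-two:c}, \eqref{eg-item:order-two:sl2} applies. By \eqref{eq:ext-homology}, $\rmH^1(\frM,k_\la)\cong\Ext^1_\frM(k_0,k_\la)$. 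If $k_\la$ is not an evaluation representation, this is $0$ by Proposition~\ref{prop:order-2-ext-local} (extension-locality), which is alternative~(a); so I may assume $k_\la\cong\ev_\psi$ for some nonzero $\psi\in\mathcal{E}$. Since $\g^x=\g$ is simple---hence has no nontrivial one-dimensional representation---for $x\notin X_\rat^\Ga$, and since $\ev_\psi$ is a tensor product of one-dimensional representations (so each factor is one-dimensional), this forces $\Supp\psi\subseteq X_\rat^\Ga$. If $\Supp\psi$ contains more than one point, then---fixed points being singleton $\Ga$-orbits---the functions $0$ and $\psi$ differ on more than one $\Ga$-orbit, so Theorem~\ref{theo:two-eval}\eqref{theo-item:two-eval:multiple-orbit-diff} gives $\rmH^1(\frM,k_\la)\cong\Ext^1_\frM(k_0,\ev_\psi)=0$, again alternative~(a). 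The remaining case is $\Supp\psi=\{x\}$ for a single $x\in X_\rat^\Ga$, which is then uniquely determined by $\la$ because $\psi\mapsto\ev_\psi$ is injective \cite[Prop.~4.14]{NSS}. Here $\g^x=\g^\Ga=\g_0$ and $k_\la\cong\ev_x(\rho_x)$ for a nontrivial one-dimensional representation $\rho_x$ of $\g_0$, so $\la=\rho_x\circ\ev_x$; since $\rho_x$ kills $\g_{0,\rss}$, it is determined by $c:=\rho_x(h_0)\ne 0$, where $h_0$ is a basis of $\g_{0,\ab}$ normalized so that $\g_{0,\ab}$ acts on $V_{\pm1}$ with weight $\pm1$.

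\smallskip

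Next I would apply Theorem~\ref{theo:two-eval}\eqref{theo-item:two-eval:one-orbit-diff} and then Theorem~\ref{theo:ext-eval-point} (with $\la=0\ne\la'=\rho_x|_{\g_{0,\ab}}$) to obtain
\[
  \rmH^1(\frM,k_\la)\ \cong\ \Ext^1_\frM\big(k_0,\ev_x(\rho_x)\big)\ \cong\ \Hom_{\g_0}\big(\frK_{x,\ab},\,k_{\rho_x}\big).
\]
To evaluate the right-hand side I would determine the $\g_0$-module $\frK_{x,\ab}$ from Examples~\ref{ex:J} and~\ref{ex:OnotKprime} with $I=\fm_x$ (so $I_0=\fm_x\cap A_0=\fm_{\bar x}$ and $I_1=A_1$): it should be
\begin{align*}
  \frK_{x,\ab}\ \cong\ & \Big(\g_{0,\rss}\ot \tfrac{\fm_{\bar x}}{\fm_{\bar x}^2+A_1^2}\Big)
  \oplus \Big(\g_{0,\ab}\ot \tfrac{\fm_{\bar x}}{A_1^2}\Big) \\
  & \oplus \Big(V_1\ot \tfrac{A_1}{\fm_{\bar x}A_1}\Big)
  \oplus \Big(V_{-1}\ot \tfrac{A_1}{\fm_{\bar x}A_1}\Big),
\end{align*}
with $\g_0$ acting through $\ad$ on the first tensor factors and trivially on the function factors, so that the four summands carry $\g_{0,\ab}$-weights $0$, $0$, $+1$, $-1$. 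Since $k_{\rho_x}$ has $\g_{0,\ab}$-weight $c\ne 0$ and is trivial over $\g_{0,\rss}$, only a weight-$c$ summand can map onto it: the $\Hom$-space vanishes unless $c=\pm1$, and for $c=\pm1$ it equals $\Hom_{\g_0}\big(V_{\pm1}\ot(A_1/\fm_{\bar x}A_1),k_{\rho_x}\big)\cong(V_{\pm1}^\ast)^{\g_{0,\rss}}\ot(A_1/\fm_{\bar x}A_1)^\ast$ by Schur's Lemma. Finally, $V_{\pm1}$ is an irreducible $\g_{0,\rss}$-module which, by Example~\ref{eg:order-two}\eqref{eg-item:order-two:c}, is trivial (equivalently one-dimensional) exactly when $\g=\lsl_2(k)$, so $(V_{\pm1}^\ast)^{\g_{0,\rss}}$ is $0$ unless $\g=\lsl_2(k)$, in which case it is one-dimensional. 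This yields the dichotomy: $\rmH^1(\frM,k_\la)=0$ (alternative~(a)) unless $\g=\lsl_2(k)$ and $c=\pm1$---the latter being precisely the requirement that $\rho_x\in\g_0^\ast$ be one of the two characters afforded by the submodules $V_{\pm1}$ of the $\g_0$-module $\g_1$---in which case $\rmH^1(\frM,k_\la)\cong(A_1/\fm_{\bar x}A_1)^\ast$ (alternative~(b)).

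\smallskip

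To finish, in case~(b) I would argue as follows. If $A$ is a domain on which $\Ga$ acts nontrivially, then $A_1\ne0$ and $A_1$ is torsion-free over the domain $A_0$, so $A_1\ot_{A_0}\mathrm{Frac}(A_0)\ne 0$ and the localization $(A_1)_{\fm_{\bar x}}$ is nonzero; Nakayama's Lemma then gives $\fm_{\bar x}A_1\ne A_1$, whence $\rmH^1(\frM,k_\la)\cong(A_1/\fm_{\bar x}A_1)^\ast\ne 0$. For the usual Onsager algebra, $A=k[t^{\pm1}]$ with $\si\cdot t=t^{-1}$, so $A_0=k[t+t^{-1}]$, $A_1=(t-t^{-1})A_0$ is $A_0$-free of rank one, and $X_\rat^\Ga=\{1,-1\}$ forces $x=\pm1$; then $A_1/\fm_{\bar x}A_1\cong A_0/\fm_{\bar x}\cong k$, so $\rmH^1(\frM,k_\la)$ is one-dimensional. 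I expect the main obstacle to be the structural description of $\frK_{x,\ab}$ as a $\g_0$-module in the second step---one must track carefully how the gradings $\g=\g_0\oplus\g_1$, $\g_0=\g_{0,\rss}\oplus\g_{0,\ab}$, $\g_1=V_1\oplus V_{-1}$ and $A=A_0\oplus A_1$ interact under the bracket, and in particular confirm (via Example~\ref{ex:OnotKprime}) that $\frK_x'\cap(\g_1\ot A_1)=\g_1\ot\fm_{\bar x}A_1$ exactly, so that the weight-$(\pm1)$ part of $\frK_{x,\ab}$ is precisely $V_{\pm1}\ot(A_1/\fm_{\bar x}A_1)$.
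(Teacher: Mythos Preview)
Your argument is correct, but it proceeds by a genuinely different route than the paper's. The paper works directly with Lemma~\ref{lem:corone}, computing $\rmH^1(\frM,k_\la)\cong(\frK_\la/\frD_\la)^*$ by hand: it writes $\frK_\la=(\g_{0,\rss}\ot A_0)\oplus(\g_{0,\ab}\ot B_\la)\oplus(\g_1\ot A_1)$ for a codimension-one subspace $B_\la\subseteq A_0$, then uses \eqref{eq:corone1} to compute $\frD_\la$ explicitly. For $\g\ne\lsl_2(k)$ the identity $\g_1=[\g_{0,\rss},\g_1]$ forces $\frD_\la=\frK_\la$; for $\g=\lsl_2(k)$ one finds $\frD_\la=\frK_\la$ unless $B_\la$ happens to be an ideal of $A_0$, in which case $B_\la=\fm_{\bar x}$ for a unique $x\in X_\rat^\Ga$ and the quotient is read off directly. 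In contrast, you invoke the structural machinery already built up: extension-locality (Proposition~\ref{prop:order-2-ext-local}) disposes of non-evaluation $k_\la$, Theorem~\ref{theo:two-eval} reduces to a single fixed point, and Theorem~\ref{theo:ext-eval-point} converts the problem into $\Hom_{\g_0}(\frK_{x,\ab},k_{\rho_x})$, which you then settle by a clean $\g_{0,\ab}$-weight argument using the explicit description of $\frK_{x,\ab}$ from Example~\ref{ex:J}. Your approach buys a more conceptual explanation of \emph{why} the exceptional case is exactly evaluation at a fixed point with $\rho_x\in\{\pm\rho\}$ (it falls out of the classification of irreducibles and the weight decomposition of $\g_1$), and it illustrates the utility of the general extension theorems; the paper's approach is more self-contained and does not rely on Proposition~\ref{prop:order-2-ext-local}, which logically precedes this lemma but is itself a nontrivial result. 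Both reach $(A_1/\fm_{\bar x}A_1)^*$ and handle the domain and Onsager specializations by the same Nakayama argument.
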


\begin{proof}
Since $\la \ne 0$, we have $\frM_\ab \cong \g_{0,\ab} \ot
(A_0/A_1^2) \ne 0$, and we can choose $z\in \g_{0,\ab} \ot A_0$
satisfying $\la(z) = 1$. Because $\g_{0,\ab}$ is one-dimensional,
there exists a codimension one subspace $B_\la \subseteq A_0$ such
that
\begin{align*}
  \frK_\la &= (\g_{0,\rss} \ot A_0) \oplus (\g_{0,\ab} \ot B_\la) \oplus (\g_1 \ot A_1), \\
  \frK_\la ' &= (\g_{0,\rss} \ot A_0) \oplus (\g_{0,\ab} \ot A_1^2) \oplus ([\g_{0,\rss}, \g_1] \ot A_1 + [\g_{0,\ab}, \g_1] \ot B_\la A_1).
\end{align*}
Suppose $\g \ne \lsl_2(k)$. It then follows from
Example~\ref{eg:order-two}\eqref{eg-item:order-two:c} that $\g_1 =
[\g_{0,\rss}, \g_1]$. Hence $\g_1\ot A_1 \subseteq \frK_\la'
\subseteq \frD_\la$. Since $z$ is central in $\g_0 \ot A_0$,
formula~\eqref{eq:corone1} shows $\g_{0,\ab} \ot B_\la \subseteq
\frD_\la$, whence $\frD_\la = \frK_\la$, i.e., $\rmH^1(\frM, k_\la)
= 0$.

Let now $\g= \lsl_2(k)$. From
Example~\ref{eg:order-two}\eqref{eg-item:order-two:sl2} we know that
then $\g_0 = \g_{0,\ab}$, $\g_1 = V_1 \oplus V_{-1}$, and there
exists $0 \ne \rh \in \g_0^*$ such that $\g_0$ acts on $V_{\pm 1}$
by $\pm \rh$. Hence
\[
  \frK_\la =  (\g_0  \ot B_\la) \oplus (\g_1 \ot A_1), \quad \frK_\la ' = (\g_0  \ot A_1^2) \oplus (\g_1 \ot B_\la A_1).
\]
We can write $z$ in the form $z=z_0 \ot z_A$ with $z_A\in A$ and
$z_0 \in \g_0$ satisfying $\rh(z_0) = 1$. Since $[z,\g_0 \ot B_\la]
= 0$, we get from \eqref{eq:corone1} that
\[
  \frD_\la = (\g_0 \ot B_\la) \oplus \big( V_1 \ot (B_\la A_1 + (1-z_A)A_1)\big) \oplus \big( V_{-1} \ot (B_\la A_1 + (1+z_A)A_1)\big).
\]
If $B_\la$ is not an ideal of $A_0$, we obtain $B_\la A_1 = B_\la
A_0 A_1 = A_0 A_1 = A_1$, so that $\frD_\la = \frK_\la$ and then
$\rmH^1(\frM, k_\la) = 0$ follows.

We are therefore left with the case that $B_\la$ is an ideal of
$A_0$. Being of codimension one, there exists a unique $\bar x \in
\maxSpec(A_0)$ such that $B_\la = \fm_{\bar x}$. From $\g_0 \ot
A_1^2 \subseteq \frM' \subseteq \frK_\lambda$ it follows that $A_1^2
\subseteq B_\la$, so $B_\la \oplus A_1 = \fm_x$ for a unique $x\in
X^\Ga_\rat$ by \eqref{eq:order-two:1}. Hence $\frK_\la = \Ker \ev_x$
and thus $\la = \rh_x \circ \ev_x$ for some $\rh_x \in \g_0^*$. If
$1-z_A \in \fm_{\bar x}$, i.e., $\rh_x = \rh$, \lv{Note $\la(z_0 \ot
(1-z_A)) = \la(z_0 \ot 1) - 1 = \rh_x(z_0) - 1$} we get $1+z_A
\not\in \fm_{\bar x}$ and $\frK_\la / \frD_\la \cong V_1 \ot (A_1 /
\fm_{\bar x} A_1) \cong A_1 / \fm_{\bar x} A_1$ follows. Similarly,
in case $1+z_A \in \fm_{\bar x}$, we have $\rh_x = - \rh$ and
$\frK_\la / \frD_\la \cong V_{-1} \ot (A_1 / \fm_{\bar x} A_1) \cong
A_1 / \fm_{\bar x} A_1$. Finally, if $1 \pm z_A \not \in \fm_{\bar
x}$, then $\frK_\la / \frD_\la = 0$.

By Lemma~\ref{lem:noeth}, $A$ is a Noetherian $A_0$-module and so $A_1$ is a finitely generated $A_0$-module. If $A_1 = \fm_{\bar x} A_1$, then by Nakayama's Lemma there exists $a_0\not \in \fm_{\bar x}$ such that $a_0 A_1 = 0$.  If $\Gamma$ acts nontrivially on $A$, then $A_1 \ne 0$.  Thus $A$ is not a
domain.

For the Onsager algebra one knows (see, for example, the proof of
\cite[Prop.~6.2]{NSS}) that $A_1$ is a free $A_0$ module of rank
$1$, whence $A_1/\fm_{\bar x} A_1 \cong A_0/\fm_{\bar x}$ is
one-dimensional. \qed
\end{proof}

One could continue to work in the generality of equivariant map algebras associated to groups of order two and deduce the extensions and block decompositions in the case where $\g$ is simple.  However, in the interest of making the exposition easier to follow and of obtaining explicit formulas, we will instead now focus on the case of the generalized Onsager algebras, which we treat in the next subsection.

\subsection{Generalized Onsager algebras} \label{sec:app-onsager}

We now apply our results to generalized Onsager algebras (see Example~\ref{eg:Onsager}).  By Theorem~\ref{theo:two-eval}, to describe arbitrary extensions between irreducible finite-dimensional representations, it suffices to give explicit formulas for the extensions between single orbit evaluation representations supported on the same orbit, which are described in Theorem~\ref{theo:ext-eval-point}.

Note that $A_0$ is a polynomial algebra (in the variable $z=t+t^{-1}$) and $A_1=yA_0$, where $y=t-t^{-1}$.   We have
\[
  y^2 = (t-t^{-1})^2 = t^2 - 2 + t^{-2} = z^2-4 = (z-2)(z+2)
\]
where the points $z=\pm 2$ correspond to the images in $X \gq \Gamma$ of the points $\pm 1 \in X$.  Thus
\[
  A_1^2 = y^2A_0 = \fm_{\bar 1} \fm_{\overline{-1}}.
\]

Suppose $x \in X$.  Let
\[
  I = \{f \in A\ |\ f|_{\Gamma \cdot x} = 0\},\quad I_0 = I \cap A_0,\quad I_1 = I \cap A_1.
\]
Then, as in Example~\ref{ex:J},
\begin{align*}
  \frK_x &= (\g_0 \otimes I_0) \oplus (\g_1 \otimes I_1), \\
  \frK_x' &= \left(\g_{0,\rss} \otimes (I_0^2 + I_1^2) \right) \oplus \left(\g_{0,\ab}
  \otimes I_1^2 \right) \oplus \left(\g_1 \otimes I_0I_1\right).
\end{align*}
Now, suppose first that $x \ne \pm 1$, so that $\Gamma_x = \{1\}$ and $\g^x=\g$.  Then
\[
  I_0 = \fm_{\bar x},\quad I_1=y\fm_{\bar x}.
\]
Thus
\[
  I_1^2 = y^2\fm_{\bar x} = \fm_{\bar 1} \fm_{\overline{-1}} \fm_{\bar x}^2,
\]
and so
\[
  \frK_x' = \left(\g_{0,\rss} \otimes \fm_{\bar x}^2\right) \oplus \left( \g_{0,\ab} \otimes \fm_{\bar 1} \fm_{\overline{-1}} \fm_{\bar x}^2 \right) \oplus \left( \g_1 \otimes y \fm_{\bar x}^2 \right).
\]
Therefore
\begin{align*}
  \frK_{x,\ab} &= \left( \g_{0,\rss} \otimes \fm_{\bar x}/\fm_{\bar x}^2 \right) \oplus \left( \g_{0,\ab} \otimes \fm_{\bar x}/\fm_{\bar 1}\fm_{\overline{-1}}\fm_{\bar x}^2 \right) \oplus \left( \g_1 \otimes y\fm_{\bar x}/y\fm_{\bar x}^2 \right) \\
  &\cong \left( \g_0 \otimes \fm_{\bar x}/\fm_{\bar x}^2 \right) \oplus \left( \g_{0,\ab} \otimes A_0/\fm_{\bar 1} \right) \oplus \left( \g_{0,\ab} \otimes A_0/\fm_{\overline{-1}} \right) \oplus \left( \g_1 \otimes y\fm_{\bar x}/y\fm_{\bar x}^2 \right).
\end{align*}
Indeed, by \cite[Ch.~II, \S1.2, Prop.~5 and Prop.~6]{Bou:ACb1}, $\fm_{\bar x}/\fm_{\bar 1}\fm_{\overline{-1}}\fm_{\bar x}^2 \cong  (\fm_{\bar x} / \fm_{\bar 1} \fm_{\bar x}) \boxplus
(\fm_{\bar x} / \fm_{\overline{-1}} \fm_{\bar x}) \boxplus (\fm_{\bar x} / \fm_{\bar x}^2)$
and $\fm_{\bar x} / \fm_{\overline{\pm 1}} \fm_{\bar x} \cong A_0 / \fm_{\overline{\pm 1}}$ since the canonical map $\fm_{\bar x} \to A_0/\fm_{\overline{\pm 1}}$ is surjective.  Thus, as $\g \cong
\frM/\frK_x$-modules, we have
\begin{equation} \label{eq:gen-Onsager-Kxab-free-point}
  \frK_{x,\ab} \cong (\g \otimes \fm_{\bar x}/\fm_{\bar x}^2) \oplus k_0 \oplus k_0, \quad x \ne \pm 1.
\end{equation}

Now suppose $x = \pm 1$, so that $\Gamma_x = \Gamma$ and $\g^x = \g_0$.  Then
\[
  I_0 = \fm_{\bar x},\quad I_1 = A_1 = yA_0.
\]
Thus
\[
  I_1^2 = \fm_{\bar 1} \fm_{\overline{-1}},
\]
and so
\[
  \frK_x' = \left(\g_{0,\rss} \otimes \fm_{\bar x}\right) \oplus \left(\g_{0,\ab} \otimes \fm_{\bar 1} \fm_{\overline{-1}} \right) \oplus \left( \g_1 \otimes y \fm_{\bar x} \right),
\]
where we have used the fact that $\fm_{\bar x}^2 + \fm_{\bar 1} \fm_{\overline{-1}} = \fm_{\bar x}$ for $x = \pm 1$.  Therefore,
\begin{align*}
  \frK_{x,\ab} &= \left( \g_{0,\ab} \otimes \fm_{\bar x}/\fm_{\bar 1}\fm_{\overline{-1}} \right) \oplus \left( \g_1 \otimes yA_0/y\fm_{\bar x} \right).
\end{align*}
We then have the following isomorphism of $\g_0$-modules:
\begin{equation} \label{eq:gen-Onsager-Kxab-nonfree-point}
  \frK_{\pm 1,\ab} \cong
  \begin{cases}
    \g_1 \oplus k_0 & \text{if } \dim \g_{0,\ab}=1,\\
    \g_1 & \text{if } \dim \g_{0,\ab}=0.
  \end{cases}
\end{equation}
Now,
\[
  \frZ_x = \ev_x^{-1}(\g^x_\ab) = \ev_x^{-1}(\g_{0,\ab}) = (\g_{0,\rss} \otimes \fm_{\bar x}) \oplus (\g_{0,\ab} \otimes A_0) \oplus (\g_1 \otimes yA_0).
\]
Thus
\begin{align*}
  \frZ_x' &= (\g_{0,\rss} \ot \fm_{\bar x}^2 + \g_0 \ot \fm_{\bar 1} \fm_{\overline{-1}}) \oplus ([\g_{0,\ab} , \g_1] \ot y A_0 + [\g_{0,\rss}, \g_1] \ot y \fm_{\bar x}) \\
  &\cong (\g_{0,\rss} \otimes \fm_{\bar x}) \oplus (\g_{0,\ab} \otimes \fm_{\bar 1} \fm_{\overline{-1}}) \oplus (\g_1 \otimes y\fm_{\bar x} + [\g_{0,\ab},\g_1] \otimes y A_0),
\end{align*}
where we have again used the fact that $\fm_{\bar x}^2 + \fm_{\bar 1} \fm_{\overline{-1}} = \fm_{\bar x}$ for $x = \pm 1$ and some results from Example~\ref{eg:order-two}.  Therefore
\[
  \frZ_{x,\ab} \cong (\g_{0,\ab} \otimes A_0/\fm_{\bar 1}) \oplus (\g_{0,\ab} \otimes A_0/\fm_{\overline{-1}})
  \oplus (\g_1 \ot yA_0 ) / ( \g_1 \ot y \fm_{\bar x} + [\g_{0, \ab}, \g_1] \ot y A_0 ).
\]
If $\g_{0,\ab} = 0$ then $\frZ_{x,\ab} \cong (\g_1 \ot yA_0) / (\g_1 \ot y \fm_{\bar x}) \cong \g_1$. On the other hand, if $\g_{0,\ab} \ne 0$, the first two terms are isomorphic to $k_0$ as $\g_0$-modules. Moreover, in this case $\g_1 = [\g_{0,\ab}, \g_1]$ by Example~\ref{eg:order-two}\eqref{eg-item:order-two:c}.
\lv{
\begin{proof}
Let $\frn = \{ u \in \g_1 : [\g_{0,\ab}, u]=0\}$. We claim that
\[
  I:= [\frn, \frn] \oplus \frn
\]
is an ideal of $\g$. Indeed, it is clear that $[\frg_0, \frn] \subseteq \frn$, hence $[\frg_0, I]\subseteq I$. To check $[\g_1, I] \subseteq I$, we use that $\g_{0,\ab}$ acts completely reducibly on $\g_1$, hence $\g_1 = [\g_{0,\ab}, \g_1] \oplus \frn$. This implies
\[
  [\g_1, \frn] = [[\g_{0,\ab}, \g_1], \frn] + [\frn,\frn] = [\frn,\frn]
\]
since $[[\g_{0,\ab}, \g_1], \frn] = [[\g_{0,\ab}, \frn],\g_1] + [\g_{0,\ab}, [\g_1, \frn]]$ and the last term vanishes since $\g_{0,\ab}$ is the centre of $\g_0$. Finally, $[\g_1, [\frn,\frn]] = [[\g_1, \frn], \frn] = [[\frn,\frn],\frn]$ and this last space is obviously killed by $\g_{0,\ab}$, i.e., it lies in $\frn$.

Having now shown that $I$ is an ideal of $\g$, assume that $I=\g$. Since then $[\g_{0,\ab}, \g] = [\g_{0,\ab}, I] = 0$, we get $\g_{0,\ab}=0$. In the other case $\frn=0$ follows. \qed
\end{proof} }
Therefore the last term in the description of $\frZ_{x,\ab}$ vanishes. To summarize, we have the following isomorphism of $\g_0$-modules.
\begin{equation} \label{eq:gen-Onsager-Zxab-nonfree-point}
  \frZ_{\pm 1,\ab} \cong
  \begin{cases}
    k_0 \oplus k_0  &\text{if } \dim \g_{0,\ab} = 1, \\
    \g_1 & \text{if } \dim \g_{0,\ab}=0.
  \end{cases}
\end{equation}

\begin{prop}[Extensions for generalized Onsager algebras] \label{prop:onsager-exts}
Suppose $V,V'$ are irreducible finite-dimensional evaluation representations at the same point $x \in X_\rat$.  If $x \ne \pm 1$, then
\[
  \Ext_\frM^1 (V,V') \cong
  \begin{cases}
    \Hom_\g (\g,V^* \otimes V') & \text{if } V \not \cong V', \\
    \Hom_\g (\g,V^* \otimes V') \oplus k^2 & \text{if } V \cong V'.
  \end{cases}
\]
If $x = \pm 1$, then
\[
  \Ext_\frM^1 (V,V') \cong
  \begin{cases}
    \Hom_{\g_0} (\g_1,V^* \otimes V') & \text{if } V \not \cong V', \\
    \Hom_{\g_0} (\g_1,V^* \otimes V') \oplus k^{\oplus 2 \dim \g_{0,\ab}}& \text{if } V \cong V'.
  \end{cases}
\]
\end{prop}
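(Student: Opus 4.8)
The plan is to obtain both statements by specializing Theorem~\ref{theo:ext-eval-point} and plugging in the explicit $\g^x$-module structures of $\frK_{x,\ab}$ and $\frZ_{x,\ab}$ recorded in \eqref{eq:gen-Onsager-Kxab-free-point}, \eqref{eq:gen-Onsager-Kxab-nonfree-point}, and \eqref{eq:gen-Onsager-Zxab-nonfree-point}. Throughout I abbreviate $W = V^* \ot V'$, and I use that $A_0$ is a polynomial ring in the single variable $z = t + t^{-1}$, so that $\dim_k \fm_{\bar x}/\fm_{\bar x}^2 = 1$ for every $x \in X_\rat$, together with the fact that $W^{\frh} \cong \Hom_{\frh}(V,V')$ for any subalgebra $\frh$ of $\g^x$ has dimension $\le 1$, with equality iff $V \cong V'$ as $\frh$-modules (Schur's Lemma, Lemma~\ref{lem:schur-tensor}).

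For $x \ne \pm 1$ we have $\Ga_x = \{1\}$, so $\g^x = \g$ is simple, hence semisimple with $\g^x_\ab = 0$; thus $\la = \la' = 0$ and the ``in particular'' part of Theorem~\ref{theo:ext-eval-point} gives $\Ext^1_\frM(V,V') \cong \Hom_\g(\frK_{x,\ab}, W)$. By \eqref{eq:gen-Onsager-Kxab-free-point} and one-dimensionality of $\fm_{\bar x}/\fm_{\bar x}^2$ we get $\frK_{x,\ab} \cong \g \oplus k_0 \oplus k_0$ as $\g$-modules, so $\Ext^1_\frM(V,V') \cong \Hom_\g(\g,W) \oplus (W^\g)^{\oplus 2}$, which is the asserted formula.

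For $x = \pm 1$ we have $\g^x = \g_0$, and I would distinguish the subcases $\dim \g_{0,\ab} \in \{0,1\}$. If $\g_{0,\ab} = 0$, then $\g_0$ is semisimple, so again $\la = \la' = 0$ and Theorem~\ref{theo:ext-eval-point} together with \eqref{eq:gen-Onsager-Kxab-nonfree-point} gives $\Ext^1_\frM(V,V') \cong \Hom_{\g_0}(\g_1, W)$ in every case, which is the claim since $k^{\oplus 2\dim\g_{0,\ab}} = 0$. If $\dim \g_{0,\ab} = 1$, let $\la, \la'$ be the forms by which $\g_{0,\ab}$ acts on $V, V'$. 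The key preliminary observation is that $\Hom_{\g_0}(\g_1, W) = 0$ whenever $\la = \la'$: by Example~\ref{eg:order-two}\eqref{eg-item:order-two:c}, $\g_{0,\ab}$ acts on $\g_1 = V_1 \oplus V_{-1}$ by the nonzero forms $\pm \rho$, while it acts on $W$ by $\la' - \la = 0$. When $\la \ne \la'$ (so $V \not\cong V'$ as $\frM$-modules), Theorem~\ref{theo:ext-eval-point} and \eqref{eq:gen-Onsager-Kxab-nonfree-point} give $\Ext^1_\frM(V,V') \cong \Hom_{\g_0}(\g_1, W) \oplus W^{\g_0}$, and $W^{\g_0} = 0$ since $\g_{0,\ab}$ acts on $W$ by the nonzero form $\la' - \la$; this matches the $V \not\cong V'$ branch. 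When $\la = \la'$, Theorem~\ref{theo:ext-eval-point} and \eqref{eq:gen-Onsager-Zxab-nonfree-point} give $\Ext^1_\frM(V,V') \cong (W^{\g_{0,\rss}})^{\oplus 2}$; since restriction to $\g_{0,\rss}$ preserves irreducibility of $V, V'$ and (using $\la = \la'$) $V|_{\g_{0,\rss}} \cong V'|_{\g_{0,\rss}}$ iff $V \cong V'$, this equals $k^{\oplus 2}$ if $V \cong V'$ and $0$ otherwise. Combining with the vanishing of $\Hom_{\g_0}(\g_1, W)$ in this regime reproduces exactly the stated formula (with $k^{\oplus 2\dim\g_{0,\ab}} = k^{\oplus 2}$).

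Essentially the whole argument is bookkeeping once Theorem~\ref{theo:ext-eval-point} and the identifications \eqref{eq:gen-Onsager-Kxab-free-point}--\eqref{eq:gen-Onsager-Zxab-nonfree-point} are in hand; the one point requiring care, and the place I would expect to be the \emph{main obstacle}, is reconciling the two formulas of Theorem~\ref{theo:ext-eval-point} (which split on whether $\la = \la'$ and involve $\frK_{x,\ab}$ versus $\frZ_{x,\ab}$) with the single dichotomy $V \cong V'$ versus $V \not\cong V'$ in the statement -- the observation that $\Hom_{\g_0}(\g_1, W)$ vanishes automatically when $\la = \la'$ is precisely what makes the two presentations agree.
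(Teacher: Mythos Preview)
Your proof is correct and follows essentially the same approach as the paper: both apply Theorem~\ref{theo:ext-eval-point} and plug in the explicit $\g^x$-module structures \eqref{eq:gen-Onsager-Kxab-free-point}--\eqref{eq:gen-Onsager-Zxab-nonfree-point}, using Lemma~\ref{lem:schur-tensor} and the observation that $\Hom_{\g_0}(\g_1,W)=0$ when $\la=\la'$ to reconcile the $\la=\la'$/$\la\ne\la'$ split with the $V\cong V'$/$V\not\cong V'$ dichotomy. The only difference is organizational---you split first on $\dim\g_{0,\ab}$ while the paper splits first on whether $\la=\la'$---but the content is the same.
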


\begin{proof}
Suppose that $\g^x_\ab$ acts on $V,V'$ by $\lambda, \lambda' \in (\g^x_\ab)^*$ respectively.  First consider the case $x \ne \pm 1$.  Then $\g^x=\g$ is simple and so $\lambda=\lambda'=0$.  By Theorem~\ref{theo:ext-eval-point} and \eqref{eq:gen-Onsager-Kxab-free-point}, we have
\[
  \Ext_\frM^1(V,V') \cong \Hom_\g((\g \otimes \fm_{\bar x}/\fm_{\bar x}^2) \oplus k_0 \oplus k_0, V^* \otimes V').
\]
Since $\dim X \gq \Gamma = 1$ and $\bar x$ is a smooth point, $\fm_{\bar x}/\fm_{\bar x}^2 \cong k$.  The result then follows from Lemma~\ref{lem:schur-tensor}.

Now suppose $x = \pm 1$.  If $\lambda \ne \lambda'$, then $\dim \g_{0,\ab} =1$ and by Theorem~\ref{theo:ext-eval-point} and \eqref{eq:gen-Onsager-Kxab-nonfree-point}, we have
\[
  \Ext_\frM^1(V,V') \cong \Hom_{\g_0} (\g_1 \oplus k_0, V^* \otimes V') \cong \Hom_{\g_0}(\g_1, V^* \otimes V'),
\]
where the second isomorphism holds by Lemma~\ref{lem:schur-tensor}.

If $\lambda = \lambda'$ and $V \not \cong V'$, by Theorem~\ref{theo:ext-eval-point}, \eqref{eq:gen-Onsager-Zxab-nonfree-point}, and Lemma~\ref{lem:schur-tensor}, we have
\begin{gather*}
  \begin{split}
    \Ext^1_\frM(V,V') = \Hom_{\g_{0,\rss}} (k_0 \oplus k_0, &V^* \otimes V') \\
    &= 0 = \Hom_{\g_0} (\g_1, V^* \otimes V') \quad \text{if } \dim \g_{0,\ab} = 1,
  \end{split} \\
  \Ext^1_\frM(V,V') = \Hom_{\g_{0,\rss}} (\g_1, V^* \otimes V')=  \Hom_{\g_0} (\g_1, V^* \otimes V') \quad \text{if } \dim \g_{0,\ab} = 0,
\end{gather*}
where the last equality in the first line holds because $\g_{0,\ab}$ acts trivially on $V^* \otimes V'$, but nontrivially on $\g_1 = [\g_{0,\ab},\g_1]$ by Example~\ref{eg:order-two}\eqref{eg-item:order-two:c}.

Finally, if $V \cong V'$, then by Theorem~\ref{theo:ext-eval-point} and \eqref{eq:gen-Onsager-Zxab-nonfree-point} we have
\[
  \Ext_\frM^1(V,V') \cong \Hom_{\g_{0,\rss}} (\g_1^{\g_{0,\ab}} \oplus k_0^{\oplus 2\dim \g_{0,\ab}}, V^* \otimes V'),
\]
where we recall that $\g_1^{\g_{0,\ab}} = 0$ if $\g_{0,\ab} \ne 0$.  Since $\g_{0,\ab}$ acts trivially on $V^* \otimes V'$, we have
\[
  \Hom_{\g_{0,\rss}} (\g_1^{\g_{0,\ab}}, V^* \otimes V') \cong \Hom_{\g_0} (\g_1,V^* \otimes V'),
\]
and $\Hom_{\g_{0,\rss}} (k_0, V^* \otimes V') \cong k$, by Lemma~\ref{lem:schur-tensor}.  The result follows. \qed
\end{proof}

Specializing the proof above to the various cases leads to the following more explicit formula.

\begin{cor}[Extensions for generalized Onsager algebras] \label{cor:onsager-ext-explicit}
Suppose $V$ and $V'$ are irreducible finite-dimensional evaluation representations at the same point $x = \pm 1$, on which $\g_{0,\ab}$ acts by $\lambda,\lambda'$ respectively.  Then
\[
  \Ext^1_\frM(V,V') \cong
  \begin{cases}
    \Hom_{\g_0}(\g_1, V^* \ot V') & \text{if $\la \ne \la'$ or if } \la = \la',\ \g_{0,\ab}=0, \\
    0   & \text{if } \la = \la',\ \g_{0,\ab} \ne 0,\ V\not \cong V', \\
    k^2 & \text{if } \g_{0,\ab} \ne 0,\ V \cong V'.
  \end{cases}
\]
\end{cor}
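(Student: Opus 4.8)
The plan is to read off the corollary from the $x = \pm 1$ branch of Theorem~\ref{theo:ext-eval-point}, substituting the explicit $\g_0$-module structures of $\frK_{\pm 1,\ab}$ and $\frZ_{\pm 1,\ab}$ recorded in~\eqref{eq:gen-Onsager-Kxab-nonfree-point} and~\eqref{eq:gen-Onsager-Zxab-nonfree-point}. Since $V$ and $V'$ are evaluation representations supported on $\Gamma \cdot x$ with $x = \pm 1$, we have $\g^x = \g_0$, and~\eqref{eq:evap1} gives $\Ext^1_\frM(V,V') \cong \Hom_{\g_0}(\frK_{\pm 1,\ab}, W)$ when $\lambda \ne \lambda'$ and $\Ext^1_\frM(V,V') \cong \Hom_{\g_{0,\rss}}(\frZ_{\pm 1,\ab}, W)$ when $\lambda = \lambda'$, where $W = V^* \ot V'$. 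Throughout I will use Lemma~\ref{lem:schur-tensor} in the form $\Hom_{\g_0}(k_0,W) \cong k$ if $V \cong V'$ and $0$ otherwise (and its analogue over $\g_{0,\rss}$), together with the fact from Example~\ref{eg:order-two}\eqref{eg-item:order-two:c} that $[\g_{0,\ab},\g_1] = \g_1$ whenever $\g_{0,\ab} \ne 0$.

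First I would treat the case $\lambda \ne \lambda'$. This forces $\g_{0,\ab} \ne 0$ (otherwise $\lambda = \lambda' = 0$), hence $\dim \g_{0,\ab} = 1$ and $V \not\cong V'$. By~\eqref{eq:gen-Onsager-Kxab-nonfree-point} we have $\frK_{\pm 1,\ab} \cong \g_1 \oplus k_0$, so $\Ext^1_\frM(V,V') \cong \Hom_{\g_0}(\g_1 \oplus k_0, W) \cong \Hom_{\g_0}(\g_1, W)$, the last step discarding the $k_0$-summand by Lemma~\ref{lem:schur-tensor} since $V \not\cong V'$; this is the first line of the table. Next, for $\lambda = \lambda'$ I would split on $\g_{0,\ab}$. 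If $\g_{0,\ab} = 0$, then $\g_0 = \g_{0,\rss}$ and~\eqref{eq:gen-Onsager-Zxab-nonfree-point} gives $\frZ_{\pm 1,\ab} \cong \g_1$, whence $\Ext^1_\frM(V,V') \cong \Hom_{\g_{0,\rss}}(\g_1, W) = \Hom_{\g_0}(\g_1, W)$, again the first line. If instead $\g_{0,\ab} \ne 0$, then~\eqref{eq:gen-Onsager-Zxab-nonfree-point} gives $\frZ_{\pm 1,\ab} \cong k_0 \oplus k_0$, so $\Ext^1_\frM(V,V') \cong \Hom_{\g_{0,\rss}}(k_0 \oplus k_0, W)$, which by Lemma~\ref{lem:schur-tensor} equals $k^2$ if $V \cong V'$ and $0$ if $V \not\cong V'$ --- the last two lines.

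Finally I would check that the three displayed cases are exhaustive: $\lambda \ne \lambda'$ and ``$\lambda = \lambda'$ with $\g_{0,\ab} = 0$'' both give line one, while ``$\lambda = \lambda'$, $\g_{0,\ab} \ne 0$'' splits according to whether $V \cong V'$ (noting that $V \cong V'$ forces $\lambda = \lambda'$, so the hypothesis ``$\g_{0,\ab} \ne 0$, $V \cong V'$'' in line three needs no mention of $\lambda$). The one point that is not pure bookkeeping --- and hence the only mild obstacle --- is the internal consistency with Proposition~\ref{prop:onsager-exts}, whose $x = \pm 1$, $V \not\cong V'$ answer is uniformly $\Hom_{\g_0}(\g_1, W)$: one must observe that this $\Hom$-space vanishes exactly when $\g_{0,\ab} \ne 0$ and $\g_{0,\ab}$ acts trivially on $W$ (the situation $\lambda = \lambda'$), because then a $\g_0$-equivariant $f\colon \g_1 \to W$ satisfies $f(h\cdot v) = h\cdot f(v) = 0$ for $h \in \g_{0,\ab}$, $v \in \g_1$, so $f$ kills $[\g_{0,\ab},\g_1] = \g_1$. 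The same observation shows that the extra summand $\Hom_{\g_0}(\g_1, W)$ in the $V \cong V'$ formula of Proposition~\ref{prop:onsager-exts} vanishes when $\g_{0,\ab} \ne 0$, leaving precisely $k^2$.
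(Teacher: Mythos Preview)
Your argument is correct and follows essentially the same route as the paper: the corollary is obtained by specializing the proof of Proposition~\ref{prop:onsager-exts} (which in turn invokes Theorem~\ref{theo:ext-eval-point} together with the explicit descriptions \eqref{eq:gen-Onsager-Kxab-nonfree-point} and \eqref{eq:gen-Onsager-Zxab-nonfree-point}) to each of the displayed cases, exactly as you do. Your final paragraph, verifying consistency with Proposition~\ref{prop:onsager-exts} via the observation that $\Hom_{\g_0}(\g_1,W)=0$ when $\g_{0,\ab}\ne 0$ and $\lambda=\lambda'$, reproduces the argument the paper gives inside that proof.
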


\lv{
\begin{proof}
If $\lambda \ne \lambda'$, then $V \not \cong V'$ and the result is a consequence of Proposition~\ref{prop:onsager-exts}.  Similarly, if $\g_{0,\ab}=0$, the two expressions given in Proposition~\ref{prop:onsager-exts} are equal, and the result follows.

Now assume $\lambda = \lambda'$, $\g_{0,\ab} \ne 0$, and $V \not \cong V'$.  By Proposition~\ref{prop:onsager-exts}, we have $\Ext^1_\frM(V,V') \cong \Hom_{\g_0}(\g_1,V^* \otimes V')$.  Since $\g_{0,\ab}$ acts trivially on $V^* \otimes V'$, we have
\[
  \Hom_{\g_0}(\g_1,V^* \otimes V') = \Hom_{\g_0}(\g_1^{\g_{0,ab}},V^* \otimes V') = 0.
\]

Finally, assume $\g_{0,\ab} \ne 0$, $V \cong V'$.  Again, since $\g_{0,\ab}$ acts trivially on $V \cong V'$ we have, using Proposition~\ref{prop:onsager-exts},
\[
  \Ext^1_\frM(V,V') \cong \Hom_{\g_0}(\g_1,V^* \otimes V') \oplus k^2 = \Hom_{\g_0}(\g_1^{\g_{0,\ab}},V^* \otimes V') \oplus k^2 = k^2. \qed
\]
\end{proof}
}

We now turn our attention to giving an explicit description of the block decomposition of the category $\cF$ of irreducible finite-dimensional representations.  Since $\frM$ is extension-local by Lemma~\ref{lem:g-simple-order-2-ext-local}, we can apply the results of Section~\ref{sec:block-decomps}.  Because all irreducible finite-dimensional representations are evaluation representations, we have $\cF_\eval = \cF$, $\frB_\eval = \frB$, and Theorem~\ref{theo:blocks} (or Proposition~\ref{prop:eval-blocks}) tells us that the block decomposition is given by $\cF = \bigoplus_{\chi \in \frB} \cF^\chi$.  It remains to describe $\cB_x$ for $x \in X_\rat$.

If $x \ne \pm 1$, then by Proposition~\ref{prop:onsager-exts} and \cite[Prop.~1.2]{CM} (or Corollary~\ref{cor:kumar}), we have $\cB_x \cong P/Q$.  So in the following, we fix $x = \pm 1$.  Let $P_0$ and $Q_0$ be the weight and root lattices of $\g_{0,\rss}$ respectively.
For a finite-dimensional $\g_{0,\rss}$-module $W$, we let $\Span_\Z \wt(W) \subseteq P_0$ be the $\Z$-span of the weights of $W$.  If $\g_{x,\ab} = 0$, then Corollary~\ref{cor:kumar} implies (see Remark~\ref{rem:Kostant-Chari-Moura}) that $\mathcal{B}_x \cong P_0/\Span_\Z \wt(\g_1)$.  It remains to consider the case $\g_{x,\ab} \ne 0$, in which case we know that $\g_{x,\ab} \cong k$ (see Example~\ref{eg:order-two}\eqref{eg-item:order-two:b}).  The one-dimensional evaluation representations are thus of the form $k_a$ where $a \in \g_{x,\ab}^* \cong k$.

By Example~\ref{eg:order-two}\eqref{eg-item:order-two:c}, we have
\[
  \g_1 \cong (V \otimes k_1) \oplus (V^* \otimes k_{-1}) \quad \text{(as $\g_0$-modules)}
\]
for some irreducible $\g_{0,\rss}$-module $V$.  So $V \cong V(\nu)$, the irreducible $\g_{0,\rss}$-module of highest weight $\nu$ for some $\nu \in P_0^+$, the set of dominant integral weights of $\g_{0,\rss}$.  We have chosen the isomorphism $\g_{x,\ab}^* \cong k$ so that the one-dimensional representations appearing in the above decomposition are $k_{\pm 1}$.  The irreducible objects of $\mathcal{F}_x$ are of the form $V(\lambda) \otimes k_a$, for $a \in \g_{0,\ab}^* \cong k$.  They are thus enumerated by $P_0^+ \times k$.  We would like to find an explicit description of the equivalence relation on this set that describes the ext-blocks.

By Corollary~\ref{cor:onsager-ext-explicit}, we have
\[
  \Ext^1_\frM (V(\lambda) \otimes k_a,V(\mu) \otimes k_b) = 0 \quad \text{if} \quad a = b \text{ and } \lambda \ne \mu.
\]
Additionally,
\[
  \Hom_{\g_0} (\g_1, V(\lambda)^* \otimes V(\mu))=0
\]
since $\g_{0,\ab}$ acts on each irreducible summand of $\g_1$ nontrivially but on $V(\lambda)^* \otimes V(\mu)$ trivially.  Therefore, the relation on $P_0^+ \times k$ describing the ext-blocks is the equivalence relation generated by
\[
  (\lambda,a) \sim (\mu,b) \quad \text{if} \quad \Hom_{\g_0} (\g_1, V(\lambda)^* \otimes V(\mu) \otimes k_{b-a}) \ne 0.
\]
We denote this equivalence relation again by $\sim$.

\begin{lem}
The equivalence relation $\sim$ is the equivalence relation generated by
\begin{equation} \label{eq:equiv-relation-generator}
  (\lambda,a) \sim (\mu,b) \quad \text{if} \quad \Hom_{\g_0} (V \otimes k_1, V(\lambda)^* \otimes V(\mu) \otimes k_{b-a}) \ne 0.
\end{equation}
\end{lem}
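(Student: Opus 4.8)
The plan is to exploit the decomposition $\g_1 \cong (V \otimes k_1) \oplus (V^* \otimes k_{-1})$ of $\g_0$-modules recorded just before the statement, together with the fact that $\Hom_{\g_0}(-,W)$ carries direct sums to direct sums. This immediately shows that, for a finite-dimensional $\g_0$-module $W$, one has $\Hom_{\g_0}(\g_1, W) \ne 0$ if and only if $\Hom_{\g_0}(V \otimes k_1, W) \ne 0$ or $\Hom_{\g_0}(V^* \otimes k_{-1}, W) \ne 0$. Writing $R_1$ for the relation in~\eqref{eq:equiv-relation-generator} and $R_2$ for the relation defined in the same way but with $V^* \otimes k_{-1}$ in place of $V \otimes k_1$, it follows that the relation generating $\sim$ is precisely $R_1 \cup R_2$. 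So the lemma reduces to the claim that $R_1$ and $R_1 \cup R_2$ generate the same equivalence relation, and for this it suffices to prove that $R_2 = R_1^{-1}$, the opposite relation: then $R_1 \cup R_2 = R_1 \cup R_1^{-1}$ has the same symmetric closure, and hence the same reflexive–symmetric–transitive closure, as $R_1$.

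To establish $R_2 = R_1^{-1}$, that is, that $(\lambda,a)\,R_2\,(\mu,b)$ is equivalent to $(\mu,b)\,R_1\,(\lambda,a)$, I would rewrite both $\Hom$-spaces as spaces of $\g_0$-invariants via $\Hom_{\g_0}(A,B) \cong (A^* \otimes B)^{\g_0}$ (Lemma~\ref{lem:geniso}), using $(k_c)^* \cong k_{-c}$ and $(V \otimes k_1)^* \cong V^* \otimes k_{-1}$. This turns $\Hom_{\g_0}(V^* \otimes k_{-1}, V(\lambda)^* \otimes V(\mu) \otimes k_{b-a})$ into $N^{\g_0}$, and $\Hom_{\g_0}(V \otimes k_1, V(\mu)^* \otimes V(\lambda) \otimes k_{a-b})$ into $(N^*)^{\g_0}$, where $N = V \otimes k_1 \otimes V(\lambda)^* \otimes V(\mu) \otimes k_{b-a}$ (after harmlessly reordering tensor factors and using $N^* \cong V^* \otimes k_{-1} \otimes V(\mu)^* \otimes V(\lambda) \otimes k_{a-b}$). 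Now $N$ is a tensor product of finite-dimensional $\g_0$-modules on which $\g_{0,\rss}$ acts semisimply while the one-dimensional centre $\g_{0,\ab}$ acts by a scalar, so $N$ is a completely reducible $\g_0$-module. Hence $N$ contains the trivial module as a summand if and only if $N^*$ does, i.e.\ $N^{\g_0} \ne 0 \iff (N^*)^{\g_0} \ne 0$, which is exactly the equivalence we want.

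The only thing requiring care is the bookkeeping of the characters ($k_{\pm 1}$, and $k_{b-a}$ versus $k_{a-b}$) and of which tensor factor gets dualized at each step; there is no genuine obstacle. The single conceptual point that is worth isolating is that a completely reducible module contains the trivial summand if and only if its dual does, which is what forces $R_2$ to be the opposite of $R_1$ rather than some a priori unrelated relation.
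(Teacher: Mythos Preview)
Your proof is correct and follows essentially the same approach as the paper: both use the decomposition $\g_1 \cong (V \otimes k_1) \oplus (V^* \otimes k_{-1})$ and then show, via dualization and complete reducibility, that the relation coming from the $V^* \otimes k_{-1}$ summand is the opposite of the one coming from $V \otimes k_1$. Your presentation in terms of $R_2 = R_1^{-1}$ is slightly more streamlined, but the mathematical content is identical to the paper's argument.
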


\begin{proof}
Let $\approx$ be the equivalence relation generated by~\eqref{eq:equiv-relation-generator}.  Since $\g_1 \cong (V \otimes k_1) \oplus (V^* \otimes k_{-1})$, it is clear that $\approx$ is contained in $\sim$ (i.e.\ if two elements are equivalent with respect to $\approx$, then they are equivalent with respect to $\sim$).  Now fix $(\lambda,a)$ and $(\mu,b)$ with $\Hom_{\g_0} (\g_1, V(\lambda)^* \otimes V(\mu) \otimes k_{b-a}) \ne 0$.  Then we have
\begin{multline*}
  0 \ne \Hom_{\g_0} (\g_1, V(\lambda)^* \otimes V(\mu) \otimes k_{b-a}) \\
   \cong\Hom_{\g_0} (V \otimes k_1, V(\lambda)^* \otimes V(\mu) \otimes k_{b-a}) \oplus \Hom_{\g_0} (V^* \otimes k_{-1}, V(\lambda)^* \otimes V(\mu) \otimes k_{b-a}).
\end{multline*}
Therefore, one of the above summands must be nonzero.  If the first summand is nonzero, then $(\lambda,a) \approx (\mu,b)$.  If the second summand is nonzero, then
\[
  0 \ne \Hom_{\g_0} (V^* \otimes k_{-1}, V(\lambda)^* \otimes V(\mu) \otimes k_{b-a}) = \Hom_{\g_0} (V(\mu)^* \otimes V(\lambda) \otimes k_{a-b}, V \otimes k_1).
\]
Now, since both arguments are completely reducible $\g_0$-modules, the nonvanishing of the above Hom-space implies that there is an irreducible $\g_0$-module that is a summand of both arguments.  But this implies that
\[
  \Hom_{\g_0} (V \otimes k_1, V(\mu)^* \otimes V(\lambda) \otimes k_{a-b}) \ne 0,
\]
and so $(\mu,b) \approx (\lambda,a)$.  Thus $\sim$ is contained in $\approx$, completing the proof. \qed
\end{proof}

\begin{lem} \label{lem:UVV-dual=Q}
We have $\Span_\Z \wt(V \otimes V^*) = Q_0$.
\end{lem}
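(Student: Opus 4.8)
The plan is to prove the two inclusions $\Span_\Z \wt(V \ot V^*) \subseteq Q_0$ and $Q_0 \subseteq \Span_\Z \wt(V \ot V^*)$ separately.

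The first inclusion is routine. Since $V = V(\nu)$ is irreducible, all of its weights lie in the single coset $\nu + Q_0$ of the root lattice, so every weight of $V \ot V^*$ --- being a difference $\mu - \mu'$ of two weights of $V$ --- lies in $Q_0$; hence so does their $\Z$-span.

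For the reverse inclusion the crucial point, which I expect to be the only real obstacle, is that $V$ is a \emph{faithful} $\g_{0,\rss}$-module. To see this I would first note that, as $\g_{0,\rss}$-modules, $\g_1 \cong (V \ot k_1) \oplus (V^* \ot k_{-1}) \cong V \oplus V^*$, because the semisimple Lie algebra $\g_{0,\rss}$ acts trivially on the one-dimensional modules $k_{\pm 1}$. Since the dual representation has the same kernel, $V$ and $V^*$ have the same annihilator in $\g_{0,\rss}$, whence $\Ann_{\g_{0,\rss}}(V) = \Ann_{\g_{0,\rss}}(V \oplus V^*) = \Ann_{\g_{0,\rss}}(\g_1) \subseteq \Ann_{\g_0}(\g_1) = 0$, the last equality by Example~\ref{eg:order-two}\eqref{eg-item:order-two:a}. (When $\g = \lsl_2(k)$ one has $\g_{0,\rss} = 0 = Q_0$ and there is nothing to prove.)

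Granting faithfulness, let $\rho : \g_{0,\rss} \to \gl(V)$ be the defining representation. It is an injective Lie algebra homomorphism, and for the commutator action $X \cdot T = \rho(X)T - T\rho(X)$ on $\gl(V)$ one has $X \cdot \rho(Y) = \rho([X,Y])$; thus $\rho$ realizes the adjoint module of $\g_{0,\rss}$ as a submodule of $\gl(V) \cong V \ot V^*$ (see Lemma~\ref{lem:geniso}). Consequently every weight of the adjoint module of $\g_{0,\rss}$ --- that is, every root of $\g_{0,\rss}$ together with $0$ --- occurs as a weight of $V \ot V^*$. Since the roots of $\g_{0,\rss}$ span $Q_0$ over $\Z$ by definition of the root lattice, this gives $Q_0 \subseteq \Span_\Z \wt(V \ot V^*)$ and completes the argument.
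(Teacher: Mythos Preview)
Your proof is correct and follows essentially the same approach as the paper: both establish faithfulness of the $\g_{0,\rss}$-action on $V$ (you spell out the reduction from Example~\ref{eg:order-two}\eqref{eg-item:order-two:a} in more detail) and then use that faithfulness to get $Q_0 \subseteq \Span_\Z \wt(V\otimes V^*)$. The only cosmetic differences are that the paper cites \cite[Exercise~21.5]{Hum72} for this last step whereas you give the direct adjoint-embedding argument, and for the inclusion $\subseteq$ the paper computes via the highest weight $-w_0(\nu)$ of $V^*$ while you use the simpler observation that weights of $V\otimes V^*$ are differences of weights of $V$.
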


\begin{proof}
By Example~\ref{eg:order-two}\eqref{eg-item:order-two:a}, $\g_{0,\rss}$ acts faithfully on $V$.  It follows that $\g_{0,\rss}$ acts faithfully on $V \otimes V^*$ and so $Q_0 \subseteq \Span_\Z \wt(V \otimes V^*)$ (see, for example, \cite[Exercise~21.5]{Hum72}).
\lv{Details: It is clear
that $\Span_\Z \wt \subseteq P$. To see that $Q \subseteq \Span_\Z \wt$, let $\al$
be a root of $\frs$ and let $0 \ne x_\al \in \frs_\al$. Also, let $\rho$
be the representation affording $U$. Since $\rho(x_\al) \ne 0$ by
faithfulness, there exists a weight space $V_\mu$ of $V$ such that
$0 \ne \rho(x_\al) V_\mu \subset V_{\mu + \al}$. Hence $\mu$ and
$\mu + \al$ are weights and thus $\al = (\mu + \al) - \al \in
\Span_\Z \wt$.}
On the other hand, the weights of $V \ot V^*$ are of the form $\nu - w_0(\nu) - \omega$, where $w_0$ is the longest element of the Weyl group of $\g_{0,\rss}$ and $\omega \in Q_0$. Since $\nu - w_0(\nu) \in Q_0$ (\cite[VI, \S1.9, Prop. 27]{Bou81}), all the weights of $V \ot V^*$ lie in $Q_0$. \qed
\end{proof}

\begin{lem} \label{lem:Onsager-equiv-fixed-a}
For all $a \in \g_{0,\ab}^*$, we have $(\lambda,a) \sim (\mu,a)$ if and only if $\mu - \lambda \in Q_0$.
\end{lem}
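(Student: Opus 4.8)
The plan is to prove the two implications separately, translating the relation $\sim$ into statements about tensor products of $\g_{0,\rss}$-modules by means of the decomposition $\g_1 \cong (V \otimes k_1) \oplus (V^* \otimes k_{-1})$ of Example~\ref{eg:order-two}\eqref{eg-item:order-two:c}; write $\nu \in P_0^+$ for the highest weight of $V$. The elementary moves generating $\sim$ then come in two kinds: an \emph{up} move $(\kappa,c)\sim(\kappa',c+1)$ whenever $V(\kappa')$ is a constituent of $V\otimes V(\kappa)$, and a \emph{down} move $(\kappa,c)\sim(\kappa',c-1)$ whenever $V(\kappa')$ is a constituent of $V^*\otimes V(\kappa)$. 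Indeed, $\Hom_{\g_0}(V\otimes k_1,\, V(\kappa)^*\otimes V(\kappa')\otimes k_{c'-c})\neq 0$ forces $c'-c=1$ because $\g_{0,\ab}$ acts trivially on $V(\kappa)^*\otimes V(\kappa')$, and the surviving part of the Hom-space is $\Hom_{\g_{0,\rss}}(V\otimes V(\kappa),V(\kappa'))$; similarly for the $V^*\otimes k_{-1}$ summand. In particular, every generator of $\sim$ shifts the second coordinate by exactly $\pm 1$.

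For the ``only if'' direction I would take a chain of generators from $(\lambda,a)$ to $(\mu,a)$ and count. Since the second coordinate returns to $a$, the chain uses equally many up and down moves, say $p$ of each. An up move changes the highest weight by an element of $\nu+Q_0$: the new highest weight is a weight of $V\otimes V(\kappa)$, the weights of $V$ lie in $\nu+Q_0$, and those of $V(\kappa)$ lie in $\kappa+Q_0$. A down move changes it by an element of $-\nu+Q_0$, since the highest weight of $V^*$ lies in $-\nu+Q_0$ (as $\nu-w_0\nu\in Q_0$). Telescoping the differences,
\[
  \mu-\lambda \;\in\; p\nu + p(-\nu) + Q_0 \;=\; Q_0 .
\]
This direction is routine bookkeeping.

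For the ``if'' direction the key intermediate claim is that if $V(\mu)$ is a constituent of $V(\lambda)\otimes(V\otimes V^*)^{\otimes n}$ for some $n\geq 0$, then $(\lambda,a)\sim(\mu,a)$. I would prove this by iterating one up move (passing to a constituent of the next $V$-factor) followed by one down move (passing to a constituent of the next $V^*$-factor), so the second coordinate oscillates between $a$ and $a+1$ and ends at $a$. Granting the claim, it remains, given $\mu-\lambda\in Q_0$, to produce such an $n$. I would choose an irreducible constituent $V(\delta)$ of the nonzero module $V(\lambda)^*\otimes V(\mu)$; then $\delta$ is a weight of $V(\lambda)^*\otimes V(\mu)$, hence $\delta\in(\mu-\lambda)+Q_0=Q_0$. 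Since $V(\delta)\subseteq V(\lambda)^*\otimes V(\mu)$ gives $V(\mu)\subseteq V(\lambda)\otimes V(\delta)$, it suffices to show $V(\delta)$ is a constituent of $(V\otimes V^*)^{\otimes n}$ for some $n$. Here I invoke Lemma~\ref{lem:UVV-dual=Q}: the self-dual module $W:=V\otimes V^*$ contains the trivial module and satisfies $\Span_\Z\wt W=Q_0$, so $W$ is a faithful self-dual representation of the connected semisimple group $\bar G\subseteq \mathrm{GL}(W)$ whose character lattice is exactly $Q_0$; its irreducible representations are precisely the $V(\gamma)$ with $\gamma\in Q_0\cap P_0^+$, and a faithful representation of a semisimple group generates the whole representation category under tensor products, direct sums, duals, and subquotients. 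Hence $V(\delta)$ occurs as a subquotient, and therefore (by complete reducibility) as a constituent, of some $W^{\otimes n}$, and $(\lambda,a)\sim(\mu,a)$ follows.

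The main obstacle is this last step --- that every irreducible $\g_{0,\rss}$-module with highest weight in $Q_0$ appears in a tensor power of $V\otimes V^*$. I expect to either cite it as a standard fact about the tensor category generated by a faithful representation of a reductive group, or to record the short argument above, in which Lemma~\ref{lem:UVV-dual=Q} identifies the relevant group $\bar G$ and its character lattice and Peter--Weyl/Tannakian generation does the rest. Everything else in the proof is weight bookkeeping together with the translation between $\sim$ and the two types of elementary moves.
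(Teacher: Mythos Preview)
Your proposal is correct and follows essentially the same route as the paper. The ``only if'' direction is identical in spirit (weight bookkeeping after observing that each elementary step shifts the second coordinate by $\pm 1$ and the first by an element of $\pm\nu+Q_0$). For the ``if'' direction, the paper packages your two key ingredients --- that a faithful module tensor-generates all irreducibles with highest weight in its weight lattice, and that one can extract a chain of intermediate irreducibles (your ``iterating one up move followed by one down move'') --- into Corollary~\ref{cor:kumar} and Lemma~\ref{lem:gen} in the appendix, and then applies Corollary~\ref{cor:kumar} with $U=V\otimes V^*$ to obtain the chain $\lambda=\lambda_0,\dots,\lambda_n=\mu$ directly; your argument reproves that corollary inline in a slightly different order (first embedding $V(\delta)$ in a tensor power, then unwinding to a chain). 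The only place you should be a bit more explicit is the chain-extraction step: ``passing to a constituent of the next factor'' needs the observation that if $V(\mu)\subseteq V(\lambda)\otimes W_1\otimes\cdots\otimes W_m$ with each $W_j$ completely reducible, then there exist irreducibles $V(\kappa_j)\subseteq V(\kappa_{j-1})\otimes W_j$ with $\kappa_0=\lambda$, $\kappa_m=\mu$ --- this is exactly Lemma~\ref{lem:gen}.
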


\begin{proof}
First suppose that $(\lambda,a) \sim (\mu,a)$.  Then there exists a sequence
\[
  (\lambda,a) = (\lambda_0,a_0), (\lambda_1,a_1),\dots, (\lambda_n,a_n) = (\mu,b)
\]
such that for each $0 \le i < n$, we have
\begin{align}
  \Hom_{\g_0} (V \otimes k_1, V(\lambda_i)^* \otimes V(\lambda_{i+1}) \otimes k_{a_{i+1}-a_i}) &\ne 0, \label{eq:hom-i-i+1} \\
  \text{or} \quad \Hom_{\g_0} (V \otimes k_1, V(\lambda_{i+1})^* \otimes V(\lambda_i) \otimes k_{a_i-a_{i+1}}) &\ne 0. \label{eq:hom-i+1-i}
\end{align}
Now~\eqref{eq:hom-i-i+1} implies that $a_{i+1}=a_i+1$ and~\eqref{eq:hom-i+1-i} implies $a_{i+1}=a_i-1$.  Since $a_0 = a = a_n$, we must have that $n$ is even and we can partition the set $\{0,1,2,\dots,n-1\} = J_1 \sqcup J_2$, with $|J_1|=|J_2|$, such that~\eqref{eq:hom-i-i+1} holds for $i \in J_1$ and~\eqref{eq:hom-i+1-i} holds for $i \in J_2$.  This implies that
\begin{align*}
  \Hom_{\g_0} (V, V(\lambda_i)^* \otimes V(\lambda_{i+1})) \ne 0 \quad \text{for} \quad i \in J_1, \\
  \Hom_{\g_0} (V^*, V(\lambda_i)^* \otimes V(\lambda_{i+1})) \ne 0 \quad \text{for} \quad i \in J_2.
\end{align*}
By Lemma~\ref{lem:hom-space-root-lattice} (with $\frs=\g_{0,\rss}$), we have that
\begin{align*}
  \lambda_{i+1}-\lambda_i \in \wt(V) + Q_0 \quad \text{for} \quad i \in J_1, \\
  \lambda_{i+1}-\lambda_i \in \wt (V^*) + Q_0 \quad \text{for} \quad i \in J_2,
\end{align*}
where $\wt(W)$ denotes the set of weights of a $\g_{0,\rss}$-module $W$.  Thus
\begin{align*}
  \mu - \lambda &= (\lambda_n - \lambda_{n-1}) + (\lambda_{n-1} - \lambda_{n-2}) + \dots + (\lambda_1 - \lambda_0) \\
  &\quad \in \underbrace{\wt(V) + \dots + \wt(V)}_\text{$n/2$ terms} + \underbrace{\wt(V^*) + \dots + \wt(V^*)}_\text{$n/2$ terms} + Q_0 \\
  &\quad = \wt((V \otimes V^*)^{\otimes n/2}) + Q_0 = Q_0,
\end{align*}
by Lemma~\ref{lem:UVV-dual=Q}.

Now suppose $\mu - \lambda \in Q_0 = \Span_\Z \wt(V \otimes V^*)$.  Since $V \otimes V^*$ is a faithful $\g_{0,\rss}$-module, it follows from Corollary~\ref{cor:kumar} that there exists a sequence
\[
  \lambda = \lambda_0 , \lambda_1,\dots, \lambda_n = \mu
\]
such that
\[
  \Hom_{\g_0} (V \otimes V^* \otimes V(\lambda_i), V(\lambda_{i+1})) \ne 0,\quad 0 \le i < n.
\]
This implies that
\[
  \Hom_{\g_0} (V \otimes k_1, (V(\lambda_i) \otimes k_a)^* \otimes (V(\lambda_{i+1}) \otimes V \otimes k_{a+1})) \ne 0.
\]
Thus $(\lambda_i,a) \sim (\delta,a+1)$ for some irreducible summand $V(\delta)$ of $V(\lambda_{i+1}) \otimes V$.  Therefore
\[
  0 \ne \Hom_{\g_0} (V(\delta), V(\lambda_{i+1}) \otimes V) \cong \Hom_{\g_0} (V(\lambda_{i+1})^* \otimes V(\delta), V).
\]
This implies that $V$ is an irreducible summand of $V(\lambda_{i+1})^* \otimes V(\delta)$.  But then
\[
  0 \ne \Hom_{\g_0} (V, V(\lambda_{i+1})^* \otimes V(\delta)) \cong \Hom_{\g_0} (V \otimes k_1, (V(\lambda_{i+1}) \otimes k_a)^* \otimes (V(\delta) \otimes k_{a+1})),
\]
and so $(\lambda_{i+1},a) \sim (\delta,a+1)$.  Hence $(\lambda_i,a) \sim (\lambda_{i+1},a)$ for all $0 \le i < n$.  It follows that $(\lambda,a) \sim (\mu,a)$. \qed
\end{proof}

\begin{prop} \label{prop:Onsager-full-equiv-relation}
We have $(\lambda, a) \sim (\mu,b)$ if and only if there exists an $n \in \Z$ such that
\begin{equation} \label{eq:Onsager-equiv-relation-explicit}
  \mu + Q_0 = \lambda + n \nu + Q_0 \quad \text{and} \quad b=a+n.
\end{equation}
\end{prop}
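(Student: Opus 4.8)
The plan is to deduce the statement from Lemma~\ref{lem:Onsager-equiv-fixed-a} together with a bookkeeping argument that records how each elementary application of the generating relation~\eqref{eq:equiv-relation-generator} moves the two coordinates. The key observation is that~\eqref{eq:equiv-relation-generator} is ``graded'' by the $\g_{0,\ab}$-action: a nonzero $\Hom_{\g_0}(V \otimes k_1, V(\lambda)^* \otimes V(\mu) \otimes k_{b-a})$ forces $\g_{0,\ab}$ to act by the same scalar on both sides, i.e.\ $b - a = 1$, and the residual $\g_{0,\rss}$-equivariance then says that $V(\mu)$ is a constituent of $V(\nu) \otimes V(\lambda)$. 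Hence $\mu$ is a weight of $V(\nu) \otimes V(\lambda)$, so $\mu \in \nu + \lambda + Q_0$, i.e.\ $\mu + Q_0 = \lambda + \nu + Q_0$. Thus a single ``forward'' link of~\eqref{eq:equiv-relation-generator} shifts the first coordinate by $\nu$ modulo $Q_0$ and the second by $+1$, while a ``backward'' link does the reverse.

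For the forward implication I would take a chain $(\lambda,a) = (\lambda_0,a_0), \dots, (\lambda_m,a_m) = (\mu,b)$ connecting $(\lambda,a)$ to $(\mu,b)$ through instances of~\eqref{eq:equiv-relation-generator} and their reverses, assign to each link a sign $\varepsilon_i \in \{+1,-1\}$ according to its direction, and set $n = \sum_i \varepsilon_i$. The observation above then gives, telescoping, $b = a + n$ and $\mu + Q_0 = \lambda + n\nu + Q_0$, which is exactly~\eqref{eq:Onsager-equiv-relation-explicit}.

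For the converse, assume~\eqref{eq:Onsager-equiv-relation-explicit}. Since $\sim$ is symmetric, after possibly interchanging $(\lambda,a)$ and $(\mu,b)$ I may assume $n \ge 0$. Because $\lambda$ and $\nu$ are dominant, $\lambda + j\nu \in P_0^+$ for all $j \ge 0$, and the Cartan component $V(\lambda + \nu)$ occurs in $V(\lambda) \otimes V(\nu)$ (with multiplicity one), so $\Hom_{\g_0}(V \otimes k_1, V(\lambda)^* \otimes V(\lambda+\nu) \otimes k_1) \ne 0$ and hence $(\lambda,a) \sim (\lambda + \nu, a+1)$. Iterating $n$ times yields $(\lambda,a) \sim (\lambda + n\nu, a+n)$. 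By~\eqref{eq:Onsager-equiv-relation-explicit} we have $\mu - (\lambda + n\nu) \in Q_0$ and $b = a+n$, so Lemma~\ref{lem:Onsager-equiv-fixed-a} gives $(\lambda + n\nu, b) \sim (\mu, b)$, and transitivity finishes the proof.

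I do not expect a real obstacle: the argument is essentially bookkeeping on top of the two preceding lemmas. The only point needing care is the grading observation that pins every elementary step to a shift of exactly $(\pm\nu \bmod Q_0,\ \pm 1)$; once that is in place the forward direction is forced, and the leftover $Q_0$-ambiguity in the converse is precisely what Lemma~\ref{lem:Onsager-equiv-fixed-a} was built to absorb.
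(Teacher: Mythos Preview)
Your proposal is correct and follows essentially the same approach as the paper: both directions rest on the grading observation that a single generator of $\sim$ shifts the pair by $(\pm\nu \bmod Q_0,\ \pm 1)$, the Cartan component $V(\lambda+\nu)\subseteq V(\nu)\otimes V(\lambda)$ to produce actual links, and Lemma~\ref{lem:Onsager-equiv-fixed-a} to absorb the residual $Q_0$-discrepancy. The only cosmetic difference is that the paper packages condition~\eqref{eq:Onsager-equiv-relation-explicit} as the equivalence relation generated by a single-step relation $\bowtie$ and reduces both implications to one step, whereas you telescope an $n$-step chain explicitly; the content is the same.
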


\begin{proof}
The relation \eqref{eq:Onsager-equiv-relation-explicit} is the equivalence
relation generated by the relation  $\bowtie$ defined by
\[
  (\la, \mu) \bowtie (\mu, b) \quad \text{if} \quad \mu + Q_0 = \la + \nu + Q_0,\ b = a+1.
\]
To show that \eqref{eq:Onsager-equiv-relation-explicit} implies $(\lambda,a) \sim (\mu,b)$, it therefore suffices to show that $(\la, a) \bowtie (\mu, b)$
implies $(\la, a) \sim (\mu, b)$. Thus assume $\mu = \la + \nu +
\om$ for some $\om \in Q_0$ and $b=a+1$. Since $(\mu, b) \sim (\mu -
\om, b)$ by Lemma~\ref{lem:Onsager-equiv-fixed-a}, it is enough to
prove $(\la, a) \sim (\mu - \om, b)$. In other words, we can assume
$\mu = \la + \nu$, $b=a+1$. But then
\[
  \Hom_{\g_0}(V \otimes k_1, (V(\lambda) \otimes k_a)^* \otimes (V(\mu) \otimes k_b)) = \Hom_{\g_0} (V(\nu) \otimes V(\lambda), V(\mu)) \ne 0,
\]
since the tensor product $V(\nu) \otimes V(\lambda)$ has an
irreducible summand isomorphic to $V(\lambda+\nu) =V(\mu)$.  Thus
$(\lambda,a) \sim (\mu,b)$.

For the other direction, assume $(\lambda,a) \sim (\mu,b)$. It
suffices to consider the case
\[
  0 \ne \Hom_{\g_0} (V \otimes k_1, (V(\lambda) \otimes k_a)^* \otimes (V(\mu) \otimes k_b)) = \Hom_{\g_0} (V(\nu) \otimes V(\lambda), V(\mu)),
\]
where the equality follows from the fact that we must have $b=a+1$,
which is immediate by considering the action of
$\g_{0,\ab}$. Thus $V(\nu) \otimes V(\lambda)$ contains an
irreducible summand isomorphic to $V(\mu)$ and so $\mu = \lambda +
\nu - \omega$ for some $\omega \in Q_0$.  Hence $\lambda + \nu + Q_0
= \mu + Q_0$. \qed
\end{proof}

\begin{cor}
We have $\cB_x \cong (P_0/Q_0) \times (k/\Z)$, $x = \pm 1$.
\end{cor}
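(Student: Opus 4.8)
The plan is to read the result off from the explicit form of the ext-block relation $\sim$ established in Proposition~\ref{prop:Onsager-full-equiv-relation}, in the case under consideration, namely $x = \pm 1$ with $\g_{0,\ab} \ne 0$ (when $\g_{0,\ab} = 0$ one has instead the finite set $\cB_x \cong P_0/\Span_\Z \wt(\g_1)$, already determined above). Recall that here the irreducible objects of $\cF_x$ are parametrized by $P_0^+ \times k$, that $\cB_x$ is by definition the set of $\sim$-classes, and that by Proposition~\ref{prop:Onsager-full-equiv-relation}, $(\lambda,a) \sim (\mu,b)$ precisely when there is some $n \in \Z$ with $\mu + Q_0 = \lambda + n\nu + Q_0$ and $b = a + n$.

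First I would observe that $\sim$ only ever relates pairs whose second coordinates differ by an integer, so that $\sim$ refines the partition of $P_0^+ \times k$ into the fibres of $(\lambda,a) \mapsto a + \Z \in k/\Z$. Consequently $\cB_x$ is, as a set, the disjoint union over $\bar a \in k/\Z$ of the $\sim$-classes contained in a single such fibre, and it remains to match each of these ``slabs'' with $P_0/Q_0$. To make the matching precise (and because the naive ``pick any representatives'' map is not well defined) I would fix once and for all a set $S \subseteq k$ of representatives for the cosets $k/\Z$, and define $\Psi \colon (P_0/Q_0) \times (k/\Z) \to \cB_x$ by sending $(\bar\mu, \bar a)$ to the class of $(\mu, s)$, where $s \in S$ represents $\bar a$ and $\mu \in P_0^+$ is any weight with $\mu + Q_0 = \bar\mu$; such a $\mu$ exists because $P_0^+ \twoheadrightarrow P_0/Q_0$ (every $Q_0$-coset meets the Weyl orbit of a dominant weight, and $w\mu - \mu \in Q_0$ for $w$ in the Weyl group). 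Using Proposition~\ref{prop:Onsager-full-equiv-relation} one then checks directly that $\Psi$ is well defined (replacing $\mu$ inside its $Q_0$-coset moves $(\mu,s)$ inside its $\sim$-class, with $n = 0$), injective (if $(\mu,s) \sim (\mu',s')$ with $s,s' \in S$ then $s - s' \in \Z$ forces $s = s'$, hence $n = 0$ and $\mu - \mu' \in Q_0$), and surjective (given $(\lambda,a)$, write $a = s + n$ with $s \in S$, $n \in \Z$, choose $\mu \in P_0^+$ with $\mu + Q_0 = \lambda - n\nu + Q_0$, and note that $(\lambda,a) \sim (\mu,s)$). This exhibits the desired bijection $\cB_x \cong (P_0/Q_0) \times (k/\Z)$.

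I expect no serious obstacle beyond bookkeeping; the one point worth flagging is that this bijection is not canonical, as it depends on the chosen section $S$ of $k \to k/\Z$. This is unavoidable: as an abelian group $\cB_x$ sits in a short exact sequence $0 \to P_0/Q_0 \to \cB_x \to k/\Z \to 0$ which need not split, so the assertion $\cB_x \cong (P_0/Q_0) \times (k/\Z)$ is to be read, as elsewhere in this section, as a bijection of the sets parametrizing the blocks. Equivalently, one could argue group-theoretically by replacing $P_0^+$ by $P_0$ (which does not change the quotient) to identify $\cB_x$ with $(P_0 \oplus k)/\Lambda$ for $\Lambda = (Q_0 \oplus 0) + \Z \cdot (\nu, 1)$, and then reuse the slab argument to put this in bijection with $(P_0/Q_0) \times (k/\Z)$.
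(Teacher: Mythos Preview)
Your proof is correct and follows essentially the same approach as the paper: both fix a set of representatives for $k/\Z$ and use Proposition~\ref{prop:Onsager-full-equiv-relation} to build a bijection between $\cB_x = (P_0^+ \times k)/\sim$ and $(P_0/Q_0) \times (k/\Z)$. The only cosmetic difference is the direction of the map---the paper sends the class of $(\lambda,a)$ to $(\lambda + n\nu + Q_0,\, a+\Z)$ with $n$ chosen so that $a+n$ lies in the fixed set of representatives, whereas you construct the inverse map $\Psi$; your additional remarks on non-canonicity and the group-theoretic reformulation are correct and a nice touch, but not needed for the bare statement.
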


\begin{proof}
Fix a set of representatives for $k/\Z$. By
Proposition~\ref{prop:Onsager-full-equiv-relation}, we have a well-defined and injective map that associates to the equivalence class of $(\lambda,a)$ in $(P_0^+ \times k)/\sim$
the element $(\lambda+ n \nu + Q_0,a+n + \Z) \in (P_0/Q_0) \times
(k/\Z)$, where $n$ is the unique integer such that $a+n$ is one of
these chosen representatives for $k/\Z$. It is surjective since every class in $P_0/Q_0$ is
represented by some $\la \in P_0^+$. \qed
\end{proof}

\begin{prop}[Blocks for generalized Onsager algebras] \label{prop:gen-Onsager-block-decomp}
The blocks of the category of finite-dimensional representations of a generalized Onsager algebra are naturally enumerated by the set of finitely supported equivariant maps
\[
  X_\rat \to (P/Q) \sqcup \big( (P_0/Q_0) \times (k/\Z) \big)
\]
such that $x$ is mapped to $P/Q$ if $x \ne \pm 1$ and to $(P_0/Q_0) \times (k/\Z)$ if $x = \pm 1$.
\end{prop}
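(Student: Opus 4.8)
The plan is to assemble the general block‑decomposition machinery of Section~\ref{sec:block-decomps} with the explicit extension computations of this subsection. First I would note that by Lemma~\ref{lem:g-simple-order-2-ext-local} the algebra $\frM$ is extension‑local, and that by \cite[Prop.~6.2]{NSS} every irreducible finite‑dimensional representation of a generalized Onsager algebra is an evaluation representation; hence $\cF_\eval = \cF$ and $\frB_\eval = \frB$. Theorem~\ref{theo:blocks} (equivalently Proposition~\ref{prop:eval-blocks}) then identifies the blocks of $\cF$ with the set $\frB$ of finitely supported $\Gamma$‑equivariant maps $\chi \colon X_\rat \to \cB_\eval = \bigsqcup_{x \in X_\rat} \cB_x$ with $\chi(x) \in \cB_x$. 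So the proof reduces to two tasks: (i) identify each local block set $\cB_x$, and (ii) make the equivariance condition explicit for the specific group action $\sigma \cdot t = t^{-1}$.

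For (i), when $x \neq \pm 1$ the isotropy group $\Gamma_x$ is trivial, so $\g^x = \g$, and Proposition~\ref{prop:onsager-exts} shows that two irreducible evaluation modules $V,V'$ supported at $x$ satisfy $\Ext^1_\frM(V,V') \neq 0$ if and only if $\Hom_\g(\g, V^* \ot V') \neq 0$; this is exactly the linkage criterion of the untwisted case, so $\cB_x \cong P/Q$ by \cite[Prop.~1.2]{CM} (or by Corollary~\ref{cor:kumar} applied with $U = \g$, so that $\Span_\Z \wt U = Q$). When $x = \pm 1$ the identification $\cB_x \cong (P_0/Q_0)\times(k/\Z)$ is the content of the corollary immediately preceding this proposition, which is in turn deduced from Corollary~\ref{cor:onsager-ext-explicit} and the explicit equivalence relation described in Proposition~\ref{prop:Onsager-full-equiv-relation}; one should also keep in view here the degenerate sub‑cases (for $\g = \lsl_2(k)$ one has $P_0 = Q_0 = 0$, so the factor is just $k/\Z$, while for $\g_{0,\ab} = 0$ the algebra $\frM$ is perfect and the $k/\Z$ factor collapses), and check that each remains consistent with the uniform statement.

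For (ii), the fixed‑point set of $\Gamma$ on $X_\rat = k^\times$ is precisely $\{1,-1\}$, since $t = t^{-1}$ forces $t = \pm 1$; every other $\Gamma$‑orbit is a two‑element set $\{x, x^{-1}\}$ on which $\Gamma$ acts freely. Over such a free orbit, under the chosen identifications $\cB_x \cong \cB_{x^{-1}} \cong P/Q$, the identification map of Definition~\ref{def:ss-spec-char} becomes the action of the outer part of the automorphism by which $\sigma$ acts on $\g$ (cf.\ Remark~\ref{rem:action-on-weight-lattice}), so an equivariant $\chi$ is free on one representative of each such orbit. Over the fixed points $x = \pm 1$, $\sigma$ acts on $\g^{\pm 1} = \g_0 = \g^\sigma$ trivially and hence acts trivially on $\cB_{\pm 1}$, so there is no further constraint there. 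Assembling (i) and (ii) then gives exactly the description in the statement: finitely supported $\Gamma$‑equivariant maps $X_\rat \to (P/Q) \sqcup \big((P_0/Q_0)\times(k/\Z)\big)$ sending $x \neq \pm 1$ into $P/Q$ (with $\Gamma$ acting there via the outer automorphism of $\g$ as in Remark~\ref{rem:action-on-weight-lattice}) and $\pm 1$ into $(P_0/Q_0)\times(k/\Z)$.

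I expect the steps above to be essentially assembly rather than new work; the one point genuinely requiring care is (ii) — correctly unwinding the $\Gamma$‑action on $\cB_\eval$, in particular verifying that $\sigma$ acts trivially on $\cB_{\pm 1}$ (because $\sigma$ fixes $\g_0$ pointwise) while on $\cB_x \cong P/Q$ for $x \neq \pm 1$ it acts through the outer part of the automorphism of $\g$ — together with keeping the $\g = \lsl_2(k)$ and $\g_{0,\ab} = 0$ cases in line with the uniform formula for $\cB_{\pm 1}$.
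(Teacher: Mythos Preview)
Your proposal is correct and follows essentially the same approach as the paper: invoke extension-locality and the fact that all irreducibles are evaluation representations so that Theorem~\ref{theo:blocks} (or Proposition~\ref{prop:eval-blocks}) applies, then feed in the local computations of $\cB_x$ already established in the text (namely $\cB_x\cong P/Q$ for $x\neq\pm1$ via Proposition~\ref{prop:onsager-exts} and \cite[Prop.~1.2]{CM}, and $\cB_{\pm1}\cong(P_0/Q_0)\times(k/\Z)$ from the preceding corollary). The paper's own proof is a one-line appeal to these same ingredients; your additional discussion of the equivariance and the degenerate cases is extra detail rather than a different argument.
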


\begin{proof}
This follows from Theorem~\ref{theo:blocks} (or Proposition~\ref{prop:eval-blocks}) and the above computations of the $\cB_x$, $x \in X_\rat$. \qed
\end{proof}

\begin{cor}
If $\frM$ is the usual Onsager algebra, then the blocks of the category of finite-dimensional representations can be naturally identified with the set of finitely supported equivariant functions $\chi$ from $X_\rat$ to $(P/Q) \cup (\C/\Z)$ where $\chi(x) \in P/Q$ for $x \ne \pm 1$ and $\chi(x) \in \C/\Z$ for $x = \pm 1$.
\end{cor}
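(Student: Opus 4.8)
The plan is to obtain this corollary as the special case of Proposition~\ref{prop:gen-Onsager-block-decomp} in which $k = \C$, $\g = \lsl_2(\C)$, and $\sigma$ acts on $\g$ by a Chevalley involution, since by \cite{Roan91} (see Example~\ref{eg:Onsager}) the usual Onsager algebra is precisely the generalized Onsager algebra $M(\C^\times, \lsl_2(\C))^\Gamma$ attached to this data. Thus no new argument is needed beyond identifying the lattices occurring in Proposition~\ref{prop:gen-Onsager-block-decomp} in this particular case.

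The single point to record is the structure of the fixed-point subalgebra $\g_0 = \g^\Gamma$: by Example~\ref{eg:order-two}\eqref{eg-item:order-two:sl2}, for $\g = \lsl_2(\C)$ with $\sigma$ a Chevalley involution one has $\g_0 = \g_{0,\ab} \ne 0$, i.e.\ $\g_{0,\rss} = 0$. Hence the weight and root lattices $P_0, Q_0$ of $\g_{0,\rss}$ both vanish, so $P_0/Q_0$ is trivial and
\[
  (P_0/Q_0) \times (k/\Z) \;\cong\; k/\Z \;=\; \C/\Z .
\]
Feeding this into the conclusion of Proposition~\ref{prop:gen-Onsager-block-decomp} shows that the blocks are naturally enumerated by the finitely supported equivariant maps $X_\rat \to (P/Q) \sqcup (\C/\Z)$ sending $x$ to a value in $P/Q$ when $x \ne \pm 1$ and to a value in $\C/\Z$ when $x = \pm 1$, which (writing $\cup$ for the disjoint union $\sqcup$) is exactly the assertion. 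Along the way one should note that the running hypotheses are met: $X = \C^\times$ is an irreducible affine variety of positive dimension, so $(\fm_x/\fm_x^2)^* \ne 0$ for every $x \in X_\rat$, and $\frM$ is extension-local by Lemma~\ref{lem:g-simple-order-2-ext-local}.

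There is essentially no obstacle here; the content lives entirely in the already-proved Proposition~\ref{prop:gen-Onsager-block-decomp}. Should one prefer a direct verification in place of citing that proposition, it would suffice to redo the case $x = \pm 1$: using Corollary~\ref{cor:onsager-ext-explicit} and the computation of $\frZ_{\pm 1,\ab}$ in~\eqref{eq:gen-Onsager-Zxab-nonfree-point} together with \cite[Prop.~1.2]{CM}, one checks that at $x = \pm 1$ the ext-blocks are indexed by the one-dimensional parameter $a \in \g_{0,\ab}^* \cong \C$ modulo the integer shifts coming from the $\g_0$-module $\g_1 \cong k_1 \oplus k_{-1}$, i.e.\ by $\C/\Z$, while for $x \ne \pm 1$ one has $\g^x = \lsl_2(\C)$ and $\cB_x \cong P/Q \cong \Z/2\Z$. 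But specializing Proposition~\ref{prop:gen-Onsager-block-decomp} is the cleanest route.
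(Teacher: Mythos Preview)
Your proposal is correct and follows essentially the same approach as the paper: specialize Proposition~\ref{prop:gen-Onsager-block-decomp} using $k=\C$ and $\g_{0,\rss}=0$ (so that $(P_0/Q_0)\times(k/\Z)\cong \C/\Z$). The additional checks you mention (nonvanishing of tangent spaces, extension-locality) and the alternative direct verification are unnecessary here, since these are already part of the standing hypotheses and results of Section~\ref{sec:app-onsager}, but they do no harm.
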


\begin{proof}
This follows immediately from Proposition~\ref{prop:gen-Onsager-block-decomp} and the fact that $k=\C$ and $\g_{0,\rss}=0$. \qed
\end{proof}

%
\appendix
\section{Extensions and the weight lattice}
%

We present here some useful results of independent interest that allow us, in certain cases, to give a simple explicit description of the block decomposition of the category $\mathcal{F}$ of finite-dimensional representations of an equivariant map algebra.

The following proposition and its proof were explained to us by S.~Kumar.

\begin{prop}\label{prop:kumar} Let $G$ be a semisimple
algebraic group over $k$ and let $\rho : G \to \SL(U)$ be a faithful
finite-dimensional rational $G$-module. Then, for any
irreducible finite-dimensional $G$-module $V$, there exists $m \in
\NN$ such that $V$ is isomorphic to an irreducible submodule of
$U^{\ot m}$.
\end{prop}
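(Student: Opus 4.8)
The plan is to reduce the statement to a question about the monoid of highest weights appearing in tensor powers of $U$, and then to use the faithfulness of $\rho$ together with a standard saturation/convex-geometry argument on the weight lattice. First I would fix a maximal torus $T \subseteq G$ and a Borel, so that the irreducible $G$-modules are indexed by dominant weights $P^+$; let $S \subseteq P^+$ be the set of highest weights $\mu$ such that $V(\mu)$ occurs as a submodule of some $U^{\ot m}$. The key elementary fact is that $S$ is closed under the operation $\mu, \mu' \mapsto \mu + \mu'$: indeed $V(\mu) \ot V(\mu')$ contains $V(\mu+\mu')$ as the "Cartan component", so if $V(\mu) \hookrightarrow U^{\ot a}$ and $V(\mu') \hookrightarrow U^{\ot b}$ then $V(\mu+\mu') \hookrightarrow U^{\ot(a+b)}$. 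Thus $S$ is a submonoid of $P^+$, and it suffices to show $S = P^+$, i.e.\ every dominant weight lies in $S$.

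Next I would bring in faithfulness. Since $\rho : G \to \SL(U)$ is faithful and $G$ is semisimple, the weights of $U$ span a finite-index subgroup of the weight lattice $P$; more precisely, the restriction of the weights of $U$ to $T$ separates points of $T$, which forces $\Span_\Z \wt(U) = P$ when $G$ is simply connected (and in general $\Span_\Z \wt(U)$ has finite index in $P$ and contains the root lattice $Q$). Using $U$ together with $U^*$ (also a submodule of $U^{\ot \dim U - 1} \ot \det = U^{\ot(\dim U -1)}$ since $\det U$ is trivial, as $\rho$ lands in $\SL(U)$), one gets that the differences of weights of $U$, hence all of $Q$, lie in the group generated by $\wt(U \ot U^*)$; combined with the highest weight $\nu$ of $U$ this shows that the weights occurring across all $U^{\ot m}$ generate $P$ as a group. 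The remaining point is to upgrade "generate as a group" to "every dominant weight is a highest weight of a submodule of some $U^{\ot m}$": here I would argue that the set of weights of $U^{\ot m}$, for $m$ large, fills out all lattice points of a cone whose interior is the dominant cone (after translating by a large multiple of $\nu$), and that a weight $\mu$ of $U^{\ot m}$ which is dominant and "deep enough" in the dominant chamber must actually be a highest weight of some irreducible constituent — any weight that is maximal among the weights of the module in the dominance order is a highest weight, and a sufficiently interior dominant weight of maximal length in its constituent will have this property. Concretely: given $\mu \in P^+$, pick $m$ and a weight of $U^{\ot m}$ of the form $\mu + (\text{large multiple of }\nu)$ lying in $S$ via the monoid property applied to powers of $\nu$ and to a lattice combination realizing $\mu$ up to $Q$, then subtract off a suitable element of $S$ realizing the correction in $Q$.

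The main obstacle I anticipate is precisely this last step — passing from "the union over $m$ of $\wt(U^{\ot m})$ generates $P$" to "$S = P^+$" — because a weight appearing in a tensor power need not be the highest weight of a constituent. The clean way around it is to exploit the submonoid structure: choose once and for all a finite set of dominant weights $\mu_1, \dots, \mu_r \in S$ whose images generate $P/Q$ and such that $\nu \in S$, note $Q \subseteq \Span_\Z \wt(U \ot U^*)$ and that $V \hookrightarrow U^{\ot m}$ forces no constraint beyond $\mu \in \nu^{?} + (\text{cone})$; then for arbitrary $\mu \in P^+$ write $\mu = \sum c_i \mu_i + (\text{element of }Q)$ and absorb the $Q$-part and any negative $c_i$ by adding a large multiple of $\nu$ and of the $\mu_i$, all of which stays in the monoid $S$. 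I would state the needed saturation fact about tensor-power weights as a short lemma. This also dovetails with Lemma~\ref{lem:UVV-dual=Q} and Corollary~\ref{cor:kumar} used later in the paper, where only the group generated by $\wt(U)$ (equivalently the quotient $P/\Span_\Z\wt(U)$) enters.
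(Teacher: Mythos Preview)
Your approach is genuinely different from the paper's, which avoids weight combinatorics entirely. The paper observes that faithfulness of $\rho$ yields a closed $G$-equivariant embedding $G \hookrightarrow \End_k(U)$ (with $G$ acting by $g \cdot f = \rho(g) \circ f$, so that $\End_k(U) \cong U^{\oplus \dim U}$ as a $G$-module), hence a $G$-equivariant surjection $k[\End_k(U)] \twoheadrightarrow k[G]$ of coordinate rings. Since $k[\End_k(U)]$ is a quotient of $\bigoplus_m (U^*)^{\ot m}$ and the algebraic Peter--Weyl theorem puts every irreducible $G$-module inside $k[G]$, every irreducible is a quotient---hence, by semisimplicity, a submodule---of some $(U^*)^{\ot m}$; dualizing finishes. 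No monoid, no convexity.

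Your monoid argument has a genuine gap exactly where you flag it. The proposed repair, ``write $\mu = \sum c_i \mu_i + q$ with $q \in Q$ and absorb the $Q$-part and any negative $c_i$ by adding large multiples of $\nu$ and of the $\mu_i$'', does not yield $\mu \in S$: after the absorption you only know $\mu + N\nu + \sum N_i \mu_i \in S$ for suitable $N, N_i \ge 0$, and there is no cancellation available in a submonoid of $P^+$. The earlier suggestion, that a dominant weight of $U^{\ot m}$ lying ``deep enough'' in the chamber must be the highest weight of a constituent, is likewise unjustified (a dominant weight far from the walls can still fail to be maximal in the dominance order among the weights of the module). A purely weight-theoretic proof does exist, but it requires substantially more than what you have sketched; the paper's Peter--Weyl route is much shorter and is the natural fix.
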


\begin{proof} Let $\rho : G \to \SL_k(U)$ be the corresponding
representation. Besides $\rho$ we will  use the trivial $G$-module
structure on the underlying vector space of $U$, denoted $U^{\rm
triv}$.  Observe that the canonical $G$-module structure on
$\End_k(U) = \Hom_k(U^{\rm triv}, U)$ satisfies $(g \cdot f)(u) = g
\cdot f(u)$ for $g \in G$, $f\in \Hom_k(U^{\rm triv}, U)$, and $u\in U$. Let
$\theta : G \to \Hom_k(U^{\rm triv}, U)$ be the composition of
$\rho$ and the canonical injection $\SL_k(U) \to \Hom_k (U^{\rm
triv}, U)$. It is easily seen that $\theta $ is an injective
$G$-module map, where  $G$ acts on itself by left multiplication.
\lv{Indeed, let $g,h\in G$ and $u \in U$. Then $\big(\theta (g \cdot
h)\big)(u) = \big(\theta (gh)\big)(u) = \big(\theta (g)
\theta(h)\big)(u) = g\cdot \big(\theta(h)(u)\big) \theta(h) = \big(
g \cdot \theta(h)\big)(u)$. Also, assume $\theta(g) = \Id$, i.e.,
for all $f\in \End_k(U)$ and $u\in U$ we have $f(u) = (g \cdot f)
(u)= g \cdot f(u)$, hence $\rho(g) = \Id$ and then $g=1$ by
assumption on $\rho$.} We therefore get a surjective $G$-module map
of the coordinate algebras of the corresponding algebraic varieties,
\[
  \theta^* : k[ \Hom(U^{\rm triv}, U)] \twoheadrightarrow k[G].
\]
As $G$-modules,
\[
  \Hom_k(U^{\rm triv}, U) \cong (U^{\rm triv})^* \ot U \cong
  U^{\rm triv} \ot U \cong U \oplus \cdots \oplus U,
\]
with the last direct sum having $\dim U$ summands. Therefore, again
as $G$-modules,
\[
  k[ \Hom(U^{\rm triv}, U)] \cong \Sym^\bullet(U^* \oplus \cdots \oplus U^*) \cong \Sym^\bullet(U^*) \ot \cdots \ot \Sym^\bullet (U^*),
\]
where $\Sym^\bullet(\cdot)$ is the symmetric algebra. We compose the
$G$-module map $\theta^*$ with the canonical $G$-module epimorphism
\[
  \T^\bullet(U^*) \ot \cdots \ot \T^\bullet(U^*) \twoheadrightarrow \Sym^\bullet(U^*) \ot \cdots \ot \Sym^\bullet (U^*)
\]
from the tensor product of the tensor algebras, to get a $G$-module epimorphism
\[
  \T^\bullet(U^*) \ot \cdots \ot \T^\bullet(U^*) \twoheadrightarrow k[G].
\]
It now follows that for any irreducible finite-dimensional
$G$-submodule $W \subseteq k[G]$, there exist natural numbers $n_1,
\ldots, n_s$, $s=\dim U$, such that
\[
  \T^{n_1} (U^*) \ot \cdots \ot \T^{n_s}(U^*) \twoheadrightarrow W
\]
is a $G$-module epimorphism. But clearly,
\[
  \T^{n_1} (U^*) \ot \cdots \ot \T^{n_s}(U^*) \cong \T^m(U^*) \quad \hbox{for $m=n_1 + \cdots + n_s$}
\]
as $G$-modules, yielding a $G$-module epimorphism $\T^m(U^*) \to W$.
By complete reducibility, this means that $W$ is isomorphic to an
irreducible component of $\T^m(U^*)$. But then $W^*$ is isomorphic
to an irreducible component of $(\T^m(U^*))^* \cong \T^m(U^{* \, *})
\cong \T^m(U)$. Finally, we apply the algebraic version of the
Peter-Weyl Theorem, which says that every finite-dimensional
representation of $G$ occurs as a submodule of $k[G]$,
\cite[Cor.~4.2.8]{GW09}. Since $V$ is irreducible if and only if $V^*$
is so, we can apply the above argument to $W=V^*$, and in this way
finish the proof of the theorem. \qed
\end{proof}

The following lemma is a generalization of the second part of the proof of \cite[Prop.~1.2]{CM}.

\begin{lem}\label{lem:gen}
Let $\frl$ be a finite-dimensional Lie algebra. Let $U, V, W$ be finite-dimensional $\frl$-modules with $U$
completely reducible and $V,W$ irreducible with\linebreak $\Hom_\frl(U^{\ot m} \ot V, W) \ne 0$ for some $m\in \NN_+$. Then there exists a finite sequence $V=V_0, V_1, \ldots, V_m=V$ of irreducible finite-dimensional $\frl$-modules such that
\[
  \Hom_\frl(U \ot V_i, V_{i+1}) \ne 0\quad \text{for } 0 \le i < m.
\]
\end{lem}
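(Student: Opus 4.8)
The plan is to prove the lemma by a straightforward induction on $m$, the only structural input being that tensor products of finite-dimensional completely reducible modules are again completely reducible (the fact recalled in Section~\ref{sec:EMA-ext}), applied here to $U\ot V$.

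For the base case $m=1$ there is nothing to do: the hypothesis $\Hom_\frl(U\ot V,W)\ne 0$ is literally the conclusion for the one-step sequence $V_0=V$, $V_1=W$. For the inductive step I would assume $m\ge 2$ and that the statement holds for $m-1$. First rewrite $U^{\ot m}\ot V\cong U^{\ot(m-1)}\ot(U\ot V)$. Since $U$ is completely reducible and $V$ is irreducible (hence completely reducible), $U\ot V$ is completely reducible; write $U\ot V=\bigoplus_j V'_j$ with each $V'_j$ irreducible. Because $\Hom_\frl(-,W)$ and $U^{\ot(m-1)}\ot-$ each commute with finite direct sums,
\[
  0\ne \Hom_\frl\big(U^{\ot(m-1)}\ot(U\ot V),\,W\big)\cong \bigoplus_j \Hom_\frl\big(U^{\ot(m-1)}\ot V'_j,\,W\big),
\]
so there is an index $j_0$ with $\Hom_\frl(U^{\ot(m-1)}\ot V'_{j_0},W)\ne 0$. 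Set $V_1:=V'_{j_0}$. As $V_1$ is a direct summand of the completely reducible module $U\ot V$, the projection onto it is a nonzero $\frl$-homomorphism, so $\Hom_\frl(U\ot V_0,V_1)\ne 0$ with $V_0=V$. Now apply the inductive hypothesis to the pair $(V_1,W)$ (both irreducible) with exponent $m-1$: it produces irreducible modules $V_1=W_0,W_1,\dots,W_{m-1}=W$ with $\Hom_\frl(U\ot W_i,W_{i+1})\ne 0$ for $0\le i<m-1$. Relabelling $V_{i+1}:=W_i$ for $1\le i\le m-1$, the sequence $V=V_0,V_1,\dots,V_m=W$ has the required property, completing the induction.

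I do not expect any genuine obstacle: the argument is a clean peeling-off of one factor of $U$ at a time. The one point meriting care is the passage from $\Hom_\frl(U^{\ot(m-1)}\ot(U\ot V),W)\ne 0$ to a choice of irreducible summand $V_1$ of $U\ot V$ that still has a nonzero $\Hom$ after tensoring with $U^{\ot(m-1)}$ — this needs complete reducibility of $U\ot V$ so that the decomposition into irreducibles (and the resulting splitting of the $\Hom$-space) is available, which is exactly where the hypotheses "$U$ completely reducible" and "$V$ irreducible" are used. (Note that the displayed conclusion should of course read $V_m=W$.)
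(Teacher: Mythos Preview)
Your proof is correct and follows essentially the same inductive strategy as the paper: peel off one tensor factor of $U$ at a time. The only cosmetic difference is that the paper peels from the $W$-end, using the adjunction $\Hom_\frl(U^{\ot m}\ot V,W)\cong\Hom_\frl(U^{\ot(m-1)}\ot V,\,U^*\ot W)$ to find an irreducible common constituent $X$ of $U^{\ot(m-1)}\ot V$ and $U^*\ot W$ and setting $V_{m-1}:=X$, whereas you peel from the $V$-end by decomposing $U\ot V$ and setting $V_1$; your version is marginally more direct since it avoids the passage through $U^*$.
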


\begin{proof}
We prove the result by induction on $m\in \NN_+$, the case $m=1$ being obvious. For $m>1$ we have
\[
  0 \ne \Hom_\frl(U^{\ot m} \ot V, W) \cong \Hom_\frl (U^{\ot (m-1)}\ot V, U^* \ot W).
\]
Since both $U^{\ot m-1} \ot V$ and $U^* \ot W$ are completely
reducible by \cite[\S6.5, Cor.~1 du Th.~4]{Bou:lie}, the above implies that
there exists an irreducible finite-dimensional $\frl$-module $X$
which is an $\frl$-submodule of both $U^{\ot (m-1)} \ot V$ and $U^*
\ot W$. But then $\Hom_\frl(U^{\ot (m-1)} \ot V, X) \ne 0$ and
$\Hom_\frl(X, U^* \ot W) \cong \Hom_\frl(U \ot X, W) \ne 0$.
Applying the induction hypothesis to $\Hom_\frl(U^{\ot (m-1)} \ot V,
X) \ne 0$ and putting $X=V_{m-1}$, finishes the proof. \qed
\end{proof}

For the remainder of the appendix, let $\frs$ be a semisimple finite-dimensional Lie algebra with weight lattice, root lattice and set of dominant integral weights $P,Q,P^+$ respectively.  Let $W(\frs)$ be the Weyl group of $\frs$.  Recall that for a finite-dimensional $\frs$-module $U$, $\wt(U)$ denotes the set of weights of $U$.

\begin{lem} \label{lem:hom-space-root-lattice}
Suppose $U$ is a finite-dimensional $\frs$-module and $\lambda, \mu \in P^+$ such that $\Hom_\frs(U \otimes V(\lambda), V(\mu)) \ne 0$.  Then $\mu - w_0(\lambda)$ and $\mu - \lambda$ are both elements of $\wt(U) + Q$.
\end{lem}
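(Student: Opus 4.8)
The plan is to deduce everything from Weyl's complete reducibility theorem and the description of the weights of a tensor product. First I would observe that a nonzero element of $\Hom_\frs(U \ot V(\la), V(\mu))$ is automatically surjective, since its image is a nonzero submodule of the irreducible module $V(\mu)$; hence $V(\mu)$ is a quotient of $U \ot V(\la)$, and since $U \ot V(\la)$ is a finite-dimensional module over the semisimple Lie algebra $\frs$ (and $k$ has characteristic zero), it is completely reducible, so $V(\mu)$ is in fact a direct summand. In particular the highest weight $\mu$ of $V(\mu)$ occurs as a weight of $U \ot V(\la)$.

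Next I would use the standard fact that $\wt(U \ot V(\la)) = \wt(U) + \wt(V(\la))$ as a sumset of lattice points, so that $\mu = \nu + \eta$ for some $\nu \in \wt(U)$ and $\eta \in \wt(V(\la))$. Every weight of $V(\la)$ is obtained from $\la$ by subtracting a nonnegative integer combination of simple roots, so $\eta - \la \in Q$, and therefore $\mu - \la = \nu + (\eta - \la) \in \wt(U) + Q$. This gives the first of the two claimed containments.

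For the second assertion I would invoke the fact that the Weyl group $W(\frs)$ acts trivially on $P/Q$; in particular $\la - w_0(\la) \in Q$. Combining this with the first part yields $\mu - w_0(\la) = (\mu - \la) + (\la - w_0(\la)) \in \wt(U) + Q$. (Equivalently, one could rerun the weight count on the lowest weight $w_0(\mu)$ of $V(\mu)$ and use that $\wt(U)$ is stable under $W(\frs)$.) I do not anticipate any genuine obstacle here: the only points requiring care are citing complete reducibility correctly so as to pass from $\Hom_\frs \ne 0$ to ``$V(\mu)$ is a constituent,'' and being precise about the inclusion $\wt(V(\la)) \subseteq \la + Q$.
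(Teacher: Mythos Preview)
Your argument is correct and is essentially the same weight-counting argument as the paper's, just run without the initial dualization: the paper rewrites the Hom-space as $\Hom_\frs(U, V(\lambda)^* \otimes V(\mu))$, deduces that some weight of $U$ lies in $\mu - w_0(\lambda) + Q$, and then passes to $\mu - \lambda$ via $\lambda - w_0(\lambda) \in Q$, whereas you extract $\mu \in \wt(U) + \wt(V(\lambda)) \subseteq \wt(U) + \lambda + Q$ directly and then pass to $\mu - w_0(\lambda)$ by the same Weyl-group fact. The two routes are dual to one another and of equal difficulty; your version avoids introducing $V(\lambda)^*$ at the cost of deriving the two conclusions in the opposite order from the paper.
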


\begin{proof}
First recall that $V(\la)^*$ is an irreducible $\frs$-module of highest weight $-w_0(\la)$, where $w_0$ is the longest element in $W(\frs)$.  Since
\[
  \Hom_\frs(U,V(\lambda)^* \otimes V(\mu)) = \Hom_\frs(U \otimes V(\lambda), V(\mu)) \ne 0,
\]
we have $\mu - w_0(\lambda) - \omega \in \wt(U)$ for some $\omega \in Q$.  Thus $\mu - w_0(\lambda) \in \wt(U) + Q$.  Furthermore,
\[
  \mu - \lambda = (\mu - w_0(\lambda)) + (w_0(\lambda)-\lambda) \in \wt(U)+Q,
\]
since $w(\xi) - \xi \in Q$ for all $\xi \in P$ and $w \in W(\frs)$ by \cite[Ch.~VI, \S1.9, Prop.~27]{Bou81}. \qed
\end{proof}

\begin{cor}\label{cor:kumar}
Let $U$ be a finite-dimensional \emph{faithful} $\frs$-module.  Then $Q \subseteq \Span_\Z \wt(U) \subseteq P$.  Furthermore, for $\la, \mu \in P^+$, the following two conditions are equivalent:
\begin{enumerate}
  \item\label{cor-item:weight-chain} There exists a sequence $\la = \la_0, \la_1, \ldots, \la_n = \mu $ of weights $\la_i\in P^+$ such that
      \[
        \Hom_\frs\big(U \ot V(\la_{i}), V(\la_{i+1})\big) \ne 0 \quad \text{for } 0 \le i < n.
      \]

  \item\label{cor-item:mu-lambda-in-U} $\mu - \la\in \Span_\Z \wt(U)$.
\end{enumerate}
\end{cor}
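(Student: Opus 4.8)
The plan is to establish the inclusions $Q \subseteq \Span_\Z\wt(U) \subseteq P$ first, then the easy implication \eqref{cor-item:weight-chain}$\Rightarrow$\eqref{cor-item:mu-lambda-in-U}, and finally the substantive implication \eqref{cor-item:mu-lambda-in-U}$\Rightarrow$\eqref{cor-item:weight-chain}. The inclusion $\Span_\Z\wt(U) \subseteq P$ is immediate, since every weight of a finite-dimensional $\frs$-module is integral. For $Q \subseteq \Span_\Z\wt(U)$ I would use that faithfulness of $U$ makes the representation map an embedding of $\frs$, as an $\frs$-module, into $\gl(U) \cong U \ot U^*$ (with $\frs$ acting by commutators); hence every root of $\frs$ is a weight of $U \ot U^*$, and, being a difference of two weights of $U$, it lies in $\Span_\Z\wt(U)$, so $Q = \Span_\Z(\text{roots of }\frs) \subseteq \Span_\Z\wt(U)$ (this is \cite[Exercise~21.5]{Hum72}, used already in Lemma~\ref{lem:UVV-dual=Q}).

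For \eqref{cor-item:weight-chain}$\Rightarrow$\eqref{cor-item:mu-lambda-in-U} I would apply Lemma~\ref{lem:hom-space-root-lattice} to each link $\Hom_\frs(U \ot V(\la_i), V(\la_{i+1})) \ne 0$ of the chain, obtaining $\la_{i+1}-\la_i \in \wt(U) + Q \subseteq \Span_\Z\wt(U)$; summing over $i$ gives $\mu - \la \in \Span_\Z\wt(U)$.

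For \eqref{cor-item:mu-lambda-in-U}$\Rightarrow$\eqref{cor-item:weight-chain} I would follow the template of \cite[Prop.~1.2]{CM}. By Lemma~\ref{lem:gen} (applicable with $\frl = \frs$, since $U$ is completely reducible as an $\frs$-module), it suffices to produce an $m \in \NN_+$ with $\Hom_\frs(U^{\ot m} \ot V(\la), V(\mu)) \ne 0$; the lemma then furnishes a chain $V(\la) = V(\la_0), V(\la_1), \dots, V(\la_m) = V(\mu)$ as required by \eqref{cor-item:weight-chain}. We may assume $\la \ne \mu$ (if $\la = \mu$, the one-term chain works). Set $\xi := \mu - w_0(\la)$; this is dominant (a sum of the dominant weights $\mu$ and $-w_0(\la)$) and nonzero (if $\xi = 0$ then $\mu = w_0(\la)$ would be both dominant and antidominant, forcing $\la = \mu = 0$), and $\xi = (\mu - \la) + (\la - w_0(\la))$ lies in $\Span_\Z\wt(U) + Q = \Span_\Z\wt(U)$ by \eqref{cor-item:mu-lambda-in-U}, the inclusion $Q \subseteq \Span_\Z\wt(U)$, and $\la - w_0(\la) \in Q$ (\cite[Ch.~VI, \S1.9, Prop.~27]{Bou81}). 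The crucial sub-claim is then: \emph{every dominant weight of $\frs$ lying in $\Span_\Z\wt(U)$ is the highest weight of an irreducible submodule of some $U^{\ot m}$}. Granting it, fix $m \ge 1$ with $V(\xi) \hookrightarrow U^{\ot m}$; since $V(\la)^* \ot V(\mu) \cong V(-w_0(\la)) \ot V(\mu)$ contains its Cartan component $V(-w_0(\la)+\mu) = V(\xi)$, and $U^{\ot m}$ is completely reducible, we get $\Hom_\frs(U^{\ot m}, V(\la)^* \ot V(\mu)) \ne 0$, i.e.\ $\Hom_\frs(U^{\ot m} \ot V(\la), V(\mu)) \ne 0$, and Lemma~\ref{lem:gen} finishes.

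It remains to prove the sub-claim, and this is the step I expect to require the most care. The idea is to pass to algebraic groups: $U$ integrates to a faithful rational representation of a connected semisimple algebraic $k$-group $\bar G$ with Lie algebra $\frs$ — take the simply connected group with Lie algebra $\frs$ and quotient by the (necessarily finite, hence central) kernel of its action on $U$ — and a standard computation with central characters identifies the irreducible $\frs$-modules $V(\nu)$ which are rational $\bar G$-modules as exactly those with $\nu \in \Span_\Z\wt(U)$. Since $\bar G$ is semisimple, its image in $\mathrm{GL}(U)$ lies in $\SL(U)$, so Proposition~\ref{prop:kumar}, applied to the faithful $\bar G$-module $U$, gives for each dominant $\xi \in \Span_\Z\wt(U)$ an $m$ with $V(\xi)$ an irreducible submodule of $U^{\ot m}$, which is the sub-claim. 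The delicate point is to arrange that the character lattice attached to $\bar G$ is \emph{exactly} $\Span_\Z\wt(U)$: one needs $\wt(U)$ to span it (so that $U$ is genuinely \emph{faithful} over $\bar G$, as Proposition~\ref{prop:kumar} requires) and also needs it to contain the central character of $V(\xi)$ (so that $V(\xi)$ is genuinely a $\bar G$-module); the bookkeeping identifying $\bar G$'s rational representations with the $\frs$-modules whose weights lie in $\Span_\Z\wt(U)$ is routine but should be written out with care. Everything else is a direct application of results already established.
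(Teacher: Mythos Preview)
Your proposal is correct and follows essentially the same route as the paper: the inclusions $Q \subseteq \Span_\Z\wt(U) \subseteq P$, then Lemma~\ref{lem:hom-space-root-lattice} for \eqref{cor-item:weight-chain}$\Rightarrow$\eqref{cor-item:mu-lambda-in-U}, and for \eqref{cor-item:mu-lambda-in-U}$\Rightarrow$\eqref{cor-item:weight-chain} the same three-step argument (produce the dominant weight $\xi=\mu-w_0(\la)\in\Span_\Z\wt(U)$, invoke Proposition~\ref{prop:kumar} on a suitable semisimple group to embed $V(\xi)$ in some $U^{\ot m}$, then apply Lemma~\ref{lem:gen}). Two cosmetic differences: for \eqref{cor-item:weight-chain}$\Rightarrow$\eqref{cor-item:mu-lambda-in-U} you sum the conclusions $\la_{i+1}-\la_i\in\wt(U)+Q$ directly, whereas the paper sums the $-w_0$ variants and cancels the telescoping $\la_i-w_0(\la_i)\in Q$ terms---your version is cleaner; for the group you quotient the simply connected group by the kernel on $U$, while the paper cites the Chevalley group attached to $U$ via Steinberg's lectures, which by construction has character lattice exactly $\Span_\Z\wt(U)$---these are two descriptions of the same group, and the ``delicate point'' you flag is precisely what the Chevalley-group reference packages for free.
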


\begin{proof}
That $Q \subseteq \Span_\Z \wt(U) \subseteq P$ is known;
see for example \cite[Exercise~21.5]{Hum72}. \lv{Details: It is clear
that $\Span_\Z \wt(U) \subseteq P$. To see that $Q \subseteq \Span_\Z \wt(U)$, let $\al$
be a root of $\frs$ and let $0 \ne x_\al \in \frs_\al$. Also, let $\rho$
be the representation affording $U$. Since $\rho(x_\al) \ne 0$ by
faithfulness, there exists a weight space $V_\mu$ of $V$ such that
$0 \ne \rho(x_\al) V_\mu \subset V_{\mu + \al}$. Hence $\mu$ and
$\mu + \al$ are weights and thus $\al = (\mu + \al) - \al \in
\Span_\Z \wt(U)$.}
Assume \eqref{cor-item:weight-chain}.  By Lemma~\ref{lem:hom-space-root-lattice}, we have
\begin{multline*}
  -w_0 (\la) + ( \la_1 - w_0(\la_1)) + \cdots + (\la_{n-1} - w_0(\la_{n-1})) + \mu  \\
  \qquad =  (\la_1 - w_0(\la_0))  + (\la_2 - w_0( \la_1)) + \cdots + (\la_n - w_0(\la_{n-1}) )\in \Span_\Z \wt(U) + Q .
\end{multline*}
Now using that $\xi - w\xi \in Q$ for $\xi \in P$ and $w\in W(\frs)$ (\cite[Ch.~VI, \S1.9, Prop.~27]{Bou81}), we see that $\mu - w_0(\la) \in \Span_\Z \wt(U)+Q$. But this is equivalent to \eqref{cor-item:mu-lambda-in-U} since $\mu - \la = (\mu - w_0(\la)) + (w_0(\la) - \la)$ and $w_0(\la) - \la \in Q$.

To prove that~\eqref{cor-item:mu-lambda-in-U} implies~\eqref{cor-item:weight-chain}, we will use some standard facts from the theory of Chevalley groups, for which the reader is referred to
\cite{Ste68}. Assume \eqref{cor-item:mu-lambda-in-U} is true and let $G$ be the Chevalley group corresponding to the representation $U$ of $\frs$. This is a semisimple algebraic
$k$-group (\cite[Th.~6]{Ste68}), whose weight lattice (group of characters of a maximal
torus) is $\Span_\Z \wt(U)$ (\cite[p.~60]{Ste68}).  The $\frs$-module $U$ is
canonically a faithful rational $G$-module, also denoted $U$. The
$\frs$-module $V(\la)^* \ot V(\mu)$ contains a highest weight vector
of weight $\mu - w_0(\la)$, hence also an irreducible $\frs$-module
$X$ of highest weight $\mu - w_0(\la)\in (\Span_\Z \wt(U)) \cap P^+$. It
integrates to an irreducible $G$-module of highest weight $\mu -
w_0(\la)$, also denoted $X$ (\cite[Th.~39]{Ste68} and the remark on
p.~211 of loc.~cit.).  We can now apply Theorem~\ref{prop:kumar}
and conclude that there exists $m \in \NN$ such that $X$ is
isomorphic to an irreducible $G$-submodule of $U^{\ot m}$, i.e.,
$\Hom_G( U^{\ot m}, X) \ne 0$. Since $\Hom_G(U^{\ot m}, X) \cong
\Hom_\frs( U^{\ot m}, X)$, we have $\Hom_\frs(U^{\ot m} \ot V(\la), V(\mu)) \cong \Hom_\frs(U^{\ot m}, V(\la)^* \ot V(\mu)) \ne 0$. Now \eqref{cor-item:weight-chain} follows from Lemma~\ref{lem:gen}. \qed
\end{proof}

\begin{rem} \label{rem:Kostant-Chari-Moura}
The special case $U=\frs$ (so $\Span_\Z \wt(U) = Q$) of Corollary~\ref{cor:kumar} is proven in \cite[Prop.~1.2]{CM}, using a result of Kostant's instead of Theorem~\ref{prop:kumar}.

Note that Corollary~\ref{cor:kumar} applies in the following setting: $\g$ a simple finite-dimensional Lie algebra with an automorphism of order $2$, $\g = \g_0 \oplus \g_1$ the corresponding eigenspace decomposition, $\frs=\g_0$ semisimple (see \cite[Chapter~X, \S5, Table~II]{Hel01} for a list of the cases in which this condition is fulfilled) and $U=\g_1$, which is a faithful $\frs$-module (see Example~\ref{eg:order-two}\eqref{eg-item:order-two:a}).
\end{rem}


\printbibliography

\end{document}